\documentclass[11pt,letterpaper]{amsart}

\usepackage{amsmath,amscd}
\usepackage{amssymb}
\usepackage{amsthm}
\usepackage{mathtools}

\usepackage{graphicx}

\usepackage{mathrsfs}
\usepackage[ocgcolorlinks, linkcolor=blue]{hyperref}

\usepackage{calc}
             {\begin{list}{\arabic{enumi}.}{\usecounter{enumi}%
              \setlength{\labelsep}{0.5em}%
              \settowidth{\labelwidth}{(\arabic{enumi})}%
              \setlength{\leftmargin}{\labelwidth+\labelsep}}}%
             {\end{list}}

\usepackage{comment}

\usepackage{tikz}
\usetikzlibrary{arrows.meta}
\definecolor{region}{RGB}{200,200,212}   
\definecolor{pen}{RGB}{0,0,0}       

\DeclareRobustCommand{\SkipTocEntry}[5]{}

\usepackage{xcolor}
\definecolor{LOcolor}{RGB}{150,100,0}

\newtheorem{Theorem}{Theorem}[section]

\newtheorem{Lemma}[Theorem]{Lemma}
\newtheorem{Corollary}[Theorem]{Corollary}

\newtheorem{Proposition}[Theorem]{Proposition}

\theoremstyle{definition}
\newtheorem{Definition}[Theorem]{Definition}

\newtheorem{Example}[Theorem]{Example}

\newtheorem{Remark}[Theorem]{Remark}

\numberwithin{equation}{section}

\newcommand{\mR}{\mathbb{R}}                    
\newcommand{\norm}[1]{\lVert #1 \rVert}         

\newcommand{\ol}[1]{\overline{#1}}

\newcommand{\supp}{\mathrm{supp}}

\newcommand{\id}{\mathrm{id}}

\newcommand{\eps}{\varepsilon}

\newcommand{\p}{\partial}

\newcommand{\gm}{g_{\mathrm{Min}}}

\newcommand{\tbl}{\textcolor{blue}}

\def\p{\partial}
\def\R{\mathbb R}

\DeclareMathOperator{\WF}{WF}

\def\mid{\,:\,}

\newcounter{sidenote}

\begin{document}

\title{Semiglobal uniqueness for the Lorentzian Calder\'on problem} 

\author[L. Oksanen]{Lauri Oksanen}
\address{Department of Mathematics and Statistics, University of Helsinki, PO Box 68, 00014 Helsinki, Finland}
\email{lauri.oksanen@helsinki.fi}

\author{Rakesh}
\address{Department of Mathematical Sciences, University of Delaware, Newark, DE 19716, USA}
\email{rakesh@udel.edu}

\author[M. Salo]{Mikko Salo}
\address{Department of Mathematics and Statistics, University of Jyv\"askyl\"a, PO Box 35, 40014 Jyv\"askyl\"a, Finland}
\email{mikko.j.salo@jyu.fi}




\begin{abstract}
We prove uniqueness in the Lorentzian Calder\'on problem in a semiglobal setting, comparing a potentially large perturbation of the Minkowski metric against a small one. Our proof uses a reduced amount of data, solving a formally determined version of the problem. In contrast to traditional approaches, it employs neither control-theoretic arguments nor a reduction to a geometric inverse problem via microlocal methods or high-frequency solutions. Instead, it relies on distorted plane waves and weighted $L^2$-estimates. 
\end{abstract}

\maketitle

\section{Introduction} \label{sec_intro}

The anisotropic Calder\'on problem amounts to recovering the coefficients of the most fundamental generally covariant elliptic partial differential equation, that is, the Laplace equation on a Riemannian manifold. More precisely, if $M$ is a compact manifold with boundary and $g$ is a Riemannian metric on $M$, then $g$ should be recovered, up to the natural diffeomorphism gauge, given the Cauchy data set 
\[
C_g^{\mathrm{Ell}} = \{ (u|_{\p M}, \p_{\nu} u|_{\p M}) \mid u \in C^{\infty}(M) \text{ solves } \Delta_g u = 0 \text{ in $M$} \}.
\]
This has remained an open problem for more than 35 years if $\dim(M) \geq 3$ \cite{lee1989}. Results such as \cite{sylvester1987, lassas2001, dos-santos-ferreira2009} impose assumptions that are not satisfied by small perturbations of the Euclidean metric. 

We solve the Lorentzian analogue of the anisotropic Calder\'on problem for small perturbations of the Minkowski metric. In fact, formulated as a uniqueness statement comparing two metric tensors, our result requires only one of them to be close to Minkowski.

\subsection{Statement of the results}

If $M$ is a compact manifold with piecewise smooth boundary and $g$ is a smooth Lorentzian metric in $M$, we define the Cauchy data set 
\[
C_g^{\mathrm{Hyp}} = \{ (u|_{\p M}, \p_{\nu} u|_{\p M}) \mid u \in C^{\infty}(M) \text{ solves } \Box_g u = 0 \text{ in $M$} \}.
\]
Here $\Box_g$ is the Lorentzian wave operator on $(M,g)$ and $\p_{\nu} u$ is the normal derivative of $u$ with respect to some fixed reference Riemannian metric on $M$. We refer the reader to  \cite{oksanen2024} for the Lorentzian geometry facts and notation used in this article. 

We establish two uniqueness results for the Lorentzian Calder\'on problem: one of local character and one of semiglobal character. We begin with the local uniqueness result.

\begin{Theorem} \label{thm_main1}
Let $M$ be a compact domain in $\mR^{1+n}$, $n \geq 2$, with smooth boundary, such that $\mR^{1+n} \setminus M$ is connected. There is $\eps > 0$ such that for any smooth Lorentzian metrics $g_1, g_2$ in $M$ with 
\begin{gather*}
\norm{g_j-\gm}_{C^{2}(M)} \leq \eps, \\
g_j = \gm \text{ to infinite order on $\p M$},
\end{gather*}
the condition $C_{g_1}^{\mathrm{Hyp}} = C_{g_2}^{\mathrm{Hyp}}$ implies 
\[
g_2 = \Psi^* g_1
\]
for some diffeomorphism $\Psi: M \to M$ with $\Psi|_{\p M} = \id$.
\end{Theorem}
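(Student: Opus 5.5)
The plan is to reduce the gauge-invariant problem to a problem for a single unknown tensor in a fixed gauge, and then to prove that this tensor vanishes using an integral identity tested against distorted plane waves.

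\emph{Step 1: gauge fixing.} Since $g_j=\gm$ to infinite order on $\p M$ and the Cauchy data coincide, I first extend both metrics by $\gm$ outside $M$. By a standard boundary-determination and unique-continuation argument, equality of the Cauchy data gives equal data on a slightly larger domain. Next I put both metrics into wave gauge. For each $j$ I solve $\Box_{g_j}\Psi_j^k=0$ with $\Psi_j^k=x^k$ near $\p M$; this is a mixed boundary problem, well posed in the near-Minkowski regime because $\|g_j-\gm\|_{C^2}\le\eps$. For $\eps$ small, $\Psi_j$ is a diffeomorphism of $M$ fixing $\p M$, and the pushed-forward metrics satisfy the harmonic gauge condition $\Box_{g}x^k=0$. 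Equality of Cauchy data is preserved, because $\Psi_j$ fixes the boundary to infinite order. It therefore suffices to show that $g_1=g_2$ when both are in wave gauge, with identical Cauchy data.

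\emph{Step 2: integral identity.} If $\Box_{g_1}u_1=0$ and $\Box_{g_2}u_2=0$, with $u_2$ matched to a $g_1$-solution carrying the same Cauchy data, then Green's formula gives
\[
\int_M \big( (\sqrt{|g_1|}g_1^{ab}-\sqrt{|g_2|}g_2^{ab})\,\p_a u_1\,\p_b v \big)\,dx=0
\]
for all $g_1$-solutions $u_1$ and all $g_2$-solutions $v$. Write $h^{ab}=\sqrt{|g_1|}g_1^{ab}-\sqrt{|g_2|}g_2^{ab}$. In $n+1\ge3$ dimensions the map from the metric to $h$ is injective. The wave-gauge condition reads $\p_a(\sqrt{|g|}g^{ab})=0$, so $\p_a h^{ab}=0$. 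Moreover $h$ is small and vanishes near $\p M$.

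\emph{Step 3: distorted plane waves.} Next I construct solutions $u=e^{i\lambda\phi_j}(a_j+r_j)$, or rather exact progressing waves $u=H(\phi_j)$ with $H$ a Heaviside-type profile. Here $\phi_j$ solves the eikonal equation $g_j^{ab}\p_a\phi\p_b\phi=0$ with $\phi_j=\omega\cdot(t,x)$ for a null covector $\omega$ outside $M$; this is globally solvable because $g_j$ is $C^2$-close to Minkowski. Rather than a high-frequency limit, I take $\lambda$ fixed or use the exact step structure. Substituting into the identity and using the free parameters (two null directions $\omega,\omega'$ and a shift parameter) yields a weighted $X$-ray-type identity for $h$. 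Because $\phi_j$ differs from the flat phase by $O(\eps)$, this identity has the form $\hat h(\xi)\,\omega_a\omega'_b=\mathcal{E}(h)(\xi)$, where the error is bounded by $C\eps\|h\|$ in a suitable weighted $L^2$ norm. Fourier in $(\omega+\omega')$ spans all of $\xi$ as the null pairs range. Together with the divergence-free condition, this shows that the flat operator $h\mapsto(\omega_a\omega'_b\hat h^{ab})$ is injective on divergence-free tensors with a stable $L^2$ bound (by counting components for $n\ge2$).

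\emph{Step 4: absorption, the main obstacle.} The crucial step is to prove $\|h\|\le C\eps\|h\|$ in a weighted $L^2$ space, and hence $h=0$. This requires three things. First, the remainders $r_j$, and the difference between the distorted and flat waves, must be controlled linearly in $\|g_1-g_2\|$ rather than merely in $\eps$. Second, the weights must be chosen (exponential in $t$, Carleman-like) so that the energy estimates for the remainder hold uniformly. Third, the injectivity estimate must be lower-bounded in the same norm. I would attempt this first with weights $e^{\sigma t}$ and the standard energy estimate for $\Box_g$ with $\sigma$ large. Once $h=0$, the gauge-fixed metrics agree, and undoing the gauge transformations gives $g_2=\Psi^*g_1$ with $\Psi=\Psi_1^{-1}\circ\Psi_2$, which equals the identity on $\p M$.
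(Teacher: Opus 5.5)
There are two genuine gaps. The first is in your gauge fixing; the second is in Steps 3--4, which is where the theorem actually lives.

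\emph{Step 1.} The gauge fixing does not exist as stated. The problem $\Box_{g_j}\Psi_j^k=0$ in $M$ with $\Psi_j^k=x^k$ on $\p M$ is a Dirichlet problem for a hyperbolic operator on a compact spacetime domain. That is the classical ill-posed problem already for $\gm$, so $C^2$-closeness to Minkowski gives you nothing. Prescribing $\Psi_j^k=x^k$ in a \emph{neighbourhood} of $\p M$ is even overdetermined: it forces $\int_M v\,\Box_{g_j}x^k\,dV_{g_j}=0$ for every $g_j$-wave $v$ in $M$. If you instead solve a Cauchy problem from the past, the wave coordinates equal $x^k$ only in the past of $M$. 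They then do not fix $\p M$, and the gauge-fixed metrics are no longer Minkowski in the future of $M$. So your condition $\p_a h^{ab}=0$, on which your injectivity count rests, is not available.

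This is exactly the obstruction the paper is organised around. It never imposes wave gauge. It fixes the eikonal gauge $\Psi=(\Phi')^{-1}\circ\Phi$, which is read off from the data because $\varphi_\omega=\varphi'_\omega$ outside $\ol Q_T$. This gauge makes the interfaces $\{\varphi_{\omega_j}=s\}$ of the two families of plane waves coincide. The wave gauge defect $\bar\theta$ is then treated as an unknown and bounded by $\bar g$ (Proposition~\ref{prop_thetabar_estimate}).

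\emph{Steps 3--4.} Step 4 is the whole theorem, and you only list what would be needed. In the form you set it up, it cannot close.

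The leading term your identity (which is fine) produces from two step waves is an integral of $h^{ab}\p_a\phi_1\p_b\phi_2$ over the codimension-two surfaces $\{\phi_1=s\}\cap\{\phi_2=s'\}$. Already in the flat case this is a transform over $(n-1)$-planes, which is smoothing of order $(n-1)/2$. A pointwise component count therefore gives at best $\|h\|_{H^{-(n-1)/2}}\lesssim\|R_0h\|_{L^2}$, not a stable $L^2$ bound.

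The error from replacing the distorted surfaces by flat planes has $O(\eps)$ coefficients, but it moves the surfaces by $O(\eps)$. Estimated directly, that costs one derivative of $h$. So ``$\|\mathcal E(h)\|\le C\eps\|h\|$'' and ``stable injectivity'' live in incompatible norms. No choice of $e^{\sigma t}$ weights in energy estimates for the remainders fixes this. A perturbative route would need the normal operator as a pseudodifferential operator depending continuously on the metrics, i.e.\ exactly the microlocal reduction the paper avoids.

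In the paper, smallness is used only to verify directional simplicity, the eikonal diffeomorphism condition and the spanning condition (Lemmas~\ref{lem_simple_near_Min}--\ref{lem_spanning_near_Min}). Theorem~\ref{thm_main1} then follows from the semiglobal result via Lemma~\ref{lem_cauchy_to_plane}, Corollary~\ref{cor_main} and Proposition~\ref{prop_conformal_identity}.

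All absorption there comes from the large parameter $\sigma$, not from small $\eps$. It uses the semiclassically elliptic weighted estimate of Proposition~\ref{prop_carleman_psi}, applied to $u_{\omega,s}-\Psi^*u'_{\omega,s}$ on $\{\varphi_\omega>s\}$. The transport equations on the characteristic interface control $\bar\theta$ and $\Box_g\bar\varphi$. The spanning condition converts $d\bar\varphi$ into the trace-free part of $\bar g$. Together these give $\sigma^2\|e^{\sigma\psi}\bar g\|^2\lesssim\|e^{\sigma\psi}\bar g\|^2$.
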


To formulate the semiglobal result, let $B$ be the open unit ball in $\mR^n$, let 
    \begin{align*}
Q_T = (-T,T) \times B, \quad M = \ol{Q}_T,
    \end{align*}
and consider a smooth Lorentzian metric $g$ in $\mR^{1+n}$ satisfying
\begin{gather}
g = \gm \text{ outside $M$}, \label{assumption1} \\
g \text{ is globally hyperbolic in $\mR^{1+n}$.} \label{assumption2}
\end{gather}

\begin{Theorem} \label{thm_main_semiglob}
Let $g, g'$ be smooth Lorentzian metrics  in $\mR^{1+n}$, $n \ge 2$,  satisfying \eqref{assumption1}--\eqref{assumption2}. There is $\eps > 0$ such that if
\begin{gather*}
\norm{g'-\gm}_{C^{2}(M)} \leq \eps,
\end{gather*}
the condition $C_{g}^{\mathrm{Hyp}} = C_{g'}^{\mathrm{Hyp}}$ implies 
\[
g = \Psi^* g'
\]
for some diffeomorphism $\Psi: \mR^{1+n} \to \mR^{1+n}$ with $\Psi = \id$ outside $M$.
\end{Theorem}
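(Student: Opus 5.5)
The plan is to reduce the boundary Cauchy data to a source-to-solution type map outside $M$, and then recover $g$ layer by layer in time, using distorted plane waves for the perturbation $g'$ of Minkowski. Because $g=g'=\gm$ outside $M$ and both metrics are globally hyperbolic, equality $C_g^{\mathrm{Hyp}}=C_{g'}^{\mathrm{Hyp}}$ gives a standard consequence. For every source $f\in C_c^\infty(\mR^{1+n}\setminus M)$, the forward solutions $u_f$ of $\Box_g u=f$ and $u'_f$ of $\Box_{g'}u'=f$ coincide outside $M$. To see this, glue the Minkowski exterior solution to an interior solution having the same Cauchy data on $\p M$, and use unique continuation in the exterior region, which is Minkowski and where $\mR^{1+n}\setminus M$ is connected.

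From this I want the integral identity
\[
\int_{Q_T}\big(\langle du, dv\rangle_g\, dV_g-\langle du',dv'\rangle_{g'}\, dV_{g'}\big)=0
\]
for solutions $u$ (forward) and $v$ (backward) of the respective equations with exterior sources. Rather than trying to use this identity directly, I would first fix the gauge. Using harmonic-type coordinates, i.e.\ solving $\Box_{g} \Psi^k = 0$ with $\Psi^k=x^k$ outside $M$, I would construct a diffeomorphism $\Psi$ with $\Psi=\id$ outside $M$. Global hyperbolicity gives existence of these functions, and $\Psi$ is a diffeomorphism at least when $g$ is close to Minkowski. For large $g$ I need a different normalization, so the intended route is to construct the diffeomorphism from $g'$ and pull back. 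The coordinate functions $x^k$ are then "the same" solution for both metrics outside $M$, and after replacing $g$ by a suitable pullback we may assume that both metrics are in wave gauge. The diffeomorphism itself would be built from solutions of the $g$-wave equation that agree with the corresponding $g'$-solutions outside $M$; these can be propagated as boundary data because the Cauchy data agree.

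Next I would derive an equation for the difference. Write $h=g-g'$, or rather the difference of the densities $\sqrt{|g|}g^{-1}$. The identity says that $\int (\sqrt{|g|}g^{ij}-\sqrt{|g'|}g'^{ij})\,\p_iu\,\p_jv=0$ whenever $u$ solves the $g$-equation and $v$ solves the $g'$-equation. In that pairing $u$ can be taken for the possibly large metric $g$, since $u_f=u'_f$ outside $M$. I would then use distorted plane waves for $g'$: $v=e^{i\lambda\phi}a+r$, or better non-oscillatory waves $v=H(\phi)$-type solutions, with optical functions $\phi$ close to $t-\omega\cdot x$, which exist by smallness of $g'-\gm$ in $C^2$. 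The aim is to show that $h$ vanishes on the region $\{t<s\}$ by a continuity argument in $s$. Assuming $h=0$ below a level, I pair against distorted plane waves to obtain a transport-type equation for $h$ along $g'$ null directions. Weighted $L^2$ estimates with Carleman weights $e^{\sigma\phi}$ then force the vanishing of $h$ near the level set.

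The main obstacle is this last step. The unknown $h$ enters paired with $du$, where $u$ is a solution for the large unknown metric $g$, so there is no explicit ansatz on that side. I would try to avoid needing explicit $u$ by taking $u=v$-type solutions: on the region where $g=g'$ is already known, the $g$-solutions agree with the $g'$-solutions, so the distorted plane waves for $g'$ serve for both up to the first contact with the unknown layer. What remains is to prove a weighted $L^2$ estimate $\|e^{\sigma\phi}h\|\le C\sigma^{-1/2}\|e^{\sigma\phi}h\|$, with constants uniform thanks to the smallness $\eps$. That estimate gives $h=0$ on a slightly larger slab, and iterating in finitely many steps up to $t=T$ completes the proof.
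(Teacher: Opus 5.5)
Your reduction from Cauchy data to exterior data is fine; Lemma~\ref{lem_cauchy_to_plane} does the analogous gluing and passes to plane-wave data $\mathcal F_{\omega,\infty}$. But at the step you flag as the main obstacle there is a genuine gap: the inequality $\norm{e^{\sigma\phi}h}\le C\sigma^{-1/2}\norm{e^{\sigma\phi}h}$ is the desired conclusion, and nothing in your plan produces it.

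Your proposed fix does not help. If $h=0$ below a level, the integrand in $\int(\sqrt{|g|}g^{ij}-\sqrt{|g'|}(g')^{ij})\p_iu\,\p_jv=0$ vanishes there. The identity therefore only sees the region where $u$ is the unknown $g$-solution, for which you have no ansatz.

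Carleman weights $e^{\sigma\phi}$ with $\phi$ an optical function are also poorly suited. The conjugated symbol $g(\xi,\xi)+2i\sigma g(d\phi,\xi)$ vanishes for $\xi\parallel d\phi$, so no elliptic gain is available. A layer-stripping iteration would in addition need step sizes uniform in the level, which you do not address.

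The paper removes the obstacle in several steps:
\begin{itemize}
\item It first proves from the data that the large metric $g$ is itself simple in each direction (Proposition~\ref{prop_simplicity_detection}, via the scattering relation, a focal point argument and Hadamard's theorem). So $g$-plane waves also have the explicit form $uH(\varphi_\omega-s)$, with amplitudes obeying transport equations on $\{\varphi_\omega=s\}$.
\item It applies a weighted estimate with a \emph{timelike} weight $\psi$ (Proposition~\ref{prop_carleman_psi}) to the amplitude differences $u-\Psi^*u'$, which have vanishing lateral Cauchy data. For this weight, $e^{\sigma\psi}\Box_ge^{-\sigma\psi}$ is semiclassically elliptic and the boundary term on the characteristic interface comes with the right sign.
\item The interface terms measure the mismatch of transport equations. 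They bound first the wave gauge defect $\bar\theta$, then $\bar\varphi_\omega$ with gains $\sigma^4,\sigma^2$, all in terms of $\bar g$.
\item The spanning condition \eqref{assumption5} gives $|\bar g|\lesssim\sum_\omega|d\bar\varphi_\omega|$. This closes $\sigma^2\norm{e^{\sigma\psi}|\bar g|}^2\lesssim\norm{e^{\sigma\psi}|\bar g|}^2$ globally, with no layer stripping.
\end{itemize}
Smallness of $g'$ is used only to verify \eqref{assumption3}--\eqref{assumption5} for $g'$. The constants in the estimates depend on the large $g$ and are beaten by taking $\sigma$ large.

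Your gauge step is also not workable. Read charitably, your $\Psi^k$ are forward $g$-solutions that agree with the $g'$-solutions outside $M$. For large $g$ the resulting map need not be a diffeomorphism, so you cannot pull back by it. A diffeomorphism built from $g'$ alone does not fix the gauge of $g$.

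The paper instead uses the eikonal gauge $\Psi=(\Phi')^{-1}\circ\Phi$, which only requires $\Phi'$, built from the small metric, to be a diffeomorphism. This follows from $C^2$-closeness because eikonal functions come from the geodesic flow (Lemma~\ref{lem_diffeo_near_Min}). For wave coordinates, $C^2$ control does not obviously give the needed $C^1$ closeness.

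Since $\Psi$ is a priori only a smooth map, the metrics are treated as $(2,0)$-tensors. They are compared through $\bar g_z=(\Psi_*g)_{\Psi(z)}-\kappa(z)g'_{\Psi(z)}$, with $\kappa$ chosen to make $\bar g$ trace free, because the spanning condition only reaches trace-free tensors. Only after proving $\bar g=0$ does one get $\kappa>0$ and that $\Psi$ is a diffeomorphism. The conformal factor is then removed by Proposition~\ref{prop_conformal_factor}.
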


The essential difference between Theorems \ref{thm_main1} and \ref{thm_main_semiglob} lies in their assumptions on the two metrics: the former requires both to be close to Minkowski, whereas the latter imposes this condition on only one of them. 

Both results prove uniqueness on a compact domain $M$. The Lorentzian Calder\'on problem was recently shown to be unsolvable on certain non-compact domains $M$, even when $g$ satisfies \eqref{assumption1}--\eqref{assumption2} and $g' = \gm$ \cite{2604.20320}.  

\begin{Remark}
There is an additional conformal invariance in the case $n = 1$, since $\Box_{\kappa g} = \kappa^{-1} \Box_g$ for strictly positive $\kappa \in C^\infty(\R^{1+1})$. Our proof of Theorem~\ref{thm_main_semiglob} applies to this case, apart from the final step, that is, Proposition~\ref{prop_conformal_factor}. For $n=1$ we can conclude that $g$ and $g'$ coincide up to a diffeomorphism and a conformal factor in the sense of \eqref{eq_kappa_Psi}. Theorem \ref{thm_main1} holds for $n = 1$ as well, when adjusted to take into account the conformal invariance in the same manner.
\end{Remark}

We will derive both results from a more general theorem concerning a formally determined inverse problem. Consider a Lorentzian metric $g$
satisfying \eqref{assumption1}--\eqref{assumption2}. Let $U = U_{\omega,s}^g$ be the unique solution of 
\[
\Box_g U = 0 \text{ in $\mR^{1+n}$}, \qquad U = H(t-x \cdot \omega-s) \text{ for $t \ll 0$},
\]
which exists by global hyperbolicity (Lemma \ref{lemma_U_well_defined}). Here $H$ is the Heaviside function, $\omega$ is a unit vector, and $s \in \R$. We call $U_{\omega,s}^g$ the {\em distorted plane wave} propagating in direction $\omega$ with delay $s$. 
Write 
\[
\Sigma_T = (-T,T) \times \p B,
\]
and consider the measurement 
\[
\mathcal{F}_{\omega,T}(g) = (U_{\omega,s}^g|_{\Sigma_{T+2}})_{s \in [-T-1,T+1]}.
\]
That is, we measure the distorted plane waves on the lateral boundary $\Sigma_{T + 2}$ for delays $|s| \leq T+1$, visualized in Figure \ref{fig_intervals}. This measurement is well defined since the wave front set of $U_{\omega,s}^g$ is disjoint from the conormal bundle of $\Sigma_{T+2}$. If $\Omega \subset S^{n-1}$ is a finite set, we further consider the measurement 
\[
\mathcal{F}_{\Omega,T}(g) = (\mathcal{F}_{\omega,T}(g))_{\omega \in \Omega}.
\]

\begin{figure}
\begin{tikzpicture}[
  >=Stealth,
  every node/.style={font=\small}
]
 
\def\T{2}      
\def\W{1.5}    
\def\M{0.2}    
 
\fill[gray!20] (-\W,-\T) rectangle (\W,\T);
 
\draw[red, thick] (-\W,\T) -- (\W,{\T+2});
 
\draw[red, thick] (\W,-\T) -- (-\W,{-\T-2});
 
\draw[dashed, thick] (-\W,{\T+2}) -- (\W,{\T+2});
\draw[dashed, thick] (-\W,{-\T-2}) -- (\W,{-\T-2});
\draw[dashed, thick] (-\W,{\T}) -- (\W,{\T});
\draw[dashed, thick] (-\W,{-\T}) -- (\W,{-\T});

\draw[thick, blue] (-\W,{\T+2}) -- (-\W,{-\T-2});
\draw[thick, blue] (\W,{\T+2}) -- (\W,{-\T-2});

\draw[thick,->] (-3,0) -- (3,0) node[right] {$x$};
\draw[thick,->] (0,-4.5) -- (0,4.5) node[above] {$t$};
  
\node[right] at (\W + \M,{\T+2})   {$t = T+2$};
\node[right] at (\W + \M,\T)       {$t = T$};
\node[right] at (\W + \M,-\T)      {$t = -T$};
\node[right] at (\W + \M,{-\T-2})  {$t = {-T-2}$};
 
\end{tikzpicture}
\caption{The region of interest $Q_T$ is shown as the shaded rectangle. The measurement $\mathcal{F}_{\omega,T}(g)$ is given on the lateral boundary $\Sigma_{T+2}$, shown as two blue lines between the levels $t = T + 2$ and $t = -T - 2$. The delay $s$ parametrizes the level sets of the eikonal solution $\varphi_{\omega}$ between $-T-1$ and $T + 1$. The two extremal level sets are shown in red.}
\label{fig_intervals}
\end{figure}

We say that $g$ is \emph{simple in direction $\omega$} if the null geodesics starting at $\{ (t,x) \,:\, x \cdot \omega = -1 \}$ with tangent vector $(1,\omega)$ smoothly parametrize a neighborhood of $\ol{Q}_{T+2}$, see Definition~\ref{simple_in_omega} for more details. If this holds, then the eikonal equation 
    \begin{align*} 
g(d\varphi, d\varphi) = 0, \qquad \varphi|_{\{ x \cdot \omega \leq -1 \}} = t-x \cdot \omega,
    \end{align*}
has a smooth solution $\varphi = \varphi_{\omega}$ near $\ol{Q}_{T+2}$ (i.e.\ in a neighborhood of $\ol{Q}_{T+2}$). If $g'$ is another such metric, we denote the corresponding solution of the eikonal equation by $\varphi_{\omega}'$. 

\begin{Theorem} \label{thm_main2}
Let $g, g'$ be smooth Lorentzian metrics  in $\mR^{1+n}$, $n \ge 2$, satisfying \eqref{assumption1}--\eqref{assumption2}, and suppose that for some finite set $\Omega \subset S^{n-1}$ the following conditions hold:
\begin{align}
 &\text{$g'$ is simple in any direction $\omega \in \Omega$}, \label{assumption3} \\[4pt]
 & \left\{ \begin{array}{c}
 \text{there are $\omega_0, \ldots, \omega_n \in \Omega$ such that $(\varphi_{\omega_0}', \ldots, \varphi_{\omega_n}')$ is} \\ 
 \text{a diffeomorphism from a neighborhood of $\ol{Q}_T$ onto} \\
 \text{a neighborhood of $[-T-1,T+1]^{1+n}$}, \end{array} \right. \label{assumption4}  \\[4pt]
  & \left\{ \begin{array}{c}
  \text{for any $z \in \ol{Q}_T$, the set $\{ g'(z) \} \cup \{ (d\varphi_{\omega}' \otimes d\varphi_{\omega}')(z) \}_{\omega \in \Omega}$} \\
  \text{spans the space of symmetric $2$-tensors at $z$.} \end{array} \right. \label{assumption5} 
\end{align}
If $\mathcal{F}_{\Omega,T}(g) = \mathcal{F}_{\Omega,T}(g')$, then  
\[
g = \Psi^* g'
\]
for some diffeomorphism $\Psi: \mR^{1+n} \to \mR^{1+n}$ with $\Psi = \id$ outside $\ol{Q}_T$.
\end{Theorem}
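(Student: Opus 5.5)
The plan is to pass from the boundary data to interior information in three stages: first recover the eikonal functions $\varphi_\omega$ for $g$ (up to gauge), then use them to build the diffeomorphism $\Psi$, and finally show that the pulled-back metric agrees with $g'$ by means of an integral identity together with the spanning condition \eqref{assumption5}.

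\textbf{Stage 1: the eikonals of $g$ are determined by the data.} Fix $\omega\in\Omega$. The singular support of $U^g_{\omega,s}$ is the wavefront emanating from $\{t-x\cdot\omega=s\}$. On the lateral boundary $\Sigma_{T+2}$, the data $\mathcal F_{\omega,T}(g)=\mathcal F_{\omega,T}(g')$ show that these wavefronts hit $\Sigma_{T+2}$ at the same places for $g$ and $g'$. Since $g'$ is simple in direction $\omega$, the wavefront for $g'$ is the smooth level set $\{\varphi'_\omega=s\}$, and $U^{g'}_{\omega,s}=H(\varphi'_\omega-s)\,a'_\omega$ plus a smoother remainder.

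The key step is to show that $g$ also has a smooth, non-caustic eikonal $\varphi_\omega$ in $Q_{T}$ whose level sets agree with those of $\varphi'_\omega$ on $\Sigma_{T+2}$. The difficulty is that $g$ is large and may have caustics, so this cannot be assumed. I would use the weighted $L^2$-estimates advertised in the abstract. Consider $w=U^g_{\omega,s}-\tilde U$, where $\tilde U$ is $U^{g'}_{\omega,s}$ transplanted. Because the Cauchy data of $w$ vanish on the lateral boundary, a Carleman/energy estimate with weight $e^{\tau\varphi'_\omega}$ yields a unique-continuation statement across the level sets of $\varphi'_\omega$. Sweeping $s$ over $[-T-1,T+1]$, this should show that the front of $U^g_{\omega,s}$ is exactly the image of $\{\varphi'_\omega=s\}$ under a map that is the identity near the boundary.

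\textbf{Stage 2: construction of $\Psi$.} Set $\Phi=(\varphi_{\omega_0},\dots,\varphi_{\omega_n})$ and $\Phi'$ analogously. Both equal $(t-x\cdot\omega_j)_j$ outside $Q_T$. Using \eqref{assumption4}, define $\Psi=(\Phi')^{-1}\circ\Phi$ on $Q_T$ and $\Psi=\mathrm{id}$ outside. A degree argument, together with the fact that $\Phi$ agrees with $\Phi'$ near $\partial Q_T$, should show that $\Phi$ is also a diffeomorphism, so $\Psi$ is a diffeomorphism with $\Psi=\mathrm{id}$ outside $\ol Q_T$. After replacing $g$ by $\tilde g=(\Psi^{-1})^*g$, we may assume $\varphi_{\omega_j}=\varphi'_{\omega_j}$ for $j=0,\dots,n$. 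Stage 1, applied to the remaining $\omega\in\Omega$, then gives $\varphi_\omega=\varphi'_\omega$ for every $\omega\in\Omega$.

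\textbf{Stage 3: identification of the metric.} It remains to show $\tilde g=g'$. The first ingredient is the eikonal equation: $\tilde g(d\varphi'_\omega,d\varphi'_\omega)=0=g'(d\varphi'_\omega,d\varphi'_\omega)$. Hence the inverse-metric difference $h=\tilde g^{-1}-g'^{-1}$ annihilates every $d\varphi'_\omega\otimes d\varphi'_\omega$. By \eqref{assumption5}, the symmetric tensors $d\varphi'_\omega\otimes d\varphi'_\omega$ together with $g'$ span, so $h$ is determined by its pairing with $g'$; that is, $h=\lambda\, g'^{-1}$ for a scalar function $\lambda$. Thus $\tilde g=\kappa g'$ is conformal to $g'$.

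The second ingredient fixes the conformal factor when $n\ge2$. The transport equations for the amplitudes $a_\omega$ involve $\Box_g\varphi_\omega$, which picks up a term proportional to $d\log\kappa(\nabla\varphi_\omega)$. Since the amplitudes coincide on $\Sigma_{T+2}$ by the data, integrating along the null rays gives $\partial_{\nabla\varphi_\omega}\kappa=0$. The null directions $\nabla\varphi_{\omega_j}$ span the tangent space, so $\kappa$ is constant, and since $\kappa=1$ near the boundary we get $\kappa\equiv1$.

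\textbf{Main obstacle.} I expect Stage 1 to be the hardest step: it must show that the large metric $g$ has no caustics and reproduces the level sets of $\varphi'_\omega$ using only lateral data. This is where the smallness of $g'$ and the weighted $L^2$ estimate must do the work. My first attempt would be a Carleman estimate with linear-in-$\varphi'_\omega$ weight applied to the difference of the two distorted plane waves, performed iteratively in $s$.
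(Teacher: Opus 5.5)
Your proposal has genuine gaps, and the central one is in Stages 2--3. A degree argument gives only surjectivity of $\Psi=(\Phi')^{-1}\circ\Phi$; this is exactly how the paper obtains $\Psi(\ol{Q}_T)=\ol{Q}_T$. It says nothing about injectivity or invertibility of $D\Phi$: for the large metric $g$, the null covectors $d\varphi_{\omega_0},\dots,d\varphi_{\omega_n}$ could a priori be linearly dependent at interior points. So $(\Psi^{-1})^*g$ is not available. The paper instead treats $g,g'$ as $(2,0)$-tensors, works with the pushforward $\bar g=\Psi_*g-\kappa g'$, and learns that $\Psi$ is a diffeomorphism only at the very end, from $\bar g=0$, $\kappa\neq 0$ and a determinant.

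More seriously, ``Stage 1, applied to the remaining $\omega$, gives $\varphi_\omega=\varphi'_\omega$'' is unsupported. The data only give $\varphi_\omega=\Psi^*\varphi'_\omega$ outside $\ol{Q}_T$. Proving this inside $Q_T$ for $\omega\notin\{\omega_0,\dots,\omega_n\}$ is the heart of the theorem. Your Stage 3 linear algebra, which is the same use of \eqref{assumption5} as in the paper, presupposes it, so the argument is circular.

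The missing idea is the semiclassically elliptic estimate with a \emph{temporal} weight $\psi$ (Proposition \ref{prop_carleman_psi}), which has a favourable boundary term on the characteristic interface $\{\varphi_\omega=s\}$. The paper applies it in $Q_T\cap\{\varphi_\omega>s\}$ to $\bar w=u_{\omega,s}-\Psi^*u'_{\omega,s}$, which has vanishing Cauchy data on $\p Q_T$. The transport equations turn $Z_{g,\varphi_\omega}\bar w$ on the interface into the wave gauge defect $\bar\theta$ for the gauge directions, and into $\Box_g\bar\varphi_\omega$ with $\bar\varphi_\omega=\varphi_\omega-\Psi^*\varphi'_\omega$ for the other directions. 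Meanwhile $\Box_g\bar w=O(|\bar g|+|\bar\theta|+|\bar\varphi_\omega|)$. Integrating in $s$ and applying the estimate again to $\bar\varphi_\omega$ gives $\sigma^2\norm{e^{\sigma\psi}d\bar\varphi_\omega}^2\lesssim\norm{e^{\sigma\psi}|\bar g|}^2$. On the other hand, \eqref{assumption5} with the trace-free choice of $\kappa$ gives $|\bar g|\lesssim\sum_\omega|d\bar\varphi_\omega|$. Letting $\sigma\to\infty$ forces $\bar g=0$.

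Your other two steps also fail as stated. In Stage 1, the weight $e^{\tau\varphi'_\omega}$ has level sets that are characteristic for $\Box_{g'}$ and of unknown type for $\Box_g$, and there is no unique continuation across them. Moreover, $U^g-U^{g'}$ solves $\Box_g(U^g-U^{g'})=(\Box_{g'}-\Box_g)U^{g'}$, whose source involves the unknown $g-g'$ and is singular on the front. The paper proves simplicity of $g$ (Proposition \ref{prop_simplicity_detection}) geometrically, with no estimate. The exterior wave front sets give a plane-wave scattering relation, and a causality/focal-point argument (Proposition \ref{prop_jminus}, Lemma \ref{lem_local_inv1}) plus Hadamard's theorem shows $\Phi^g$ is a diffeomorphism. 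Note also that Theorem \ref{thm_main2} assumes no smallness of $g'$, so smallness cannot be what does the work.

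For the conformal factor, integrating the leading transport equation along a ray only shows that $u$ and $u'$ differ on the front by the factor $\kappa^{-(n-1)/4}$. That amounts to $\int_\gamma\p_{\dot\gamma}\log\kappa=0$, which holds trivially because $\kappa=1$ at both ends of every ray; it does not give $\p_{\nabla\varphi_\omega}\kappa=0$ pointwise. The paper again needs the weighted estimate for $u_s-u'_s$ behind the front (Proposition \ref{prop_conformal_factor}). There both $\Box_g(u_s-u'_s)$ and the interface term are expressed through $d(c^{-1})$, which yields $\sigma\norm{e^{\sigma\psi}d(c^{-1})}^2\lesssim\norm{e^{\sigma\psi}d(c^{-1})}^2$.
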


The directional simplicity condition \eqref{assumption3} ensures that eikonal solutions $\varphi_{\omega}$ are smooth and that the distorted plane waves $U_{\omega,s}^g$ are conormal distributions. Condition \eqref{assumption4} gives a coordinate system consisting of eikonal solutions, and the corresponding \emph{eikonal gauge} for the metric $g'$ is important for our method. The spanning condition \eqref{assumption5} is required in the proof to relate the metrics to eikonal solutions.

We note that the conditions \eqref{assumption3}--\eqref{assumption5} hold for $\gm$ and
    \begin{align*}
\Omega_{\mathrm{Min}} = \{ \pm e_1, \ldots, \pm e_n \} \cup \{ \frac{e_j+e_k}{\sqrt{2}} \mid 1 \leq j < k \leq n \}.
    \end{align*}
They are also stable under $C^{2}$-small perturbations of the metric, see Section~\ref{sec_proof_of_main_ths} for details. Thus Theorem \ref{thm_main2} implies Theorem \ref{thm_main_semiglob}. Related results for time-independent coefficients 
are given in the companion article \cite{rakesh2026}.

\subsection{Previous literature}

Barring the real analytic case \cite{lee1989, lassas2001}, the above three theorems give the first uniqueness results for a Calder\'on type problem comparing two fully anisotropic metric tensors. Previous results are largely confined to cases where the metric tensor has a translation symmetry up to a conformal factor, that is, the manifold admits a splitting $M \subset \R \times M_0$ and the metric tensor is a conformally rescaled product 
    \begin{align}\label{cta}
g(t, x) = c(t, x) (\pm dt^2 + h(x)), \quad (t, x) \in M,
    \end{align}
where $h$ is a Riemannian metric on $M_0$. Furthermore, most previous results recover one of the coefficients $c$ and $h$ while the other is known or fixed. 

When $c = 1$ identically and the sign is negative in \eqref{cta}, the coefficients are time-independent, and the problem can be formulated equivalently as an inverse boundary spectral problem \cite{katchalov2001}. These problems are well-understood, with a tradition going back to Gelfand and Levitan \cite{Gelfand1951, gelfand1957}. They are typically solved using the Boundary Control method \cite{belishev1988}, which relies on Tataru's unique continuation result across non-characteristic hypersurfaces \cite{tataru1995} and generalizes to time-dependent coefficients only insofar as the dependence is real-analytic \cite{eskin2007}. When the dependence is smooth, unique continuation may fail across non-pseudoconvex hypersurfaces \cite{alinhac1983}.

A complementary tradition concerns a fixed conformal class, that is, $h$ is assumed to be known and only $c$ is determined. The seminal result \cite{sylvester1987} recovers $c$ when $h$ is the Euclidean metric and the sign is positive in \eqref{cta}. The most general results with the positive sign \cite{dos-santos-ferreira2009,dos-santos-ferreira2016,carstea2023} impose certain geometric conditions on a known metric $h$. Similar results with the negative sign are available as well \cite{kian2019a, feizmohammadi2021b}. 

There are a handful of results that go beyond the conformally rescaled product form \eqref{cta}, however, they are all restricted in a fixed conformal class. Two such results are based on a reduction to inversion of the light ray transform. Global hyperbolicity suffices for the reduction \cite{stefanov2018}, but the transform has been inverted only in real analytic or stationary spacetimes \cite{stefanov2017, oksanen2025b}. Furthermore, a conformal rescaling of a Lorentzian metric satisfying a curvature bound can be determined by combining elements of the Boundary Control method with unique continuation along a pseudoconvex foliation \cite{alexakisfeizmohammadioksanen2022}. 

Finally, our earlier work \cite{oksanen2024} established a rigidity version of Theorem~\ref{thm_main_semiglob} that assumes $g' = \gm$. A rigidity result for the formally determined problem was proven as well. The method was somewhat inspired by the fixed angle scattering results in \cite{rakesh2020a, rakesh2020b}, which adapt the Bukhgeim-Klibanov method for formally determined inverse problems based on Carleman estimates (see \cite{bukhgeim1981, bellassoued2017, klibanov2021} and for time-dependent coefficients \cite{takase2023}). The proof in \cite{oksanen2024} uses special features of the Minkowski metric, in particular, the fact that solutions $\varphi_{\omega}$ to the eikonal equation give wave gauge coordinates. The present setting lacks this feature, and a key element in our proof is to use an exponentially weighted $L^2$-estimate with weight given by a temporal function (Proposition~\ref{prop_carleman_psi}).

Our weighted $L^2$-estimate is similar in form to Carleman estimates, but semiclassically elliptic rather than subelliptic, and it plays a role analogous to the unique continuation arguments in \cite{oksanen2024}. The correspondence is not exact, however. The estimate is applied repeatedly, and the first $n+1$ applications bound a wave gauge defect (the quantity $\ol \theta$ in Proposition~\ref{prop_thetabar_estimate}). The remaining applications, the number of which scales like the cardinality of $\Omega$ in Theorem~\ref{thm_main2}, 
bound differences of eikonal solutions by the difference of the metrics (here we use the eikonal gauge given by \eqref{assumption4}) as well as by the wave gauge defect. Finally we invoke the spanning condition \eqref{assumption5} to estimate the difference of the metrics by differences of eikonal solutions, and prove that the metrics agree in the eikonal gauge.

\subsection{Organization of article}

Section \ref{sec_intro} is the introduction and includes the statement of the main results. In Section \ref{sec_proof_main_theorem} we prove Theorem \ref{thm_main2} based on a sequence of propositions to be proved in later sections. Section \ref{sec_directional_simplicity} proves the geometric fact that directional simplicity can be detected from boundary measurements. The required facts on distorted plane waves and eikonal solutions are proved in Section \ref{sec_plane_waves}. Section \ref{sec_carleman} states the key weighted estimate, Proposition \ref{prop_carleman_psi}, and gives an outline of its proof. In Section \ref{sec_proof_of_main_ths} we consider metrics close to $\gm$ and prove Theorems \ref{thm_main1}--\ref{thm_main_semiglob}. There are four appendices: Appendix \ref{lorentzian_integration} discusses integration on Lorentzian manifolds, Appendix \ref{degree_theory} recalls certain degree theory facts needed in the proofs, Appendix \ref{sec_carleman_appendix} gives a detailed proof of Proposition \ref{prop_carleman_psi}, and Appendix \ref{sec_diffeo_appendix} proves a result relating directional simplicity and a restricted scattering relation.

\subsection*{Acknowledgements}

L.O.\ and M.S.\ were supported by the Research Council of Finland (Centre of Excellence in Inverse Modelling and Imaging and FAME Flagship, grants 353096, 359182, 353091 and 359208, as well as a project grant 347715). L.O.\ was also supported by the European Research Council of the European Union, grant 101086697 (LoCal). Rakesh's work was partly funded by the grants DMS 1908391 and DMS 2307800 from the National Science Foundation of USA. Views and opinions expressed are those of the authors only and do not
necessarily reflect those of the European Union or the other funding
organizations.

\section{Proof of Theorem \ref{thm_main2}} \label{sec_proof_main_theorem}

We will now prove Theorem \ref{thm_main2}, based on a sequence of propositions whose proofs will be given in later sections. We assume that $g, g'$ are smooth Lorentzian metrics in $\mR^{1+n}$ satisfying \eqref{assumption1}--\eqref{assumption2}, that the metric $g'$ satisfies \eqref{assumption3}--\eqref{assumption5}, and that 
\[
\mathcal{F}_{\Omega,T}(g) = \mathcal{F}_{\Omega,T}(g').
\]
The beginning of the proof follows the same approach as in \cite{oksanen2024}, and the way of using weighted estimates is motivated by \cite{krishnan2023}. The first step is to show that also $g$ must satisfy the simplicity condition \eqref{assumption3}.

\begin{Proposition} \label{prop_simplicity_detection}
Let $g, g'$ be smooth Lorentzian metrics  in $\mR^{1+n}$ satisfying \eqref{assumption1}--\eqref{assumption2}, and suppose that $g'$ is simple in direction $\omega$. If 
\[
\mathcal{F}_{\omega,T}(g) = \mathcal{F}_{\omega,T}(g'),
\]
then also $g$ is simple in direction $\omega$.
\end{Proposition}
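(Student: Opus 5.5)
The strategy is to read the null geodesic data of $g$ off the singularities of the measured distorted plane waves, and then to use the fact that this data coincides with that of $g'$ to transfer the simplicity of $g'$ in direction $\omega$ to $g$.

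\textbf{Step 1: singular support of the plane waves.} For $t \ll 0$ we have $U^g_{\omega,s} = H(t - x\cdot\omega - s)$. Its wave front set is the conormal bundle of the null hyperplane $\{t - x\cdot\omega = s\}$. By propagation of singularities for $\Box_g$ and global hyperbolicity (Lemma~\ref{lemma_U_well_defined}), $\WF(U^g_{\omega,s})$ is contained in the flowout of this set under the null bicharacteristic flow. In base space, the singular support of $U^g_{\omega,s}$ therefore lies in
\[
\Lambda^g_s = \{\gamma_y(r) \mid y \in \{t - x\cdot\omega = s,\ x\cdot\omega = -1\},\ r \ge 0\},
\]
where $\gamma_y$ is the $g$-null geodesic with $\gamma_y(0)=y$ and $\dot\gamma_y(0) = (1,\omega)$. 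The geodesics $\gamma_y$ are straight lines until they enter $M$, since $g = \gm$ outside $M$. Conversely, the principal symbol of the Heaviside jump is transported without vanishing along each bicharacteristic. Hence every point of $\Lambda^g_s$ carries a genuine singularity, possibly a caustic, but with $\WF$ nonempty above it.

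\textbf{Step 2: the boundary data determine the exit data.} Each geodesic $\gamma_y$ leaves $\ol{Q}_{T}$ and, by global hyperbolicity, eventually exits every compact set. On $\Sigma_{T+2}$ and on the Minkowski region outside, it is again a straight line. From $\mathcal{F}_{\omega,T}(g)$ we know $\operatorname{sing\,supp}$ and the conormal directions of $U^g_{\omega,s}|_{\Sigma_{T+2}}$ for all $|s|\le T+1$. Their tangential parts determine the exit points and exit directions of the null geodesics through $\Sigma_{T+2}$. Equality of the data gives this restricted scattering relation for $g$ equal to that for $g'$. The relevant parameter range of $y$ is the one whose geodesics reach a neighborhood of $\ol{Q}_{T+2}$, and this is covered by $|s|\le T+1$ since the metrics are Minkowski outside $\ol{Q}_T$.

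\textbf{Step 3: from equal scattering relations to simplicity.} This is where I invoke the result of Appendix~\ref{sec_diffeo_appendix}, which relates directional simplicity to the restricted scattering relation. The plan is as follows. Consider the map $\Phi^g(y,r) = \gamma_y(r)$ on the relevant parameter domain $D$. For $g'$ it is a diffeomorphism onto a neighborhood of $\ol{Q}_{T+2}$, and outside a compact set $\Phi^g$ and $\Phi^{g'}$ agree, because the exit data coincide and the metrics are Minkowski there. A degree argument (Appendix~\ref{degree_theory}) then shows that $\Phi^g$ has degree one and is proper, so $\Phi^g$ is surjective onto the neighborhood. It remains to show that $\Phi^g$ is a local diffeomorphism and injective.

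\textbf{Main obstacle: excluding conjugate points and crossings.} At a conjugate point of the family, that is, a caustic of $\Lambda^g_s$, the singularities would change type: a fold produces a phase shift and non-conormal behavior afterwards. Likewise, two distinct geodesics of the same front meeting produce extra singular directions. My first attempt is to propagate such defects forward to $\Sigma_{T+2}$, where they would become visible as singularities that are not of the conormal Heaviside type, or as several conormal directions over a single point. This would contradict equality with the $g'$ data, which consist of clean jumps along a single smooth hypersurface. If this direct microlocal argument is delicate, I would instead rely on the counting/degree argument of Appendix~\ref{sec_diffeo_appendix}: a proper map of degree one whose fibers are single points, since the wave front sets over boundary points are single rays, is a diffeomorphism once a local diffeomorphism is established away from a measure-zero set. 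With $\Phi^g$ a diffeomorphism, $g$ is simple in direction $\omega$.
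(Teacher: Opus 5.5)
Your architecture (compare wave front sets of the plane waves, extract a level-wise restricted scattering relation, conclude simplicity) matches the paper's. The step that carries the weight is not proved, however. You need to show that $D\Phi^g$ is invertible along every $\gamma^g_z$ before it exits, i.e.\ that $\Sigma_{-,\sigma}$ has no focal points, and that $\Phi^g$ is then globally injective. Neither of your routes does this.

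The data only tell you that outside $\ol{Q}_T$ the wave $U^g_{\omega,s}$ is a clean positive jump across the single smooth hypersurface $\{\varphi'_\omega = s\}$. Nothing in your Step 3 prevents the front from passing through focal points or self-crossings inside $Q_T$ and still emerging as one smooth sheet, as a converging spherical wave does after its focus. The microlocal imprint of a focal point on the emerging wave can then be just a Maslov factor on the principal symbol. That factor is trivial when the accumulated focal multiplicity is divisible by $4$. So non-Heaviside singularities or extra conormal directions at $\Sigma_{T+2}$ are not guaranteed.

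Your fallback fails as well, for three reasons:
\begin{enumerate}
\item[(i)] A proper degree-one map that is a local diffeomorphism off a null set need not be a diffeomorphism; consider $x \mapsto x^3$.
\item[(ii)] Single rays of $\WF$ over boundary points say nothing about the fibres of $\Phi^g$ in the interior, and single-point fibres are exactly the injectivity you are trying to prove.
\item[(iii)] $\Phi^g$ and $\Phi^{g'}$ need not agree outside a compact set. The data determine the exit data of $\Sigma_{-,\sigma}$ only as a set for each $\sigma$. They determine neither the map $z \mapsto \gamma^g_z(r^g_+(z))$ nor the affine parameter.
\end{enumerate}

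The missing idea is causal, and it is exactly where the level structure in $\sigma$ is used.
\begin{enumerate}
\item[(1)] The equality \eqref{eq_beta} says that the $g$-geodesics from $\Sigma_{-,\sigma}$ exit precisely through $\Sigma_{+,\sigma}$, normally to it.
\item[(2)] Since $\p J^-_g(\Sigma_{+,\sigma})$ is ruled by normal null geodesics from $\Sigma_{+,\sigma}$ (Lemma~\ref{lem_bd_null}), one obtains $J^-_g(\Sigma_{+,\sigma}) \cap \Sigma_- = \{t \le \sigma\} \cap \Sigma_-$ (Proposition~\ref{prop_jminus}).
\item[(3)] Suppose $\Phi^g$ were singular at $(z_*, r_*)$ with $0 < r_* < r^g_+(z_*)$. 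Then $\gamma^g_{z_*}(r_*)$ is a focal point of $\Sigma_{-,t_*}$. Hence some $z_- \in \Sigma_{-,t_*}$ can be joined to $\Sigma_{+,t_*}$ by a timelike curve, which puts $z_-$ in the interior of $J^-_g(\Sigma_{+,t_*}) \cap \Sigma_-$. This is a contradiction (Lemma~\ref{lem_local_inv1}).
\item[(4)] Global injectivity follows from properness of $\Phi^g$ onto $\Phi^{g'}(\mho_0)$ and Hadamard's theorem. This uses Lemma~\ref{lem_nontrapping}: there is no trapping, and the exit map is a diffeomorphism onto $\Sigma_+^{g'}$.
\end{enumerate}

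If instead you simply cite Theorem~\ref{th_diffeo}, your Step 2 must deliver exactly \eqref{eq_beta}. Reading exit data straight from the wave front set of the trace on $\Sigma_{T+2}$ is delicate. Incoming and outgoing null covectors share tangential projections, and their contributions to the trace can interfere. The paper avoids this by first proving $U^g_{\omega,s} = U^{g'}_{\omega,s}$ on $\mR^{1+n} \setminus \ol{Q}_T$ (Lemma~\ref{lem_from_F_to_U}, via exterior Dirichlet uniqueness and unique continuation). Only then does it compare wave front sets, on an open set.
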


Now we know that both $g$ and $g'$ satisfy the simplicity condition \eqref{assumption3}. As in \cite{oksanen2024}, this ensures that for any $\omega \in \Omega$ there is a smooth solution $\varphi_{\omega}$ of the eikonal equation, and that the distorted plane waves $U_{\omega,s} = U^g_{\omega,s}$ are conormal distributions that have an explicit representation in terms of the solutions $\varphi_{\omega}$. The amplitudes of the distorted plane waves satisfy transport equations involving the vector field $Z_{g,\varphi} := -d\varphi^{\sharp}$ that is tangent to level sets $\{ \varphi = s \}$, i.e.\ 
\[
Z_{g,\varphi} v = -g(d\varphi, dv).
\]
From now on, we will refer to distorted plane waves simply as plane waves.

\begin{Proposition} \label{prop_plane_wave_structure}
Assume that $g$ satisfies \eqref{assumption1}--\eqref{assumption2} and is simple in direction $\omega$.

\begin{enumerate}
\item[(a)] 
There is a unique smooth solution $\varphi = \varphi_{\omega}$ near $\mR \times \ol{B}$ of the eikonal equation 
\[
\text{$g(d\varphi, d\varphi) = 0$}, \quad  \varphi|_{\{x \cdot \omega \leq -1\}} = t - x \cdot \omega.
\]
Moreover, $\varphi = t - x \cdot \omega$ in $\{ |t - x \cdot \omega| \geq T+1 \}$ and 
\[
\varphi(\ol{Q}_T) = \varphi(\{ |t - x \cdot \omega| \leq T+1 \} \cap (\mR \times \ol{B})) = [-T-1, T+1].
\]

\item[(b)] 
The plane wave $U = U_{\omega,s}$ has the form 
\[
U = u H(\varphi - s) \text{ near $\mR \times \ol{B}$}
\]
where $u = u_{\omega,s}$ is smooth near $\mR \times \ol{B}$ and depends smoothly on $s$.

\item[(c)]
The function $u$ in {\rm (b)} can be chosen to satisfy near $\mR \times \ol{B}$ 
\begin{align*}
\Box_g u &= (\varphi-s) a, \\
2 Z_{g,\varphi} u + (\Box_g \varphi) u &= (\varphi-s) b,
\end{align*}
where $a = a_{\omega,s}$ and $b = b_{\omega,s}$ are smooth near $\mR \times \ol{B}$ and depend smoothly on $s$, and $a$ is supported in $\{ \varphi \leq s \}$. In particular, $u$ satisfies 
\begin{align*}
\Box_g u &= 0 \text{ in $\{ \varphi \geq s \}$}, \\
2 Z_{g,\varphi} u + (\Box_g \varphi) u &= 0 \text{ on $\{ \varphi = s \}$,} \\
u &> 0 \text{ on $\{ \varphi = s \}$.}
\end{align*}
\end{enumerate}
\end{Proposition}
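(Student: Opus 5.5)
We plan to build the eikonal solution by the method of characteristics, using directional simplicity. Then we construct $U$ as a conormal (progressing-wave) expansion whose amplitude is determined by transport equations along the null generators. Finally, uniqueness of the forward solution (Lemma \ref{lemma_U_well_defined}) identifies this construction with $U_{\omega,s}$.

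\emph{Step (a).} By Definition~\ref{simple_in_omega}, the null geodesics $\gamma_y(r)$ with $\gamma_y(0)=y\in\{x\cdot\omega=-1\}$ and $\dot\gamma_y(0)=(1,\omega)$ give a diffeomorphism $(y,r)\mapsto\gamma_y(r)$ onto a neighborhood of $\ol{Q}_{T+2}$, which we extend to a neighborhood of $\mR\times\ol B$. Outside $M$ the metric is Minkowski, so the geodesics are straight lines. We define $\varphi(\gamma_y(r)) := t(y)-y\cdot\omega = t(y)+1$. Since $\dot\gamma$ is null and orthogonal to the transversal data surface, the standard characteristics argument gives $d\varphi=\dot\gamma^\flat$ up to sign, hence $g(d\varphi,d\varphi)=0$. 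It also gives $\varphi=t-x\cdot\omega$ on $\{x\cdot\omega\le -1\}$. Uniqueness of $\varphi$ follows from the same characteristic argument, since any smooth solution has null gradient tangent to the geodesics from the data.

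For the support statement, the geodesic through $y$ with $|t(y)+1|\ge T+1$ stays in $\{|t|\ge T\}$ while crossing $\mR\times\ol B$. Indeed, it moves at unit speed in $t$ outside $M$, and before entering $\ol Q_T$ it must remain outside $M$. So it never meets $\ol Q_T$, and there $\varphi=t-x\cdot\omega$. Conversely, points of $\ol Q_T$ lie on geodesics with $|t(y)+1|\le T+1$, and every such level is attained in $\ol Q_T$ by a continuity/connectedness argument. This yields the range identity. I expect the mild obstacle to be making this precise when a geodesic enters $M$ near its time boundary; global hyperbolicity and $g=\gm$ outside $M$ handle it.

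\emph{Steps (b)--(c).} We seek $U=uH(\varphi-s)$. A computation gives
\[
\Box_g(uH(\varphi-s)) = (\Box_g u)H(\varphi-s) + \bigl(2Z_{g,\varphi}u+(\Box_g\varphi)u\bigr)\delta(\varphi-s)\cdot(\pm1) + u\,g(d\varphi,d\varphi)\delta'(\varphi-s),
\]
and the last term vanishes by the eikonal equation. We first solve the transport ODE $2Z u_0+(\Box_g\varphi)u_0=0$ along the null generators. Here $Z$ is tangent to the level sets and is the geodesic field, so this is a linear ODE along each generator. We take initial data $u_0=1$ in $\{x\cdot\omega\le-1\}$, where $\varphi$ is linear and $\Box\varphi=0$, so $u_0\equiv1$ there. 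Positivity follows from the exponential solution formula.

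Next we extend $u$ off the surface. We solve the characteristic Goursat problem $\Box_g v=0$ in $\{\varphi\ge s\}$ with $v=u_0$ on $\{\varphi=s\}$; this is well posed by global hyperbolicity, and $v\equiv1$ in the past region where $g=\gm$. We then take $u$ to be a smooth extension of $v$ across $\{\varphi=s\}$. By Borel/Taylor, $\Box_g u$ and $2Zu+(\Box\varphi)u$ vanish on $\{\varphi\ge s\}$, respectively on $\{\varphi=s\}$. Hadamard's lemma then yields $a,b$ with $\Box_g u=(\varphi-s)a$, $a$ supported in $\{\varphi\le s\}$, and the transport identity with remainder $(\varphi-s)b$.

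Smooth dependence on $s$ follows from smooth dependence of the Goursat solution on the data surface. This is cleanest after changing coordinates by $\varphi$, so that the surfaces become $\{\varphi=s\}$ in fixed coordinates. Then $uH(\varphi-s)$ solves $\Box_gU=0$ and equals $H(t-x\cdot\omega-s)$ for $t\ll0$, so it coincides with $U_{\omega,s}$ by uniqueness in Lemma \ref{lemma_U_well_defined}. The main technical step is the smooth, $s$-uniform characteristic initial value problem.
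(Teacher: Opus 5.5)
Your part (a) is essentially the paper's argument. The paper builds $\varphi$ by the same characteristic construction along the generators (citing \cite{oksanen2024}). It obtains the range identity from three facts: $\varphi(\ol Q_T)$ is connected; $\varphi(T,-\omega)=T+1$ and $\varphi(-T,\omega)=-T-1$; and a generator starting at $(t_0,x_0)\in\Sigma_-$ with $t_0>T$ or $t_0<-T-2$ is a Minkowski line that misses $\ol Q_T$. One small correction: $\dot\gamma$ is not orthogonal to $\Sigma_-$. What gives $d\varphi=-g(\dot\gamma,\cdot\,)$ is $g(\p_{y_j}\Phi^g,\dot\gamma)=0$ and $g(\p_t\Phi^g,\dot\gamma)=-1$, as in Lemma \ref{lem_Jacobi}.

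The genuine gap is in (b)--(c). The problem ``$\Box_g v=0$ in $\{\varphi\ge s\}$, $v=u_0$ on $\{\varphi=s\}$'' is not well posed, and global hyperbolicity does not make it so. The surface $\{\varphi=s\}$ is null, not a Cauchy surface for $\{\varphi\ge s\}$: in the incoming region, past-directed null lines parallel to the generators stay in $\{\varphi>s\}$. So Dirichlet data on it cannot determine $v$.

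Already for $g=\gm$, where $u_0\equiv1$, both $v=1$ and $v=1+(t-x\cdot\omega-s)$ solve your problem, transport equation included. Only the first gives the plane wave. Hence ``$v\equiv1$ in the far past'' is not a consequence; it must be imposed. That turns the problem into a characteristic--spacelike corner problem, with Cauchy data $v=1$, $\p_t v=0$ on $\{t=t_0\}\cap\{\varphi\ge s\}$ and Dirichlet data on the null face.

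The whole content of (b) is then that $U|_{\{\varphi>s\}}$ extends smoothly up to $\{\varphi=s\}$, smoothly in $s$. That is exactly smooth solvability of this characteristic problem up to its null boundary, which you defer as ``the main technical step.'' It is a real theorem about the characteristic initial value problem, not something global hyperbolicity provides. Separately, $\varphi$, and hence $uH(\varphi-s)$, is only defined near $\mR\times\ol B$, since generators may cross behind the cylinder. So identifying it with $U_{\omega,s}$ needs more than a direct appeal to the global uniqueness in Lemma \ref{lemma_U_well_defined}.

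The paper avoids characteristic problems altogether. It writes $U=U^{(N)}+R^{(N)}$ with $U^{(N)}=\bigl[\sum_{j\le N}u_j(\varphi-s)^j\bigr]H(\varphi-s)$. Here $u_0$ is your transport solution, and for $j\ge1$ the $u_j$ solve $jLu_j+\Box_g u_{j-1}=0$ with $u_j=0$ on $\{x\cdot\omega\le-1\}$; all are independent of $s$. The remainder $R^{(N)}$ solves an \emph{ordinary} Cauchy problem with zero data in the far past and source $-(\Box_g u_N)(\varphi-s)^NH(\varphi-s)\in H^N_{\mathrm{loc}}$. Thus $R^{(N)}\in H^{N-1}_{\mathrm{loc}}$, with matching control of $\p_s^j R^{(N)}$, and $R^{(N)}$ is supported in $\{\varphi\ge s\}$.

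Hence $U=uH(\varphi-s)$ with $u=\sum_{j\le N}u_j(\varphi-s)^j+R^{(N)}$. This $u$ is $C^k$ jointly in $(s,z)$ for large $N$ and equals $u_0>0$ on $\{\varphi=s\}$, and Borel summation upgrades it to $C^\infty$. Your closing Hadamard-lemma step producing $a$ and $b$ is the same as the paper's.
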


We denote by $\varphi_{\omega}'$ the corresponding eikonal solution for $g'$, and $U_{\omega,s}' = u_{\omega,s}' H(\varphi_{\omega,s}')$ is the corresponding plane wave for $g'$. By comparing the wave front sets of $U_{\omega,s}$ and $U_{\omega,s}'$ outside $\ol{Q}_T$ we see that $\varphi_{\omega} = \varphi_{\omega}'$ outside $\ol{Q}_T$.

\begin{Proposition} \label{prop_eikonal_detection}
Let $g, g'$ satisfy \eqref{assumption1}--\eqref{assumption2}, and suppose that $g$ and $g'$ are simple in direction $\omega$. If 
\[
\mathcal{F}_{\omega,T}(g) = \mathcal{F}_{\omega,T}(g'),
\]
then $U_{\omega,s} = U_{\omega,s}'$ and $\varphi_{\omega}  = \varphi_{\omega}'$ in $W \setminus \ol{Q}_T$ for some neighborhood $W$ of $\mR \times \ol{B}$.
\end{Proposition}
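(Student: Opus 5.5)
The plan is to show that $U_{\omega,s}$ and $U'_{\omega,s}$ agree outside $\ol{Q}_T$ by a unique continuation argument in the exterior, where $g = g' = \gm$. We then read off $\varphi_\omega = \varphi'_\omega$ from the singular supports via Proposition~\ref{prop_plane_wave_structure}(b).

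\emph{Step 1: agreement away from $\ol{Q}_T$ in time.} Set $w = U_{\omega,s} - U'_{\omega,s}$. Outside $\ol{Q}_T$ both metrics equal $\gm$, so $\Box_{\gm} w = 0$ in $\mR^{1+n}\setminus \ol{Q}_T$, and $w = 0$ for $t \ll 0$. By finite speed of propagation in the exterior and global hyperbolicity, $w = 0$ in $\{t < -T\}$: there both waves solve the Minkowski wave equation with the same data from the far past, because the region $\{t<-T\}$ lies in the past of $\ol{Q}_T$.

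\emph{Step 2: the lateral exterior region.} For $|t| \le T+2$, $|x| \ge 1$, the function $w$ solves the Minkowski wave equation in the exterior cylinder. It vanishes on $\Sigma_{T+2}$ because $\mathcal{F}_{\omega,T}(g) = \mathcal{F}_{\omega,T}(g')$, at least for $|s| \le T+1$. For $|s| > T+1$, Proposition~\ref{prop_plane_wave_structure}(a) gives $\varphi = t - x\cdot\omega$ in $\{|t-x\cdot\omega|\ge T+1\}$. For $s > T+1$ the wavefront $\{\varphi=s\}$ near $\mR\times\ol B$ then never meets $\ol{Q}_T$ in the relevant time window, and one argues directly via domain of dependence; alternatively, it suffices to prove the claim for $|s|\le T+1$ if that is all that is needed later. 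We also need $\p_\nu w = 0$ on $\Sigma_{T+2}$. To get it, solve the exterior problem for $|x|>1$: the restriction of $w$ to $\{|x|\ge 1\}$ solves $\Box_{\gm}w=0$ with zero Dirichlet data on $\Sigma_{T+2}$ and zero data at $t=-T-2$ (Step 1), so $w=0$ in $[-T-2,T+2]\times\{|x|\ge1\}$ by uniqueness for the exterior mixed problem. Since $w$ is only a distribution, we perform this via the conormal structure: write $w$ locally as a difference of $uH(\varphi-s)$-type terms, or mollify in $s$. The Dirichlet trace makes sense because $\WF(U)$ is disjoint from $N^*\Sigma_{T+2}$. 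Beyond $t>T+2$ in the exterior region, $w$ has zero Cauchy data on a neighborhood of $\{t=T+2,|x|\ge1\}$ and the Minkowski equation outside the forward light cone of $\ol{Q}_T$ propagates zero. Since $W$ need only be a small neighborhood of $\mR\times\ol B$, we only need $w=0$ near $\mR\times\p B$ minus $\ol{Q}_T$. For $t>T$ near $|x|\le 1$ this requires more care: $w$ vanishes on a neighborhood of $\Sigma_{T+2}$ (inside too, by unique continuation from the lateral boundary with vanishing Cauchy data, using Holmgren/Tataru for the time-independent Minkowski operator in $\{t>T\}$). Hence $w$ has zero Cauchy data on $\Sigma$ for $t\in(T,T+2)$, and $w=0$ in $\{t>T\}\cap(\mR\times\ol B)$ follows from the time-independent analytic-coefficient unique continuation (Tataru) in the domain of influence. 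Similarly for $t<-T$ by Step 1.

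\emph{Step 3: eikonal solutions.} Given $U=U'$ in $W\setminus\ol{Q}_T$, compare singular supports using Proposition~\ref{prop_plane_wave_structure}(b),(c). Since $u>0$ on $\{\varphi=s\}$, the singular support of $U_{\omega,s}$ is exactly $\{\varphi=s\}$, so $\{\varphi=s\}=\{\varphi'=s\}$ outside $\ol{Q}_T$ for each $s$. Together with part (a), where $\varphi=\varphi'=t-x\cdot\omega$ when $|t-x\cdot\omega|\ge T+1$, this gives $\varphi=\varphi'$ there.

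The main obstacle is Step 2's unique continuation for $t>T$ inside the cylinder: making the Tataru argument rigorous for the distributional $w$, and checking that the domain of influence of $\Sigma_{T+2}$ under $\gm$ covers $(T,\infty)\times\ol B$ near the boundary. I would first try regularizing by integrating in $s$, which converts $H$ into smoother functions.
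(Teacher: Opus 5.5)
Your Step 1, the exterior mixed-problem part of Step 2, and Step 3 follow the paper (Lemma~\ref{lem_from_F_to_U}, then the wave-front comparison). Your treatment of the region $\{t>T\}$, however, has a genuine gap.

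First, you misread the target. $W\setminus\ol{Q}_T$ contains all of $\{|t|>T\}\times\ol B$, not just a neighbourhood of $\mR\times\p B$. The top region is needed later: the Cauchy data of $\bar w$ must vanish on $\{t=T\}\times B$, and $\Phi=\Phi'$ is needed slightly above $\ol{Q}_T$.

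Second, unique continuation from $(T,T+2)\times\p B$ does not give $w=0$ on $(T,\infty)\times\ol B$. For $\Box_{\gm}$ it only gives vanishing on $\{|x|>|t-T-1|\}$, obtained by sweeping with the timelike hyperboloids $|x|^2-(t-T-1)^2=c$. Your argument that zero data near $\{t=T+2,\ |x|\ge 1\}$ propagates ``outside the forward light cone of $\ol{Q}_T$'' also fails: points near $\p B$ with $t>T+2$ lie inside $J^+(\ol{Q}_T)$.

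The missing idea is that $g=g'=\gm$ on the whole half-space $\{t>T\}$. So it suffices to show that $w$ has vanishing Cauchy data on one slice $\{t=T+1\}$ and then use uniqueness for the Minkowski Cauchy problem. The unique-continuation region covers that slice except the single point $x=0$.

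At $x=0$, regularity is indispensable, not a technicality. Let $K(t,\cdot)$ be the kernel of $\sin(t|D|)/|D|$ (retarded minus advanced fundamental solution). Then $K(t-T-1,x)$ solves $\Box_{\gm}=0$ on all of $\mR^{1+n}$, vanishes on $\{|x|>|t-T-1|\}$ (hence on $(T,T+2)\times(\mR^n\setminus\ol B)$), and is nonzero. So unique continuation alone cannot close the gap for a distributional $w$. Note that $w$ is not a priori smooth: it may be singular on $\{\varphi=s\}\cup\{\varphi'=s\}$ inside the cylinder.

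The paper obtains smoothness of $V=U_{\omega,s}-U'_{\omega,s}$ on $\{T<t<T+2\}$ by propagation of singularities. A singular Minkowski null line in $\{t>T\}\times B$ exits $\ol B$ before time $T+2$ into the region where $V=0$. This uses $|s|<T+1$; the cases $s=\pm(T+1)$ and $|s|>T+1$ are handled separately, the latter via $U^{\gm}_{\omega,s}$ being constant near $\ol{Q}_T$.

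Your idea of integrating in $s$ is a viable and arguably cleaner alternative. Take $\chi\in C^\infty_c((-T-1,T+1))$. Then $\int\chi(s)U_{\omega,s}\,ds$ is the smooth solution with incoming data $X(t-x\cdot\omega)$, where $X'=\chi$, and its traces on $\Sigma_{T+2}$ agree for $g$ and $g'$. Run the exterior uniqueness, the unique continuation and the Cauchy-slice step on these smooth differences. Then recover $w_s=0$ outside $\ol{Q}_T$ from continuity of $s\mapsto U_{\omega,s}$; this also covers the endpoints $s=\pm(T+1)$.

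As written, though, your proposal leaves exactly this step open and points toward a domain-of-influence claim that is false.
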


Next we use the diffeomorphism assumption \eqref{assumption4} and fix the diffeomorphism gauge in Theorem \ref{thm_main2} via eikonal solutions. Let 
\[
\Phi(z) = (\varphi_{\omega_0}(z), \ldots, \varphi_{\omega_n}(z)) \text{ near $\ol{Q}_T$},
\]
and define $\Phi'$ similarly in terms of the solutions $\varphi_{\omega_j}'$. Then $\Phi$ is a smooth map, and by \eqref{assumption4} $\Phi'$ is a diffeomorphism from a neighborhood of $\ol{Q}_T$ onto a neighborhood of $[-T-1,T+1]^{1+n}$. Moreover, by Proposition \ref{prop_eikonal_detection} we have $\Phi = \Phi'$ slightly outside $\ol{Q}_T$. 

If also $\Phi$ were a diffeomorphism, we would like to prove that the metrics agree in the \emph{eikonal gauge}, i.e.\ that 
\begin{equation} \label{eikonal_gauge_phi_inverse}
(\Phi^{-1})^* g = ((\Phi')^{-1})^* g'
\end{equation}
if $g$ and $g'$ are considered as $(0,2)$-tensors. However, at this point $\Phi$ is only a smooth map, and therefore $(\Phi^{-1})^* g$ is not well defined. The fact that $\Phi$ is not yet known to be a diffeomorphism will require considerable care in how the proof is arranged. It turns out that it will be more natural to consider $g$ and $g'$ as $(2,0)$-tensors via raising indices.

From now on we will make the interpretation that $g$ and $g'$ are $(2,0)$-tensors, so in local coordinates $g = (g^{jk})$ and $g' = ((g')^{jk})$. Instead of proving \eqref{eikonal_gauge_phi_inverse}, our objective is to show the analogous statement 
\begin{equation*} 
(\Psi_* g)_{\Psi(z)} = g'_{\Psi(z)} \text{ for all $z \in \ol{Q}_T$,}
\end{equation*}
where $\Psi = (\Phi')^{-1} \circ \Phi$. 
Here the pushforward of a $(2,0)$-tensor $h$ by a smooth map $F$ is 
\[
(F_* h)_{F(z)}(\zeta, \tilde{\zeta}) = h_z(F^* \zeta, F^* \tilde{\zeta}), \qquad \zeta, \tilde{\zeta} \in T_{F(z)}^* \mR^{1+n}.
\]
In local coordinates 
\[
(F_* h)^{ab}(F(z)) = ((DF) h (DF)^t)^{ab}(z) = \p_p F^a (z) h^{pq}(z) \p_q F^b(z).
\]

We will show next that $\Psi = (\Phi')^{-1} \circ \Phi$ is indeed well defined.

\begin{Proposition} \label{prop_psi_well_defined}
If $g$ and $g'$ satisfy \eqref{assumption1}--\eqref{assumption2}, $g'$ satisfies \eqref{assumption3}--\eqref{assumption4}, and $\mathcal{F}_{\omega_j,T} = \mathcal{F}_{\omega_j,T}$ for $0 \leq j \leq n$, then the map 
\[
\Psi := (\Phi')^{-1} \circ \Phi
\]
is smooth near $\ol{Q}_T$, $\Psi = \id$ outside $\ol{Q}_T$, and $\Psi(\ol{Q}_T) = \ol{Q}_T$.
\end{Proposition}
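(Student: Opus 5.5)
Since $\Phi'$ is a diffeomorphism from a neighborhood $V'$ of $\ol{Q}_T$ onto a neighborhood $W'$ of $K := [-T-1,T+1]^{1+n}$ by \eqref{assumption4}, the map $\Psi = (\Phi')^{-1}\circ\Phi$ is well defined and smooth on any open set where $\Phi$ takes values in $W'$. The plan is: (i) show $\Phi = \Phi'$ on a punctured neighborhood $V\setminus \ol{Q}_T$, where $V \subset V'$ is a neighborhood of $\ol{Q}_T$; (ii) show $\Phi(\ol{Q}_T) \subset K$; (iii) conclude smoothness and $\Psi = \id$ outside $\ol{Q}_T$; (iv) prove $\Psi(\ol{Q}_T) = \ol{Q}_T$ by a degree argument.

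For (i), apply Proposition~\ref{prop_simplicity_detection} with each $\omega_j$ to see that $g$ is simple in direction $\omega_j$. Then Proposition~\ref{prop_eikonal_detection} gives $\varphi_{\omega_j} = \varphi'_{\omega_j}$ in $W\setminus\ol{Q}_T$, where $W$ is a neighborhood of $\mR\times\ol{B}$. For (ii), Proposition~\ref{prop_plane_wave_structure}(a) applied to $g$ gives $\varphi_{\omega_j}(\ol{Q}_T) = [-T-1,T+1]$ for each $j$, hence $\Phi(\ol{Q}_T)\subset K\subset W'$. By continuity, $\Phi$ maps a neighborhood $V$ of $\ol{Q}_T$ into $W'$, so $\Psi$ is smooth on $V$. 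On $V\setminus\ol{Q}_T$ we have $\Phi=\Phi'$, hence $\Psi = \id$ there. Outside $V$, where $g = g' = \gm$ away from $\ol{Q}_T$, we simply define $\Psi=\id$. This is consistent and yields a smooth map on $\mR^{1+n}$ equal to the identity outside $\ol{Q}_T$.

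For (iv), first note that $\Psi(\ol{Q}_T) \subset (\Phi')^{-1}(K)$. I expect $(\Phi')^{-1}(K)=\ol{Q}_T$ within $V'$: the inclusion $\ol{Q}_T\subset(\Phi')^{-1}(K)$ holds by Proposition~\ref{prop_plane_wave_structure}(a) for $g'$. For the reverse inclusion, note that $\Phi'$ maps $\partial Q_T$ onto $\partial K$, because $\Phi'$ is a homeomorphism and $\Phi'(\ol{Q}_T)$ is a compact subset of $K$. Invariance of domain then forces $\Phi'(\ol{Q}_T) = K$, since the image of the interior is open and closed in the interior of $K$. Hence $\Psi(\ol{Q}_T) \subset \ol{Q}_T$.

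The main obstacle is surjectivity, $\ol{Q}_T \subset \Psi(\ol{Q}_T)$, since $\Psi$ is not yet known to be a diffeomorphism. Here I would use degree theory from Appendix~\ref{degree_theory}. The map $\Psi:\ol{Q}_T\to\ol{Q}_T$ is continuous and equals the identity on $\partial Q_T$, since it is the identity on the exterior and is continuous. For any $y\in Q_T$ we have $y\notin\Psi(\partial Q_T)$, so $\deg(\Psi,Q_T,y)$ is defined. By the boundary dependence of degree (homotopy invariance relative to boundary values), it equals $\deg(\id,Q_T,y)=1$. Therefore $y\in\Psi(Q_T)$. Combined with $\Psi|_{\partial Q_T}=\id$, this gives $\Psi(\ol{Q}_T)=\ol{Q}_T$.

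Alternatively, one could run the argument at the level of $\Phi$ itself: $\Phi$ agrees with $\Phi'$ on $\partial Q_T$, so $\deg(\Phi, Q_T, \cdot)=\deg(\Phi', Q_T,\cdot)=\pm1$ on the interior of $K$. This shows $\Phi(\ol{Q}_T)=K$, and then we transport by $(\Phi')^{-1}$.
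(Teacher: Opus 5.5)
Your steps (i)--(iii), and the degree computation giving $Q_T\subset\Psi(Q_T)$ and hence $\ol{Q}_T\subset\Psi(\ol{Q}_T)$, are correct. They coincide with the paper's proof, which invokes Lemma~\ref{lemma_idext_surjective}; that lemma runs the same homotopy $(1-t)\id+t\Psi$ on a large ball.

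The gap is your argument for $\Psi(\ol{Q}_T)\subset\ol{Q}_T$. The claims that $\Phi'$ maps $\p Q_T$ onto $\p K$, that $\Phi'(\ol{Q}_T)=K$, and that $(\Phi')^{-1}(K)=\ol{Q}_T$ are all false, already for $g'=\gm$. Take $n=2$, $\omega_0=-e_1$, $\omega_1=e_1$, $\omega_2=e_2$. Then $\Phi'(t,x)=(t+x_1,\,t-x_1,\,t-x_2)$ is a linear diffeomorphism satisfying \eqref{assumption4}. It sends the lateral boundary point $(0,e_1)\in\p Q_T$ to $(1,-1,0)$, which is an interior point of $K=[-T-1,T+1]^3$. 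It sends $(T+\frac12,0)\notin\ol{Q}_T$ to $(T+\frac12,T+\frac12,T+\frac12)\in K$. A homeomorphism only gives $\Phi'(\p Q_T)=\p\bigl(\Phi'(\ol{Q}_T)\bigr)$, which is not $\p K$, so invariance of domain yields nothing here.

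Your alternative paragraph fails for the same reason. $\deg(\Phi',Q_T,y)=\pm1$ only for $y\in\Phi'(Q_T)$, and it vanishes on $K\setminus\Phi'(\ol{Q}_T)$. Comparing degrees therefore gives only $\Phi'(\ol{Q}_T)\subset\Phi(\ol{Q}_T)$, not $\Phi(\ol{Q}_T)=K$.

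What your argument actually establishes is $\ol{Q}_T\subset\Psi(\ol{Q}_T)\subset(\Phi')^{-1}(K)$, and the last set is in general strictly larger than $\ol{Q}_T$. The missing inclusion is not a topological consequence of $\Psi|_{\p Q_T}=\id$. On $\mR$, take $x\mapsto x+\chi(x)$ with $\chi\in C^\infty_c((0,1))$ and $\chi(\frac12)=2$: it is the identity outside $(0,1)$ yet sends $\frac12$ to $\frac52$. At this stage nothing you use rules out $\Phi$ folding points of $Q_T$ onto values $\Phi'(y)$ with $y\notin\ol{Q}_T$.

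The paper's appeal to Lemma~\ref{lemma_idext_surjective} likewise delivers only the surjectivity half. The inclusion $\Psi(\ol{Q}_T)\subset\ol{Q}_T$ becomes immediate once $\Psi$ is known to be a bijection of $\mR^{1+n}$ equal to the identity off $\ol{Q}_T$, and the paper proves that only after Proposition~\ref{prop_gbar_zero}. So either restrict your conclusion to $\ol{Q}_T\subset\Psi(\ol{Q}_T)\subset(\Phi')^{-1}(K)$, or supply genuinely new geometric information about $\Phi$; topology of maps fixed on $\p Q_T$ cannot provide it.
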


We extend $\Psi$ as identity to $\mR^{1+n}$, so that $\Psi$ is a smooth map on $\mR^{1+n}$ with $\Psi = \id$ outside $\ol{Q}_T$. The functions $\Psi^* \varphi_{\omega}'$ satisfy 
\[
\Psi^* \varphi_{\omega}'|_{\{ x \cdot \omega \leq -1 \}} = t - x \cdot \omega.
\]
The construction of $\Psi$ gives the important gauge fixing property that for $0 \leq j \leq n$, 
    \begin{align}\label{phi_omega_j}
\varphi_{\omega_j} = \Psi^* \varphi_{\omega_j}' \text{ near $\ol{Q}_T$.}
    \end{align}
In particular, the interfaces $\{ \varphi_{\omega_j} = s \}$ and $\{ \Psi^* \varphi_{\omega_j}' = s \}$ match.

We will now state a weighted $L^2$-estimate that will be used several times in the argument. This estimate allows us to control $Z_{g,\varphi_{\omega}} w$ on the interface 
\[
\Gamma_{\omega} := Q_T \cap \{ \varphi_{\omega} = s \}
\]
in terms of $\Box_g w$ in $Q_T \cap \{ \varphi_{\omega} > s \}$.

\begin{Proposition} \label{prop_carleman_psi}
Let $g$ be a smooth Lorentzian metric in $\mR^{1+n}$ satisfying \eqref{assumption1}--\eqref{assumption3}, and 
fix the time-orientation so that $dt$ is future-directed outside $M$.
Let $\psi \in C^{\infty}(\ol{Q}_T)$ satisfy $g(d\psi, d\psi) < 0$ with $d\psi$ future-directed. There exist $C, \sigma_0 > 0$ such that for any $\omega \in \Omega$ and $|s| \leq T+1$, the estimate 
\begin{multline*}
\sigma^3 \norm{e^{\sigma \psi} w}_{L^2(\Gamma_{\omega})}^2 + \sigma \norm{e^{\sigma \psi} Z_{g,\varphi_{\omega}} w}_{L^2(\Gamma_{\omega})}^2  + \sigma^4 \norm{e^{\sigma \psi} w}_{L^2(Q_T \cap \{ \varphi_{\omega} > s \})}^2 \\
  + \sigma^2 \norm{e^{\sigma \psi} d w}_{L^2(Q_T \cap \{ \varphi_{\omega} > s \})}^2 \leq C \norm{e^{\sigma \psi} \Box_g w}_{L^2(Q_T \cap \{ \varphi_{\omega} > s \})}^2
\end{multline*}
holds whenever $\sigma \geq \sigma_0$ and $w \in C^{\infty}(\ol{Q}_T \cap \{ \varphi_{\omega} \geq s \})$ satisfies $w = \partial_{\nu} w = 0$ on $\p Q_T \cap \{ \varphi_{\omega} \geq s \}$.
\end{Proposition}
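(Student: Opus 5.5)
We propose to prove the estimate by the standard conjugation (energy/multiplier) method. Set $v = e^{\sigma\psi} w$ and write the conjugated operator
\[
P_\sigma v := e^{\sigma\psi}\Box_g(e^{-\sigma\psi} v) = \Box_g v - 2\sigma\, g(d\psi, dv) + \bigl(\sigma^2 g(d\psi,d\psi) - \sigma \Box_g\psi\bigr) v .
\]
Since $g(d\psi,d\psi)<0$ on $\ol{Q}_T$, the zeroth order term $\sigma^2 g(d\psi,d\psi) v$ has a fixed sign and size $\sim\sigma^2|v|$. This is the source of the \emph{semiclassically elliptic} character mentioned in the introduction: no subellipticity/pseudoconvexity is needed. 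We will use two multipliers, integrated over the region $D := Q_T\cap\{\varphi_\omega>s\}$ with the Lorentzian integration by parts formulas of Appendix \ref{lorentzian_integration}.

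First, we multiply $P_\sigma v$ by $X v$, where $X = -(d\psi)^\sharp$ is a timelike vector field (future or past directed as the sign conventions require). The standard energy identity gives $\int_D P_\sigma v\, Xv$ as a sum of three contributions. The first is a bulk term with the deformation tensor of $X$, which is $O(\|dv\|^2)$. The second is the term $-2\sigma\int_D |Xv|^2$, coming from the first-order part. The third comes from $\sigma^2 g(d\psi,d\psi) v\, Xv$: after integrating by parts it gives $\tfrac{\sigma^2}{2}\int_D \operatorname{div}(g(d\psi,d\psi)X)\,v^2$, which is $O(\sigma^2\|v\|^2)$, plus boundary terms.

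Second, we multiply by $v$ itself. This gives
\[
\int_D P_\sigma v\, v = -\int_D g(dv,dv) + \sigma^2\int_D g(d\psi,d\psi)v^2 + O\bigl(\sigma\|v\|^2\bigr) + \text{boundary}.
\]
Because $-g(dv,dv)$ is indefinite, we will combine it with the first multiplier. For a timelike $X$, the quantity $|Xv|^2 - c\, g(dv,dv)$ controls $|dv|^2$ for a suitable small $c>0$ (with a positive definite energy--momentum tensor $T(X,Y)$ for timelike $X,Y$). Taking a combination $Xv + \lambda\sigma v$ as multiplier, with $\lambda$ fixed small, the bulk terms then yield
\[
\sigma^2\|dv\|^2 + \sigma^4\|v\|^2 \lesssim \|P_\sigma v\|^2 + \text{boundary terms},
\]
after Cauchy--Schwarz $\int P_\sigma v\,(Xv+\lambda\sigma v)\le \epsilon\sigma^{-1}\|\cdot\|^2+\dots$ and absorption for $\sigma\ge\sigma_0$. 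To get exactly the powers in the statement, we normalize by multiplying the whole identity by $\sigma$. Converting back from $v$ to $w$ gives the interior terms, since $e^{\sigma\psi}dw = dv - \sigma v\,d\psi$.

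Next, the boundary terms. On $\p Q_T\cap\{\varphi_\omega\ge s\}$ we have $w=\p_\nu w=0$, so $v$ and $dv$ vanish and the boundary terms there are zero. On $\Gamma_\omega$, the normal is $d\varphi_\omega$, which is null, and $Z=-d\varphi_\omega^\sharp$ is tangent to $\Gamma_\omega$. The energy flux $T(X,Z)$ through a null hypersurface involves only tangential derivatives of $v$: it is positive definite in $(Zv$, and the derivatives along the spacelike directions in $\Gamma_\omega)$ when $X$ is timelike with the correct time orientation. Here we use the assumption that $d\psi$ is future-directed, together with the orientation of $\{\varphi_\omega>s\}$ lying in the future of $\Gamma_\omega$. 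With that orientation, these flux terms appear on the good side, giving $\sigma\|Zv\|^2_{L^2(\Gamma_\omega)}$ after the multiplication by $\sigma$. The $v^2$ flux from the $\sigma^2 g(d\psi,d\psi)X$ term is $\sigma^2\, g(d\psi,d\psi)\, g(X,d\varphi_\omega)\, v^2$, which also has a definite good sign and yields $\sigma^3\|v\|^2_{L^2(\Gamma_\omega)}$. Cross boundary terms from the $\lambda\sigma v$ multiplier, of the form $\sigma v Zv$ or $\sigma\,\mathrm{div}$ terms, are then absorbed using smallness of $\lambda$. Finally, we convert $Zv$ to $Z w$ using $Zv = e^{\sigma\psi}(Zw + \sigma (Z\psi) w)$; the extra term is absorbed by the $\sigma^3\|e^{\sigma\psi}w\|^2$ term on $\Gamma_\omega$.

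Uniformity in $\omega\in\Omega$ (a finite set) and $|s|\le T+1$ follows since all constants depend only on $C^2$ bounds of $g,\psi$ and of $\varphi_\omega$ on $\ol{Q}_T$. The latter is smooth in $s$ by Proposition \ref{prop_plane_wave_structure}, so compactness of the parameter set gives a uniform $C$ and $\sigma_0$.

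The main obstacle is the sign bookkeeping on the null interface $\Gamma_\omega$: verifying that both the $|Zv|^2$ and the $\sigma^2 v^2$ flux terms come with the favorable sign for the chosen time orientation. This must hold simultaneously with positivity of the bulk combination. We would first check this in Minkowski with $\psi=t$, where everything is explicit, and then argue by the pointwise causal-cone inequalities $g(X,Y)<0$ for future timelike/null $X,Y$, which hold for general $g$.
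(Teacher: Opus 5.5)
Your multiplier argument works, but it takes a different route from the paper's. The paper proves the more general semiclassical Proposition~\ref{prop_semiclassical_estimate}. It writes $P=e^{\psi/h}h^2\Box_g e^{-\psi/h}=A+iB$ with $A,B$ formally self-adjoint and expands $\norm{Pu}^2=\norm{Au}^2+\norm{Bu}^2+(i[A,B]u,u)+{}$boundary terms. Interior coercivity is the integrated form of $|p|^2=m_T^4+2m_T^2g_T(\xi,\xi)+g(\xi,\xi)^2$, and the commutator is $O(h)$ up to a Hessian boundary term. On $\Gamma$ the paper must identify a second-order tangential elliptic operator $E$ with symbol $\tfrac{1}{\sqrt2}m_Tg_T(\xi,\xi)+\sqrt2\, m_Tg_T(\hat T,\xi)^2$, and check that the $\theta^n$ terms and the $D^2\psi(\nu^\sharp,\nu^\sharp)\p_\nu u$ terms cancel.

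You instead pair $P_\sigma v$ with a first-order multiplier $M$, so the relevant symbol is $\mathrm{Re}(p\bar m)$ rather than $|p|^2$. Here it equals $2g(T,\xi)^2+\lambda g(\xi,\xi)+\lambda m_T^2$ with $T=d\psi$, which is positive definite for $0<\lambda<2\min m_T^2$. The good sign on $\Gamma_\omega$ comes from positivity of the energy--momentum flux $Q(X,\nu)$ on tangential derivatives, for $X$ future timelike and $\nu$ null, together with the flux of $\sigma^2 m_T^2 v^2$. Then $\mathrm{Re}\int P_\sigma v\,\overline{Mv}\le\norm{P_\sigma v}(\norm{dv}+\sigma\norm{v})$ gives exactly the powers in the statement.

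Your route is shorter and needs only one integration by parts per term. It also makes transparent why $d\psi$ must be future-directed with $\{\varphi_\omega>s\}$ to the future of $\Gamma_\omega$. The paper's route gives explicit constants ($1-Ch$, $\sqrt2$, $3+Ch$) and also treats a spacelike top $\Gamma_R$ with nonzero data. That is not needed for Proposition~\ref{prop_carleman_psi}, since the Cauchy data vanish on all of $\p Q_T\cap\{\varphi_\omega\ge s\}$.

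Fix two sign slips when writing this out. With $X=-(d\psi)^\sharp$, which is future-directed, the first-order part of $P_\sigma$ contributes $+2\sigma\norm{Xv}^2$, not $-2\sigma\norm{Xv}^2$. In signature $(-,+,\dots,+)$ the coercive combination is $2|Xv|^2+\lambda g(dv,dv)$, not $|Xv|^2-c\,g(dv,dv)$. So take the multiplier $Xv-\lambda\sigma v$ with small $\lambda>0$. Then the bulk is $\sigma(2|Xv|^2+\lambda g(dv,dv))+\lambda\sigma^3m_T^2v^2$ up to lower order, and the $\lambda\sigma^2v^2$ flux on $\Gamma_\omega$ also has the good sign. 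Finally, $\varphi_\omega$ does not depend on $s$. Uniformity in $s$ holds simply because all constants are pointwise bounds on $\ol{Q}_T$.
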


A weaker estimate, with $\sigma^4$ and $\sigma^2$ replaced by $\sigma^3$ and $\sigma$, would follow from the Carleman estimate in \cite{rakesh2020a}. The stronger Proposition \ref{prop_carleman_psi} is a semiclassically elliptic $L^2$ estimate with boundary terms (instead of subelliptic, as in standard Carleman estimates) for the conjugated operator $e^{\sigma \psi} \circ \Box_g \circ e^{-\sigma \psi}$. The temporal weight $\psi$ ensures that the boundary terms on the interface $\Gamma_{\omega}$ come with the right sign. The estimate will be discussed in Section \ref{sec_carleman}, with further details given in Appendix \ref{sec_carleman_appendix}. A similar estimate in the Minkowski case with $\psi(t,x) = t$ was employed in \cite{krishnan2023}.

Our next step is to use a first batch of weighted estimates for the differences $u_{\omega_j,s} - \Psi^* u_{\omega_j,s}'$ in the region $Q_T \cap \{ \varphi_{\omega_j} > s \}$ where $0 \leq j \leq n$. From now on we write 
\[
\varphi_{\omega}'' := \Psi^* \varphi_{\omega}', \qquad u_{\omega,s}'' := \Psi^* u_{\omega,s}'.
\]
With this notation, for the special vectors $\omega_j$ with $0 \leq j \leq n$ we have 
\[
\text{$\varphi_{\omega_j} = \varphi_{\omega_j}''$ near $\ol{Q}_T$,}
\]
and for any $\omega \in \Omega$ we have 
\begin{gather*}
\text{$\varphi_{\omega} = \varphi_{\omega}''$ slightly outside $Q_T$}, \\
\Box_g u_{\omega,s} = 0 \text{ in $Q_T \cap \{ \varphi_{\omega} \geq s \}$}, \qquad \Psi^*(\Box_{g'} u_{\omega,s}') = 0 \text{ in $Q_T \cap \{ \varphi_{\omega}'' \geq s \}$}, \\
Z_{g,\varphi_{\omega}} u_{\omega,s} + \frac{1}{2} (\Box_g \varphi_{\omega}) u_{\omega,s}  = 0 \text{ on $Q_T \cap \{ \varphi_{\omega} = s \}$}, \\
\Psi^* \left[ Z_{g',\varphi_{\omega}'} u_{\omega,s}'+ \frac{1}{2} (\Box_{g'} \varphi_{\omega}') u_{\omega,s}' \right]  = 0 \text{ on $Q_T \cap \{ \varphi_{\omega}'' = s \}$}.
\end{gather*}

\begin{Remark}
It would be tempting to shorten notations further and write $g'' = \Psi^* g'$ and $\Box_{g''} u_{\omega,s}'' = 0$ etc. However, the expression $\Psi^* g'$ is only well defined when $g'$ is a $(0,2)$-tensor, and as discussed above, we need to think of $g$ and $g'$ as $(2,0)$-tensors (acting on cotangent vectors) in the proof. 
\end{Remark}

We will write 
\[
\bar{w}_j := u_{\omega_j,s} - u_{\omega_j,s}'', \qquad 0 \leq j \leq n.
\]
Since the boundary measurements for $g$ and $g'$ agree, Proposition \ref{prop_eikonal_detection} ensures that $\bar{w}_j = 0$ outside $Q_T$ and therefore the weighted estimate in Proposition~\ref{prop_carleman_psi} can be applied to $\bar{w}_j$. This gives 
\begin{multline} \label{barwj_carleman_estimate}
\sigma \norm{e^{\sigma \psi} Z_{g,\varphi_{\omega_j}} \bar{w}_j}_{L^2(\Gamma_{\omega_j})}^2 + \sigma^3 \norm{e^{\sigma \psi} \bar{w}_j}_{L^2(\Gamma_{\omega_j})}^2  \\
 \lesssim \norm{e^{\sigma \psi} \Box_g \bar{w}_j}_{L^2(Q_T \cap \{ \varphi_{\omega_j} > s \})}^2.
\end{multline}

The term on the right of \eqref{barwj_carleman_estimate} satisfies in $Q_T \cap \{ \varphi_{\omega_j} > s \}$ 
\[
\Box_g \bar{w}_j = -\Box_g(\Psi^* u_{\omega_j,s}').
\]
We wish to add a term involving $\Box_{g'} u_{\omega_j,s}' = 0$ to obtain a quantity involving $g - g'$. However, it turns out that at first we can only determine the metric up to an unknown conformal factor $\kappa$ (this is because the spanning condition \eqref{assumption5} naturally involves trace free tensors). Such a conformal factor needs to be built into the argument.

We take $\kappa$ to be a smooth function (not necessarily positive) that will be fixed later, and write the term on the right of \eqref{barwj_carleman_estimate} as 
\begin{equation} \label{boxg_barwj_estimate_first}
\Box_g \bar{w}_j = - \left[ \Box_g(\Psi^* u_{\omega_j,s}') - \kappa \Psi^*(\Box_{g'} u_{\omega_j,s}') \right].
\end{equation}
We next give a simple result on differences of this type. Below $f(\Psi)$ is a shorthand for the function $z \mapsto f(\Psi(z))$.

\begin{Lemma} \label{lemma_psi_differences}
Let $z = (z^0, \ldots, z^n)$ be global coordinates in $\mR^{1+n}$, and write $g = (g^{jk})$ and $g' = ((g')^{jk})$ in these coordinates. If $v$ is a smooth function, then 
\[
-\Box_g v = g^{jk} \p_{jk} v + \theta^k_g \p_k v
\]
where $\theta^k_g = -\Box_g(z^k)$. If $\Psi$ is a smooth map from $\mR^{1+n}$ to itself and $\varphi$, $\kappa$ are smooth functions on a subset of $\mR^{1+n}$, then 
\begin{align*}
-\Box_g (\Psi^* v)  + \kappa  \Psi^* (  \Box_{g'} v) &= \bar{g}^{ab} \p_{ab} v(\Psi) + \bar{\theta}^k \p_k v(\Psi), \\
Z_{g,\Psi^* \varphi}(\Psi^* v) - \kappa \Psi^*( Z_{g',\varphi} v) &= -\bar{g}^{ab} \p_a \varphi(\Psi) \p_b v(\Psi)
\end{align*}
where 
\begin{align*}
\bar{g}_{z} &= (\Psi_* g)_{\Psi(z)} - \kappa(z) (g')_{\Psi(z)}, \\
\bar{\theta}^k &= -\Box_g (\Psi^* z^k)  + \kappa \Psi^* (\Box_{g'} z^k).
\end{align*}
More explicitly, 
\begin{align*}
\bar{g}^{ab} &= ((D\Psi) g (D\Psi)^t)^{ab} - \kappa (g')^{ab}(\Psi), \\
\bar{\theta}^k &= g^{lm} \p_{lm} \Psi^k + \theta^l_g \p_l \Psi^k -  \kappa \theta_{g'}^k(\Psi).
\end{align*}
\end{Lemma}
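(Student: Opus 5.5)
The lemma is a direct chain-rule computation, and I would carry it out in the order it is stated. Everything rests on the coordinate formula for the wave operator, so that comes first.

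\textbf{Step 1: the formula for $-\Box_g v$.} With the sign convention implicit in the statement, $-\Box_g v = |g|^{-1/2}\p_j(|g|^{1/2} g^{jk}\p_k v)$. Expanding the derivative gives
\[
-\Box_g v = g^{jk}\p_{jk} v + \Big(|g|^{-1/2}\p_j(|g|^{1/2} g^{jk})\Big)\p_k v .
\]
Taking $v = z^k$ kills the second-order term and identifies the first-order coefficient as $-\Box_g(z^k) = \theta^k_g$. This proves the first identity.

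\textbf{Step 2: the operator identity.} I would apply the Step 1 formula to $\Psi^* v = v\circ\Psi$. The chain rule gives
\[
\p_l (v\circ\Psi) = \p_l\Psi^a\,(\p_a v)(\Psi),
\]
\[
\p_{lm}(v\circ\Psi) = \p_l\Psi^a\,\p_m\Psi^b\,(\p_{ab}v)(\Psi) + \p_{lm}\Psi^a\,(\p_a v)(\Psi).
\]
Substituting, the second-order part of $-\Box_g(\Psi^*v)$ is $((D\Psi)g(D\Psi)^t)^{ab}\,\p_{ab}v(\Psi)$. The first-order part is $(g^{lm}\p_{lm}\Psi^k + \theta_g^l\p_l\Psi^k)\,\p_k v(\Psi)$.

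Next, Step 1 for $g'$ evaluated at $\Psi(z)$ gives
\[
\kappa\,\Psi^*(\Box_{g'}v) = -\kappa\big((g')^{ab}(\Psi)\,\p_{ab}v(\Psi) + \theta^k_{g'}(\Psi)\,\p_k v(\Psi)\big).
\]
Adding the two expressions yields the claimed identity, with $\bar g^{ab}$ and $\bar\theta^k$ in the explicit forms stated.

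The alternative form of $\bar\theta^k$ follows by specializing the identity to $v = z^k$. In that case $\p_{ab}v = 0$ and $\p_a v = \delta_a^k$, so $\bar\theta^k = -\Box_g(\Psi^*z^k) + \kappa\Psi^*(\Box_{g'}z^k)$. This is consistent with the coordinate expression obtained above.

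\textbf{Step 3: the transport identity.} By definition $Z_{g,\Psi^*\varphi}(\Psi^*v) = -g^{lm}\,\p_l(\varphi\circ\Psi)\,\p_m(v\circ\Psi)$. The chain rule turns this into
\[
-\,\p_l\Psi^a\, g^{lm}\,\p_m\Psi^b\,(\p_a\varphi)(\Psi)\,(\p_b v)(\Psi).
\]
Similarly, $\kappa\,\Psi^*(Z_{g',\varphi}v) = -\kappa\,(g')^{ab}(\Psi)\,\p_a\varphi(\Psi)\,\p_b v(\Psi)$. Subtracting the second expression from the first gives $-\bar g^{ab}\,\p_a\varphi(\Psi)\,\p_b v(\Psi)$.

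Finally, the identification $\bar g_z = (\Psi_*g)_{\Psi(z)} - \kappa(z)\,g'_{\Psi(z)}$ is just the local-coordinate formula for the pushforward stated earlier in the paper.

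There is no real obstacle here. The only care needed is to track that $g'$ and $\theta_{g'}$ are evaluated at $\Psi(z)$ while $g$ and $\kappa$ are evaluated at $z$, and to fix the sign convention for $\Box_g$ consistently with the formula for $Z_{g,\varphi}$.
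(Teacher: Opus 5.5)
Your proposal is correct and follows the paper's proof: expand $-\Box_g$ in coordinates to identify $\theta_g^k = -\Box_g z^k$, apply the chain rule to $\Psi^* v$ and subtract $\kappa\,\Psi^*(\Box_{g'} v)$, and do the analogous first-order computation for the $Z$ identity. Your check that specializing to $v = z^k$ makes the two expressions for $\bar\theta^k$ agree is a small step the paper leaves implicit.
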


\begin{Remark}
Above, $\bar{g}$ is a smooth $(2,0)$-tensor field, but the quantities $\bar{\theta}^k$ depend on the choice of coordinates. One can view $\theta_g^k$, $k=0,\dots,n$, as measuring how far the coordinates $z = (z^0, \dots, z^n)$ are from \emph{wave coordinates} that would satisfy $\Box_g z^k = 0$ (these are the Lorentzian analogue of harmonic coordinates). We call $\bar{\theta} = (\bar{\theta}^0, \ldots, \bar{\theta}^n)$ the \emph{wave gauge defect}. Our objective will be to prove that $\bar{g} = 0$.
\end{Remark}

At this point we fix a global coordinate system $z = (z^0, \ldots, z^n)$ in $\mR^{1+n}$, and $\bar\theta$ is defined with respect to these coordinates. Lemma~\ref{lemma_psi_differences} and \eqref{boxg_barwj_estimate_first} imply that 
\begin{equation} \label{boxgbarwj_estimate}
|\Box_g \bar{w}_j| \lesssim |\bar{g}| + |\bar{\theta}|.
\end{equation}
Here and below, the constants are uniform over $|s| \leq T+1$ due to the smooth dependence in $s$ in Proposition \ref{prop_plane_wave_structure}.

The estimate \eqref{barwj_carleman_estimate} also involves $Z_{g,\varphi_{\omega_j}} \bar{w}_j$ on $\Gamma_{\omega_j}$. 
The gauge fixing property \eqref{phi_omega_j} together with the transport equations for $u$ and $u'$ yield that on $\Gamma_{\omega_j}$ 
\begin{align*}
 &Z_{g,\varphi_{\omega_j}} \bar{w}_j = Z_{g,\varphi_{\omega_j}} u_{\omega_j,s} - Z_{g,\varphi_{\omega_j}} u_{\omega_j,s}'' \\
 &= Z_{g,\varphi_{\omega_j}} u_{\omega_j,s} - Z_{g,\Psi^* \varphi_{\omega_j}'} \Psi^* u_{\omega_j,s}' \\
 &= Z_{g,\varphi_{\omega_j}} u_{\omega_j,s} - \kappa \Psi^* (Z_{g',\varphi_{\omega_j}'} u_{\omega_j,s}') - (Z_{g,\Psi^* \varphi_{\omega_j}'} \Psi^* u_{\omega_j,s}' - \kappa \Psi^* (Z_{g',\varphi_{\omega_j}'} u_{\omega_j,s}')) \\
 &=-\frac{1}{2} (\Box_g \varphi_{\omega_j}) u_{\omega_j,s} + \frac{1}{2} \kappa \Psi^*( (\Box_{g'} \varphi_{\omega_j}') u_{\omega_j,s}' ) \\
 &\qquad  \qquad -(Z_{g,\Psi^* \varphi_{\omega_j}'} \Psi^* u_{\omega_j,s}' - \kappa \Psi^* ( Z_{g',\varphi_{\omega_j}'} u_{\omega_j,s}')) \\
 &=-\frac{1}{2} (\Box_g (\Psi^* \varphi_{\omega_j}') - \kappa \Psi^*( \Box_{g'} \varphi_{\omega_j}')) u_{\omega_j,s} - \frac{1}{2} \kappa \Psi^*( \Box_{g'} \varphi_{\omega_j}') \bar{w}_j \\
 &\qquad  \qquad -(Z_{g,\Psi^* \varphi_{\omega_j}'} \Psi^* u_{\omega_j,s}' - \kappa \Psi^* ( Z_{g',\varphi_{\omega_j}'} u_{\omega_j,s}')).
\end{align*}
In view of Lemma \ref{lemma_psi_differences} and the fact that $u_{\omega_j,s} > 0$ on $\Gamma_{\omega_j}$ (see Proposition \ref{prop_plane_wave_structure}), this gives 
\begin{equation} \label{zbarwj_estimate}
|Z_{g,\varphi_{\omega_j}} \bar{w}_j| \geq c |\bar{\theta}^k \p_k \varphi_{\omega_j}'(\Psi)| - C(|\bar{w}_j| + |\bar{g}|).
\end{equation}

Using \eqref{boxgbarwj_estimate}--\eqref{zbarwj_estimate} in \eqref{barwj_carleman_estimate} and absorbing one term (with $\sigma$ chosen large enough) gives that 
\[
\sigma \norm{e^{\sigma \psi} \bar{\theta}^k \p_k \varphi_{\omega_j}'(\Psi)}_{L^2(\Gamma_{\omega_j})}^2 
 \lesssim \sigma \norm{e^{\sigma \psi} |\bar{g}|}_{L^2(\Gamma_{\omega_j})}^2 + \norm{e^{\sigma \psi} (|\bar{g}| + |\bar{\theta}|)}_{L^2(Q_T \cap \{ \varphi_{\omega_j} > s \})}^2.
\]
Integrating over $|s| \leq T+1$, and using that $\bar{g}$ and $\bar{\theta}$ vanish outside $\ol{Q}_T$, yields 
\[
\sigma \norm{e^{\sigma \psi} \bar{\theta}^k \p_k \varphi_{\omega_j}'(\Psi)}_{L^2(Q_T)}^2 
 \lesssim \sigma \norm{e^{\sigma \psi} |\bar{g}|}_{L^2(Q_T)}^2 + \norm{e^{\sigma \psi} |\bar{\theta}|}_{L^2(Q_T)}^2.
\]
Summing over $0 \leq j \leq n$ and using \eqref{assumption4} implies 
\[
\sigma \norm{e^{\sigma \psi} |\bar{\theta}|}_{L^2(Q_T)}^2 
 \lesssim \sigma \norm{e^{\sigma \psi} |\bar{g}|}_{L^2(Q_T)}^2 + \norm{e^{\sigma \psi} |\bar{\theta}|}_{L^2(Q_T)}^2.
\]
Choosing $\sigma$ large, we obtain the first consequence of the weighted estimate in Proposition~\ref{prop_carleman_psi}.

\begin{Proposition} \label{prop_thetabar_estimate}
Suppose that $g$ and $g'$ satisfy \eqref{assumption1}--\eqref{assumption2} and $g'$ satisfies \eqref{assumption3}--\eqref{assumption4}. If $\mathcal{F}_{\omega_j,T}(g) = \mathcal{F}_{\omega_j,T}(g')$ for $0 \leq j \leq n$, then 
\begin{equation*}
\norm{e^{\sigma \psi} |\bar{\theta}|}_{L^2(Q_T)}
 \lesssim \norm{e^{\sigma \psi} |\bar{g}|}_{L^2(Q_T)}
\end{equation*}
when $\sigma$ is sufficiently large.
\end{Proposition}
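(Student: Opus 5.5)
The argument sketched just before the statement already contains the skeleton; the plan is to make each step rigorous. Fix $0 \leq j \leq n$ and $|s| \leq T+1$, and set $\bar w_j = u_{\omega_j,s} - u''_{\omega_j,s}$.

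\textbf{Step 1: admissibility of $\bar w_j$.} By Proposition~\ref{prop_eikonal_detection}, $u_{\omega_j,s}=u'_{\omega_j,s}$ on $W\setminus\ol{Q}_T$. By Proposition~\ref{prop_psi_well_defined}, $\Psi=\id$ outside $\ol Q_T$. Hence $\bar w_j$ vanishes outside $\ol Q_T$, and by smoothness it vanishes to infinite order on $\p Q_T \cap \{\varphi_{\omega_j}\ge s\}$. Here we use that $u$ and $u''$ are smooth near $\mR\times\ol B$, by Proposition~\ref{prop_plane_wave_structure}(b). So Proposition~\ref{prop_carleman_psi} applies to $\bar w_j$ and gives \eqref{barwj_carleman_estimate}.

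\textbf{Step 2: pointwise bounds.} In $Q_T\cap\{\varphi_{\omega_j}>s\}$, the identity \eqref{boxg_barwj_estimate_first}, combined with the first formula of Lemma~\ref{lemma_psi_differences} applied to $v=u'_{\omega_j,s}$, gives \eqref{boxgbarwj_estimate}. The needed bounds on second derivatives of $u'$ near $\ol Q_T$ are uniform in $s$ by Proposition~\ref{prop_plane_wave_structure}. On $\Gamma_{\omega_j}$ we do the following:
\begin{itemize}
\item Use the gauge property \eqref{phi_omega_j} and the transport equations of Proposition~\ref{prop_plane_wave_structure}(c) for $g$ and $g'$, exactly as in the displayed computation.
\item Expand $\Box_g(\Psi^*\varphi'_{\omega_j})-\kappa\Psi^*(\Box_{g'}\varphi'_{\omega_j})$ via Lemma~\ref{lemma_psi_differences} as $-\bar g^{ab}\p_{ab}\varphi'(\Psi)-\bar\theta^k\p_k\varphi'(\Psi)$.
\item Bound the $Z$-difference term by $C|\bar g|$ using the second formula of the lemma.
\item Use $u_{\omega_j,s}\ge c>0$ on $\{\varphi_{\omega_j}=s\}\cap\ol Q_T$. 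The lower bound is uniform in $s$ by compactness and continuity in $s$.
\end{itemize}
Together these give \eqref{zbarwj_estimate}.

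\textbf{Step 3: absorbing and integrating.} Square \eqref{zbarwj_estimate}, using $(a-b)^2\ge a^2/2-b^2$, and insert it into \eqref{barwj_carleman_estimate}. The resulting term $\sigma\|e^{\sigma\psi}\bar w_j\|^2_{L^2(\Gamma)}$ is absorbed by $\sigma^3\|e^{\sigma\psi}\bar w_j\|^2_{L^2(\Gamma)}$ for large $\sigma$.

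Next integrate in $s\in[-T-1,T+1]$. By Proposition~\ref{prop_plane_wave_structure}(a), the level sets $\{\varphi_{\omega_j}=s\}$ for these $s$ foliate $\ol Q_T$. Since $\varphi_{\omega_j}=\Phi'^{\,j}\circ\Psi$ and $\Psi$ need not be a diffeomorphism, we must check that $d\varphi_{\omega_j}\neq0$. This holds because $\varphi_{\omega_j}$ is an eikonal function of a simple metric, $g$ being simple by Proposition~\ref{prop_simplicity_detection}. The coarea formula then gives
\[
\int \|f\|^2_{L^2(\Gamma_{\omega_j,s})}\,ds \simeq \|f\|^2_{L^2(Q_T)},
\]
with constants controlled by $|d\varphi_{\omega_j}|$ bounded above and below. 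On the right side, the bulk integrals $\int_s\|F\|^2_{L^2(Q_T\cap\{\varphi>s\})}\,ds$ are bounded by $C\|F\|^2_{L^2(Q_T)}$ by Fubini.

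\textbf{Step 4: recovering $|\bar\theta|$ and concluding.} Sum over $j$. The key point is that the covectors $d\varphi'_{\omega_j}(\Psi(z))$, $j=0,\dots,n$, form a basis at $\Psi(z)\in\ol Q_T$ by \eqref{assumption4}, with uniformly bounded inverse by compactness. Therefore
\[
|\bar\theta|\lesssim\sum_j|\bar\theta^k\p_k\varphi'_{\omega_j}(\Psi)|.
\]
Then $\sigma\|e^{\sigma\psi}\bar\theta\|^2\lesssim\sigma\|e^{\sigma\psi}\bar g\|^2+\|e^{\sigma\psi}\bar\theta\|^2$, and the last term is absorbed for large $\sigma$.

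\textbf{Expected obstacle.} The main difficulty is bookkeeping uniformity in $s$ in Steps 2 and 3: the positive lower bound on $u$ along interfaces, and the coarea comparison when only $\Psi$, not $\Phi$, is known to be smooth. I would handle this by working entirely with the $g$-eikonal functions, which are smooth with nonvanishing differential, and using $\Psi$ only through the evaluation points $\Psi(z)\in\ol Q_T$.
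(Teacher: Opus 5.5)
Your proposal is correct and follows the paper's own argument step by step: apply Proposition~\ref{prop_carleman_psi} to $\bar w_j$, use the pointwise bounds \eqref{boxgbarwj_estimate}--\eqref{zbarwj_estimate}, absorb the $\bar w_j$ term on $\Gamma_{\omega_j}$, integrate over $|s|\le T+1$, then sum over $j$ using \eqref{assumption4} and take $\sigma$ large. The extra justifications you give are sound and make explicit what the paper leaves implicit: the uniformity in $s$, the nonvanishing of $d\varphi_{\omega_j}$ (from the simplicity of $g$) needed for the coarea step, and the uniformly invertible basis $d\varphi'_{\omega_j}(\Psi(z))$ with $\Psi(z)\in\ol{Q}_T$.
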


In Proposition \ref{prop_thetabar_estimate} we have only used measurements from $n+1$ directions $\omega_j$ and have obtained control on the wave gauge defect $\bar{\theta}$ in terms of $\bar{g}$. The next step is to use a second batch of weighted estimates related to measurements from directions $\omega \in \Omega \setminus \{ \omega_0, \ldots, \omega_n \}$. We will now also assume the spanning condition \eqref{assumption5}. With a suitable choice of $\kappa$, this will imply that a power of $\sigma$ times $\bar{g}$ can be controlled in terms of $\bar{g}$ and $\bar{\theta}$, or just in terms of $\bar g$ in view of Proposition~\ref{prop_thetabar_estimate}. The proof of the following result is based on taking $\sigma$ large enough in such an argument.

\begin{Proposition} \label{prop_gbar_zero}
Suppose that $g$ and $g'$ satisfy \eqref{assumption1}--\eqref{assumption2} and $g'$ satisfies \eqref{assumption3}--\eqref{assumption5}. If $\mathcal{F}_{\Omega,T}(g) = \mathcal{F}_{\Omega,T}(g')$, and if $\kappa$ is chosen so that $\bar{g}$ is $g'$-trace free, that is, 
\[
\kappa(z) = \frac{1}{n+1} \mathrm{tr}_{g'}(\Psi_* g)(\Psi(z)),
\]
then $\bar{g} = 0$.
\end{Proposition}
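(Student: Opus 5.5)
The plan is to run a second batch of applications of Proposition~\ref{prop_carleman_psi}, now for directions $\omega \in \Omega$ not among $\omega_0,\dots,\omega_n$. These will bound the eikonal differences $\rho_\omega := \varphi_\omega - \varphi_\omega''$ with a gain in $\sigma$. The spanning condition \eqref{assumption5} will then convert this into a gain for $\bar g$ itself, and Proposition~\ref{prop_thetabar_estimate} removes $\bar\theta$.

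\textbf{Step 1 (algebra: $\bar g$ is controlled by its null pairings).} Since $\bar g$ is $g'$-trace free by the choice of $\kappa$, its pairing with $g'(\Psi(z))$ vanishes. Condition \eqref{assumption5} at $\Psi(z)\in\ol Q_T$ says that $g'$ and the tensors $d\varphi'_\omega\otimes d\varphi'_\omega$ span all symmetric $2$-tensors. Hence, by compactness,
\[
|\bar g(z)| \lesssim \sum_{\omega\in\Omega} |\bar g^{ab}\,\p_a\varphi'_\omega(\Psi)\,\p_b\varphi'_\omega(\Psi)|(z).
\]
Next I compute these pairings using the eikonal equations. Since $g'(d\varphi'_\omega,d\varphi'_\omega)=0$, the definition of the pushforward gives
\[
\bar g(d\varphi'_\omega,d\varphi'_\omega)(\Psi) = g(d\varphi''_\omega,d\varphi''_\omega) = g(d\varphi''_\omega,d\varphi''_\omega) - g(d\varphi_\omega,d\varphi_\omega) = -g(d\rho_\omega, d\varphi_\omega + d\varphi''_\omega).
\]
Therefore $|\bar g| \lesssim \sum_\omega |d\rho_\omega|$ on $\ol Q_T$. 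Moreover $\rho_\omega$ vanishes outside $\ol Q_T$ by Proposition~\ref{prop_eikonal_detection}, and $\rho_{\omega_j}=0$ by \eqref{phi_omega_j}.

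\textbf{Step 2 (weighted estimate for $\rho_\omega$).} For each $\omega\in\Omega$ and $|s|\le T+1$, I apply Proposition~\ref{prop_carleman_psi} to $w = u_{\omega,s} - u''_{\omega,s}$ on $Q_T\cap\{\varphi_\omega>s\}$. This is legitimate because $w$ vanishes to infinite order on $\p Q_T$ by Proposition~\ref{prop_eikonal_detection}, using the smooth extensions of Proposition~\ref{prop_plane_wave_structure}(c).

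In the interior, Lemma~\ref{lemma_psi_differences} together with $\Psi^*\Box_{g'}u' = (\varphi''-s)\Psi^*a'$ gives
\[
|\Box_g w| \lesssim |\bar g| + |\bar\theta| + |\rho_\omega|.
\]
On $\Gamma_\omega$ I use $\varphi''_\omega - s = -\rho_\omega$ there. Repeating the computation that led to \eqref{zbarwj_estimate}, now with the extra error terms $\rho_\omega b''$ and $d\rho_\omega$, the transport equations should yield
\[
|Z_{g,\varphi_\omega} w| \gtrsim |\bar\theta^k\p_k\varphi'_\omega(\Psi)| - C(|w| + |\bar g| + |\rho_\omega|).
\]
I also want a trace bound linking $\rho_\omega$ to $w$ on $\Gamma_\omega$. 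Both $U_{\omega,s}$ and $U''_{\omega,s}$ have their jump along the respective level sets, and $u>0$ there. Differentiating in $s$, i.e.\ comparing the jump loci, should give $|\rho_\omega|$ and $|Z\rho_\omega|$ on $\Gamma_\omega$ in terms of $w$, $Zw$ and $\bar g$.

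Inserting these into the weighted estimate, absorbing the $|w|$ terms with $\sigma^3$, and integrating over $s$ as in the derivation of Proposition~\ref{prop_thetabar_estimate}, I aim for
\[
\sigma\norm{e^{\sigma\psi}|d\rho_\omega|}_{L^2(Q_T)}^2 \lesssim \norm{e^{\sigma\psi}(|\bar g|+|\bar\theta|)}_{L^2(Q_T)}^2 + \norm{e^{\sigma\psi}|\rho_\omega|}_{L^2(Q_T)}^2 .
\]
The last term will be handled by a weighted Poincaré/transport bound, since $\rho_\omega$ vanishes near the incoming side and $\psi$ is temporal, so that $\norm{e^{\sigma\psi}\rho_\omega}\lesssim\sigma^{-1}\norm{e^{\sigma\psi}d\rho_\omega}$.

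\textbf{Step 3 (closing).} Summing over $\omega$ and combining with Step 1 and Proposition~\ref{prop_thetabar_estimate} gives
\[
\sigma\norm{e^{\sigma\psi}|\bar g|}^2 \lesssim \norm{e^{\sigma\psi}|\bar g|}^2 .
\]
Taking $\sigma$ large forces $\bar g=0$ on $Q_T$.

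The main obstacle is Step 2: for $\omega\ne\omega_j$ the interfaces $\{\varphi_\omega=s\}$ and $\{\varphi''_\omega=s\}$ do not coincide. The boundary term on $\Gamma_\omega$ must therefore be expressed in terms of $\rho_\omega$ and its tangential derivative $Z\rho_\omega$ rather than only $w$, and I must make sure that $d\rho_\omega$, not merely $\rho_\omega$, acquires the $\sigma$-gain. If the trace argument fails, the fallback is to use the transport equation $g(d\rho_\omega, d\varphi_\omega+d\varphi''_\omega) = -\bar g(d\varphi'_\omega,d\varphi'_\omega)(\Psi)$ from Step 1 to control the transversal part of $d\rho_\omega$. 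Step 2 would then only need to control the tangential derivative $Z\rho_\omega$ on $\Gamma_\omega$.
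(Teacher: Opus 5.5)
Your Steps 1 and 3 are correct and match the end of the paper's argument (your $\rho_\omega$ is the paper's $\bar\varphi$). The gap is Step 2, which never produces $d\rho_\omega$ with a $\sigma$-gain.

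First, your lower bound for $Z_{g,\varphi_\omega}w$ on $\Gamma_\omega$ omits the leading term. Since $\varphi_\omega\neq\varphi''_\omega$, the zeroth-order coefficients of the two transport equations differ by $\Box_g\varphi_\omega-\kappa\Psi^*(\Box_{g'}\varphi'_\omega)=\Box_g\rho_\omega+O(|\bar g|+|\bar\theta|)$, by Lemma~\ref{lemma_psi_differences}. Also $(Z_{g,\varphi_\omega}-Z_{g,\varphi''_\omega})\Psi^*u'=O(|d\rho_\omega|)$. Hence on $\Gamma_\omega$
\[
Z_{g,\varphi_\omega}w=-\tfrac12(\Box_g\rho_\omega)\,u+O(|w|+|\rho_\omega|+|d\rho_\omega|+|\bar g|+|\bar\theta|).
\]
The term $\bar\theta^k\partial_k\varphi'_\omega(\Psi)$ that you isolate is only one of the errors. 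It is useless here anyway, since Proposition~\ref{prop_thetabar_estimate} already controls $\bar\theta$.

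Second, the trace bound ``by comparing jump loci'' has no mechanism behind it. The estimate is applied to the smooth difference of amplitudes $w=u-u''$. On $\Gamma_\omega$ this is, to leading order, the unknown difference of the leading amplitudes plus $O(|\rho_\omega|)$, so its trace does not encode $\rho_\omega$ or $Z\rho_\omega$.

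Third, the fallback does not control a transversal derivative. The quantity $g(d\rho_\omega,d\varphi_\omega)=-Z_{g,\varphi_\omega}\rho_\omega$ is the derivative along the null generator $d\varphi_\omega^\sharp$, which is tangent to $\Gamma_\omega$. The identity $2Z_{g,\varphi_\omega}\rho_\omega+g(d\rho_\omega,d\rho_\omega)=\bar g(d\varphi'_\omega,d\varphi'_\omega)(\Psi)$ is just Step 1 read backwards. It bounds one directional derivative by $|\bar g|+|d\rho_\omega|^2$, with no $\sigma$-gain, and never gives the full gradient that Step 1 needs.

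The missing idea is to keep the $\Box_g\rho_\omega$ term and use the weighted estimate twice. Insert the display above into Proposition~\ref{prop_carleman_psi} for $w$, using $u>0$ on $\Gamma_\omega$ and $|\Box_g w|\lesssim|\bar g|+|\bar\theta|+|\rho_\omega|$. Absorb $|w|$ into the $\sigma^3$ term and integrate over $|s|\le T+1$. This gives
\[
\norm{e^{\sigma\psi}\Box_g\rho_\omega}_{L^2(Q_T)}^2\lesssim\norm{e^{\sigma\psi}(|\rho_\omega|+|d\rho_\omega|+|\bar g|+|\bar\theta|)}_{L^2(Q_T)}^2 .
\]
Since $\rho_\omega$ is supported in $\ol{Q}_T$, you can now apply Proposition~\ref{prop_carleman_psi} to $\rho_\omega$ itself. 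Use a larger cylinder $Q_S$ with $s=-S-1$, on which all boundary terms vanish. Because the conjugated operator is semiclassically elliptic, this yields $\sigma^4\norm{e^{\sigma\psi}\rho_\omega}^2+\sigma^2\norm{e^{\sigma\psi}d\rho_\omega}^2\lesssim\norm{e^{\sigma\psi}\Box_g\rho_\omega}^2$.

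Absorbing the $\rho_\omega$ and $d\rho_\omega$ terms and using Proposition~\ref{prop_thetabar_estimate} gives $\sigma^2\norm{e^{\sigma\psi}d\rho_\omega}^2\lesssim\norm{e^{\sigma\psi}|\bar g|}^2$. Your Step 1 then closes the argument with a $\sigma^2$ gain. The weighted Poincar\'e bound becomes unnecessary, since the $\sigma^4$ term already controls $\rho_\omega$.
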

\begin{proof}
Fix $\omega \in \Omega \setminus \{ \omega_0, \ldots, \omega_n \}$ and write 
\[
\varphi_{\omega}'' = \Psi^* \varphi_{\omega}', \qquad u_{\omega,s}'' = \Psi^* u_{\omega,s}'.
\]
As above, we would like to apply the weighted estimate to the difference of $u_{\omega,s}$ and $u_{\omega,s}''$. However, now $u_{\omega,s}$ and $u_{\omega,s}''$ satisfy transport equations on interfaces $\{ \varphi_{\omega} = s \}$ and $\{ \varphi_{\omega}'' = s \}$, which may be different. Furthermore, the sets $\{ \varphi_{\omega}'' = s \}$ may not even be regular hypersurfaces, since $d \varphi_{\omega}'' = \Psi^* d \varphi_{\omega}'$ may vanish at points where $D\Psi$ vanishes (though $\{ \varphi_{\omega}'' = s \}$ is regular for almost every $s$ by Sard's theorem). To resolve these issues, we will use the properties of the smooth function $u'$ in $\ol{Q}_T$ given in Proposition~\ref{prop_plane_wave_structure}.

From now on we will drop the subscripts $\omega$ and $s$ for simplicity.  Define 
\[
\bar{w} = u - \Psi^* u'.
\]
This is smooth in $\ol{Q}_T \cap \{ \varphi \geq s \}$. Since the boundary measurements from direction $\omega$ agree for $g$ and $g'$ and $\Psi = \id$ outside $\ol{Q}_T$, we have that the Cauchy data of $\bar{w}$ vanishes on $\p Q_T \cap \{ \varphi \geq s \}$. Thus Proposition \ref{prop_carleman_psi} can be applied to $\bar{w}$ in $Q_T \cap \{ \varphi > s \}$, which gives 
\begin{gather} \label{barwomegas_carleman_estimate}
\sigma \norm{e^{\sigma \psi} Z_{g,\varphi} \bar{w}}_{L^2(\Gamma)}^2 + \sigma^3 \norm{e^{\sigma \psi} \bar{w}}_{L^2(\Gamma)}^2 
 \lesssim \norm{e^{\sigma \psi} \Box_g \bar{w}}_{L^2(Q_T \cap \{ \varphi > s \})}^2.
\end{gather}
Here $\Gamma = Q_T \cap \{ \varphi = s \}$.

In $\ol{Q}_T \cap \{ \varphi \geq s \}$, the right hand side of \eqref{barwomegas_carleman_estimate} satisfies 
\[
\Box_g \bar{w} = - \Box_g (\Psi^* u') = - (\Box_g (\Psi^* u') - \kappa \Psi^*( \Box_{g'} u')) - \kappa \Psi^* (\Box_{g'} u').
\]
From Lemma \ref{lemma_psi_differences} we obtain the bound 
\[
|\Box_g (\Psi^* u') - \kappa \Psi^* (\Box_{g'} u')| \lesssim |\bar{g}| + |\bar{\theta}|.
\]
On the other hand, by Proposition \ref{prop_plane_wave_structure} we have 
\[
\Psi^* ( \Box_{g'} u') = (\varphi''-s) \Psi^* a_{\omega,s}'
\]
where $a_{\omega,s}'$ is supported in $\{ \varphi' \leq s \}$. In the set $\{ \varphi \geq s \}$, the function $\Psi^* a_{\omega,s}'$  can only be nonzero if $\varphi'' \leq s \leq \varphi$. Thus in $\{ \varphi \geq s \}$ we have 
\[
| \kappa \Psi^* ( \Box_{g'} u')| \leq |\varphi'' - s| | \kappa \Psi^* a_{\omega,s}'| \lesssim |\varphi - \varphi''|.
\]
We introduce the notation 
\[
\bar{\varphi} := \varphi - \varphi''.
\]
Combining the above estimates leads to the bound 
\begin{equation}  \label{boxgbarwomega_estimate}
|\Box_g \bar{w}| \lesssim |\bar{g}| + |\bar{\theta}| + |\bar{\varphi}|
\end{equation}
in the set $\ol{Q}_T \cap \{ \varphi \geq s \}$.

To estimate the left hand side of \eqref{barwomegas_carleman_estimate}, we compute  
\begin{align*}
Z_{g,\varphi} \bar{w} &= Z_{g,\varphi} u - Z_{g,\varphi} (\Psi^* u') \\
  &= Z_{g,\varphi} u - Z_{g,\varphi''} (\Psi^* u') - (Z_{g,\varphi} - Z_{g,\varphi''}) (\Psi^* u') \\
  &= Z_{g,\varphi} u - \kappa \Psi^* ( Z_{g',\varphi'} u') - (Z_{g,\varphi} - Z_{g,\varphi''}) (\Psi^* u') \\
  & \qquad -  (Z_{g,\Psi^* \varphi'} (\Psi^* u') - \kappa \Psi^* ( Z_{g',\varphi'} u')).
\end{align*}
We combine this with Lemma \ref{lemma_psi_differences} and recall that $Z_{g,\varphi} = -d\varphi^\sharp$ to obtain 
\[
Z_{g,\varphi} \bar{w} = Z_{g,\varphi} u - \kappa \Psi^* ( Z_{g',\varphi'} u') + O(| d \bar{\varphi}| + |\bar{g}|).
\]
On the set $\{ \varphi = s \}$, from the transport equation for $u$ we obtain that $2 Z_{g,\varphi} u = -(\Box_g \varphi) u$. On the other hand, still on $\{ \varphi = s \}$, Proposition \ref{prop_plane_wave_structure} implies that 
\[
\Psi^* (2 Z_{g',\varphi'} u') = -\Psi^* ( (\Box_{g'} \varphi') u') - \bar{\varphi} \Psi^* b_{\omega,s}'.
\]
This yields that on $\{ \varphi = s \}$  
\begin{align*}
Z_{g,\varphi} \bar{w} &= -\frac{1}{2} \left[ (\Box_g \varphi) u - \kappa \Psi^* ( (\Box_{g'} \varphi') u' ) \right] + O(|\bar{\varphi}| + |d \bar{\varphi}| + |\bar{g}|) \\
 &= -\frac{1}{2} (\Box_g \bar{\varphi}) u -\frac{1}{2} \left[ (\Box_g \varphi'') \Psi^* u' - \kappa \Psi^* ( (\Box_{g'} \varphi') u' ) \right] \\
 &\qquad + O(|\bar{w}| + |\bar{\varphi}| + |d \bar{\varphi}| + |\bar{g}|).
\end{align*}
Finally, one more application of Lemma \ref{lemma_psi_differences} gives on $\{ \varphi = s \}$ the estimate 
\begin{equation}  \label{zgbarwomega_estimate}
|Z_{g,\varphi} \bar{w}| \geq c |\Box_g \bar{\varphi}| - C(|\bar{w}| + |\bar{\varphi}| + |d \bar{\varphi}| + |\bar{g}| + |\bar{\theta}|).
\end{equation}
for some $C, c > 0$. Inserting the estimates \eqref{boxgbarwomega_estimate} and \eqref{zgbarwomega_estimate} in \eqref{barwomegas_carleman_estimate} and absorbing one $\bar{w}$ term yields 
\begin{multline*}
\sigma \norm{e^{\sigma \psi} \Box_g \bar{\varphi}}_{L^2(\Gamma)}^2 \lesssim \norm{e^{\sigma \psi} (|\bar{g}| + |\bar{\theta}| + |\bar{\varphi}| )}_{L^2(Q_T \cap \{ \varphi > s \})}^2 \\
 + \sigma \norm{e^{\sigma \psi} (|\bar{\varphi}| + |d \bar{\varphi}| + |\bar{g}| + |\bar{\theta}|)}_{L^2(\Gamma)}^2.
\end{multline*}
Integrating both sides of this estimate from $-T-1$ to $T+1$ with respect to $s$, and using that $\bar{\varphi}$, $\bar{g}$, and $\bar{\theta}$ vanish outside $Q_T$, gives that 
\[
\norm{e^{\sigma \psi} \Box_g \bar{\varphi}}_{L^2(Q_T)}^2 \lesssim \norm{e^{\sigma \psi} (|\bar{\varphi}| + |d \bar{\varphi}| + |\bar{g}| + |\bar{\theta}|)}_{L^2(Q_T)}^2.
\]
For the $\bar{\theta}$ term on the right we can use Proposition \ref{prop_thetabar_estimate}. On the other hand, since $\bar{\varphi}$ vanishes outside $\ol{Q}_T$, on the left hand side we may apply the $L^2$ estimate in Proposition \ref{prop_carleman_psi} on $Q_S$ with $s = -S - 1$ and $S > 0$ large enough so that $Q_T \subset Q_S \cap \{\varphi_\omega > s\}$ for some unit vector $\omega$. This implies, after absorbing terms, that 
\begin{equation} \label{barvarphi_barg_estimate}
\sigma^4 \norm{e^{\sigma \psi} \bar{\varphi}}_{L^2(Q_T)}^2 + \sigma^2 \norm{e^{\sigma \psi} d \bar{\varphi}}_{L^2(Q_T)}^2 \lesssim \norm{e^{\sigma \psi} |\bar{g}|}_{L^2(Q_T)}^2.
\end{equation}

It remains to relate $\bar{g}$ to $\bar{\varphi}$. For this we will use the spanning condition \eqref{assumption5}. Now the subspace of $g'$-trace free tensors is a codimension $1$ subspace of the symmetric tensors, and each $d \varphi_{\omega}' \otimes d \varphi_{\omega}'$ is trace free by the eikonal equation, so \eqref{assumption5}  implies that $\{ d \varphi_{\omega}' \otimes d \varphi_{\omega}' \}_{\omega \in \Omega}$ spans the set of $g'$-trace free symmetric $2$-tensors at any point. Moreover, the choice of $\kappa$ guarantees that $\bar g$ is $g'$-trace free.
Hence \eqref{assumption5} and the eikonal equations for $\varphi_{\omega}$ and $\varphi_{\omega}'$ imply 
\[
|\bar{g}| \lesssim \sum_{\omega \in \Omega} |\bar{g}(d \varphi_{\omega}'(\Psi), d \varphi_{\omega}'(\Psi))| = \sum_{\omega \in \Omega} |g(d \varphi_{\omega}'', d \varphi_{\omega}'')| \lesssim \sum_{\omega \in \Omega} |d \bar{\varphi}_{\omega}|.
\]
Summing \eqref{barvarphi_barg_estimate} over $\omega \in \Omega$ and inserting the previous estimate, we obtain that  
\[
\sigma^2 \norm{e^{\sigma \psi} |\bar{g}|}_{L^2(Q_T)}^2 \lesssim \norm{e^{\sigma \psi} |\bar{g}|}_{L^2(Q_T)}^2.
\]
Choosing $\sigma$ large enough, this finally implies that $\bar{g} = 0$ in $Q_T$.
\end{proof}

The equation $\bar{g} = 0$ means in coordinates that 
\begin{equation} \label{gbar_zero_in_coordinates}
(D\Psi) g (D\Psi)^t = \kappa(g')(\Psi).
\end{equation}
Next we show that $\kappa$ is nowhere vanishing and hence a conformal factor.

\begin{Proposition}
Assume the conditions in Proposition \ref{prop_gbar_zero}. Then $\kappa > 0$ in $\mR^{1+n}$ with $\kappa = 1$ outside $Q_T$, and $\Psi$ is a diffeomorphism from $\mR^{1+n}$ onto itself.
\end{Proposition}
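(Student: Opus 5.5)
We have $(D\Psi) g (D\Psi)^t = \kappa\, g'(\Psi)$ on $\ol{Q}_T$ by \eqref{gbar_zero_in_coordinates}, and outside $\ol Q_T$ we have $\Psi=\id$ and $g=g'=\gm$. There $\kappa = \frac{1}{n+1}\mathrm{tr}_{g'}(\Psi_* g)(\Psi) = \frac{1}{n+1}\mathrm{tr}_{\gm}\gm = 1$. By smoothness and continuity, $\kappa=1$ also on $\p Q_T$.

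The plan is to first show $\kappa\neq 0$ everywhere, then get positivity by connectedness, and finally deduce that $\Psi$ is a diffeomorphism from degree theory.

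\textbf{Step 1: $\kappa\neq0$ and $D\Psi$ invertible.} Take determinants in \eqref{gbar_zero_in_coordinates}:
\[
(\det D\Psi)^2 \det g = \kappa^{n+1} \det g'(\Psi).
\]
Both $\det g$ and $\det g'$ are nonzero, of sign $(-1)^n$ as inverse Lorentzian metrics. Hence $\kappa(z)=0$ if and only if $\det D\Psi(z)=0$, so it suffices to rule out points where $D\Psi$ is singular.

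Suppose $D\Psi(z)$ has nontrivial kernel on the cotangent side, i.e.\ some covector $\zeta\ne0$ has $\Psi^*\zeta=(D\Psi)^t\zeta=0$. Then the right-hand side $\kappa\, g'(\Psi)$ annihilates nothing unless $\kappa=0$, so $\kappa(z)=0$. Conversely, if $\kappa(z)=0$, then $(D\Psi)g(D\Psi)^t=0$. Since $g$ is nondegenerate of Lorentzian signature, the rank of $(D\Psi)g(D\Psi)^t$ is at least $2\,\mathrm{rank}(D\Psi)-(n+1)$, and the image of $(D\Psi)^t$ must be a $g$-totally isotropic subspace. A totally isotropic subspace for a Lorentzian form has dimension at most $1$, so $\mathrm{rank}\, D\Psi(z)\le 1$.

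So the danger is an open-closed argument along the zero set of $\kappa$. Let $V=\{\kappa>0\}$. It is open, contains a neighborhood of the complement of $Q_T$, and on $V$ the map $\Psi$ is a local diffeomorphism. I then aim to show $V$ is closed, which by connectedness of $\mR^{1+n}$ gives $V=\mR^{1+n}$.

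\textbf{Step 2: closedness, the main obstacle.} Take $z_0\in\p V$, so $\kappa(z_0)=0$ and $\mathrm{rank}\,D\Psi(z_0)\le1$. My first attempt would use the eikonal gauge \eqref{phi_omega_j}: $\varphi_{\omega_j}=\varphi'_{\omega_j}\circ\Psi$ gives $d\varphi_{\omega_j}(z_0)=(D\Psi)^t d\varphi'_{\omega_j}(\Psi(z_0))$. By the eikonal equation for $g$ and Proposition \ref{prop_plane_wave_structure}, each $d\varphi_{\omega_j}$ is a nonzero null covector for $g$; it cannot vanish because $Z_{g,\varphi}$ generates the null geodesics in the simplicity parametrization. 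Therefore the $n+1$ covectors $d\varphi_{\omega_j}(z_0)$ all lie in the image of $(D\Psi)^t$, which has dimension at most $1$. So they are all parallel null covectors.

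To conclude, I would contradict this either with a nondegeneracy of $(d\varphi_{\omega_0},\dots,d\varphi_{\omega_n})$ for $g$, or more robustly with the other directions. Applying the same reasoning with $\bar g=0$: $g(d\varphi''_\omega,d\varphi''_\omega)=\kappa\, g'(d\varphi'_\omega,d\varphi'_\omega)(\Psi)=0$, so the $\varphi''_\omega$ are $g$-eikonal. Together with \eqref{barvarphi_barg_estimate} and $\bar g=0$ this gives $\bar\varphi_\omega=0$ for every $\omega\in\Omega$. Hence $\varphi_\omega=\varphi'_\omega\circ\Psi$ for all $\omega\in\Omega$, and all the $d\varphi_\omega(z_0)$ are parallel.

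But each $d\varphi_\omega$ is a null covector determined by its null geodesic through $z_0$. Those geodesics come from distinct incoming directions $(1,\omega)$, and scattering is injective in the backward direction along null geodesics: two null geodesics through $z_0$ with parallel tangents coincide, so they would have the same initial data at $t\ll0$. This contradicts $\omega\ne\omega'$, so $\kappa(z_0)\ne0$. Since $\kappa$ is continuous and nonvanishing on the connected set $\mR^{1+n}$ and equals $1$ outside $Q_T$, we get $\kappa>0$ everywhere.

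\textbf{Step 3: $\Psi$ is a diffeomorphism.} By Step 1, $\det D\Psi\ne0$ everywhere, so $\Psi$ is a local diffeomorphism. It equals the identity outside $\ol Q_T$ and maps $\ol Q_T$ to itself (Proposition \ref{prop_psi_well_defined}), hence it is proper. A proper local diffeomorphism of $\mR^{1+n}$ is a covering map, and since $\mR^{1+n}$ is simply connected it is a diffeomorphism. Alternatively, the degree theory of Appendix \ref{degree_theory} gives degree $1$ with every Jacobian positive, so each fiber is a single point.
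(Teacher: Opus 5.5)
Your proof is correct and is essentially the paper's argument. At a zero of $\kappa$, the gauge relation \eqref{phi_omega_j} forces $d\varphi_{\omega_0}(z)$ and $d\varphi_{\omega_1}(z)$ to be parallel null covectors, which contradicts $\omega_0 \neq \omega_1$; the paper reads this off from $g(d\varphi_{\omega_0}, d\varphi_{\omega_1}) = \kappa\, g'(d\varphi'_{\omega_0}, d\varphi'_{\omega_1})(\Psi) = 0$, you from the totally isotropic image of $(D\Psi)^t$, and the determinant identity plus Hadamard's theorem (equivalently your covering or degree argument) then finish.

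The open--closed framing and the detour through \eqref{barvarphi_barg_estimate} are unnecessary, since the contradiction works at any zero of $\kappa$ using only $\omega_0 \neq \omega_1$. Your first suggested option, nondegeneracy of the $d\varphi_{\omega_j}$ for $g$, would have been circular, because it is equivalent to invertibility of $D\Psi$.
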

\begin{proof}
If $u$ and $v$ are smooth functions, the equation \eqref{gbar_zero_in_coordinates} implies that 
\[
g_z(d\Psi^* u, d\Psi^* v) = \kappa(z) (g')_{\Psi(z)}(du, dv).
\]
Choosing $u = \varphi_{\omega_0}'$ and $v = \varphi_{\omega_1}'$ and using the gauge fixing property \eqref{phi_omega_j} shows that 
\[
g_z(d \varphi_{\omega_0}, d\varphi_{\omega_1}) = \kappa(z) (g')_{\Psi(z)}(d\varphi_{\omega_0}', d\varphi_{\omega_1}').
\]
Now if $\kappa(z) = 0$ for some $z$, we obtain $g_z(d \varphi_{\omega_0}, d\varphi_{\omega_1}) = 0$, and since these are null covectors in the same causal cone we must have 
\[
d\varphi_{\omega_0}(z) = \lambda d\varphi_{\omega_1}(z)
\]
for some $\lambda > 0$. However, since both $\varphi_{\omega_0}$ and $\varphi_{\omega_1}$ are eikonal solutions with $\varphi_{\omega_j} = t - x \cdot \omega_j$ for $x \cdot \omega_j \leq -1$, it follows that $\omega_0 = \omega_1$ (see \cite[Proposition 4.1]{oksanen2024}). This contradicts \eqref{assumption4}, and therefore $\kappa$ is nowhere vanishing. The formula for $\kappa$ in Proposition \ref{prop_gbar_zero}, together with \eqref{assumption1} and the fact that $\Psi = \id$ outside $Q_T$, implies that $\kappa = 1$ outside $Q_T$. Since $\mR^{1+n}$ is connected, we see that $\kappa > 0$ in $\mR^{1+n}$.

Now taking determinants in \eqref{gbar_zero_in_coordinates} and using that $g$ and $g'$ are nondegenerate and $\kappa$ is nowhere vanishing implies that $\det(D\Psi)$ is never zero. Hence $\Psi$ is a local diffeomorphism with $\Psi = \id$ outside $\ol{Q}_T$. Hadamard's global inverse function theorem \cite[Theorem 6.2.8]{krantz2002} implies that $\Psi$ is a diffeomorphism from $\mR^{1+n}$ to itself.
\end{proof}

Now we know that $\Psi$ is indeed a global diffeomorphism and $\kappa$ is positive. We can return to considering $g$ and $g'$ as $(0,2)$-tensors (i.e.\ with indices down). Then \eqref{gbar_zero_in_coordinates} may be rewritten as 
    \begin{align}\label{eq_kappa_Psi}
\kappa g = \Psi^* g'.
    \end{align}
Thus we have proved that $g$ and $g'$ agree up to a conformal transformation that is identity outside $\ol{Q}_T$.

It remains to show that $\kappa \equiv 1$. Writing $c = \kappa^{-1} \circ \Psi^{-1}$ and using that $\Psi = \id$ outside $Q_T$, for any $\omega \in \Omega$ we have 
\[
\mathcal{F}_{\omega,T}(c g') = \mathcal{F}_{\omega,T}(\Psi^*(c g')) = \mathcal{F}_{\omega,T}(\kappa^{-1} \Psi^* g') = \mathcal{F}_{\omega,T}(g).
\]
Since we assumed that $\mathcal{F}_{\omega,T}(g) = \mathcal{F}_{\omega,T}(g')$, this gives 
\[
\mathcal{F}_{\omega,T}(c g') = \mathcal{F}_{\omega,T}(g').
\]
The proof of Theorem \ref{thm_main2} is now concluded by the following result, which recovers an unknown conformal factor from $n+1$ measurements.

\begin{Proposition}\label{prop_conformal_factor}
Let $g'$ be a smooth Lorentzian metric in $\mR^{1+n}$, $n \ge 2$, satisfying \eqref{assumption1}--\eqref{assumption2}, and suppose that \eqref{assumption3}--\eqref{assumption4} hold for $\omega_0, \ldots, \omega_n \in S^{n-1}$. Let $c$ be a smooth positive function in $\mR^{1+n}$ with $c = 1$ outside $Q_T$, and suppose that \eqref{assumption2} holds for $c g'$. If 
\[
\mathcal{F}_{\omega_j,T}(c g') = \mathcal{F}_{\omega_j,T}(g'), \quad 0 \leq j \leq n,
\]
then $c = 1$.
\end{Proposition}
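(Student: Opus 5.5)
The plan is to run the argument of Proposition~\ref{prop_thetabar_estimate} again, with $g := c g'$ in place of $g$. Now there is no diffeomorphism gauge to fix, and the roles of $\bar g$ and $\bar\theta$ are played by $dc$.

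\emph{Step 1: geometry is unchanged.} The eikonal equation $g(d\varphi,d\varphi)=0$ is conformally invariant, and null geodesics of $cg'$ are reparametrized null geodesics of $g'$. Hence $cg'$ is simple in each direction $\omega_j$, and by uniqueness in Proposition~\ref{prop_plane_wave_structure}(a) its eikonal solutions coincide with $\varphi_j := \varphi_{\omega_j}'$. Write $U_{\omega_j,s}^{cg'} = u_c H(\varphi_j - s)$ and $U'_{\omega_j,s} = u' H(\varphi_j-s)$. By Proposition~\ref{prop_eikonal_detection}, $w := u_c - u'$ vanishes near $\p Q_T \cap\{\varphi_j \ge s\}$. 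Proposition~\ref{prop_carleman_psi} then applies to $w$ with respect to the globally hyperbolic metric $cg'$.

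\emph{Step 2: conformal identities.} In dimension $1+n$,
\[
\Box_{cg'} v = c^{-1}\Bigl(\Box_{g'} v \mp \tfrac{n-1}{2}\, g'(d\log c, dv)\Bigr),
\]
with the sign fixed by the paper's convention for $\Box$. We also have $Z_{cg',\varphi} = c^{-1} Z_{g',\varphi}$. In $\{\varphi_j \ge s\}$, Proposition~\ref{prop_plane_wave_structure}(c) gives $\Box_{g'}u' = 0$, since $a'$ is supported in $\{\varphi_j\le s\}$ and is multiplied by $(\varphi_j-s)$. Therefore
\[
|\Box_{cg'} w| = |\Box_{cg'}u'| \lesssim |dc| .
\]
On $\Gamma_{\omega_j}$ we compare the two transport equations,
\[
2Z_{cg'}u_c = -(\Box_{cg'}\varphi_j)u_c, \qquad 2Z_{cg'}u' = -c^{-1}(\Box_{g'}\varphi_j)u'.
\]
Subtracting gives
\[
Z_{cg',\varphi_j} w = \pm\tfrac{n-1}{4}\,c^{-1} g'(d\log c, d\varphi_j)\,u' + O(|w|) = \mp\tfrac{n-1}{4}\,c^{-1}(Z_{g',\varphi_j}\log c)\,u' + O(|w|).
\]
Since $u'>0$ on the interface and $n\ge 2$, this yields
\[
|Z_{cg',\varphi_j} w| \ge c_0\,|Z_{g',\varphi_j}\log c| - C|w| .
\]

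\emph{Step 3: absorb.} Insert these bounds into Proposition~\ref{prop_carleman_psi} and absorb the $|w|$ term using the $\sigma^3$ term. Then integrate in $s\in[-T-1,T+1]$; by Proposition~\ref{prop_plane_wave_structure}(a) the level sets $\Gamma_{\omega_j}$ sweep out $Q_T$ with a smooth Jacobian, since $d\varphi_j\neq 0$. This gives
\[
\sigma\norm{e^{\sigma\psi} Z_{g',\varphi_j}\log c}_{L^2(Q_T)}^2 \lesssim \norm{e^{\sigma\psi}|dc|}_{L^2(Q_T)}^2 .
\]
By \eqref{assumption4}, the covectors $d\varphi_0,\dots,d\varphi_n$ span $T^*_z$, so the vector fields $Z_{g',\varphi_j} = -d\varphi_j^{\sharp}$ span $T_z$ on $\ol Q_T$. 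Since $c$ is bounded above and below on the compact set $\ol Q_T$, summing over $j$ gives
\[
\sigma\norm{e^{\sigma\psi} dc}_{L^2(Q_T)}^2 \lesssim \norm{e^{\sigma\psi} dc}_{L^2(Q_T)}^2 .
\]
Taking $\sigma$ large forces $dc=0$, and since $c=1$ outside $Q_T$ we get $c\equiv 1$.

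The main obstacle is Step 2. One must get the conformal-change formula for $\Box$ exactly right, including the coefficient $\tfrac{n-1}{2}$; this coefficient is where $n\ge 2$ enters, and for $n=1$ it vanishes. One must also check that the transport-equation mismatch is a nonzero multiple of $Z_{g',\varphi_j}\log c$ with uniform constants in $s$.
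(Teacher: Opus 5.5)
Your proposal is correct and follows the paper's proof essentially step for step. It applies the same weighted estimate (Proposition~\ref{prop_carleman_psi}) to $u_s-u_s'$, uses the same conformal identity $(-\Box_{cg'}+c^{-1}\Box_{g'})v=\frac{n-1}{2}c^{-2}g'(dc,dv)$ for both the interior term and the transport mismatch on $\Gamma$, and then integrates in $s$ and sums over $j$ using \eqref{assumption4}. The one deviation is that you get simplicity of $cg'$ directly from the conformal invariance of null pregeodesics, whereas the paper invokes Proposition~\ref{prop_simplicity_detection}; your route is equally valid. A minor point: Proposition~\ref{prop_eikonal_detection} gives $w=0$ outside $\ol{Q}_T$, not near $\p Q_T$. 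This still yields vanishing Cauchy data on $\p Q_T\cap\{\varphi_j\ge s\}$, which is all the estimate needs.
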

\begin{proof}
Write $g = c g'$. Now $g$ satisfies \eqref{assumption1}--\eqref{assumption2}, and by Proposition \ref{prop_simplicity_detection} also $g$ is simple in each direction $\omega_j$. By Proposition \ref{prop_plane_wave_structure} there are smooth eikonal solutions $\varphi = \varphi_{\omega_j}$ and $\varphi' = \varphi'_{\omega_j}$, and since $g = c g'$ we have $\varphi = \varphi'$. Again by Proposition \ref{prop_plane_wave_structure} the plane wave solutions for $g$ and $g'$ have the form 
\[
U = u_s H(\varphi-s), \qquad U' = u_s' H(\varphi-s),
\]
respectively, where $u_s$ and $u'_s$ are smooth in $\{ \varphi \geq s \}$. On $\Gamma = \{ \varphi = s \}$, the functions $u_s$ and $u_s'$ satisfy transport equations 
\begin{align*}
2 Z_{g,\varphi} u_s + (\Box_g \varphi) u_s &= 0, \\
2 Z_{g',\varphi} u_s' + (\Box_{g'} \varphi) u_s' &= 0.
\end{align*}
Proposition \ref{prop_eikonal_detection} ensures that $U_s = U_s'$ slightly outside $Q_T$.

Let $\bar{w} = u_s - u_s'$. Applying Proposition \ref{prop_carleman_psi} to $\bar{w}$ yields, for $\sigma$ sufficiently large,  
\begin{equation} \label{carleman_conformal_factor}
\sigma \norm{e^{\sigma \psi} Z_{g,\varphi} \bar{w}}_{L^2(\Gamma)}^2 + \sigma^3 \norm{e^{\sigma \psi} \bar{w}}_{L^2(\Gamma)}^2 \lesssim \norm{e^{\sigma \psi} \Box_g \bar{w}}_{L^2(Q_T \cap \{ \varphi > s \})}^2.
\end{equation}
We write the right hand side of \eqref{carleman_conformal_factor} in $Q_T \cap \{ \varphi > s \}$ as 
\[
\Box_g \bar{w} = -\Box_g u_s' = (-\Box_g + c^{-1} \Box_{g'}) u_s'.
\]
Since $g = c g'$ and $-\Box_g = g^{jk} \p_{jk} + \theta_g^k \p_k$, for any smooth $v$ we have 
\[
(-\Box_g + c^{-1} \Box_{g'}) v = (\theta_g^k - c^{-1} \theta_{g'}^k) \p_k v.
\]
Now $\theta_g^k = |g|^{-1/2} \p_j(|g|^{1/2} g^{jk}) = c^{-1} \theta_{g'}^k + c^{-\frac{1+n}{2}} \p_j(c^{\frac{1+n}{2}-1}) (g')^{jk}$, so 
\begin{equation} \label{boxg_cinvboxgprime}
(-\Box_g + c^{-1} \Box_{g'}) v  = \frac{n-1}{2} c^{-2} g'(dc, dv).
\end{equation}
In particular 
\[
\Box_g \bar{w} = \frac{1-n}{2} g'(du_s', d(c^{-1})).
\]

On the other hand, for the left hand side of \eqref{carleman_conformal_factor}, since $Z_{g,\varphi} = c^{-1} Z_{g',\varphi}$ we have 
\begin{align*}
Z_{g,\varphi} \bar{w} &= Z_{g,\varphi} u_s - c^{-1} Z_{g',\varphi} u_s' \\
 &= -\frac{1}{2} ( (\Box_g \varphi) u_s - c^{-1} (\Box_{g'} \varphi) u_s' ) \\
 &= \frac{1}{2} ( -\Box_g \varphi + c^{-1} \Box_{g'} \varphi) u_s + O(|\bar{w}|).
\end{align*}
From \eqref{boxg_cinvboxgprime} we obtain 
\[
Z_{g,\varphi} \bar{w} = \frac{1-n}{4} g'(d\varphi, d(c^{-1})) u_s + O(|\bar{w}|).
\]
Inserting these estimates in \eqref{carleman_conformal_factor} and absorbing one term, with $\sigma$ large enough, we obtain  
\[
\sigma \norm{e^{\sigma \psi} g'(d\varphi, d(c^{-1}))}_{L^2(\Gamma)}^2 \lesssim \norm{e^{\sigma \psi} d(c^{-1})}_{L^2(Q_T \cap \{ \varphi > s \})}^2.
\]
Integrating from $-T-1$ to $T+1$ with respect to $s$, this becomes 
\[
\sigma \norm{e^{\sigma \psi} g'(d\varphi, d(c^{-1}))}_{L^2(Q_T)}^2 \lesssim \norm{e^{\sigma \psi} d(c^{-1})}_{L^2(Q_T)}^2.
\]

The previous estimate is true for $\varphi = \varphi_{\omega_j}$. Summing over $0 \leq j \leq n$ and applying the diffeomorphism property \eqref{assumption4}, so that $\{ d \varphi_{\omega_0}, \ldots, d \varphi_{\omega_n} \}$ is a basis at each point, implies that 
\[
\sigma \norm{e^{\sigma \psi} d(c^{-1})}_{L^2(Q_T)}^2 \lesssim \norm{e^{\sigma \psi} d(c^{-1})}_{L^2(Q_T)}^2.
\]
Choosing $\sigma$ large enough gives $d(c^{-1}) = 0$, so $c$ is constant. Since $c = 1$ outside $Q_T$, we obtain $c \equiv 1$ as required.
\end{proof}

\section{Directional simplicity} \label{sec_directional_simplicity}

In this section we prove Proposition \ref{prop_simplicity_detection}.
Let $g$ be a smooth Lorentzian metric  in $\mR^{1+n}$ satisfying \eqref{assumption1}.
We begin by formulating the directional simplicity condition in more detail.
Fix $\omega \in S^{n-1}$ and write 
    \begin{align}\label{def_Sigma_minus}
\Sigma_- = \{ (t,x) \in \R^{1+n} \,:\, x \cdot \omega = - 1 \},
    \end{align}
Given $z \in \Sigma_-$, let $\gamma_z(r) = \gamma(r; z, (1, \omega))$ be the geodesic satisfying 
    \begin{align*}
\gamma_z(0) = z, \quad \dot{\gamma}_z(0) = (1,\omega)
    \end{align*}
that is defined in the maximal interval $(-\infty,\rho^g(z))$ where $\rho^g(z) \in (0,\infty]$. Occasionally, we write $\gamma_z = \gamma_z^g$ to indicate the choice of the metric. Furthermore, we define
    \begin{align}\label{Phig}
\Phi^g : \{(z,r) \in \Sigma_- \times \R \mid r < \rho^g(z) \}
\to \R^{1+n}, \quad \Phi^g(z, r) = \gamma_z^g(r).
    \end{align}

\begin{Definition}\label{simple_in_omega}
A smooth Lorentzian metric $g$ in $\mR^{1+n}$ satisfying \eqref{assumption1} is simple in direction $\omega \in S^{n-1}$ if there is an open set $U \subset \Sigma_- \times \R$ and a neighborhood $V$ of $\overline{Q_{T+2}}$ such that $\Phi^g$ is a diffeomorphism from $U$ to $V$. 
\end{Definition}

We write $\Phi^g = \Phi^g_\omega$ and $\Sigma_- = \Sigma_-^\omega$ when emphasizing the dependence on the direction $\omega$. Our first lemma shows that $\Phi^g$ is a diffeomorphism on a larger set. We write $B_r = \{x \in \R^n \mid |x| < r\}$.

\begin{figure}
\begin{tikzpicture}[line cap=round, line join=round]
 
  \def\xL{1}          
  \def\R{2}           
  \def\cy{3}        
  \pgfmathsetmacro{\cx}{\xL+\R}       
  \def\xmin{0}  \def\xmax{5.5}  \def\ymin{0}  \def\ymax{6}
  \pgfmathsetmacro{\ytop}{\cy+\R}     
  \pgfmathsetmacro{\ybot}{\cy-\R}     
  \pgfmathsetmacro{\xout}{\xmax+1}    
 
  \begin{scope}
    \clip (\xmin,\ymin) rectangle (\xmax,\ymax);
    \fill[region, even odd rule]
        (\xmin-1,\ymin-1) rectangle (\xout,\ymax+1)
        (\cx,\ytop) arc[start angle=90, end angle=270, radius=\R]
                    -- (\xout,\ybot) -- (\xout,\ytop) -- cycle;
  \end{scope}
 
  \draw[pen, line width=1pt] (\cx,\cy) circle (\R);
  \draw[pen, line width=1pt] (\xL,\ymin) -- (\xL,\ymax);
 
  \node[pen, anchor=south west, font=\large] at (\xL,\ymax+0.1) {$\Sigma_-$};
  \node[pen, font=\large] at (\cx,\cy) {$\mathbb R\times B$};
\end{tikzpicture}
\caption{The cylinder $\R \times B$ and $\Sigma_-$ projected in space $\R^n$. The shaded region is foliated by the geodesics $\gamma_z$, $z \in \Sigma_-$. The geodesics are Minkowski geodesics in this region.}
\label{fig_shadow}
\end{figure}

\begin{Lemma}\label{lem_simple_ext}
Suppose that $g$ satisfying \eqref{assumption1}--\eqref{assumption2} is simple in direction $\omega \in S^{n-1}$. Then
there are open sets $\tilde U \subset \Sigma_- \times \R$ and $\tilde V \subset \R^{1+n}$ and a constant $\epsilon > 0$ such that 
\begin{itemize}
\item[(i)] $\Phi^g : \tilde U \to \tilde V$ is a diffeomorphism,
\item[(ii)] $\R \times B_{1+\epsilon} \subset \tilde V$,
\item[(iii)] $\Sigma_- \times \{0\} \subset \tilde U$,
\item[(iv)] $(z, r) \in \tilde U$ implies $(z,s) \in \tilde U$ for all $s < r$,
\item[(v)] $(\Sigma_- \setminus \pi((\Phi^g)^{-1}(\ol Q_{T+2}))) \times \R \subset \tilde U$ where $\pi : \Sigma_- \times \R \to \Sigma_-$ is the natural projection.
\end{itemize}
Furthermore, for all $z \in \Sigma_-$, the inverse image $(\gamma_z)^{-1}(\R \times B_{1+\epsilon})$ is a bounded interval, if it is nonempty, and $\gamma_z$ extends beyond the interval on both ends. 
\end{Lemma}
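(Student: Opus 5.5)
The plan is to enlarge the given diffeomorphism $\Phi^g : U \to V$ by adding the region where the geodesics $\gamma_z$ are straight Minkowski lines, in the spirit of Figure~\ref{fig_shadow}. Set
\[
K = \pi((\Phi^g)^{-1}(\ol Q_{T+2})) \subset \Sigma_-.
\]
Since $(\Phi^g)^{-1}(\ol Q_{T+2})$ is compact, $K$ is compact. For $z \notin K$ the geodesic $\gamma_z$ never enters $\ol Q_{T+2}$. I expect it then never enters $\ol Q_T$ either, so by \eqref{assumption1} it is the Minkowski line $r \mapsto z + r(1,\omega)$, defined for all $r \in \R$. This still has to be checked: one must rule out a geodesic that enters $\ol Q_T$ from outside $V$. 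If it did, then before entering it would be a Minkowski line, and every such line that meets $\ol Q_T$ crosses the slab $\{T<|t|<T+2\}$ inside $\R\times \ol B$, hence meets $\ol Q_{T+2}$ first.

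I will define
\[
\tilde U = \bigl(U \cup ((\Sigma_- \setminus K)\times\R)\bigr) \cap \{(z,r) : r < \rho^g(z)\},
\]
and then shrink it as needed so that (iv) holds. The natural choice is the set of $(z,r)$ such that $\gamma_z([0,r])$, or $\gamma_z([r,0])$, stays in an admissible region, together with all backward times. Backward times are harmless: for $r \le 0$ the point $\gamma_z(r)=z+r(1,\omega)$ has $x\cdot\omega\le -1$, so there $\Phi^g$ is the affine map $(z,r)\mapsto z+r(1,\omega)$, which is injective. This gives (iii) and the backward part of (iv), and (v) holds by construction. Set $\tilde V = \Phi^g(\tilde U)$.

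The local diffeomorphism property is clear: on $U$ it is assumed, and on the straight part $\Phi^g$ is affine with invertible differential. Injectivity is the main obstacle. The pieces are individually injective, so the risk is a point of $V$ that is hit both by a geodesic from $U$ and by a straight line from some $z' \notin K$. I would handle this with time orientation and causality from \eqref{assumption2}. The key facts are that two distinct null geodesics issuing from $\Sigma_-$ in direction $(1,\omega)$ cannot cross outside $M$, since there they are parallel Minkowski lines, and that inside $V$ the map $\Phi^g$ is already injective. Points of $V \setminus \ol Q_T$ lie on Minkowski lines, and their preimage is determined by translating back along $(1,\omega)$. So the two pieces agree on any overlap, and the union is injective. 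The difficulty is the bookkeeping at times after the geodesic exits $V$ into the future. I would restrict $\tilde U$ to the connected component of $\{(z,r)\}$ containing $\Sigma_-\times\{0\}$ on which the trajectory has not re-entered $\ol Q_{T+2}$ after leaving it. Global hyperbolicity gives that null geodesics leave every compact set and do not return, which supports this restriction.

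For (ii) I would argue by compactness and openness. The set $\tilde V$ contains a neighborhood of $\ol Q_{T+2}$, and every point of $\R\times \ol B$ with $|t|\ge T+1$ lies on a Minkowski line from $\Sigma_-$ that is included in $\tilde U$. The lines that pass near $\ol Q_T$ are in $U$ near $\ol Q_{T+2}$ and continue straight afterwards. So $\R\times\ol B\subset \tilde V$, and openness of $\tilde V$, together with uniformity in $t$ coming from translation invariance for $|t|$ large, yields $\R\times B_{1+\epsilon}\subset\tilde V$.

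For the final claim, take $r$ with $\gamma_z(r)\in\R\times B_{1+\epsilon}$. Outside $\ol Q_T$ the curve is a Minkowski line, whose $x$-component moves with unit speed along $\omega$ and so crosses $B_{1+\epsilon}$ in time at most $2+2\epsilon$. Inside $\ol Q_T$ it can spend only bounded time, because $V$ is a neighborhood of a compact set parametrized by a bounded parameter range. Hence the preimage is bounded. It is an interval because, by the diffeomorphism property, the curve cannot leave and re-enter the region. It extends on both ends: backward trivially, and forward because after exiting it is again a complete Minkowski line, so $\rho^g(z)=\infty$ there.
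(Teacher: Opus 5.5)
There is a genuine gap, at exactly the step you flag as the main obstacle: choosing $\tilde U$ so that (i) and (ii) hold together.

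\textbf{The injectivity argument rests on a false claim.} You argue that distinct $\gamma_z$ cannot cross outside $M$ because there they are parallel lines in direction $(1,\omega)$. You also say that preimages of points of $V\setminus\overline{Q_T}$ are found by translating back along $(1,\omega)$. Both are false. A geodesic that has passed through $\overline{Q_T}$ leaves it in some direction $(1,v)$, in general with $v\neq\omega$. Along that line $t-x\cdot\omega$ increases at rate $1-v\cdot\omega>0$, so the line eventually enters $\{t-x\cdot\omega>T+1\}$. Every point of that set also lies on the straight line from some $z'$ with $t(z')>T$, and that line never meets $\overline{Q_T}$. Bent geodesics likewise run into $\{|p(x)|>1\}$, where $p(x)=x-(x\cdot\omega)\omega$, and this region is swept by straight lines. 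So you cannot keep the whole future of every geodesic.

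\textbf{Neither version of your $\tilde U$ works.} Your last suggestion is to take the component of pairs whose trajectory has not re-entered $\overline{Q_{T+2}}$. This removes nothing: by convexity and straightness outside $\overline{Q_T}$, no $\gamma_z$ ever re-enters, so (i) fails.

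Your explicit candidate $U\cup((\Sigma_-\setminus K)\times\mathbb{R})$ plus backward times fails (ii) and (iv) instead. The dichotomy ``meets $\overline{Q_{T+2}}$'' is the wrong one. Take $V$ thin and $z=(T+1.5,-\omega)$. Then $z\in\overline{Q_{T+2}}$, so $z\in K$, and $\gamma_z(1)=(T+2.5,0)$ is not in $\Phi^g(\tilde U)$. Similarly, for $z=(-T-3.5,-\omega)\in K$ the pair $(z,1)$ is missing, although $(z,1.5)\in U$. Your argument for (ii) silently uses exactly these missing continuations.

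\textbf{The missing idea is to use the invariant $t-x\cdot\omega$.} It is constant on straight lines, and $\overline{Q_T}\subset\{|t-x\cdot\omega|\le T+1\}$. So the value of $t(z)$ tells you which continuations are safe to keep. The paper builds $\tilde U$ in three pieces:
\begin{itemize}
\item It fixes a convex cylinder $V_0=\{|t|<T+2+\epsilon,\ |x|<1+\epsilon\}\subset V$ and keeps bent geodesics only inside $V_0$.
\item It adds the \emph{complete} lines $\gamma_z$ with $t(z)>T$ or $t(z)<-T-2$. These never meet $\overline{Q_T}$, and they fill $\{t-x\cdot\omega>T+1\}$ and $\{t-x\cdot\omega<-T-1\}$. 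Outside $V_0$, only straight pieces are kept there, so injectivity is preserved.
\item It adds the straight pieces in the shadow complement $V_3=\{|x|>1\}\cap(\{x\cdot\omega<0\}\cup\{|p(x)|>1\})$. Together with the complete lines, these give (iii)--(v).
\end{itemize}
Then (ii) is just the inclusion $\mathbb{R}\times B_{1+\epsilon}\subset V_0\cup\{t-x\cdot\omega>T+1\}\cup\{t-x\cdot\omega<-T-1\}$.

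For the final claim, the paper does not rely on a ``bounded parameter range''. It uses a Cauchy temporal function $\tau$. This $\tau$ is bounded on $\overline{V_0}$ and runs from $-\infty$ to $\infty$ along each inextendible $\gamma_z$. Hence $(\gamma_z)^{-1}(\overline{V_0})$ is bounded and $\gamma_z$ extends past it at both ends.
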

\begin{proof}
Compactness of $\overline{Q_{T+2}}$ implies that there is $\epsilon > 0$ such that
    \begin{align*}
V_0 = \{(t, x) \in \R^{1+n} \mid |t| < T+2 + \epsilon,\ |x| < 1 + \epsilon \}
    \end{align*}
is contained in $V$. We write $U_0 = (\Phi^g)^{-1}(V_0)$.
The geodesic $\gamma_z$ with $z \in \Sigma_-$ is a Minkowski null geodesic as long as it stays outside $V_0$. In particular, 
$(\gamma_z)^{-1}(\R \times B_{1+\epsilon})$ is an interval if it is nonempty. 
By \eqref{assumption2} there is a Cauchy temporal function $\tau$ on $(\mR^{1+n}, g)$, see e.g \cite[Proposition 2.3]{oksanen2024}. Compactness of $\overline{V_0}$ implies that $\tau$ attains its minimum and maximum values on $\overline{V_0}$. On the other hand, $\tau$ is strictly increasing and takes every value in $\R$ along $\gamma_z$. Thus 
$(\gamma_z)^{-1}(\overline{V_0})$ is bounded and $\gamma_z$ extends beyond it on both ends.

The geodesic $\gamma_z$ with $z \in \{t > T\} \cap \Sigma_-$ is a Minkowski null geodesic. (In Figure \ref{fig_intervals} such geodesics stay above the upper red line segment.) The directional simplicity implies that $\Phi^g$ is a diffeomorphism from $U_1$ to $V_1$ where
    \begin{align*}
U_1 = ((\{t > T\} \cap \Sigma_-) \times \R) \cup U_0,
\quad
V_1 = \{t - x \cdot \omega> T + 1\} \cup V_0.
    \end{align*}
Analogously $\Phi^g$ is a diffeomorphism from $U_2$ to $V_2$ where
    \begin{align*}
U_2 = ((\{t < -T - 2\} \cap \Sigma_-) \times \R) \cup U_1,
\quad
V_2 = \{t - x \cdot \omega < -T - 1\} \cup V_1.
    \end{align*}
Let us show that 
    \begin{align}\label{V2_bound}
\R \times \overline B \subset V_2.
    \end{align}
Let $x \in \overline B$ and $t \in \R$. If $|t| \le T + 2$ then 
    \begin{align*}
(t,x) \in \overline{Q_{T+2}} \subset V_0 \subset V_2.
    \end{align*}
On the other hand, if $t > T + 2$ then $t - x \cdot \omega > T + 1$, and if $t < -T - 2$ then $t - x \cdot \omega < -T - 1$. Thus \eqref{V2_bound} holds.

Consider the projection $p(x) = x - (x \cdot \omega) \omega$ and define the region
    \begin{align*}
V_3 = \{ |x| > 1 \} \cap (\{ x \cdot \omega < 0 \} \cup \{|p(x)| > 1\}),
    \end{align*}
visualized in Figure \ref{fig_shadow}. Write $U_3 = (\Phi^g)^{-1}(V_3)$, $\tilde U = U_2 \cup U_3$, and $\tilde V = V_2 \cup V_3$.
The geodesic $\gamma_z$ with $z \in \Sigma_-$ is a Minkowski null geodesic as long as it stays in $V_3$, and the directional simplicity implies that $\Phi^g$ is a diffeomorphism from $\tilde U$ to $\tilde V$. Now (ii) holds due to \eqref{V2_bound} and (iii) follows from the choice of $V_3$ and $U_3$. 

Write $I = (\gamma_z)^{-1}(V_0)$ and recall that $I$ is a bounded interval if nonempty. We have $\gamma_z(s) \in V_3$ for $s \le \inf I$ whenever $I$ is nonempty. On the other hand, when $I$ is empty, $\gamma_z$ is a Minkowski geodesic contained in $\tilde V$. We conclude that (iv) holds using these two facts. 

Consider $z = (t,x) \in \Sigma_-$ such that the geodesic $\gamma_z$ does not intersect $\ol Q_{T+2}$. If $|p(x)| > 1$ then $\gamma_z$ is a Minkowski geodesic contained in $V_3$. If $|p(x)| \le 1$ and $t \ge 0$ then $t > T + 1$ and $\gamma_z$ is a Minkowski geodesic contained in $V_1$. Finally, if $|p(x)| \le 1$ and $t \le 0$ then $t < -T - 3$ and $\gamma_z$ is a Minkowski geodesic contained in $V_2$. Thus (v) holds.

We have already shown that $(\gamma_z)^{-1}(\overline{V_0})$ is a bounded interval, if nonempty, and that $\gamma_z$ extends beyond it on both ends.
If $\gamma_z$ does not intersect $\overline{V_0}$ then $\gamma_z$ is a Minkowski geodesic, and it is straightforward to see that the preimage $(\gamma_z)^{-1}(\R \times B_{1+\epsilon})$ has also the above properties.
\end{proof}

\begin{Lemma}\label{lem_r_plus}
Suppose that $g$ satisfying \eqref{assumption1}--\eqref{assumption2} is simple in direction $\omega \in S^{n-1}$. Then there is a smooth function $r_+ : \Sigma_- \to (0, \infty)$ and a constant $\eps > 0$ such that $r_+(z) = 2 + \eps$ when $|z|$ is large and, writing
    \begin{align*}
\mho_\delta &= \{(z, r) \mid r \in (-\infty, r_+(z) + \delta),\ z \in \Sigma_- \}, \quad \delta \in \R, 
    \end{align*}
it holds that $\R \times B \subset \Phi^g(\mho_{-\eps})$ and that
$\Phi^g : \mho_\eps \to \Phi^g(\mho_\eps)$ is a diffeomorphism.
\end{Lemma}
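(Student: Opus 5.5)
We build $r_+$ from Lemma \ref{lem_simple_ext}: take $\tilde U,\tilde V,\epsilon$ as there, and let $\epsilon_1 \in (0,\epsilon)$ be small. For $z \in \Sigma_-$ the preimage $I_z = \gamma_z^{-1}(\R \times B_{1+\epsilon_1})$ is a bounded interval (possibly empty), and $\gamma_z$ extends beyond it on both ends. Let $R(z)$ denote the first exit time of $\gamma_z$ from $\R\times \ol B_{1+\epsilon_1/2}$ after its last visit to $\R \times \ol B$, or $0$ if $\gamma_z$ never meets $\R\times\ol B$.

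The key observation is that the required parameter range lies in $\tilde U$. By (ii), every point of $\R\times B_{1+\epsilon}$ lies in $\tilde V = \Phi^g(\tilde U)$, so $(z,r) \in \tilde U$ whenever $\gamma_z(r) \in \R\times B_{1+\epsilon}$. Now fix $z$ with $I_z$ nonempty. By (iv), $\tilde U$ is closed downward in $r$, so $(z,r) \in \tilde U$ for all $r$ below any point of $I_z$. Hence $\{z\}\times(-\infty,\sup I_z) \subset \tilde U$. Since $\tilde U$ is open, this extends slightly beyond $\sup I_z$. For $z$ with $I_z$ empty, (iii) and (iv) give $\{z\}\times(-\infty,0]\subset\tilde U$.

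The strategy is then to pick a smooth $r_+$ lying strictly between "late enough to cover $\R\times B$" and "early enough to stay in $\tilde U$". Concretely, we want a smooth $r_+$ and a constant $\eps>0$ such that:
\begin{itemize}
\item $r_+(z) - \eps \ge$ the last time $\gamma_z$ is in $\R\times\ol B$ (when this set is nonempty), so that $\R\times B \subset \Phi^g(\mho_{-\eps})$;
\item $\{z\}\times(-\infty, r_+(z)+\eps) \subset \tilde U$.
\end{itemize}
Injectivity and the local diffeomorphism property on $\mho_\eps$ are then inherited from $\Phi^g|_{\tilde U}$.

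The existence of such $r_+$ reduces to a semicontinuity and compactness argument. Define the lower function $a(z) = \sup \gamma_z^{-1}(\R\times\ol B)$ (or $-\infty$) and the upper function $b(z) = \sup\{r : \{z\}\times(-\infty,r]\subset\tilde U\}$. We have $a<b$ pointwise, by the observation above together with openness of $\tilde U$. Moreover, $a$ is upper semicontinuous, since the closed condition "$\gamma_z$ meets the closed set $\R\times \ol B$" behaves well under limits and the relevant times are bounded via the Cauchy temporal function. Also, $b$ is lower semicontinuous, since $\tilde U$ is open and the intervals are compact; one needs a uniform margin here, which is where compactness of $\overline{Q_{T+2}}$ and the bounded-interval statement of Lemma \ref{lem_simple_ext} enter.

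A standard partition-of-unity interpolation between an upper semicontinuous $a$ and a lower semicontinuous $b$ with $a<b$ then gives a smooth $r_+$ with $a + 2\eps < r_+ < b - 2\eps$. This is valid with a uniform $\eps$ once we know that outside a compact set of $z$ both $a$ and $b$ are explicit.

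That explicit behavior for large $|z|$ is handled by using (v) and the Minkowski structure. For $|z|$ large the geodesic is a straight Minkowski line. If it meets $\R\times\ol B$, it does so within parameter $r \le 2$, since it starts at $x\cdot\omega=-1$ and moves with unit spatial speed across the unit ball. By (v), all of $\{z\}\times\R\subset\tilde U$ for such $z$, so $b=\infty$ there. Thus the constant $2+\eps$ works for $|z|$ large, and we glue it to the interpolant on a compact set.

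The main obstacle I expect is this uniformity: verifying the semicontinuity of $a$ and $b$, and producing a single $\eps>0$ on the compact set of $z$ whose geodesics hit $\ol Q_{T+2}$. For this I would rely on compactness of $\pi((\Phi^g)^{-1}(\ol Q_{T+2}))$, which follows because $\Phi^g$ is a diffeomorphism onto a neighborhood of the compact set $\ol Q_{T+2}$.
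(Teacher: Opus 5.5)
Your architecture matches the paper's: an upper semicontinuous lower barrier, the lower semicontinuous fibre supremum $R(z)=\sup\{r:(z,r)\in\tilde U\}$ (by (iv) the fibre of $\tilde U$ over $z$ is $(-\infty,R(z))$), a uniform gap on the compact set $C=\pi((\Phi^g)^{-1}(\overline{Q_{T+2}}))$, a partition of unity, and the constant $2+\eps$ off $C$ via (v).

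The gap is your ``key observation'', and $a<b$ rests on it. Property (ii) only says that a point $\gamma_z(r)\in\mathbb{R}\times B_{1+\epsilon}$ has \emph{some} preimage in $\tilde U$. To conclude that this preimage is $(z,r)$ you would need injectivity of $\Phi^g$ on $\tilde U\cup\{(z,r)\}$. But $\Phi^g$ is only known to be injective on $\tilde U$, and after leaving $\overline{Q_T}$ a deflected geodesic may well re-enter $\tilde V$ at parameters outside $\tilde U$. So, as written, nothing rules out that $\gamma_z$ leaves $\tilde U$ and afterwards lies in $\mathbb{R}\times\overline B$. That would give $a(z)\ge b(z)$, leaving no room for $r_+$.

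The claim is true, but it needs an argument:
\begin{itemize}
\item Take $z\in C$ with $R(z)<\rho^g(z)$; otherwise there is nothing to prove.
\item Then $\gamma_z(R(z))\notin\tilde V$. If it were, a compact neighbourhood $N\subset\tilde V$ of it would give $(z,r)=(\Phi^g|_{\tilde U})^{-1}(\gamma_z(r))\in(\Phi^g|_{\tilde U})^{-1}(N)$ for $r\uparrow R(z)$, and compactness would force $(z,R(z))\in\tilde U$.
\item Hence $R(z)\notin I_z=\gamma_z^{-1}(\mathbb{R}\times B_{1+\epsilon})$ by (ii).
\item Since $z\in C$, some $r_1<R(z)$ has $\gamma_z(r_1)\in\overline{Q_{T+2}}$, so $r_1\in I_z$. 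Because $I_z$ is an interval, $I_z\subset(-\infty,R(z))$, and thus $a(z)<R(z)$.
\item For $z\notin C$, (v) gives $R(z)=\infty$.
\end{itemize}

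The paper avoids this step altogether. It defines the lower function through $(\Phi^g|_{\tilde U})^{-1}$ of the compact set $\overline{Q_{T+4}}$. Then $m<R$ is automatic, since the supremum is attained on a compact subset of the open set $\tilde U$, and upper semicontinuity is just Lemma \ref{lem_sc}. Your $a$, defined via the non-compact set $\mathbb{R}\times\overline B$, additionally needs the boundedness and continuous-dependence argument you only allude to.

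A minor point: the margin $a+2\eps<r_+$ is incompatible with $r_+=2+\eps$ at infinity, since $a(t,-\omega)=2$ for $|t|$ large. You only need $a+\eps\le r_+\le R-\eps$, which suffices for the covering because $B$ is open.
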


The proof uses the following elementary lemma, shown in Appendix \ref{sec_diffeo_appendix}. 

\begin{Lemma}\label{lem_sc}
Let $e_n = (0, \ldots, 0, 1) \in \mathbb{R}^n$.
For a set $S \subset \mathbb{R}^n$ and a point $x \in \mathbb{R}^n$, define
\[
f_S : \mathbb{R}^n \to \mathbb{R} \cup \{-\infty, +\infty\},
\quad
f_S(x) \;=\; \sup \{\, t \in \mathbb{R} \mid x + t\,e_n \in S \,\},
\]
with the convention $\sup \emptyset = -\infty$.
If $K \subset \mathbb{R}^n$ is compact, then $f_K$ is upper semicontinuous. Moreover, if $U \subset \mathbb{R}^n$ is open, then $f_U$ is lower semicontinuous.
\end{Lemma}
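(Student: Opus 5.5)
The plan is to prove the two claims separately, working directly with the definition of semicontinuity via sequences or neighborhoods. Throughout, $f$ takes values in the extended reals.

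\emph{Upper semicontinuity of $f_K$.} Fix $x$ and a real $a > f_K(x)$; we want a neighborhood of $x$ on which $f_K < a$. Suppose this fails: there are $x_k \to x$ with $f_K(x_k) \ge a$. Then $f_K(x_k) > -\infty$, and since $K$ is compact the set $\{t \mid x_k + t e_n \in K\}$ is compact and nonempty, so its supremum is attained. This gives $t_k \ge a$ with $x_k + t_k e_n \in K$. Because $K$ is bounded, the $t_k$ are bounded, so a subsequence converges to some $t \ge a$. Closedness of $K$ then gives $x + t e_n \in K$, hence $f_K(x) \ge t \ge a$, a contradiction. The same argument covers $f_K(x) = -\infty$ (take any real $a$), and $f_K < +\infty$ everywhere by boundedness. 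This is the only step where a limit must be extracted, and compactness handles it.

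\emph{Lower semicontinuity of $f_U$.} Fix $x$ and a real $a < f_U(x)$; we want a neighborhood on which $f_U > a$. By definition of the supremum there is $t > a$ with $x + t e_n \in U$. Since $U$ is open, some ball $B(x + t e_n, r) \subset U$. Hence for $|y - x| < r$ we have $y + t e_n \in U$, so $f_U(y) \ge t > a$. If $f_U(x) = -\infty$ there is nothing to prove.

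No step is a real obstacle. The only point needing care is the extended-real conventions: $\sup\emptyset = -\infty$, and the thresholds $a$ must range over all reals so that the infinite values are handled.
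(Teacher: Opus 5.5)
Your proof is correct and follows essentially the same route as the paper. For $f_K$ you attain the supremum on the compact fibers, extract a convergent subsequence of the bounded $t_k$ (bounded because both $K$ and the convergent sequence $x_k$ are bounded), and use closedness of $K$; the paper phrases this as closedness of the superlevel sets $\{f_K \ge \alpha\}$. For $f_U$ you use the same ball around $x + t e_n$ to show that $\{f_U > \alpha\}$ is open.
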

\begin{proof}[Proof of Lemma \ref{lem_r_plus}.]
Let $\tilde U \subset \Sigma_- \times \R$ and $\pi : \Sigma_- \times \R \to \Sigma_-$ be as in Lemma~\ref{lem_simple_ext}.
We define $C = \pi((\Phi^g)^{-1}(\ol Q_{T+2}))$ and
    \begin{align*}
m &: \Sigma_- \to (0,\infty), 
\quad
m(z) = \sup \{ r \in (0, \infty) \mid (z, r) \in \Phi^{-1}(\overline{Q_{T+4}})\},
\\
R &: \Sigma_- \to (0,\infty], \quad R(z) = \sup \{ r \in (0, \infty) \mid (z, r
) \in \tilde U \}.
    \end{align*}
Then $m(z) < R(z)$ for all $z \in C$ due to Lemma~\ref{lem_simple_ext}. 
Writing 
    \begin{align*}
\iota : \Sigma_- \to \Sigma_- \times \R,
\quad \iota(z) = (z, 0),
    \end{align*}
we have $R = f_{\tilde U} \circ \iota$. 
Hence $R$ is lower semicontinuous by Lemma \ref{lem_sc}. Similarly, $m$ is upper semicontinuous.
Therefore $R - m$ is lower semicontinuous and strictly positive on the compact set $C$, and there is a constant $\eps > 0$ such that $R - m > 2\eps$ on $C$. For each $z \in C$ there is $r_z$ with 
    \begin{align*}
m(z) + \eps < r_z < R(z) - \eps.
    \end{align*}
Semicontinuity implies that there is a neighborhood $U_z \subset \Sigma_-$ of $z$ such that $m(w) + \eps < r_z < R(w) - \eps$ for $w \in U_z$. Choose a finite subcover $U_{z_1}, \dots, U_{z_N}$ of $C$ and a smooth partition of unity $\chi_j \in C^\infty(\Sigma_-)$ subordinate to this cover. Then the function $r_+ = \sum_{j=1}^N r_{z_j} \chi_j$ is smooth on $\Sigma_-$ and satisfies $m + \eps < r_+ < R - \eps$ in $C$. By Lemma~\ref{lem_simple_ext} there is room to modify $r_+$ on $\Sigma_- \setminus C$ so that it satisfies $\R \times \overline{B} \subset \Phi^g(\mho_{-\eps})$ and that $r_+(z) = 2 + \eps$ when $|z|$ is large.
\end{proof}

When $g$ and $r_+$ are as in Lemma~\ref{lem_r_plus}, we define 
    \begin{align*}
\Sigma_+^g &= \{ \Phi^g(z, r_+(z)) \mid z \in \Sigma_- \}.
    \end{align*}

\begin{Remark}\label{rem_alpha_diffeo}
If $g$ satisfying \eqref{assumption1}--\eqref{assumption2} is simple in direction $\omega$, then in the coordinates given by $\Phi^g$, the geodesic $\gamma_z$, $z \in \Sigma_-$, is the curve $r \mapsto (z, r)$. In particular, $\gamma_z$ is transverse to $\Sigma_+^g$, and the map $\alpha(z) = \gamma_z(r_+(z))$ is a diffeomorphism from $\Sigma_-$ to $\Sigma_+^g$.
\end{Remark}

For later use, we define 
    \begin{align*}
\Sigma_{-,\sigma} = \Sigma_- \cap \{t = \sigma\}, \quad \sigma \in \R,
    \end{align*}
and when $g$ simple in direction $\omega$ and $\alpha$ is as in Remark \ref{rem_alpha_diffeo},
    \begin{align}\label{def_Sigma_plus_sigma}
\Sigma_{+,\sigma}^g = \alpha(\Sigma_{-,\sigma}).
    \end{align}
Then $\Sigma_{+,\sigma}^g$, $\sigma \in \R$, is a family of disjoint smooth manifolds of codimension two that foliates the smooth manifold $\Sigma_+^g$ of codimension one.

\begin{Definition} \label{def_spw_scattering_relation}
Suppose that $g'$ satisfying \eqref{assumption1}--\eqref{assumption2} is simple in direction $\omega \in S^{n-1}$.
Let $g$ be a Lorentzian metric on $\R^{1 + n}$.
For any $z \in \Sigma_-$ let 
    \begin{align}\label{def_r_g}
r_+^{g}(z) = \inf \,\{ 0 < r < \rho^g(z) \,:\, \gamma_z^{g}(r) \in \Sigma_+^{g'} \}
    \end{align}
with the convention that $r_+^{g}(z) = \rho^g(z)$ if $\gamma_z^{g}(r) \notin \Sigma_+^{g'}$ for all $0 < r < \rho^g(z)$. Let $\mathcal{T}_{g} = \{ z \in \Sigma_- \,:\, r_+^{g}(z) = \rho^{g}(z) \}$ be the trapped set. Given any function $\lambda : \Sigma_+^{g'} \to \R$, the \emph{single plane wave (SPW) scattering relation} is the map 
\[
\beta_{g}^{\lambda}: \Sigma_- \setminus \mathcal{T}_{g} \to T \R^{1+n}, \ z \mapsto (p, \lambda(p) q)
\]
where $(p, q) = (\gamma_z^{g}(r), \dot \gamma_z^{g}(r))$ is evaluated at $r = r_+^{g}(z)$.
We write $\beta_{g} = \beta_{g}^1$ when $\lambda \equiv 1$.
\end{Definition}

Lemma \ref{lem_simple_ext} implies that $\mathcal{T}_{g'} = \emptyset$. The following variant of \cite[Theorem 1.6]{oksanen2024} can be proven using the same strategy as in \cite{oksanen2024}. For the convenience of the reader we give a detailed proof in Appendix \ref{sec_diffeo_appendix}.

\begin{Theorem}\label{th_diffeo}
Let two Lorentzian metrics $g$ and $g'$ on $\R^{1+n}$ both satisfy \eqref{assumption1} and \eqref{assumption2}. 
Suppose that $g'$ is simple in direction $\omega \in S^{n-1}$ and that there is positive $\lambda \in C^\infty(\Sigma_+^{g'})$ such that for all $\sigma \in \R$
    \begin{align}\label{eq_beta}
\beta_{g}(\Sigma_{-,\sigma} \setminus \mathcal T_{g})
= 
\beta_{g'}^{\lambda}(\Sigma_{-,\sigma}),
    \end{align}
where $\Sigma_{-,\sigma} = \Sigma_- \cap \{t = \sigma\}$.
Then $g$ is simple in direction $\omega$.
\end{Theorem}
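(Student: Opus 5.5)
The strategy is to reproduce, inside the $g$-geometry, the foliation that simplicity of $g'$ provides, and to show that $\Phi^g$ is a diffeomorphism onto a neighborhood of $\ol Q_{T+2}$ using degree theory (Appendix \ref{degree_theory}). Let $\tilde U'$, $\mho'_\eps$ and $\Sigma_+^{g'}$ be built for $g'$ by Lemmas \ref{lem_simple_ext} and \ref{lem_r_plus}. Set $D' = \{(z,r) : r \le r_+(z)\}$. Then $\Phi^{g'}$ maps $D'$ diffeomorphically onto the closed region $\mathcal{R}$ lying "below" $\Sigma_+^{g'}$ in $\R\times B_{1+\epsilon}$ (and near it), and this region contains $\ol Q_{T+2}$.

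\textbf{Step 1: no trapping and the correct hitting point.} Since $g = g' = \gm$ outside $M$, for $z$ far away (more precisely, for $z$ whose $g$-geodesic never meets $M$) we have $\gamma^g_z = \gamma^{g'}_z$, so $r_+^g(z) = r_+(z)$. Global hyperbolicity of $g$, via a Cauchy temporal function as in the proof of Lemma \ref{lem_simple_ext}, shows that each $\gamma^g_z$ leaves the compact set $\ol Q_{T+2}$ in finite parameter time. I then want to show $\mathcal T_g = \emptyset$, i.e.\ that every $\gamma_z^g$ reaches $\Sigma_+^{g'}$. Here \eqref{eq_beta} is used: $\beta_g(\Sigma_{-,\sigma}\setminus\mathcal T_g) = \beta^\lambda_{g'}(\Sigma_{-,\sigma})$, and the image on the right is a compact-free closed leaf $\Sigma^{g'}_{+,\sigma}$ (with positively rescaled directions). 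Since $\Sigma_+^{g'}$ separates $\R^{1+n}$ and the part of $\gamma^g_z$ after exiting $M$ is Minkowskian and transversal, a trapped geodesic would have to stay below $\Sigma_+^{g'}$ forever. That contradicts the fact that $\tau$ goes to $+\infty$ along $\gamma^g_z$, while the region below $\Sigma_+^{g'}$ has $\tau$ bounded above on any set where the geodesic is Minkowskian and exits $\R\times B_{1+\epsilon}$. This gives $\mathcal T_g = \emptyset$, and $r_+^g$ is then lower semicontinuous by Lemma \ref{lem_sc}.

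\textbf{Step 2: the exit map is a diffeomorphism, leafwise.} Define $\alpha_g(z) = \gamma^g_z(r_+^g(z))$. From \eqref{eq_beta}, $\alpha_g$ maps $\Sigma_{-,\sigma}$ onto $\Sigma^{g'}_{+,\sigma}$, and the exit velocity at $\alpha_g(z)$ equals $\lambda$ times the $g'$ velocity of some ray landing there. These $g'$ velocities are transverse to $\Sigma_+^{g'}$ by Remark \ref{rem_alpha_diffeo}, so $\gamma^g_z$ crosses $\Sigma_+^{g'}$ transversally. The implicit function theorem then makes $r_+^g$ smooth, hence $\alpha_g$ smooth. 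Next I would show $\alpha_g$ is injective. If two $g$-geodesics land at the same point, they have velocities that are positive multiples of the same $g'$-vector, so by backward uniqueness of geodesics for $g$ they coincide up to affine reparametrisation. Since both pass through $\Sigma_-$ with velocity $(1,\omega)$ in the Minkowski region, the initial points agree. Surjectivity onto each leaf is given directly by \eqref{eq_beta}, so $\alpha_g:\Sigma_-\to\Sigma_+^{g'}$ is a bijection. It also equals $\alpha'$ outside a compact set, and properness follows from Step 1.

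\textbf{Step 3: $\Phi^g$ on $D' $-type domains.} Consider $D^g = \{(z,r) : r \le r_+^g(z)\}$ and $F = \Phi^g|_{D^g}$. The boundary $\partial D^g$ is mapped by $F$ bijectively onto $\Sigma_+^{g'}$, and $F$ coincides with $\Phi^{g'}$ near infinity. By a degree argument (homotopy through the identity at infinity, as in Appendix \ref{degree_theory}), $F$ has degree one onto $\mathcal R$, and every point of the interior of $\mathcal R$ has a preimage. The main obstacle is \textbf{injectivity and the immersion property} of $\Phi^g$ on $D^g$, i.e.\ excluding conjugate points and crossings of the null geodesics $\gamma^g_z$. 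My plan is to use the null hypersurfaces $\Phi^g(\Sigma_{-,\sigma}\times\R)$. Each is a null hypersurface for $g$ near $\Sigma_-$, and its "end" is the smooth leaf $\Sigma^{g'}_{+,\sigma}$, which is an embedded codimension-two sphere-like submanifold. By the argument of \cite[Theorem 1.6]{oksanen2024}, a focal point along a generator would produce, after exiting, a generator that is not a first-hitting ray, or two leaves that meet. This contradicts the leaf-disjointness encoded in \eqref{eq_beta} together with the causality of $g$: a null geodesic past a focal point becomes timelike-separated, and global hyperbolicity forbids the resulting crossing of leaves at different $\sigma$. With local injectivity established, degree one combined with properness yields a global diffeomorphism from the interior of $D^g$ onto a neighborhood of $\ol Q_{T+2}$, which is exactly Definition \ref{simple_in_omega}.
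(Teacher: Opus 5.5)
Your architecture matches the paper's: non-trapping and a smooth exit map, then nondegeneracy of $\Phi^g$ via focal points and causality, then a global inverse argument. Two steps, however, do not work as written.

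\textbf{Step 1 (non-trapping).} The argument is wrong. Since $r_+=2+\eps$ for large $|z|$, outside a compact set $\Sigma_+^{g'}$ coincides with the timelike hyperplane $\{x\cdot\omega=1+\eps\}$. The side of it containing $\Sigma_-$ is therefore not bounded in $\tau$. It contains, for all times, the half-space $\{x\cdot\omega<-1\}=\Phi^{g'}(\{r<0\})$. A $g$-ray that leaves $\ol Q_T$ with spatial direction $-\omega$ eventually enters this half-space and stays there, with $\tau\to+\infty$. Nothing in your argument stops such a ray from avoiding $\Sigma_+^{g'}$. Nor can \eqref{eq_beta} help directly, because its left-hand side excludes $\mathcal T_g$.

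In the paper (Lemma~\ref{lem_nontrapping}), non-trapping is an output of a topological argument:
\begin{enumerate}
\item By \eqref{eq_beta}, each $w\in\Sigma_+^{g'}$ is the first hitting point of a $g$-ray whose velocity is a positive multiple of the $g'$-velocity there. Hence the backward direction $\xi(w)$ is smooth in $w$.
\item The backward $g$-geodesic from $(w,\xi(w))$ meets $\Sigma_-$ transversally, since its velocity there is a positive multiple of $(1,\omega)$. So $\kappa(w)\in\Sigma_-$ depends smoothly on $w$.
\item $\alpha\circ\kappa=\id$ makes $\kappa$ a local diffeomorphism. It is proper because it is explicit near infinity.
\item Hadamard's theorem then makes $\kappa$ onto $\Sigma_-$, which gives $\mathcal T_g=\emptyset$.
\end{enumerate}
This $\kappa$ is also what your Step~2 lacks: a smooth bijection $\alpha_g$ is not a diffeomorphism until its inverse is known to be smooth.

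\textbf{Step 3 (immersion).} You name the right tool, but not the argument that makes it work. The mechanism you describe, global hyperbolicity forbidding a crossing of leaves at different $\sigma$, is not what happens. The focal point produces a timelike curve from some $z_-\in\Sigma_{-,t_*}$ to $\Sigma^{g'}_{+,t_*}$, with the same index $\sigma=t_*$, and global hyperbolicity alone does not forbid this. The paper's route (Lemma~\ref{lem_local_inv1}) is:
\begin{enumerate}
\item By Lemma~\ref{lem_Jacobi}, $g(\p_t\Phi^g,\dot\gamma)\equiv-1$ and $g(\p_{y_j}\Phi^g,\dot\gamma)\equiv0$. So every kernel vector of $D\Phi^g$ has $\delta t=0$. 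A degeneracy at $0<r_*<r_+^g(z_*)$ is therefore a focal point of the spacelike submanifold $\Sigma_{-,t_*}$ along $\gamma_{z_*}$.
\item \cite[Proposition 48, p.~296]{oneill1983} then gives a timelike curve from some $z_-\in\Sigma_{-,t_*}$ to $\gamma_{z_*}(r_+^g(z_*))\in\Sigma^{g'}_{+,t_*}$. So $z_-$ is an interior point of $J_g^-(\Sigma^{g'}_{+,t_*})\cap\Sigma_-$.
\item This contradicts Proposition~\ref{prop_jminus}: $J_g^-(\Sigma^{g'}_{+,\sigma})\cap\Sigma_-=\{t\le\sigma\}\cap\Sigma_-$.
\end{enumerate}
Proposition~\ref{prop_jminus} is the real core of the proof and the second place where \eqref{eq_beta} is used. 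Proving it requires the following ingredients:
\begin{enumerate}
\item $\Sigma^{g'}_{+,\sigma}$ is spacelike and the $\gamma_z$, $z\in\Sigma_{-,\sigma}$, are normal to it (Lemma~\ref{lem_Sigma_splike}).
\item $J_g^-(\Sigma^{g'}_{+,\sigma})$ is closed even though $\Sigma^{g'}_{+,\sigma}$ is noncompact. This is done by splitting off a Minkowskian far part (Lemmas~\ref{lem_near_far_J} and \ref{lem_cl_bdd}).
\item Its boundary is ruled by null geodesics normal to $\Sigma^{g'}_{+,\sigma}$ (Lemma~\ref{lem_bd_null}). By \eqref{eq_beta}, these meet $\Sigma_-$ only in $\Sigma_{-,\sigma}$.
\item A connectedness argument along the lines $r\mapsto z+re_0$ finishes it.
\end{enumerate}
None of this appears in your proposal; it is replaced by a pointer to \cite{oksanen2024}. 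Once it is in place, your closing step (local diffeomorphism plus properness, via Hadamard or your degree count) is fine.
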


\begin{Lemma}\label{lem_nontrapping}
Let two Lorentzian metrics $g$ and $g'$ on $\R^{1+n}$ both satisfy \eqref{assumption1} and \eqref{assumption2}. 
Suppose that $g'$ is simple in direction $\omega \in S^{n-1}$ and that there is $\lambda : \Sigma_+^{g'} \to \R$ such that \eqref{eq_beta} holds for all $\sigma \in \R$.
Then 
$\mathcal{T}_{g} = \emptyset$, $\gamma_z^{g}$ intersects $\Sigma_+^{g'}$ transversally for all $z \in \Sigma_-$,
$r_+^{g}$ is smooth on $\Sigma_-$, the map
taking $z$ to $\gamma_z^{g}(r_+^{g}(z))$ is a diffeomorphism from $\Sigma_{-}$ to $\Sigma_{+}^{g'}$, and $\lambda$ is smooth and positive.
\end{Lemma}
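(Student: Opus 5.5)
The plan is to read off everything from the set identity \eqref{eq_beta}, using that the right-hand side is a very well-behaved object. By Lemma~\ref{lem_simple_ext}, $\mathcal T_{g'}=\emptyset$. By Remark~\ref{rem_alpha_diffeo}, $\alpha' : z\mapsto \gamma^{g'}_z(r_+(z))$ is a diffeomorphism $\Sigma_-\to\Sigma_+^{g'}$ carrying $\Sigma_{-,\sigma}$ onto $\Sigma^{g'}_{+,\sigma}$. The geodesics $\gamma^{g'}_z$ are transverse to $\Sigma_+^{g'}$. Hence the right-hand side of \eqref{eq_beta} is the set of pairs $(p,\lambda(p)\dot\gamma^{g'}(p))$ with $p\in\Sigma^{g'}_{+,\sigma}$. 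Here $\dot\gamma^{g'}(p)$ denotes the tangent at $p$ of the unique $g'$-geodesic of the family through $p$; it is a smooth vector field $X'$ on $\Sigma_+^{g'}$, transverse to it.

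\textbf{Nontrapping.} Take the union over $\sigma$. Every $z\notin\mathcal T_g$ satisfies $\gamma^g_z(r^g_+(z))=p$ and $\dot\gamma^g_z=\lambda(p)X'(p)$ for some $p$. I would show $\mathcal T_g=\emptyset$ by a continuity/openness argument along each slice $\Sigma_{-,\sigma}$.

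For $|z|$ large, $\gamma^g_z$ stays in the Minkowski region, so it agrees with $\gamma^{g'}_z$ and hits $\Sigma_+^{g'}$ transversally at $r=2+\eps$. Thus $\Sigma_-\setminus\mathcal T_g$ is nonempty and contains a neighbourhood of infinity.

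Let $z\notin\mathcal T_g$. Then $\dot\gamma^g_z(r^g_+(z))=\lambda(p)X'(p)$. I need $\lambda(p)\neq0$, since a geodesic's tangent cannot vanish. So $\lambda(p)\ne0$, the hit is transversal, and by the implicit function theorem $r^g_+$ is smooth near $z$ and $\Sigma_-\setminus\mathcal T_g$ is open. Here I need the hit to be the first one; nearby first hits cannot jump earlier, because an earlier hit would persist under limits by closedness of $\Sigma_+^{g'}$ and transversality.

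For closedness, take $z_k\to z$ with $z_k\notin\mathcal T_g$. Let $p_k=\gamma^g_{z_k}(r^g_+(z_k))$. By the set equality, $p_k=\alpha'(z'_k)$ with $z_k'\in\Sigma_{-,\sigma}$. I want $p_k$ to stay bounded. Since $g=\gm$ outside $M$, a geodesic of $g$ leaving a compact set far away is Minkowski. Global hyperbolicity, via the Cauchy temporal function as in the proof of Lemma~\ref{lem_simple_ext}, bounds the parameter interval spent in $\ol{Q}_{T+2}$, so the exit data are controlled. I expect this compactness step, ruling out escape of the hitting point to infinity or $r^g_+(z_k)\to\rho^g(z)$, to be the main obstacle. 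The key point is that $\Sigma_+^{g'}$ outside a compact set is where Minkowski lines from the corresponding $\Sigma_-$ points land, so those hits are forced for the exterior parts.

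Passing to a subsequence with $p_k\to p$, continuity of the geodesic flow gives $\gamma^g_z(r)=p$ with $r=\lim r^g_+(z_k)<\rho^g(z)$. Hence $z\notin\mathcal T_g$. By connectedness of $\Sigma_-$ (note $n\ge2$), $\mathcal T_g=\emptyset$.

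\textbf{Positivity of $\lambda$.} Since $\lambda$ never vanishes on the image, both tangent vectors are future-directed null vectors with respect to the time-orientation (which agrees far away). Hence $\lambda>0$.

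\textbf{The map $\beta: z\mapsto\gamma^g_z(r^g_+(z))$.} It is a smooth local map into $\Sigma_+^{g'}$. The relation \eqref{eq_beta} determines $(p,\lambda(p)X'(p))$ from $\gamma^g$ data. Consequently $\beta=\alpha'\circ\kappa$, where $\kappa$ is slice-preserving.

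$\beta$ is injective. If $\beta(z_1)=\beta(z_2)=p$, the two $g$-geodesics have tangents $\lambda(p)X'(p)$ and $\lambda(p)X'(p)$, so they coincide. By uniqueness of geodesics and the fact that both are parametrized from $\Sigma_-$ with initial velocity $(1,\omega)$, we get $z_1=z_2$.

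$\beta$ is surjective, since the union over $\sigma$ of the right-hand side is all of $\Sigma_+^{g'}$.

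$\beta$ is a local diffeomorphism. Run the geodesic flow backwards from $(p,\lambda(p)X'(p))$ to $\Sigma_-$, which is transverse since $g=\gm$ there. This gives a smooth inverse, and in particular $\lambda=|\dot\gamma^g|/|X'|$ in any smooth frame. Hence $\lambda$ is smooth, being expressed through a smooth inverse composed with smooth flow data.
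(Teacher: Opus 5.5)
Your route differs from the paper's, and the step you flag as the main obstacle is a genuine gap: it is where all the content of $\mathcal T_g=\emptyset$ lies. Openness of $\Sigma_-\setminus\mathcal T_g$, injectivity, surjectivity onto $\Sigma_+^{g'}$ and $\lambda>0$ are fine. Closedness, however, is not proved.

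The danger is not the parameter time spent in $\ol{Q}_{T+2}$, which global hyperbolicity does control. The danger is this. The limit geodesic $\gamma_z^g$ may leave the perturbed region along a Minkowski null line that never meets $\Sigma_+^{g'}$, e.g.\ one with spatial direction orthogonal to $\omega$; recall that the far part of $\Sigma_+^{g'}$ is the hyperplane $\{x\cdot\omega=1+\eps\}$. Meanwhile the nearby $\gamma_{z_k}^g$ tilt slightly towards $\omega$ and hit that hyperplane at points $p_k\to\infty$, with $r_+^g(z_k)\to\infty$. Neither global hyperbolicity nor continuity of the geodesic flow excludes this.

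What excludes it is the \emph{velocity} component of \eqref{eq_beta}. At a point $p_k$ on the flat part of $\Sigma_+^{g'}$, the $g$-geodesic must arrive with velocity a positive multiple of $(1,\omega)$. If $|p_k|$ is large, the segment $p_k-r(1,\omega)$, $0\le r\le 2+\eps$, lies outside $M$. So $\gamma^g_{z_k}$ is that Minkowski line, and $z_k=p_k-(2+\eps)(1,\omega)\to\infty$, contradicting $z_k\to z$. Your remark about the exterior part of $\Sigma_+^{g'}$ points this way, but it does not use the velocity constraint, which is what makes the argument work. Even with $p_k$ bounded, you still need a uniform bound on $r_+^g(z_k)$. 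This follows because $\gamma^g_z$ is eventually a Minkowski null line escaping to infinity, and by continuity so are the $\gamma^g_{z_k}$, uniformly in $k$.

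This compactness is exactly properness of the backward map, which is why the paper argues backwards. For each $w\in\Sigma_+^{g'}$, \eqref{eq_beta} gives a unique $\kappa(w)\in\Sigma_-\setminus\mathcal T_g$ with $\gamma^g_{\kappa(w)}(r_+^g(\kappa(w)))=w$. The map $\kappa$ is smooth: it follows the $g$-geodesic through $(w,\xi(w))$, where $\xi$ is the normalized $g'$-direction, back to its transversal hit on $\Sigma_-$. The smooth left inverse $z\mapsto\gamma^g_z(r_+^g(z))$ makes $\kappa$ a local diffeomorphism. Finally, $\kappa$ is trivially proper, since it equals $w\mapsto w-(2+\eps)(1,\omega)$ for $|w|$ large. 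Hadamard's theorem then gives $\kappa(\Sigma_+^{g'})=\Sigma_-$, i.e.\ $\mathcal T_g=\emptyset$, without ever controlling forward hitting times.

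Your last step also has a circularity. You build the inverse by flowing back from $(p,\lambda(p)X'(p))$ and then deduce smoothness of $\lambda$ from that inverse, but at that stage $\lambda$ is not even known to be continuous. Flow back from $(p,X'(p)/|X'(p)|)$ instead: a reparametrization does not change the point where the geodesic meets $\Sigma_-$.
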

\begin{proof}
Since $g$ and $g'$ are globally hyperbolic, they are time-orientable, and we fix their time-orientations so that $dt$ is future-directed outside $M$ for both the metrics.
Let $w \in \Sigma_+^{g'}$.
Using the directional simplicity of $g'$, Remark \ref{rem_alpha_diffeo} implies that
    \begin{align}\label{def_line_bundle}
\{(\gamma_z^{g'}(r), \lambda_z \dot \gamma_z^{g'}(r)) \mid r=r_+^{g'}(z),\ \lambda_z \ne 0, \ z \in \Sigma_{-}\}
    \end{align}
is a line bundle over $\Sigma_{+}^{g'}$.
In particular, there is unique $z' \in \Sigma_-$ such that $w = \gamma_{z'}^{g'}(r_+^{g'}(z'))$. It follows from \eqref{eq_beta} that there
are $z \in \Sigma_{-}$, $s > 0$ and $\lambda \in \R$ such that $\gamma_z^{g}(s) = w$ and 
    \begin{align}\label{def_lambda}
\dot \gamma_z^{g}(s) = \lambda \xi, \quad \xi = \dot \gamma_z^{g'}(r_+^{g'}(z')).
    \end{align}
The geodesics $\gamma_z^{g}$ and $\gamma_{z'}^{g'}$ are future pointing at $z$ and $z'$, respectively. Thus they are future pointing everywhere, and therefore $\lambda > 0$. 

Consider the largest number $\ell \in (0, s]$ such that $\gamma_z^{g}(r)$ is outside the cylinder $\R \times B$ for $r \in (s-\ell, s)$. Then $\gamma_z^{g}|_{(s-\ell, s)}$ coincides with a segment of $\gamma_{z'}^{g'}$, up to reparametrization. This segment of $\gamma_{z'}^{g'}$ does not intersect $\Sigma_+^{g'}$ because of Lemma \ref{lem_r_plus}. Moreover, if $\gamma_z^{g}$ enters into the cylinder $\R \times B$, then it can not intersect $\Sigma_+^{g'}$ before it exits. Since $B$ is convex, it follows from \eqref{assumption1} that if $\gamma_z^g$ exits the cylinder then it does not re-enter. We conclude that $s = r_+^{g}(z)$.

The geodesics through $(w, \lambda \xi)$, for $\lambda \ne 0$, coincide up to reparametrization. This implies that $z \in \Sigma_{-}$ is unique, and we write $\kappa(w) = z$. Then $w = \alpha(\kappa(w))$ where $\alpha(z) = \gamma_z^{g}(r_+^{g}(z))$. By \eqref{def_lambda} and Remark \ref{rem_alpha_diffeo} the geodesic $\gamma_z^{g}$ is transverse to $\Sigma_+^{g'}$ for $z \in \kappa(\Sigma_+^{g'})$, and therefore $r_+^{g}$ is smooth near any $z \in \kappa(\Sigma_+^{g'})$. Hence $\alpha$ is smooth in $\kappa(\Sigma_+^{g'}) \subset \Sigma_-$. 

Write $|\cdot|$ for the Euclidean norm and $\alpha'(z) = \gamma_z^{g'}(r_+^{g'}(z))$. Let us show that $\kappa$ is smooth. 
Using $\lambda > 0$, the function 
    \begin{align*}
\xi(w) 
= 
\frac{\dot \gamma_{\kappa(w)}^{g}(r_+^{g}(\kappa(w)))}
{|\dot \gamma_{\kappa(w)}^{g}(r_+^{g}(\kappa(w)))|} 
= 
\frac{\dot \gamma^{g'}_{z'}(r_+(z'))}
{|\dot \gamma^{g'}_{z'}(r_+(z'))|},
\quad 
z' = (\alpha')^{-1}(w),
    \end{align*}
is smooth. Define
    \begin{align*}
r_-(z) = \inf \,\{ r > 0 \,:\, \gamma^{g}(-r; w, \xi(w)) \in \Sigma_- \}.
    \end{align*}
Then
    \begin{align*}
\kappa(w) = \gamma^{g}(-r_-(w); w,\xi(w)).
    \end{align*}
Moreover, $\gamma^{g}(\cdot; w,\xi(w))$ is transverse to $\Sigma_-$ since
    \begin{align*}
\dot \gamma(-r_-(w); w,\xi(w))
=
\tilde \lambda \dot \gamma_{\kappa(w)}^{g}(0)
    \end{align*}
for some $\tilde \lambda > 0$.
The transversality implies that $r_-$ is smooth. Therefore $\kappa$ is smooth.

Since $\alpha$ is a smooth left inverse of $\kappa$, $\kappa$ is a local diffeomorphism. In the coordinates given by $\Phi^{g'}$, we can identify $z$ and $(z, 0)$. Then it holds for $z \in \Sigma_-$ with large $|z|$ that $\kappa(z, 2) = z$. Hence $\kappa$ is proper. It follows from Hadamard's global inverse function theorem that $\kappa$ is a diffeomorphism from $\Sigma_+^{g'}$ to $\Sigma_-$. This again implies that $\mathcal T_{g} = \emptyset$ and that $\alpha$ is a diffeomorphism from $\Sigma_-$ to $\Sigma_+^{g'}$.

Finally, the smoothness of $\lambda$ follows from 
    \begin{align*}
\lambda(w) = |\lambda(w)| = \frac{|\dot \gamma_{z'}^{g'}(r_+^{g'}(z'))|}{|\dot \gamma_z^g(r_+^{g}(z))|},
    \end{align*}
where $z = \alpha^{-1}(w)$ and $z' = (\alpha')^{-1}(w)$.
\end{proof}

Next we move to boundary measurements of plane waves. First we show that the plane waves are well defined.

\begin{Lemma} \label{lemma_U_well_defined}
Let $g$ be a Lorentzian metric in $\R^{1+n}$ satisfying \eqref{assumption1}--\eqref{assumption2}. Given any $t_0 < -T$, there is a unique $U \in L^2_{\mathrm{loc}}(\mR^{1+n})$ satisfying 
\[
\Box_g U = 0 \text{ in $\mR^{1+n}$}, \qquad U = H(t - x \cdot \omega - s) \text{ in $\{ t < -t_0 \}$}.
\]
\end{Lemma}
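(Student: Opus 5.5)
Note: the statement says $U = H(\dots)$ in $\{t < -t_0\}$ with $t_0<-T$, so $-t_0>T$; presumably the intended region is $\{t<t_0\}$, lying below $M$. I will work with the region $\{t<t_0\}$, $t_0<-T$, where $g=\gm$ by \eqref{assumption1}.

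The plan is to subtract off the incoming Heaviside wave and reduce to a Cauchy problem with a source term supported in a region that is compact in space-like directions, then invoke well-posedness on the globally hyperbolic spacetime $(\mR^{1+n},g)$.

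\textbf{Construction.} Let $H_0 = H(t-x\cdot\omega-s)$. It solves $\Box_{\gm} H_0 = 0$ in the distributional sense, being a function of the null coordinate $t-x\cdot\omega$. Since $g=\gm$ outside $M$, the distribution $F := -\Box_g H_0 = (\Box_{\gm}-\Box_g)H_0$ is supported in $M$. Because $g$ is smooth and $H_0\in L^2_{\mathrm{loc}}$, $F$ is a compactly supported distribution of order at most $2$, lying in $H^{-2}_{\mathrm{loc}}$ or better.

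Next we look for $U = H_0 + V$ with $\Box_g V = F$ and $V=0$ in $\{t<t_0\}$. By \eqref{assumption2} there is a Cauchy temporal function $\tau$ (as in the proof of Lemma \ref{lem_simple_ext}). We choose time orientation so that $dt$ is future directed outside $M$. Then $\{t<t_0\}$ contains the causal past region below $M$, and $\supp F\subset M\subset\{t\ge -T\}$. The forward fundamental solution (retarded Green operator) $E^+$ of $\Box_g$ exists on globally hyperbolic spacetimes and acts on distributions with past compact support. We set $V=E^+F$. Then $\supp V\subset J^+(M)$. We must check that $J^+(M)\cap\{t<t_0\}=\emptyset$: any future causal curve from $M$ that leaves $M$ is a Minkowski causal curve outside $M$, so $t$ is nondecreasing along it there. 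Hence a curve starting in $M$, where $t\ge -T>t_0$, cannot reach $t<t_0$. This gives existence. Regularity $V\in L^2_{\mathrm{loc}}$ follows from energy estimates, since $F\in H^{-1}_{\mathrm{loc}}$ after noting $F$ is actually a first-order distribution. Concretely, $(\Box_{\gm}-\Box_g)H_0$ involves second derivatives of $H_0$, but writing $\Box_g$ in divergence form gives $F = \p_j(a^{jk}\p_k H_0)+\dots$ with $\p_k H_0$ a measure on a hyperplane, hence in $H^{-1/2-\epsilon}$. So $F\in H^{-3/2-\epsilon}$, and the hyperbolic solution operator gains one derivative locally, putting $V\in H^{-1/2-\epsilon}_{\mathrm{loc}}$. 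This is not $L^2$ yet.

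\textbf{Main obstacle: $L^2_{\mathrm{loc}}$ regularity.} I would handle it with propagation of singularities and the conormal structure, or more simply by choosing a different splitting. Instead take $U = H_0 + \p_t$-antiderivative: let $W$ solve $\Box_g W = -\Box_g R_0$, where $R_0 = (t-x\cdot\omega-s)_+$ is the ramp, Lipschitz, with $\p_t R_0 = H_0$. Then $\Box_g R_0$ is supported in $M$ and lies in $H^{-1/2-\epsilon}$. The solution $W=E^+(-\Box_g R_0)$ lies in $H^{1/2-\epsilon}_{\mathrm{loc}}$. Set $\tilde U = R_0+W\in H^{1/2-\epsilon}_{\mathrm{loc}}$ and $U := \p_t\tilde U$. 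However, $\p_t$ does not commute with $\Box_g$. So instead I would use the energy estimate directly at regularity $H^{-1/2-\epsilon}$ in the conormal/$H^s$ scale, plus the fact that $\WF(F)$ lies in the conormal bundle of $\{t-x\cdot\omega=s\}\cap M$. Singularities of $V$ then propagate along null bicharacteristics with $H^{-1/2-\epsilon}$ regularity: Lagrangian distributions of the same order as $H_0$, which are $L^2_{\mathrm{loc}}$ by local $H^{1/2-\epsilon}$ from the solution operator gaining one derivative on $H^{-1/2-\epsilon}$ data. I expect this regularity bookkeeping to be the delicate step.

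\textbf{Uniqueness.} If $U_1,U_2$ both satisfy the conditions, then $D=U_1-U_2\in L^2_{\mathrm{loc}}$ solves $\Box_g D=0$ and vanishes in $\{t<t_0\}$, a set containing a neighborhood of a level set $\{\tau=c\}$ in the relevant compact region. Uniqueness for the distributional Cauchy problem on globally hyperbolic spacetimes (the solution operator is unique among distributions with past compact support) gives $D=0$.
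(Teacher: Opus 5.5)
Your correction of the typo to $\{t<t_0\}$ is right, and the distributional construction $U=H_0+E^+F$, the support argument $J^+(M)\cap\{t<t_0\}=\emptyset$ and the uniqueness argument are sound. The genuine gap is $U\in L^2_{\mathrm{loc}}$, which is part of the statement, and your construction does not establish it. With $\phi=t-x\cdot\omega-s$ one has $\Box_g H(\phi)=-g(d\phi,d\phi)\,\delta'(\phi)+(\Box_g\phi)\,\delta(\phi)$. Since $g(d\phi,d\phi)$ need not vanish in $M$, $F\in H^{-3/2-\epsilon}_{\mathrm{loc}}$ and in general no better. The one-derivative gain of $E^+$ then gives only $V\in H^{-1/2-\epsilon}_{\mathrm{loc}}$, as you note yourself. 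Your rescue does not close this, for two reasons. First, it applies the gain to ``$H^{-1/2-\epsilon}$ data'', which $F$ is not. Second, it relies on a conormal/Lagrangian structure of $V$ that is unavailable: the lemma assumes no directional simplicity, so caustics may occur. Moreover, the microlocal ellipticity of $\Box_g$ at $d\phi$, which would supply the missing derivative for the $\delta'(\phi)$ term, fails wherever $\{\phi=0\}$ is characteristic for $g$.

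The missing idea is to subtract $H_0$ only in the far past, so that the source carries one derivative of $H_0$ instead of two. Let $\chi=\chi(t)$ equal $1$ for $t\le t_0-1$ and $0$ for $t\ge t_0$. Since $\chi H_0$ is supported where $g=\gm$ and $\Box_{\gm}H_0=0$, the source $G=\Box_g(\chi H_0)=\chi''H_0+2\chi'\delta(\phi)$ lies in $H^{-1/2-\epsilon}_{\mathrm{loc}}$. It has past compact support, because $\{t=t_0-1\}$ is a Cauchy surface. Hence $U:=\chi H_0-E^+G\in H^{1/2-\epsilon}_{\mathrm{loc}}\subset L^2_{\mathrm{loc}}$. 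Applying $E^+\Box_g w=w$ to $w=(1-\chi)H_0$ shows $U=H_0+E^+F$, so this is exactly your solution, now with the correct regularity. This is equivalent to solving a Cauchy problem with the $H^{1/2-\epsilon}\times H^{-1/2-\epsilon}$ Cauchy data of $H_0$ on a Cauchy surface below $M$. That is what the paper does, by citing \cite[Proposition 4.1]{oksanen2024} with data prescribed on $\{\tau<r_-\}$. The paper uses your $V=U-H_0$, whose source is supported in $\ol{Q}_T$, only for the support statement via finite speed of propagation.

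For uniqueness, a level set of $\tau$ need not lie in $\{t<t_0\}$. The precise form of your remark is that $\{t\ge t_0\}$ is past compact, so $D$ is a past compactly supported solution of $\Box_g D=0$ and hence vanishes. The paper instead first enlarges the zero set of $D$ to $\{t<t_0+\max(|x|-1,0)\}\supset\{\tau\le r_-\}$.
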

\begin{proof}
By \eqref{assumption2} there is a Cauchy temporal function $\tau$ in $(\mR^{1+n}, g)$. Using \cite[Proposition 2.6]{oksanen2024} there is $r_- \in \mR$ such that 
\[
\{ \tau \leq r_- \} \subset \{ t < t_0 + \max(|x|-1, 0) \}.
\]
Let $U_1$ be the unique $L^2_{\mathrm{loc}}$ solution of $\Box_g U_1 = 0$ in $\mR^{1+n}$ with $U_1|_{\{\tau < r_-\}} = H(t - x \cdot \omega - s)$ as in \cite[Proposition 4.1]{oksanen2024}. By \eqref{assumption1}, the function $V = U_1 - H(t - x \cdot \omega - s)$ solves 
\[
\Box_g V = f \text{ in $\mR^{1+n}$}, \qquad V|_{\{ \tau < r_- \}} = 0.
\]
Finite speed of propagation \cite[Proposition 2.5]{oksanen2024} yields $\supp(V) \subset J_+(\supp(f)) \subset J_+(\ol{Q}_T)$, and thus $V = 0$ in $\{ t < t_0 + \max(|x|-1,0) \}$. In particular 
\[
\Box_g U_1 = 0 \text{ in $\mR^{1+n}$}, \qquad U_1 = H(t - x \cdot \omega - s) \text{ in $\{ t < -t_0 \}$}.
\]
If $U_2$ is another solution of the above problem, then $\Box_g(U_1-U_2) = 0$ in $\mR^{1+n}$ and $U_1-U_2=0$ in $\{ t < -t_0 \}$. By \eqref{assumption1} we have $U_1-U_2=0$ in $\{ t < t_0 + \max(|x|-1, 0) \}$ and thus also in $\{ \tau \leq r_- \}$. Uniqueness in the Cauchy problem implies that $U_1-U_2=0$ in $\mR^{1+n}$.
\end{proof}

\begin{Lemma}\label{lem_from_F_to_U}
Let two Lorentzian metrics $g$ and $g'$ on $\R^{1+n}$ both satisfy \eqref{assumption1} and \eqref{assumption2}. Let $\omega \in S^{n-1}$ and suppose that 
$\mathcal{F}_{\omega,T}(g) = \mathcal{F}_{\omega,T}(g')$.
Then $U_{\omega, s}^{g} = U_{\omega, s}^{g'}$ in $\R^{1+n} \setminus \ol{Q}_T$ for $s \in \R$.
\end{Lemma}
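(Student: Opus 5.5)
The plan is to show that $V := U_{\omega,s}^{g} - U_{\omega,s}^{g'}$ vanishes in $\R^{1+n} \setminus \ol{Q}_T$. Both metrics equal $\gm$ there, so $V$ solves $\Box_{\gm} V = 0$ in $\R^{1+n}\setminus\ol Q_T$. By Lemma~\ref{lemma_U_well_defined}, $V = 0$ in $\{t < -t_0\}$ for any $t_0 < -T$ (in fact $V=0$ in $\{ t < -T + \max(|x|-1,0)\}$ by finite speed of propagation, as in that proof). The task is then a unique continuation argument for the Minkowski wave equation from the data on the lateral boundary.

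First I would reduce to the relevant delays. For $s > T+1$, the wave $H(t-x\cdot\omega-s)$ vanishes in a neighborhood of $\ol{Q}_T$ up to time $t = T$. Hence both $U$'s coincide with it until the metric perturbation is met, and by \eqref{assumption1} and uniqueness of the Cauchy problem for $\Box_{\gm}$ outside $\ol{Q}_T$, both $U$'s equal $H(t-x\cdot\omega-s)$ outside $\ol Q_T$. More carefully, $V$ solves $\Box_g V' =0$ with zero data before $\ol Q_T$ is reached. For $s < -T-1$ the Heaviside is identically $1$ near $\ol Q_T$ and is itself a global solution, so $U = H(\cdots)$ for both metrics. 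Thus only $|s| \le T+1$ matters, and for these the data $\mathcal F_{\omega,T}$ gives $V|_{\Sigma_{T+2}} = 0$.

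Next I would split the exterior $\R^{1+n}\setminus \ol Q_T$ into the cylinder part $(\R\times B)\setminus \ol Q_T = \{|t|>T, |x|<1\}$ and the outside region $\{|x| \ge 1\}$.
\begin{itemize}
\item \emph{Outside the cylinder, $\{|x|>1\}$.} Here $V$ solves the Minkowski wave equation. On the time interval $|t| < T+2$ its Dirichlet data on $\p B$ vanish, and it has zero initial data at early times in the exterior region. Solving the exterior problem in $\{|x|>1\}$ forward in time with zero Dirichlet data on $(-T-2,T+2)\times\p B$ and zero Cauchy data gives $V = 0$ in $(-T-2,T+2)\times\{|x|\geq 1\}$. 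This solution is justified since $V$ is a distribution whose wave front set avoids $N^*\Sigma$, so the traces make sense.
\item \emph{The top region.} For $t \ge T$, I would use that $V$ then solves $\Box_{\gm}V=0$ on all of $\{t>T\}$, both in and out of the cylinder, since $g=g'=\gm$ there. Its Cauchy data at $t = T+1$ vanish for $|x|>1$, but inside $B$ they are unknown.
\end{itemize}

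This is the main obstacle: the interior of the cylinder above $Q_T$. I would use that $V$ has zero Cauchy data on the timelike surface $(T,T+2)\times\p B$ (zero on the outside plus transmission, since $V$ is a solution across it). Unique continuation for $\Box_{\gm}$ across $\p B$ then holds by Tataru's theorem, given the time-independent Minkowski coefficients. This yields $V=0$ in the domain of dependence $\{(t,x) : |x|<1,\ T + (1-|x|) < t < T+2 - (1-|x|)\}$, roughly a double cone over the boundary, for time width $2$. That covers $\{T< t\}$ near $\p B$ but not the full top. To cover all $t>T$ I would combine this with support considerations. The support of $V$ lies in $J^+(\ol{Q}_T)$ by finite speed, and singularities of $V$ (conormal to level sets of eikonal solutions) are the same outside. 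Alternatively, I would invoke Lemma~\ref{lem_simple_ext}: distorted plane waves for $t>T+1$ inside the cylinder are determined by the Minkowski evolution from Cauchy data on $\{t=T+1\}$. I expect the clean route to be the following. Apply the data to all delays $s$ and note that $\partial_s U_{\omega,s}$ is again a solution. Linear combinations over $s$ of $U_{\omega,s}$ produce solutions with arbitrary incoming profile $f(t-x\cdot\omega)$, and the whole region inside the cylinder above $t=T$ is then reached by the double-cone unique continuation, applied with the time window lengthened using $\Sigma_{T+2}$ beyond $t=T+2$ (where $V|_{\p B}=0$ follows from the exterior argument iterated in time). The symmetric argument for $t<-T$ is trivial, since $V=0$ there already.
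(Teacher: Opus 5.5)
\textbf{There is a genuine gap in the top region.} Your treatment of $|s|>T+1$, of $\{t<-T\}$, and of the exterior $(-T-2,T+2)\times\{|x|>1\}$ is essentially the paper's. The top region $\{t>T\}\times B$, which you rightly call the main obstacle, is left unresolved, and the routes you sketch for it fail.
\begin{itemize}
\item The data $\mathcal{F}_{\omega,T}$ give $V|_{\p B}$ only for $|t|<T+2$. Exterior Dirichlet uniqueness needs vanishing boundary data on the whole time interval, so ``iterating the exterior argument'' to get $V|_{\p B}=0$ for $t\ge T+2$ presupposes what you want to prove.
\item Superposing over $s$ adds nothing, since each $V_s$ is still only controlled for $|t|<T+2$.
\item Lemma~\ref{lem_simple_ext} assumes that $g$ is simple in direction $\omega$. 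That is not a hypothesis here. In the paper, simplicity of $g$ is \emph{deduced} from this very lemma (Proposition~\ref{prop_simplicity_detection}), so invoking it would be circular.
\end{itemize}

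\textbf{What is actually needed.} No further continuation is required. Your double cone $\{|t-T-1|<|x|<1\}$ already contains $\{T+1\}\times(B\setminus\{0\})$. Since $\Box_{\gm}V=0$ on all of $\{t>T\}$, vanishing Cauchy data on $\{t=T+1\}$ would give $V=0$ in $\{t>T\}$ by solving the Cauchy problem.

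The genuinely missing ingredient is control at the centre $(T+1,0)$. Consider the solution of $\Box_{\gm}E=0$ with $E|_{t=T+1}=0$ and $\p_t E|_{t=T+1}=\delta_0$. It vanishes both on $(T,T+2)\times(\R^n\setminus\ol B)$ and in the double cone. So the information ``$\Box_{\gm}V=0$ in $\{t>T\}$ and $V=0$ outside $\ol B$ for $T<t<T+2$'' does not by itself force $V=0$.

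\textbf{How the paper closes this.} It shows that $V$ is smooth in $\{T<t<T+2\}$ by propagation of singularities, using $|s|<T+1$. Singularities of $V$ in $\{t>T\}\times B$ can then only travel along Minkowski null rays meeting $\{t=T\}\times B$. Such rays leave $\ol B$ strictly before $t=T+2$ and enter the region where $V=0$, so they carry no singularity. Unique continuation from $(T,T+2)\times\p B$ then gives zero Cauchy data at $t=T+1$, with continuity handling the centre.

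\textbf{Why $|s|<T+1$ and the endpoint delays matter.} The condition $|s|<T+1$ rules out singularities along the diametral rays through corner points of $\{T\}\times\p B$. These rays reach $\p B$ exactly at $t=T+2$ and lie outside $\ol Q_T$ for $t<T$. One of them carries the incoming front precisely when $s=T+1$, since then the front grazes $\ol Q_T$ at $(T,-\omega)$. This is why the paper treats $s=\pm(T+1)$ separately. It checks directly that $H(t-x\cdot\omega-s)$ is already a distributional solution of $\Box_g U=0$: the front meets $\ol Q_T$ at a single point, where $g(d\phi,d\phi)$ and $\Box_g\phi$ still vanish by continuity. Your proposal lumps all $|s|\le T+1$ together and misses this endpoint case.
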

\begin{proof}
Observe that $U_{\omega, s}^{\gm}$ is a constant in $Q_T$ when the delay $s$ satisfies $|s| \ge T + 1$. It follows from \eqref{assumption1} that $\Box_g U_{\omega, s}^{\gm} = 0$ for $|s| > T + 1$, and therefore $U_{\omega, s}^{g} = U_{\omega, s}^{\gm}$ for these delays. The same argument shows $U_{\omega, s}^{g'} = U_{\omega, s}^{\gm}$ as well, and $U_{\omega, s}^{g} = U_{\omega, s}^{g'}$ in $\R^{1+n}$ for $|s| > T + 1$. 

Let us consider the case $s = T + 1$. Writing $\phi(t,x) = t - x \cdot \omega - s$ we have $U_{\omega, s}^{\gm} = \phi^* H$ and 
    \begin{align*}
\Box_g U_{\omega, s}^{\gm} 
= 
g(d\phi, d\phi) \phi^* H'' + (\Box_g \phi) \phi^* H'.
    \end{align*}
Both terms on the right-hand side vanish as distributions. Indeed, $\phi^* H''$ and $\phi^* H'$ are supported on the plane $\{\phi = 0\}$, this plane intersects $\overline{Q}_T$ only at the point $p = (T,-\omega)$, it holds in $\{\phi = 0\} \setminus \{p\}$ that
    \begin{align*}
g(d\phi, d\phi) = \gm(d\phi, d\phi) = 0,
\quad
\Box_g \phi = \Box_{\gm} \phi = 0,
    \end{align*}
and this vanishing extends to $p$ due to smoothness of $g$.
Similarly to the case $|s| > T + 1$, we conclude that $U_{\omega, s}^{g} = U_{\omega, s}^{g'}$ in $\R^{1+n}$ for $s = T + 1$, and the case $s = -T - 1$ is analogous.
It remains to consider the case $|s| < T + 1$.

Let $V = U_{\omega,s}-U_{\omega,s}'$. By \eqref{assumption1}, $\Box_{\gm} V = 0$ in $\{t < -T \}$ with $V = 0$ when $t \ll 0$. This implies that $V = 0$ in $\{ t < -T \}$. Since $\mathcal{F}_{\omega,T}(g) = \mathcal{F}_{\omega,T}(g')$, we see that $V$ solves the exterior Dirichlet problem 
\[
\Box_{\gm} V = 0 \text{ in $\{ t < T+2 \} \times (\mR^n \setminus \ol{B})$}, \ \ V|_{\{ t < T+2 \} \cap \p B} = 0.
\]
Since $V = 0$ in $\{ t < -T \}$, uniqueness in the exterior Dirichlet problem (see \cite[Lemma 6.1]{oksanen2024}) implies that 
\begin{equation} \label{v_vanishing}
V = 0 \text{ in $\{ t < T+2 \} \times (\mR^n \setminus \ol{B})$}.
\end{equation}
Now $\Box_{\gm} V = 0$ in $\{ t > T \}$ by \eqref{assumption1}. Since $|s| < T + 1$, propagation of singularities implies that $V$ can be singular in $\{ t > T \} \times B$ only along Minkowski null geodesics intersecting $\{ t = T \} \times B$. But these geodesics intersect $\{ t < T+2 \} \times (\mR^n \setminus \ol{B})$ and it follows from \eqref{v_vanishing} that $V$ is smooth in $\{ T < t < T+2 \}$. Then unique continuation from $(T, T+2) \times \p B$ implies that 
\[
V|_{t=T+1} = \p_t V|_{t=T+1} = 0.
\]
Solving the Cauchy problem for $\Box_{\gm}$ with this initial data yields $V = 0$ in $\{ t > T \}$ as required.
\end{proof}

\begin{proof}[Proof of Proposition \ref{prop_simplicity_detection}]
Lemma \ref{lem_from_F_to_U} implies for all $s \in \R$ and $\omega \in \Omega$
    \begin{align*}
\WF(U_{\omega, s}^{g}) = \WF(U_{\omega, s}^{g'})
\quad \text{on $T^* (\R^{1+n} \setminus Q_T)$}.
    \end{align*}
It follows from propagation of singularities that for all $\sigma \in \R$ and $\omega \in \Omega$
    \begin{align*}
&\{(\gamma_z^g(r), \lambda_z \dot \gamma_z^g(r)) \mid r=r_+^g(z),\ \lambda_z \ne 0, \ z \in \Sigma_{-,\sigma} \setminus \mathcal{T}_{g}\}
\\&\quad=
\{(\gamma_z^{g'}(r), \lambda_z \dot \gamma_z^{g'}(r)) \mid r=r_+^{g'}(z),\ \lambda_z \ne 0, \ z \in \Sigma_{-,\sigma}\}.
    \end{align*}
Since \eqref{def_line_bundle} is a line bundle over $\Sigma_{+}^{g'}$, there is $\lambda : \Sigma_+^{g'} \to \R$ such that \eqref{eq_beta} holds.
We conclude by combining Lemma \ref{lem_nontrapping} and Theorem \ref{th_diffeo}.
\end{proof}

\section{Plane waves and eikonal solutions} \label{sec_plane_waves}

In this section we will prove Propositions \ref{prop_plane_wave_structure}--\ref{prop_psi_well_defined} and Lemma \ref{lemma_psi_differences}.

\begin{proof}[Proof of Proposition \ref{prop_plane_wave_structure}]
(a): It follows from Lemma \ref{lem_simple_ext} and \cite[Lemma A.1]{oksanen2024} that there is a unique smooth solution $\varphi$ near $\mR \times \ol{B}$. Observe that $(T, -\omega) \in \ol{Q}_T$ and that $\varphi(T, -\omega) = T + 1$. Similarly, $(-T, \omega) \in \ol{Q}_T$ and \eqref{assumption1} implies that $\varphi(-T, \omega) = -T - 1$.
Since $\varphi(\ol{Q}_T)$ is a compact and connected subset of $\mR$, it is a closed interval containing $[-T-1,T+1]$.

Now suppose that $|\varphi(z)| > T+1$ for some $z \in \ol{Q}_T$. Let $\gamma(r)$ be the null geodesic through $z$ such that $\gamma(-r_0) \in \{ x \cdot \omega = -1 \}$ and $\dot{\gamma}(-r_0) = (1,\omega)$ for some $r_0$. Since $\varphi$ is constant along $\gamma$ (see \cite[Lemma A.1]{oksanen2024}), we have $|\varphi(\gamma(-r_0))| > T+1$. But $\gamma(-r_0) = (t_0,x_0)$ where $x_0 \cdot \omega = -1$, so $\varphi(\gamma(-r_0)) = t_0+1$ and therefore $|t_0+1| > T+1$. This means that $t_0 > T$ or $t_0 < -T-2$. However, by \eqref{assumption1} the null geodesic through such $(t_0,x_0)$ can never reach $\ol{Q}_T$, which is a contradiction. We have proved that $\varphi(\ol{Q}_T) = [-T-1,T+1]$. The same argument shows that in fact $\varphi(\{|t - x \cdot \omega| \leq T+1 \} \cap (\mR \times \ol{B})) = [-T-1,T+1]$.

(b) and (c): Following \cite[proof of Lemma 5.3]{oksanen2024}, for any $N$ the plane wave $U$ is given by 
\[
U = U^{(N)} + R^{(N)}
\]
where $U^{(N)}$ has the form 
\[
U^{(N)} = \left[ \sum_{j=0}^N u_j(\varphi-s)^j \right] H(\varphi-s).
\]
Here $u_j$ are functions obtained by solving transport equations 
\begin{align*}
Lu_0 &= 0, \qquad &u_0|_{\{x\cdot \omega \leq -1\}} &= 1, \\
j L u_j + \Box_g u_{j-1} &= 0, \qquad &u_j|_{\{ x\cdot \omega \leq -1 \}} &= 0 \ \ (j \geq 1)
\end{align*}
near $\mR \times \ol{B}$, where $Lv = 2Z_{g,\varphi} v + (\Box_g \varphi)v$. Since the null geodesics smoothly parametrize a neighborhood of $\mR \times \ol{B}$, it follows from the explicit formulas for $u_j$ in \cite[proof of Lemma 5.3]{oksanen2024} that each $u_j$ is smooth near $\mR \times \ol{B}$ and is independent of $s$, and that $u_0 > 0$. Moreover, $R^{(N)}$ is the solution of the Cauchy problem 
\[
\Box_g R^{(N)} = -(\Box_g u_N) (\varphi-s)^N H(\varphi-s), \qquad R^{(N)}|_{\{ \tau < \tau_0 \}} = 0,
\]
where $\tau$ is a Cauchy temporal function for $(\mR^{1+n},g)$ and $\tau_0$ is a suitable constant. As in \cite[proof of Lemma 5.3]{oksanen2024}, $R^{(N)} \in H^{N-1}_{\mathrm{loc}}(\mR^{1+n})$ and $R^{(N)}$ is supported in $\{ \varphi \geq s \}$. Since $R^{(N)} = G F$ where $G$ is the solution operator for $\Box_g$ with vanishing Cauchy data in $\{ \tau < \tau_0 \}$ and $F = -(\Box_g u_N) (\varphi-s)^N H(\varphi-s)$, the fact that $\p_s^j F \in H^{N-j}_{\mathrm{loc}}(\mR^{1+n})$ implies that $\p_s^j R^{(N)} \in H^{N-1-j}_{\mathrm{loc}}(\mR^{1+n})$ for $j \leq N-1$. Thus $U = u H(\varphi-s)$ where 
\[
u = \sum_{j=0}^N u_j(\varphi-s)^j + R^{(N)}.
\]
In particular, $u > 0$ on $\{ \varphi = s \}$.

Given any $k$, choosing $N$ large enough yields $u(z)$ that is $C^{k+3}$ jointly in $s$ and $z$. Then $\Box_g u$ is $C^{k+1}$ jointly in $s$ and $z$, and since $u=U$ in $\{ \varphi > s \}$ we have that $\Box_g u$ is supported in $\{ \varphi \leq s \}$. Taking $\varphi-s$ as a new coordinate, it follows that 
\[
\Box_g u = (\varphi-s) a
\]
where $a$ is $C^k$ jointly in $s$ and $z$ and supported in $\{ \varphi \leq s \}$. Similarly, since $Lu_0 = 0$, $Z_{g,\varphi}((\varphi-s)^j) = 0$ and $R^{(N)}$ vanishes to high order on $\{ \varphi=s \}$, we see that $Lu$ vanishes on $\{ \varphi = s \}$ and is $C^{k+2}$ in $s$ and $z$. Taking $\varphi-s$ as a new coordinate, it follows that $Lu = (\varphi-s) b$ where $b$ is $C^{k+1}$ in $s$ and $z$.

The argument above proves (b) and (c), except that $u$, $a$ and $b$ are only $C^k$ in $s$ and $z$ where $k$ can be any large number. This would already be enough for the subsequent proof. By a Borel summation argument (see \cite[Theorem 1.2.6]{hormander}), we can further take 
\[
\tilde{u} = \sum_{j=0}^{\infty} u_j(\varphi-s)^j \chi((\varphi-s)/\eps_j)
\]
where $\chi \in C^{\infty}_c((-1,1))$ satisfies $\p_t^j(\chi(t)-1)|_{t=0} = 0$ for all $j$ and $\eps_j \to 0$ sufficiently rapidly. Writing $u = \tilde{u} + R$ where $R$ is the smooth solution of a wave equation as above, we obtain  $U = u H(\varphi-s)$ where $u$ is smooth jointly in $s$ and $z$. Moreover, $\Box_g u = (\varphi-s) a$ and $2 Z_{g,\varphi} u + (\Box_g \varphi) u = (\varphi-s) b$ with $a$ and $b$ smooth in $s$ and $z$.
\end{proof}

\begin{proof}[Proof of Proposition \ref{prop_eikonal_detection}]
By Lemma \ref{lem_from_F_to_U} we have 
\begin{equation} \label{u_uprime_equality_exterior}
U_{\omega,s} = U_{\omega,s}' \text{ outside $\ol{Q}_T$}.
\end{equation}
By Proposition \ref{prop_plane_wave_structure}, there is a neighborhood $W$ of $\mR \times \ol{B}$ such that $\varphi_{\omega}$ is smooth in $W$ and $\WF(U_{\omega,s}) \cap T^* W = N^*(\{ \varphi_{\omega} =s \}) \cap T^*W$, and similar statements hold for $\varphi_{\omega}'$ and $U_{\omega,s}'$. Then from \eqref{u_uprime_equality_exterior} we obtain that 
\[
N^*(\{ \varphi_{\omega} =s \}) \cap T^*(W \setminus \ol{Q}_T) = N^*(\{ \varphi_{\omega}'=s \}) \cap T^*(W \setminus \ol{Q}_T).
\]
Thus if $z \in W \setminus  \ol{Q}_T$ and $|\varphi_{\omega}(z)| \leq T+1$ or $|\varphi_{\omega}'(z)| \leq T+1$, then we must have $\varphi_{\omega}(z) = \varphi_{\omega}'(z)$. On the other hand, if $|\varphi_{\omega}(z)| > T+1$ or $|\varphi_{\omega}'(z)| > T+1$, then $z$ must be in $\{ |t - x \cdot \omega| \geq T+1 \}$ by Proposition~\ref{prop_plane_wave_structure}, and in this set we have $\varphi_{\omega}(z) = \varphi_{\omega}'(z) = t - x \cdot \omega$. It follows that 
\[
\varphi_{\omega} = \varphi_{\omega}' \text{ in $W \setminus \ol{Q}_T$}. \qedhere
\]
\end{proof}

\begin{proof}[Proof of Proposition \ref{prop_psi_well_defined}]
By Proposition \ref{prop_plane_wave_structure} we know that 
\[
\varphi_{\omega_j}(\ol{Q}_T) = [-T-1,T+1]
\]
for $0 \leq j \leq n$, which implies that $\Phi(\ol{Q}_T) \subset [-T-1,T+1]^{1+n}$. It follows that for any neighborhood $\mathcal U$ of $[-T-1,T+1]^{1+n}$ there is a neighborhood $\mathcal V$ of $\ol{Q}_T$ satisfying $\Phi(\mathcal V) \subset \mathcal U$. Then by \eqref{assumption4} the map $\Psi = (\Phi')^{-1} \circ \Phi$ is well defined and smooth near $\ol{Q}_T$. From Proposition \ref{prop_eikonal_detection} we know that $\Phi = \Phi'$ slightly outside $\ol{Q}_T$, which implies that $\Psi = \id$ slightly outside $\ol{Q}_T$. By Lemma \ref{lemma_idext_surjective}, we also have $\Psi(\ol{Q}_T) = \ol{Q}_T$.
\end{proof}

\begin{proof}[Proof of Lemma \ref{lemma_psi_differences}]
We have 
\[
-\Box_g v = |g|^{-1/2} \p_j (|g|^{1/2} g^{jk} \p_k v) = g^{jk} \p_{jk} v + \theta_g^k \p_k v,
\]
where $\theta_g^k = |g|^{-1/2} \p_j (|g|^{1/2} g^{jk}) = - \Box_g z^k$. Since $\p_k(\Psi^* v) = \p_b v(\Psi) \p_k \Psi^b$, $\p_{jk}(\Psi^* v) = \p_{ab} v(\Psi) \p_j \Psi^a \p_k \Psi^b + \p_b v(\Psi) \p_{jk} \Psi^b$, a computation yields 
\begin{align*}
 &- \Box_g (\Psi^* v)  + \kappa \Psi^* (\Box_{g'} v) \\
 &= g^{jk} \p_{jk}(\Psi^* v) + \theta^k_g \p_k(\Psi^* v) - \kappa \big[g'(\Psi)^{jk} \p_{jk} v(\Psi) + \theta_{g'}^k(\Psi) \p_k v(\Psi) \big] \\
 &= \bar{g}^{ab} \p_{ab} v(\Psi) + \bar{\theta}^k \p_k v(\Psi).
\end{align*}
Similarly, 
\begin{align*}
Z_{g,\Psi^* \varphi}(\Psi^* v) - \kappa \Psi^*( Z_{g',\varphi} v) &= -g(d\Psi^* \varphi, d\Psi^* v) + \kappa \Psi^* ( g'(d\varphi,dv)) \\
 &= -\bar{g}^{ab} \p_a \varphi(\Psi) \p_b v(\Psi). \qedhere
\end{align*}
\end{proof}

\section{\texorpdfstring{$L^2$}{L2}-estimate with temporal weight} \label{sec_carleman}

In this section we will sketch a proof of Proposition \ref{prop_carleman_psi} (a detailed proof is given in Appendix \ref{sec_carleman_appendix}). In fact, we will prove an estimate of this type on a general Lorentzian manifold with rather precise constants.

Let $X$ be a smooth oriented manifold of dimension $1+n$ with smooth boundary $\p X$, and let $g$ be a smooth Lorentzian metric on $X$ with signature $(-,+,\ldots,+)$. Let $\psi \in C^{\infty}(X, \mR)$ be such that $g(d\psi, d\psi) < 0$. As in \cite[Section 24.1]{hormander}, we assume that $\p X$ is timelike and that $\psi$ is a proper map on $X$. Then $\psi$ is an analogue of a temporal function on the manifold $X$ with boundary. 

Let also $\varphi \in C^{\infty}(X,\mR)$ be such that $g(d\varphi,d\varphi) = 0$, $d\varphi$ is nowhere vanishing, and $d\psi$ and $d\varphi$ are in the same causal cone. We use the abbreviation 
\[
T := d\psi.
\]
Then $T$ is timelike and $d\varphi$ is null. We recall the fact that two vectors in the same causal cone have negative $g$-inner product unless they are null and parallel. Thus we have 
\[
g(T, T) < 0, \qquad g(d\varphi, d\varphi) = 0, \qquad g(d\varphi, T) < 0.
\]

Fix $s, R \in \mR$, and consider the set 
\[
Q = Q_{s,R} := \{ x \in X \,:\, \varphi \geq s, \ \psi \leq R \}.
\]
We also write 
\[
\Gamma = \{ \varphi = s \}, \qquad \Gamma_{R} = \{ \psi = R \},
\]
and assume that $\Gamma \subset \{ \psi < R \}$. Thus $Q$ is a cylinder-like set whose top $\Gamma_R$ is spacelike and bottom $\Gamma$ is characteristic (with suitable $s$ and $R$).

\begin{Example}
A basic example to keep in mind is when $X = \mR \times \ol{B}$, $g = \gm$, $\psi(t,x) = t$, and $\varphi(t,x) = t - x_1$. In this example 
\[
Q = \{ (t,x) \in \mR \times \ol{B} \,:\, t-x_1 \geq s, \ \ t \leq R \}.
\]
\end{Example}

We also remark that $\p X$ always meets both $\Gamma$ and $\Gamma_R$ transversally, since the conormals are spacelike, null and timelike, respectively. This implies that $Q$ is a manifold with corners and thus the standard integration by parts formulas hold in $Q$ \cite{lee2002}.

We will work in the context of semiclassical analysis \cite{zworski2012}, with $h > 0$ a small parameter. We wish to consider an $L^2$-estimate with boundary terms for the semiclassical operator 
\[
P := e^{\psi/h} \circ h^2 \Box_g \circ e^{-\psi/h},
\]
applied to functions in $Q$ whose Cauchy data vanishes on $\p X$. This estimate is given in terms of $L^2$ norms with respect to the canonical volume form $dV_g$ (see Appendix \ref{lorentzian_integration}). The estimate also involves $L^2$ norms of $1$-forms, and for this it is convenient to have a Riemannian metric on $X$. The next result states that if $g$ is a Lorentzian metric and $T$ is a timelike $1$-form, there is an associated Riemannian metric $g_T$ that satisfies $dV_{g_T} = dV_g$.

\begin{Lemma} \label{lemma_ht_riemannian}
Let $T$ be a smooth $1$-form on $X$ with $g(T,T) < 0$. Any $\xi \in T^* X$ has a unique decomposition $\xi = \lambda T + \xi_{\perp}$ where $g(T, \xi_{\perp}) = 0$, and then $\lambda = g(T,\xi)/g(T,T)$. If $\xi = \lambda T + \xi_{\perp}$, $\eta = \mu T + \eta_{\perp}$ and 
\[
g_T(\xi, \eta) := g(\xi_{\perp}, \eta_{\perp}) - g(T, T) \lambda \mu = g(\xi, \eta) - 2 \frac{g(T,\xi)g(T,\eta)}{g(T,T)},
\]
then $g_T$ is a Riemannian metric on $X$ with $dV_{g_T} = dV_g$.
\end{Lemma}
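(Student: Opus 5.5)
The plan is to work pointwise and use only linear algebra in a single cotangent space $T^*_x X$. Since $g(T,T)<0$, the line $\mR T$ is nondegenerate, so $T^*_xX = \mR T \oplus T^{\perp}$ with $T^\perp = \{\xi : g(T,\xi)=0\}$. This gives the unique decomposition $\xi = \lambda T + \xi_\perp$. Pairing with $T$ forces $g(T,\xi) = \lambda g(T,T)$, which is the stated formula for $\lambda$; uniqueness follows because $T \notin T^\perp$.

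Next I substitute the decomposition into the definition. This gives $g(\xi,\eta) = \lambda\mu\, g(T,T) + g(\xi_\perp,\eta_\perp)$, so
\[
g(\xi_\perp,\eta_\perp) - g(T,T)\lambda\mu = g(\xi,\eta) - 2\lambda\mu\, g(T,T).
\]
Inserting the formulas for $\lambda$ and $\mu$ yields the second expression for $g_T$. That expression is manifestly smooth and symmetric in $(\xi,\eta)$, so $g_T$ is a smooth symmetric $2$-tensor on the cotangent bundle; equivalently, it is a metric on $TX$ after raising indices.

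For positivity, I use that $g$ has signature $(-,+,\dots,+)$ and is negative on the timelike line $\mR T$. By Sylvester's law of inertia, $g$ restricted to the $g$-orthogonal complement $T^\perp$ is positive definite. Then $g_T(\xi,\xi) = g(\xi_\perp,\xi_\perp) - g(T,T)\lambda^2 \geq 0$, with equality only if $\xi_\perp = 0$ and $\lambda = 0$. Hence $g_T$ is Riemannian.

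The volume identity is the only step needing care about conventions. I would pick, at a fixed point, a $g$-orthonormal coframe $e^0 = T/\sqrt{-g(T,T)}$, $e^1,\dots,e^n$ with $e^j \in T^\perp$ for $j \geq 1$. In this coframe, $g^{-1} = \mathrm{diag}(-1,1,\dots,1)$ and $g_T^{-1} = \mathrm{diag}(1,1,\dots,1)$, since $g_T$ only flips the sign on the $T$-direction. It follows that $|\det g_T^{-1}| = |\det g^{-1}|$ in any coordinates, because the two matrices differ by conjugation with the same change of basis. Therefore $|\det (g_T)_{jk}| = |\det g_{jk}|$, and the Riemannian volume form $\sqrt{\det g_T}\,dx$ coincides with the Lorentzian $dV_g = \sqrt{|\det g|}\,dx$ as defined in Appendix \ref{lorentzian_integration}.
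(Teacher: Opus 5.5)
Your proposal is correct and follows essentially the paper's argument: you decompose along $T$, get positivity from $g$ being positive definite on $T^{\perp}$, and use a $g$-orthonormal frame adapted to $T$ that is simultaneously $g_T$-orthonormal, so the two volume forms agree (the paper evaluates $dV_g$ and $dV_{g_T}$ on this frame directly, while you compare determinants). One small wording fix: under a change of basis the matrices of bilinear forms transform by congruence $P^{t} A P$ rather than by conjugation, but both determinants pick up the same factor $\det(P)^2$, so your conclusion is unaffected.
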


Lemma \ref{lemma_ht_riemannian} is proved in Appendix \ref{lorentzian_integration}. Note that if $g = \gm$ on $X = \mR \times \ol{B}$ and $\psi = \psi(t)$, then $g_T$ is the Euclidean metric on $\mR \times \ol{B}$.

To state the estimate, we fix $T = d\psi$ as above and define the inner products 
\[
(u, v) = \int_Q u \bar{v} \,dV_{g}, \qquad (du, dv) = \int_Q g_T(du, \ol{dv}) \,dV_{g},
\]
with associated norms $\norm{u}$ and $\norm{du}$. On $\Gamma$ and $\Gamma_R$ we will use the volume form $dS$ induced by $g_T$, with associated inner products $(u, v)_{\Gamma}$, $(du, dv)_{\Gamma}$ etc and corresponding norms. The main result in this section is the following estimate for the conjugated operator $P$.

\begin{Proposition} \label{prop_semiclassical_estimate}
Let $T = d\psi$ with $\psi$ as above, and let 
\[
m_T = |g(T,T)|^{1/2}.
\]
There is $C> 0$ 
such that when $0 < h \leq 1$ we have the estimate 
\begin{align*}
(1-Ch) \Big[ \norm{m_T^2 u}^2 + 2 \norm{m_T h du}^2 + \sqrt{2} h \norm{m_T^{3/2} u}_{\Gamma}^2 + \sqrt{2} h \norm{m_T^{1/2} h d_{\Gamma} u}_{\Gamma}^2 \Big] \\
 \leq \norm{Pu}^2 + (3+Ch) \left[ h \norm{m_T^{3/2} u}_{\Gamma_R}^2 + h \norm{m_T^{1/2} h d u}_{\Gamma_R}^2 \right]
\end{align*}
for any $u \in H^2(Q)$ with Cauchy data vanishing on $\p X$.
\end{Proposition}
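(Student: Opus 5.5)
We split $P$ into self-adjoint and skew-adjoint parts and expand $\norm{Pu}^2$ exactly, with every boundary term kept explicit. Write $P = h^2 e^{\psi/h}\Box_g e^{-\psi/h}$. With $-\Box_g v = \mathrm{div}_g(\nabla v)$, conjugation gives
\[
P u = -h^2\,\mathrm{div}(\nabla u) + 2h\, T^\sharp u + h(\mathrm{div}\, T^\sharp) u - g(T,T) u .
\]
We set $A u := -h^2 \mathrm{div}(\nabla u) - g(T,T)u$, which is formally symmetric, and $B u := 2h T^\sharp u + h(\mathrm{div}\,T^\sharp)u$, which is formally antisymmetric. Then
\[
\norm{Pu}^2 = \norm{Au}^2 + \norm{Bu}^2 + 2\,\mathrm{Re}(Au,Bu).
\]

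The key observation, which is what makes the estimate semiclassically elliptic rather than subelliptic, concerns the principal symbol. It is $p = -g(\xi,\xi) + 2i g(T,\xi) - g(T,T)$, up to sign conventions. Hence $|p|^2 = (g(\xi,\xi)+g(T,T))^2 + 4g(T,\xi)^2$, and this vanishes only if $\xi\perp T$ and $g(\xi_\perp,\xi_\perp) = -g(T,T) = m_T^2$. The conjugated operator is therefore not elliptic. Note, however, that $g_T(\xi,\xi) = g(\xi_\perp,\xi_\perp) + m_T^2\lambda^2$.

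I would exploit an exact algebraic identity instead of ellipticity. The plan is to compute $\mathrm{Re}(Au,Bu)$ by integration by parts. This produces an interior term $h\,\mathrm{Re}([A,B]u,u)$. I expect it to yield, up to $O(h)$ corrections, the coercive quadratic form
\[
\int \bigl(m_T^4|u|^2 + 2m_T^2 h^2|du|_{g_T}^2\bigr)
\]
only after combining with $\norm{Au}^2+\norm{Bu}^2$. Concretely, I would aim to show the pointwise symbol identity
\[
|p|^2 + \{\text{commutator contributions}\} \ge m_T^4 + 2m_T^2 g_T(\xi,\xi)
\]
up to $O(h)$. Equivalently, one can take a shortcut and use a multiplier identity: test $Pu$ against $\bar u$ and against $T^\sharp \bar u$. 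The real part of $(Pu, -u)$ gives $\norm{m_T u}^2 - h^2\norm{\nabla u}_g^2$ plus boundary terms. The identity $\mathrm{Re}(Pu, 2hT^\sharp u)$ gives $\norm{2hT^\sharp u}^2$ plus an energy-type term, using $g_T = g - 2T\otimes T/g(T,T)$. Cauchy–Schwarz then assembles these into the left side with constant $1-Ch$. All lower-order terms, including derivatives of $T$ and $\mathrm{div}\,T^\sharp$, are absorbed into the $Ch$ factors.

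Next come the boundary terms. On $\p X$ they vanish, because the Cauchy data of $u$ vanishes there. On $\Gamma_R$ we simply bound them crudely by Cauchy–Schwarz, giving the $(3+Ch)$ terms on the right. The crucial part is $\Gamma$. There the outward conormal is $-d\varphi$, which is null, and the boundary flux from the $B$-term is proportional to $h\,g(T,d\varphi)|u|^2$. This has a definite sign because $g(d\varphi,T)<0$. The flux from the $A$-term involves only derivatives tangent to $\Gamma$, since $\nabla\varphi$ is tangent to the null hypersurface $\Gamma$. These terms should combine into
\[
\sqrt2\,h\bigl(\norm{m_T^{3/2}u}_\Gamma^2 + \norm{m_T^{1/2}h\,d_\Gamma u}_\Gamma^2\bigr),
\]
after normalizing $d\varphi$ by its $g_T$-length. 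For null $d\varphi$, this normalization gives $|g(T,d\varphi)|/|d\varphi|_{g_T} = m_T/\sqrt2$, which I expect to be the source of the constant $\sqrt2$.

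The main obstacle is the bookkeeping on $\Gamma$. We need to show that the quadratic form in $(u, h\,d_\Gamma u)$ arising there is positive with exactly those weights. This includes handling the cross term between $u$ and the derivative along the null generator $\nabla\varphi$, and checking that it is dominated via $g(T,d\varphi)<0$. I would first verify the whole computation in the model case $g=\gm$, $\psi=t$, $\varphi=t-x_1$, and then treat the general case as a perturbation in which the non-constant coefficients contribute only $O(h)$.
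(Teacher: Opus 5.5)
Your architecture is the paper's: a symmetric part $A$ plus a first-order part, expansion of $\norm{Pu}^2$, integration by parts, and reading off the boundary terms on $\Gamma$ and $\Gamma_R$. But two steps fail as written.

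\textbf{The symbol has a sign error.} Since $h\p_j$ has symbol $i\xi_j$, the operator $-h^2\,\mathrm{div}\nabla$ has symbol $+g(\xi,\xi)$. Hence $p=g(\xi+iT,\xi+iT)=g(\xi,\xi)+m_T^2+2i\,g(T,\xi)$. A covector with $g(T,\xi)=0$ is spacelike or zero, so $\mathrm{Re}\,p\ge m_T^2$ there. Thus $p$ never vanishes, and in fact $|p|^2=m_T^4+2m_T^2g_T(\xi,\xi)+g(\xi,\xi)^2$. So $P$ \emph{is} semiclassically elliptic. The mechanism you substitute could not work anyway: the commutator carries a factor $h$, so if $p$ vanished somewhere, wave packets concentrated there would violate an estimate with $O(1)$ coefficient on $\norm{m_T^2u}^2$. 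In the paper the interior coercivity comes from $\norm{Au}^2+\norm{Bu}^2$ alone:
\begin{itemize}
\item up to $O(h)$ and boundary terms, $2\mathrm{Re}(A_2u,A_0u)$ yields $2(m_T^2hdu,hdu)_g$;
\item together with $\norm{Bu}^2=4\norm{g(T,hdu)}^2+O(h)$, this equals $2\norm{m_Thdu}^2$ by the definition of $g_T$;
\item $\norm{h^2\Box_gu}^2\ge0$ is discarded, and the commutator contributes only $O(h)$ plus a boundary term.
\end{itemize}

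\textbf{The computation on $\Gamma$ is only asserted.} This is the real content of the proposition. Your plan is to check it for $\gm$, $\psi=t$, and then treat variable coefficients as $O(h)$ errors. That breaks down in the commutator bookkeeping you describe. Reducing the commutator term to first-order quantities requires an integration by parts that leaves $-4h^3(D^2\psi(\nu^\sharp,\nu^\sharp)\p_\nu u,u)_\Gamma$. The remaining $\Gamma$ terms contain $-4h^3(\mu(\p^nT^n)\p_nu,u)_\Gamma$, in coordinates with $x^n=\varphi-s$, $\mu=|dx^n|_{g_T}^{-1}$, and indices raised by $g$.
\begin{itemize}
\item Both terms involve the transversal derivative on a characteristic surface, which nothing in the estimate controls.
\item Both vanish in your model case because $D^2\psi=0$, so no perturbation argument can see them.
\item The paper shows they cancel exactly. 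Since $g^{nn}\equiv0$ near $\Gamma$, one gets $\p^nT^n=\mu^{-2}D^2\psi(\nu^\sharp,\nu^\sharp)$. The same fact removes the second-order transversal terms, and the two $\theta^n$ terms cancel.
\item What survives is a tangential operator whose symbol is H\"ormander's energy form $e(\xi)=-g(N,T)g(\xi,\xi)+2g(T,\xi)g(N,\xi)$, with $N=\mu\,d\varphi$. Since $g(N,T)=-m_T/\sqrt2$, on $T^*\Gamma$ one gets $e=\frac{1}{\sqrt2}m_Tg_T(\xi,\xi)+\sqrt2\,m_Tg_T(\hat T,\xi)^2$. This produces $\sqrt2\,h\norm{m_T^{1/2}hd_\Gamma u}_\Gamma^2$.
\end{itemize}

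\textbf{Constants.} The cross term $-2h\,\mathrm{Re}(h\tilde{\p}_\nu u,m_T^2u)_\Gamma$ should not be \emph{dominated} by Cauchy--Schwarz, which loses the constant $\sqrt2$. Instead it is $O(h^2\norm{u}_\Gamma^2)$ after integrating the tangential derivative $\tilde{\p}_\nu|u|^2$ along $\Gamma$. Likewise, the constant $3$ on $\Gamma_R$ needs the exact terms
\[
-2h\norm{m_T^{1/2}hdu}_{\Gamma_R}^2-2h\norm{m_T^{3/2}u}_{\Gamma_R}^2+2h\,\mathrm{Re}(h\p_\nu u,m_T^2u)_{\Gamma_R},
\]
not a crude bound.

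\textbf{A cleaner route.} Your multiplier remark points to a route cleaner than the paper's. Treat $2\mathrm{Re}(h^2\Box_gu,2hT^\sharp u)$ with the energy--momentum identity: one integration by parts, with the deformation tensor $2D^2\psi$ appearing in the bulk. This gives the flux $2h^3\int_\Gamma e(du)\,dS$. On a null hypersurface $e(\eta+cN)=e(\eta)$, so transversal derivatives never appear. That computation still has to be carried out; as it stands, the boundary part of the estimate is unproved.
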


Above $d_{\Gamma}$ and $d$ are the exterior derivatives on $\Gamma$ and in $X$. The proof of Proposition \ref{prop_semiclassical_estimate} is somewhat long. In this section we give a short sketch, which is hopefully sufficient to convince the reader that such an estimate holds. The detailed proof is left to Appendix \ref{sec_carleman_appendix}.

We explain the proof in three different cases:
\begin{enumerate}
\item[(a)] 
no boundary terms, i.e.\ $u \in H^2_0(Q)$,
\item[(b)]
boundary terms on $\Gamma_R$, and 
\item[(c)]
boundary terms on $\Gamma$.
\end{enumerate}

Case (a) amounts to computing the principal symbol of $P$ (all symbols are taken in the semiclassical sense), and observing that it is nowhere vanishing. Since $h^2 \Box_g$ has principal symbol $g(\xi, \xi)$, $P$ has principal symbol 
\[
p(x,\xi) = g(\xi + i T, \xi + i T) = g(\xi, \xi) - g(T, T) + 2i g(T, \xi).
\]
Then 
\begin{align*}
|p|^2 &= (g(\xi,\xi) - g(T,T))^2 + (2 g(T, \xi))^2 \\
 &= g(T,T)^2 + g(\xi,\xi)^2 - 2 g(T,T) \left[ g(\xi, \xi) - 2 \frac{g(T,\xi)^2}{g(T,T)} \right].
\end{align*}
The quantity in brackets is $g_T(\xi, \xi)$. Since $-g(T,T) = m_T^2$, we have 
\[
|p|^2 = m_T^4 + 2 m_T^2 g_T(\xi, \xi) + g(\xi, \xi)^2.
\]
This is $\geq m_T^4 + 2 m_T^2 g_T(\xi, \xi)$, suggesting an $H^1$ lower bound. This argument is carried out precisely in Lemma \ref{lemma_pu_interior_terms}, which shows that 
\[
\norm{Pu}^2 \geq  (1-Ch)((m_T^4 u, u) + 2 (m_T^2 hdu, hdu))
\]
when $u \in H^2_0(Q)$. This proves case (a) of Proposition \ref{prop_semiclassical_estimate}. Note that since $p$ is nowhere vanishing, Proposition \ref{prop_semiclassical_estimate} is a semiclassically elliptic estimate instead of being subelliptic as would be the case for a standard Carleman estimate.

Next we sketch a proof for (b). As in general Carleman estimates with boundary terms \cite{tataru1996, bellassoued2015}, the terms on $\Gamma_R$ can be understood by factoring $p$ into first order factors corresponding to the conormal direction of $\Gamma_R$. This conormal direction is just $T$. When $g_T(T, \xi) = 0$ (i.e.\ $g(T, \xi) = 0$), we compute 
\begin{align*}
p(x, \xi + \lambda T) &= g(\xi + \lambda T, \xi + \lambda T) - g(T, T) + 2i g(T, \xi + \lambda T) \\ 
 &= \lambda^2  g(T, T) + g(\xi, \xi) - g(T, T) + 2i \lambda g(T, T).
\end{align*}
We view the right hand side as a second order polynomial in $\lambda$. Its roots are 
\begin{align*}
\lambda &= \frac{-2i g(T, T) \pm \sqrt{-4 g(T, T)^2  - 4 g(T, T)( g(\xi, \xi)  - g(T, T)} )}{2 g(T, T)} \\
 &= -i \pm \sqrt{g(\xi, \xi)/|g(T, T)|}.
\end{align*}
Now $g(T, \xi) = 0$, so $g(\xi, \xi) = g_T(\xi, \xi)$. This proves that 
\begin{equation} \label{gammar_terms_symbol}
p(x, \xi + \lambda T) = g(T, T) (\lambda - \lambda_{+}(x, \xi)) (\lambda - \lambda_{-}(x,\xi))
\end{equation}
where $\lambda_{\pm}(x, \xi) = -i \pm \sqrt{g_T(\xi, \xi)/|g(T, T)|}$. If we consider coordinates $(x',x_n)$ so that $x'$ is tangential to $\Gamma_R$ and $x_n = R-\psi$, quantizing \eqref{gammar_terms_symbol} yields the pseudodifferential factorization identity 
\[
P = g(T,T) (hD_n - \lambda_+(x',x_n,hD_{x'})) (hD_n - \lambda_-(x',x_n,hD_{x'}))
\]
modulo a lower order term. Here $\lambda_{\pm}$ are first order semiclassical operators depending smoothly on $x_n$. Since $\mathrm{Im}(\lambda_{\pm}(x,\xi)) < 0$, both first order factors behave like heat equations in $x_n$ (see e.g.\ \cite[Chapter III]{treves1980}). This suggests that one can solve a Cauchy problem for $P$ near $\Gamma_R$, and therefore in Proposition \ref{prop_semiclassical_estimate} one should indeed have boundary terms involving $u$ and $\p_{\nu} u$ on $\Gamma_R$ on the right hand side. The boundary terms on $\Gamma_R$ are computed in detail in Lemma \ref{lemma_pu_gammar_terms}.

The sketch for (c) is similar as for (b), though slightly more involved due to the characteristic nature of $\Gamma$. Now $N = d\varphi$ is conormal to $\Gamma$ and satisfies $g(N, N) = 0$. If $g_T(\xi, N) = 0$ we have 
\begin{align*}
p(x, \xi + \lambda N) &= g(\xi+\lambda N, \xi + \lambda N) - g(T,T) + 2i g(T, \xi + \lambda N) \\
 &= 2 \lambda (g(\xi, N) + i g(T, N)) + p(x,\xi).
\end{align*}
This is a first order polynomial in $\lambda$. If $(x',x_n)$ are coordinates such that $x'$ is tangential to $\Gamma$ and $x_n = \varphi-s$ is the normal direction near $\Gamma$, quantizing yields the pseudodifferential identity 
\[
P = R(x',x_n,hD_{x'}) hD_n + S(x',x_n, hD_{x'})
\]
modulo a lower order term. Here $R$ has symbol $2 g(\xi, N) + 2 i g(T, N)$ which never vanishes since $g(N, T) < 0$. Thus $R$ is elliptic, and inverting $P$ near $\Gamma$ corresponds to inverting $hD_n + R^{-1} S$, where $R^{-1}$ has symbol $r^{-1}$.

The nature of $hD_n + R^{-1} S$ is determined by the sign of 
\[
\mathrm{Im}(r^{-1} s) = |r|^{-2} \mathrm{Im}(\bar{r} s).
\]
If $g_T(\xi, N) = 0$, we have 
\begin{align*}
\mathrm{Im}(\bar{r} s) &= 2 (g(N, \xi) \mathrm{Im}(p(x,\xi)) - g(N,T) \mathrm{Re}(p(x,\xi))) \\
 &=  2 ( 2 g(N, \xi) g(T, \xi) - g(N,T)(g(\xi,\xi) - g(T,T)) ) \\
 &= 2 ( E(\xi) + g(N,T) g(T,T))
\end{align*}
where $E(\xi) = 2 g(N, \xi) g(T, \xi) - g(N,T) g(\xi,\xi)$. This quadratic form is a variant of the one in \cite[Lemma 24.1.2]{hormander}. Since $g_T(N, \xi) = 0$, we have 
\begin{align*}
E(\xi) &= 4 \frac{g(T,N) g(T,\xi)}{g(T,T)} g(T,\xi) - g(N,T) g(\xi,\xi) \\
 &= |g(N,T)| (g_T(\xi,\xi) +  2 g_T(\hat{T},\xi)^2)
\end{align*}
where $\hat{T} = T/|T|_{g_T}$, and therefore 
\[
\mathrm{Im}(\bar{r} s) = 2 |g(N,T)| (|g(T,T)| + g_T(\xi,\xi) + 2 g_T(\hat{T},\xi)^2).
\]
In particular $\mathrm{Im}(r^{-1} s) > 0$, so $hD_n + R^{-1} S$ and thus $P$ behave like backward heat operators near $\Gamma$. This implies that in Proposition \ref{prop_semiclassical_estimate}, one should indeed have a boundary term involving $u$ (but not $\p_{\nu} u$) on $\Gamma$ on the left hand side. The precise form of the boundary terms is given in Lemma \ref{lemma_pu_gamma_terms}.

To have precise control over the constants, we will give a direct proof of Proposition \ref{prop_semiclassical_estimate} in Appendix \ref{sec_carleman_appendix} via integration by parts instead of using pseudodifferential factorizations as in the sketch above.

\section{Proof of Theorems \ref{thm_main1}--\ref{thm_main_semiglob}}
\label{sec_proof_of_main_ths}

In this section we prove that small compactly supported perturbations of the Minkowski metric satisfy the conditions \eqref{assumption1}--\eqref{assumption5} in Theorem \ref{thm_main2}. This will imply that Theorems \ref{thm_main1}--\ref{thm_main_semiglob} reduce to Theorem \ref{thm_main2}.

\subsection{The metric \texorpdfstring{$\gm$}{gMin}}

We first observe that $\gm$ satisfies \eqref{assumption1}--\eqref{assumption5} with a suitable set of directions $\Omega$.

\begin{Lemma} \label{lemma_minkowski_conditions}
The metric $g = \gm$ satisfies \eqref{assumption1}--\eqref{assumption2}. Moreover:
\begin{enumerate}
\item[(a)] 
The simplicity condition \eqref{assumption3} holds in any direction $\omega \in S^{n-1}$.
\item[(b)]
The map $(\varphi_{\omega_0}, \ldots, \varphi_{\omega_n})$ is a linear diffeomorphism from $\mR^{1+n}$ to itself, so in particular \eqref{assumption4} holds, whenever $\omega_1, \ldots, \omega_n$ are linearly independent and $\omega_0$ is not of the form $\sum_{j=1}^n a_j \omega_j$ for some $a_j \in \mR$ with $\sum a_j = 1$.
\item[(c)]
The spanning condition \eqref{assumption5} holds with $\Omega = \Omega_{\mathrm{Min}}$, where $\Omega_{\mathrm{Min}} = \{ \pm e_1, \ldots, \pm e_n \} \cup \{ \frac{e_j+e_k}{\sqrt{2}} \,:\, 1 \leq j < k \leq n \}$.
\end{enumerate}
\end{Lemma}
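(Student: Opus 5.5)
Everything can be computed explicitly for $\gm$, and I will check the items in order. Assumption \eqref{assumption1} holds trivially. For \eqref{assumption2}, the time function $t$ is a Cauchy temporal function: every inextendible causal curve in Minkowski space meets each slice $\{t=c\}$ exactly once. For (a), the null geodesics starting at $\Sigma_-=\{x\cdot\omega=-1\}$ with tangent $(1,\omega)$ are the straight lines $r\mapsto (t_0+r,\,x_0+r\omega)$. Hence $\Phi^{\gm}(z,r)=z+r(1,\omega)$ is defined for all $r\in\mR$. This is an affine bijection $\Sigma_-\times\mR\to\mR^{1+n}$, because $(1,\omega)$ is transverse to the hyperplane $\Sigma_-$ (its $x\cdot\omega$-component is $1\neq 0$). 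So $\Phi^{\gm}$ is a global diffeomorphism, and Definition~\ref{simple_in_omega} holds with $V$ any neighborhood of $\ol{Q}_{T+2}$. The eikonal solution is $\varphi_\omega=t-x\cdot\omega$ globally, since it satisfies the initial condition and $\gm(d\varphi_\omega,d\varphi_\omega)=-1+|\omega|^2=0$.

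For (b), the map $\Phi(t,x)=(t-x\cdot\omega_j)_{j=0}^n$ is linear, with matrix rows $(1,-\omega_j)$. It is invertible iff the vectors $(1,\omega_j)\in\mR^{1+n}$ are linearly independent. Suppose $\sum_j c_j(1,\omega_j)=0$. If $c_0=0$, independence of $\omega_1,\dots,\omega_n$ forces every $c_j=0$. If $c_0\neq 0$, normalize $c_0=-1$; this gives $\omega_0=\sum_{j\ge1} c_j\omega_j$ with $\sum_{j\ge1} c_j=1$, which is excluded by hypothesis. So $\Phi$ is a linear isomorphism. It maps $\ol{Q}_T$ into $[-T-1,T+1]^{1+n}$, but that is not what \eqref{assumption4} requires. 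I will check that it maps some neighborhood of $\ol{Q}_T$ onto some neighborhood of $[-T-1,T+1]^{1+n}$: take the preimage under the global diffeomorphism $\Phi$ of a small neighborhood of the cube. This step is the only delicate point, and I expect it to be where one must either read \eqref{assumption4} as only requiring a diffeomorphism between neighborhoods (with the image of $\ol Q_T$ lying in the cube, as Proposition~\ref{prop_plane_wave_structure}(a) guarantees) or else note that the preimage of the cube may be larger than $\ol Q_T$. I would adopt the former reading, which is the one used in the proof of Proposition~\ref{prop_psi_well_defined}.

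For (c), the tensors $d\varphi_\omega\otimes d\varphi_\omega=\xi_\omega\otimes\xi_\omega$ with $\xi_\omega=(1,-\omega)$ are constant, so it suffices to show that together with $\gm$ they span the symmetric $2$-tensors. In a basis $\{e^0,e^1,\dots,e^n\}$ of covectors, I generate everything step by step:
\begin{itemize}
\item[(i)] From $\pm e_k$: $\xi_{\pm e_k}\otimes\xi_{\pm e_k}=e^0e^0\mp 2e^0e^k+e^ke^k$ (symmetric products). The sum and difference give $e^0e^0+e^ke^k$ and $e^0e^k$.
\item[(ii)] From $\gm=-e^0e^0+\sum_k e^ke^k$ together with the $n$ elements $e^0e^0+e^ke^k$, we get $\sum_k(e^0e^0+e^ke^k)-\gm=(n+1)e^0e^0$. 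Hence $e^0e^0$, and then each $e^ke^k$, lie in the span.
\item[(iii)] From $\omega=(e_j+e_k)/\sqrt2$: $\xi_\omega\otimes\xi_\omega=e^0e^0-\sqrt2\,e^0(e^j+e^k)+\tfrac12(e^je^j+2e^je^k+e^ke^k)$. Every term except $e^je^k$ is already in the span, so $e^je^k$ is too.
\end{itemize}
This yields all $\frac{(n+1)(n+2)}{2}$ basis tensors, proving \eqref{assumption5} at every point.
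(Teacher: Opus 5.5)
Your proposal is correct. The verification of \eqref{assumption1}--\eqref{assumption2}, part (a), and the kernel computation in part (b) match the paper. The point you flag as delicate in (b) is not actually delicate: $\Phi$ is a linear isomorphism of $\mR^{1+n}$, so you may take both neighborhoods in \eqref{assumption4} to be $\mR^{1+n}$ itself, as the paper does. Your preimage construction also works, since $\Phi(\ol{Q}_T)\subset[-T-1,T+1]^{1+n}$ and nothing requires the neighborhood of $\ol{Q}_T$ to be small.

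The genuine difference is in (c). The paper argues by duality. It shows that a symmetric matrix $A$ with $-a_{00}+a_{11}+\dots+a_{nn}=0$ that is Frobenius-orthogonal to every $(1,-\omega)\otimes(1,-\omega)$, $\omega\in\Omega_{\mathrm{Min}}$, must vanish: the directions $\pm e_j$ force $b=0$ and $c_{jj}=-a$, the directions $(e_j+e_k)/\sqrt{2}$ force $c_{jk}=0$, and the trace condition forces $a=0$. This proves that the tensors $d\varphi_\omega\otimes d\varphi_\omega$ alone span the $\gm$-trace-free subspace. That is the form in which the spanning condition is later used in Proposition~\ref{prop_gbar_zero}, and \eqref{assumption5} follows because $\gm$ is not trace free.

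You instead generate the standard basis of symmetric products constructively, using $\gm$ only in step (ii) to separate $e^0e^0$ from the $e^ke^k$. Your computations check out. Since $|\Omega_{\mathrm{Min}}|+1 = 2n+\binom{n}{2}+1 = \frac{(n+1)(n+2)}{2}$, your argument in fact shows the family is a basis, and the paper's trace-free spanning statement then follows by a dimension count. Both routes are equally elementary: yours is explicit, while the paper's annihilator check lands directly on the trace-free formulation.
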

\begin{proof}
Let $(t,x)$ be global coordinates in $\mR^{1+n}$. Then $(\mR^{1+n}, \gm)$ satisfies \eqref{assumption1}, and \eqref{assumption2} follows since $t$ is a Cauchy temporal function.

(a) For $\gm$, the null geodesics $\gamma_z(r)$ for $z \in \Sigma_- = \{ x \cdot \omega = -1 \}$ with $\gamma_z(0) = z$ and $\dot{\gamma}_z(0) = (1,\omega)$, are the straight lines 
\[
\gamma_z(r) = z + r(1,\omega).
\]
Thus the map $\Sigma_- \times \mR \to \mR^{1+n}$, $(z,r) \mapsto \gamma_z(r)$ is a diffeomorphism onto $\mR^{1+n}$, which implies \eqref{assumption3} for any $\omega$. 

(b) The eikonal solutions for $\gm$ are $\varphi_{\omega}(t,x) = t - x \cdot \omega$. Let 
\[
\Phi = (\varphi_{\omega_0}, \ldots, \varphi_{\omega_n}) = \left( \begin{array}{rc} 1 & -\omega_0 \\ \vdots & \vdots \\ 1 & -\omega_n \end{array} \right) \begin{pmatrix} t \\ x \end{pmatrix}.
\]
Let $A$ be the transpose of the matrix on the right. Then $A$ has nontrivial kernel if there is $(c_1, \ldots, c_n) \in \mR^n \setminus \{0\}$ such that 
\[
\sum_{j=1}^n c_j \omega_j = (\sum c_j) \omega_0.
\]
If $\sum c_j = 0$, one would get a contradiction since $\omega_1, \ldots, \omega_n$ are linearly independent. Thus, after dividing by $\sum c_j$, $\Phi$ is a diffemorphism onto $\mR^{1+n}$ unless $\omega_0 = \sum_{j=1}^n a_j \omega_j$ with $\sum a_j = 1$.

(c) It is enough to prove that the set $\{ (1,-\omega) \otimes (1, -\omega) \}_{\omega \in \Omega_{\mathrm{Min}}}$ spans 
\[
S = \{ A = (a_{jk})_{j,k=0}^n \,:\, a_{jk} = a_{kj}, \ -a_{00} + a_{11} + \ldots + a_{nn} = 0 \}.
\]
Consider the matrix product $A : B = \sum_{j,k} a_{jk} b_{jk}$. Since $A : (v \otimes w) = Av \cdot w$, the spanning condition holds iff any $A \in S$ satisfying 
\[
A \begin{pmatrix} 1 \\ -\omega \end{pmatrix} \cdot \begin{pmatrix} 1 \\ -\omega \end{pmatrix} = 0, \qquad \omega \in \Omega_{\mathrm{Min}},
\]
must be zero. Writing $A = \begin{pmatrix} a & b \\ b^t & C \end{pmatrix}$ with $a \in \mR$, $b \in \mR^n$ and $C$ a symmetric $n \times n$ matrix, the above condition reads 
\[
a - 2 b \cdot \omega + C \omega \cdot \omega = 0, \qquad \omega \in \Omega_{\mathrm{Min}}.
\]
Adding and subtracting these equations for  $\omega = \pm e_j$ gives 
\[
2 b \cdot e_j = 0, \qquad a + c_{jj} = 0.
\]
Since this holds for all $j$, we get $b = 0$ and $c_{jj} = -a$. Then taking $\omega = (e_j+e_k)/\sqrt{2}$, $j < k$, gives 
\[
a + (c_{jj} + 2 c_{jk} + c_{kk})/2 = 0.
\]
Inserting $c_{jj} =  -a$ gives $c_{jk} = 0$ for $j < k$. It follows that  
\[
A = \begin{pmatrix} a & 0 \\ 0 & -a \mathrm{Id} \end{pmatrix}.
\]
Since $A \in S$, the trace condition yields $-(1+n)a = 0$, which implies $a=0$ and $A = 0$ as required.
\end{proof}

\subsection{Compactly supported perturbations of \texorpdfstring{$\gm$}{gMin}}

We first show that any $C^0$-small compactly supported perturbation of $\gm$ satisfies \eqref{assumption2}. Below, a Lorentzian manifold is said to be causally geodesically complete if every inextendible causal geodesic is defined on the whole real axis. 

\begin{Lemma}\label{lem_globhyp_near_Min}
Let $K \subset \R^{1+n}$ be compact. There is $\eps > 0$ such that any smooth Lorentzian metric $g$ that coincides with $\gm$ outside $K$ and satisfies
\begin{gather*}
\norm{g-\gm}_{C^0(\R^{1+n})} \leq \eps,
\end{gather*}
is globally hyperbolic and causally geodesically complete.
\end{Lemma}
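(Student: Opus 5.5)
The approach is to compare the light cones of $g$ with slightly widened Minkowski cones. Write $\gm = -dt^2 + |dx|^2$. If $\norm{g-\gm}_{C^0} \le \eps$ with $\eps$ small, then every $g$-causal vector $v=(v^0,v')$ satisfies $|v'| \le (1+\delta)|v^0|$, where $\delta = \delta(\eps) \to 0$. Indeed, $g(v,v)\le 0$ gives $-|v^0|^2 + |v'|^2 \le C\eps(|v^0|^2+|v'|^2)$. Similarly, for $\eps$ small $dt$ is $g$-timelike everywhere, since $g^{-1}(dt,dt) \le -1 + C\eps < 0$. Hence $t$ is a time function for $g$, and $g$ is time-orientable with $\p_t$ future directed. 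It follows that every future-directed $g$-causal curve has $t$ strictly increasing, and can be parametrized by $t$ with Euclidean speed $|dx/dt| \le 1+\delta$.

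\textbf{Global hyperbolicity.} I will show that each level set $\{t = c\}$ is a Cauchy hypersurface. Let $\gamma$ be an inextendible causal curve, parametrized by $t$ on a maximal interval $(a,b)$. Since $|dx/dt|\le 1+\delta$, if $b<\infty$ then $x(t)$ converges as $t \to b$ and $\gamma$ extends, a contradiction. Hence $(a,b) = \R$ and $\gamma$ meets every slice exactly once. (Alternatively, I would check that causal diamonds $J^+(p)\cap J^-(q)$ are contained in the compact sets $\{ |x - x_p| \le (1+\delta)(t-t_p)\}\cap\{t\le t_q\}$, and that the cone comparison gives causality.) Either route gives global hyperbolicity.

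\textbf{Causal geodesic completeness.} This is the main step. Let $\gamma(r)$ be a causal geodesic with affine parameter. Outside $K$ the metric is Minkowski, so $\gamma$ is a straight line there with constant $\dot\gamma$. Since $K \subset \{|t|\le t_K\}$ and $t$ increases along $\gamma$, the portion of $\gamma$ inside $K$ lies in $\{|t| \le t_K\}$. The plan is as follows. (i) Show that $\gamma$ spends only a finite $t$-interval, of length at most $2t_K$, in the slab $\{|t|\le t_K\}$. (ii) Show that in this slab the affine parameter stays bounded while $\dot t$ stays bounded away from $0$ and $\infty$, so the geodesic leaves the slab in finite affine time and is then a Minkowski line defined for all $r$. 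For (ii) I would use the geodesic equation $\ddot t = -\Gamma^0_{jk}\dot\gamma^j\dot\gamma^k$ together with $|\dot\gamma|_{\mathrm{Eucl}} \le C\dot t$ (from the cone comparison). This gives $|\ddot t| \le C' \dot t^{\,2}$, where $C'$ bounds the Christoffel symbols on the compact $K$. Then $\frac{d}{dr}\log \dot t$ is bounded by $C'\dot t$, and $\frac{d}{dt}\log \dot t = \ddot t/\dot t^{\,2}$ is bounded by $C'$. Integrating in $t$ over an interval of length at most $2t_K$ shows that $\dot t$ changes by at most a factor $e^{2C't_K}$ within the slab. Therefore $r$ advances by a finite amount while $t$ crosses the slab, so the maximal interval cannot end inside the slab. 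Outside the slab the geodesic is an affinely parametrized line, hence complete. Note that $C'$ need not be small, since only $C^0$ smallness is assumed; this is fine, because it only enters through the compactness of $K$.

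The main obstacle I expect is ensuring that the geodesic actually exits the slab, i.e.\ that it is not trapped as $r \to \rho$ with $t$ converging. The bound on $\log\dot t$ handles this: $\dot t$ is bounded below, so $t$ grows at least linearly in $r$ while the geodesic is in the slab. Hence a finite $\rho$ would force $\gamma$ to remain in a compact region with bounded velocity, and the geodesic would extend by the standard ODE extension argument.
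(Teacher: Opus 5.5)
Your proof is correct. The global hyperbolicity part matches the paper's argument: the $C^0$ cone bound makes $t$ strictly monotone along causal curves with $|dx/dt|$ bounded, so an inextendible causal curve parametrized by $t$ is defined on all of $\mR$, and each slice $\{t=c\}$ is a Cauchy surface.

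For causal geodesic completeness you take a different route.

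\textbf{The paper's route.} It reuses the causal-curve statement directly. The maximal geodesic, reparametrized by $t$, is defined for all $t\in\mR$, so it lies outside $K$ for $|t|$ large. There it is an affinely parametrized Minkowski line, hence complete. This is shorter, but it tacitly uses a standard fact that is not stated: a geodesic whose maximal affine interval has a finite endpoint is inextendible as a curve. In other words, it cannot converge to a point; this follows from the usual convex-neighborhood argument.

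\textbf{Your route.} You work at the level of the geodesic ODE. Combining $\ddot t=-\Gamma^0_{jk}\dot\gamma^j\dot\gamma^k$ with the cone bound gives $|d(\log\dot t)/dt|\le C'$. Hence $\dot t$ stays pinched between positive constants while $\gamma$ crosses the slab, and $(\gamma,\dot\gamma)$ remains in a compact subset of $T\mR^{1+n}$. The ODE extension theorem then forces the geodesic to leave the slab in finite affine time, after which it is a complete Minkowski line.

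\textbf{Comparison.} Your argument is self-contained and does not need the causal-curve result at all. It also gives explicit bounds on the affine length spent in the slab and on the distortion of $\dot t$. Its only extra input is boundedness of the Christoffel symbols. As you correctly note, that bound depends on $g$ rather than being small, which is enough because completeness is a statement about each fixed $g$.
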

\begin{proof}
Let $p \in K$ and $v = (v^0, v') \in T_p\R^{1 + n}$.
Then, writing $h = g-\gm$,
\[
|h(v,v)| \lesssim \eps (|v^0|^2 + |v'|^2).
\]
In particular, if $g(v,v) \leq 0$ then
\[
-(v^0)^2 + |v'|^2 = \gm(v,v) = g(v,v) - h(v,v) \lesssim \eps (|v^0|^2 + |v'|^2).
\]
For small enough $\eps > 0$, we obtain the cone bound 
\begin{equation}\label{cone_bd}
|v'|^2 \lesssim |v^0|^2.
\end{equation}
If, further, $v \ne 0$, then $v^0 \ne 0$. Hence $t$ is strictly monotone along every $g$-causal curve, and these curves can be parametrized by $t$. 

We fix the time-orientation so that $dt$ is future-directed.
Let $I \subset \R$ be an interval and let $\gamma : I \to \R^{1+n}$ be an inextendible future-directed $g$-causal curve parametrized by $t$. We write $\gamma(t) = (t, x(t))$ and show that $I = \R$. 
The cone bound \eqref{cone_bd} gives $|\dot x(t)| \lesssim 1$.
Let $a \in I$ and suppose for contradiction that $I \cap [a, \infty) = [a, b)$ for some $b < \infty$. Then for all $t, s \in [a, b)$,
\[
|x(t) - x(s)| \lesssim |t - s| \le b - a,
\]
so $x([a, b)) \subset K'$ for a compact set $K' \subset \R^{n}$. 
We see that $x : [a, b) \to K'$ is Lipschitz continuous. Hence $x(t) \to y$ for some $y \in K'$ as $t \to b$, contradicting inextendibility. The same argument applies in the past direction.

Therefore every inextendible $g$-causal curve crosses each level set of $t$ exactly once. In particular, $\{t = 0\}$ is a Cauchy surface on $(\R^{1+n}, g)$.

Consider now the case that $\gamma$ is a causal geodesic. As $t$ is bounded in the compact set $K$, we see using the non-affine reparametrization $\gamma(t) = (t, x(t))$, that $\gamma$ stays away from $K$ for $t \gg 1$ and $t \ll 1$. Thus $\gamma$ is an unbounded line segment for $s \gg 1$ and $s \ll 1$, where $s$ is the affine parameter of $\gamma$.
\end{proof}

Next we consider the stability of \eqref{assumption3}. Recall that the map $\Phi^g$ is defined by \eqref{Phig}.

\begin{Lemma}\label{lem_simple_near_Min}
Let $g$ be a smooth Lorentzian metric in $\mR^{1+n}$ satisfying \eqref{assumption1}. Let $K_1 \subset \Sigma_- \times \R$ and $K_2 \subset \R^{1+n}$ be compact. There is $\eps > 0$ such that if
\begin{gather*}
\norm{g-\gm}_{C^2(\R^{1+n})} \le \eps,
\end{gather*}
then there are neighborhoods $U \subset \Sigma_- \times \R$ of $K_1$ 
and $V \subset \R^{1+n}$ of $K_2$ such that the map
$\Phi^g : U \to V$ 
is a diffeomorphism. Moreover, for compact $K \subset U$
    \begin{align*}
\norm{\Phi^g - \Phi^{\gm}}_{C^1(K)} \to 0, \quad \eps \to 0.
    \end{align*}
\end{Lemma}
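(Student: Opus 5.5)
The plan is to treat $\Phi^g$ as the time-one flow of a geodesic ODE and use continuous dependence of ODE solutions on parameters. This gives the $C^1$ closeness of $\Phi^g$ to $\Phi^{\gm}$. Since $\Phi^{\gm}(z,r) = z + r(1,\omega)$ is an affine diffeomorphism $\Sigma_-\times\R\to\R^{1+n}$, a quantitative inverse function theorem argument then turns closeness into the diffeomorphism property.

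\textbf{Step 1: $C^1$ convergence on compacts.} The geodesic equation is $\ddot\gamma^a = -\Gamma^a_{bc}(g)\dot\gamma^b\dot\gamma^c$. The Christoffel symbols are $C^1$-close to $0$ when $\norm{g-\gm}_{C^2}\le\eps$, uniformly on $\R^{1+n}$, because $g=\gm$ outside the compact set $M$.
\begin{itemize}
\item Since the vector field is uniformly bounded in $C^1$, for $\eps$ small every geodesic $\gamma_z^g$ with $(z,r)$ in a fixed compact set exists up to time $r$. One can argue this either by comparison with straight lines or via Lemma~\ref{lem_globhyp_near_Min}, which gives causal geodesic completeness.
\item Gronwall estimates for the solution and for its linearization in the initial data then give $\norm{\Phi^g-\Phi^{\gm}}_{C^1(K)}\to0$ for any compact $K\subset\Sigma_-\times\R$.
\end{itemize}
This proves the last assertion once $U$ is chosen so that $K\subset U$ is compact. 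I would in fact work with a fixed large compact set, as set up in Step 2.

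\textbf{Step 2: local diffeomorphism near $K_1$.} Set $A=\Phi^{\gm}$. Choose a compact convex set $\tilde K\subset\Sigma_-\times\R$, say a large product of a closed ball and an interval, containing $K_1\cup A^{-1}(K_2)$ in its interior, with some margin $\delta$.
\begin{itemize}
\item By Step 1, $D\Phi^g$ is uniformly close to the constant invertible $DA$ on $\tilde K$.
\item Consequently $\Phi^g-A$ has small Lipschitz constant there, so $\Phi^g$ is injective on $\tilde K$. Explicitly, $|\Phi^g(p)-\Phi^g(q)|\ge|A(p-q)|-o(1)|p-q|$ by convexity.
\item $\Phi^g$ is also a local diffeomorphism on the interior of $\tilde K$.
\end{itemize}
Take $U=\mathrm{int}\,\tilde K$. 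Then $\Phi^g:U\to\Phi^g(U)$ is a diffeomorphism onto an open set, by invariance of domain or the inverse function theorem.

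\textbf{Step 3: the image covers $K_2$.} I expect this to be the main (though modest) obstacle. Since $\sup_{\tilde K}|\Phi^g-A|$ is small, a degree argument shows that $\Phi^g(U)$ contains the $\delta/2$-neighborhood of $K_2$. Concretely, the homotopy $(1-\tau)A+\tau\Phi^g$ does not hit $y$ on $\partial\tilde K$ for $y$ near $K_2$, so the degree equals $\deg(A,\tilde K,y)=\pm1$, using the degree facts recalled in Appendix~\ref{degree_theory}. A simpler alternative is a contraction-mapping argument solving $\Phi^g(p)=y$ via $p=A^{-1}(y-(\Phi^g-A)(p))$.

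Then set $V$ to be an open neighborhood of $K_2$ contained in $\Phi^g(U)$, and restrict $U$ to $(\Phi^g)^{-1}(V)\cap U$ enlarged to still contain $K_1$. The simplest choice is to keep $U$ and let $V=\Phi^g(U)$, which is open and contains $K_2$. Either way $\Phi^g:U\to V$ is a diffeomorphism.
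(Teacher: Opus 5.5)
Your proposal is correct and follows essentially the same route as the paper. Gronwall estimates for the geodesic equation and its linearization give $C^1$ closeness of $\Phi^g$ to the affine map $\Phi^{\gm}$ on compacts; the paper gets the $C^0$ part from Lang's theorem on $C^1$ dependence of flows on the metric as a Banach-space parameter, and uses causal geodesic completeness (Lemma~\ref{lem_globhyp_near_Min}) for existence. Your Steps 2--3 are an inline version of Lemma~\ref{lemma_diffeo_perturbation}, which the paper proves by the same small-Lipschitz injectivity and degree/homotopy surjectivity argument, on a smooth-boundary domain where you use a convex one.
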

\begin{proof}
Choose $\eps > 0$ small enough so that $(\R^{1+n}, g)$ is causally geodesically complete (see Lemma \ref{lem_globhyp_near_Min}), and write $\mathcal B$ for the open ball of radius $\eps$ around $\gm$ in the space $\mathcal V \subset C^2(\R^{1+n})$ of $2$-tensors satisfying \eqref{assumption1}.
Consider the flow  
    \begin{align*}
\alpha(r; p, v, g) = (\gamma^g(r; p, v), \dot \gamma^g(r; p, v), g)
    \end{align*}
associated to the vector field
    \begin{align*}
f(x, \xi, g) = (\xi, \eta, 0), 
\quad \eta^j(x, \xi, g) = -\Gamma^j_{kl}(x; g) \xi^k \xi^l.
    \end{align*}
The Christoffel symbols $\Gamma^j_{kl}(x; g)$ depend on $g$ and its first derivatives. Thus $f \in C^1(\mathcal U; \mathcal W)$ 
where $\mathcal W = (\R^{1+n})^2 \times \mathcal V$ and  
$\mathcal U = (\R^{1+n})^2 \times \mathcal B$.
By \cite[Theorem 5.2]{lang1983} we have $\alpha \in C^1(\R \times \mathcal U; \mathcal U)$. 
Write
    \begin{align*}
\tilde \alpha_g(r,p,\omega) = \alpha(r; p,(1,\omega), g).
    \end{align*}
Let $I \subset \R$ and $K \subset \R^{1+n}$ be compact sets. Then for $g \in \mathcal B$, noting that $\alpha$ is $C^1$, 
    \begin{align*}
\norm{\tilde \alpha_g - \tilde \alpha_{\gm}}_{C^0(I \times K \times S^{n-1})}
\lesssim
\norm{g - \gm}_{C^2(\R^{1+n})} \le \eps.
    \end{align*}
Let $(r, p) \in I \times K$, $\omega \in S^{n-1}$, and $g \in \mathcal B$. It holds, in particular, that $|\tilde \alpha_g(r; p, \omega)| \lesssim 1$.
We write $q = (p, \omega)$ and 
    \begin{align*}
\beta_g(r; p, \omega) = \p_q \gamma^g(r; p, (1, \omega)),
    \end{align*}
Differentiating $\dot \alpha = f \circ \alpha$ in $q$ we get
    \begin{align*}
\ddot \beta_g^j = -2 \Gamma^j_{kl}(\gamma^g, g) (\dot \gamma^g)^k \dot \beta_g^l -\p_x \Gamma^j_{kl}(\gamma^g, g) \beta_g (\dot \gamma^g)^k (\dot \gamma^g)^l.
    \end{align*} 
In particular, $\ddot \beta_{\gm} = 0$. The difference $w = \beta_g - \beta_{\gm}$ satisfies $w(0) = 0$, $\dot w(0) = 0$ and 
    \begin{align*}
|\ddot w| 
\lesssim \eps |\dot \beta_g| + \eps |\beta_g|
\lesssim \eps |\dot w| + \eps |w| + \eps.
    \end{align*}
Thus $|w| \lesssim \eps$. Indeed,  
we have
\begin{equation*}
w(t) = \int_0^t \dot w(s) ds, 
\quad 
\dot w(t) = \int_0^t \ddot w(s) ds,
\end{equation*}
Thus, writing $u(t)=|w(t)|+|\dot w(t)|$,
    \begin{align*}
u(t) \lesssim \eps + \int_0^t u(s) ds.
    \end{align*}
and Gr\"onwall yields $u(t)\lesssim \eps$.
Writing $\tilde \gamma^g(r, p, \omega) = \gamma^g(r; p, (1, \omega))$, we have shown that 
    \begin{align*}
\norm{\tilde \gamma_g - \tilde \gamma_{\gm}}_{C^1(I \times K \times S^{n-1})}
\lesssim \eps, \quad g \in \mathcal B.
    \end{align*}
The claims that $\Phi^g : U \to V$ 
is a diffeomorphism and $\norm{\Phi^g - \Phi^{\gm}}_{C^1(K)} \to 0$ as $\eps \to 0$ follow from stability of diffeomorphisms (see Lemma \ref{lemma_diffeo_perturbation}).
\end{proof}

\begin{Lemma}\label{lem_diffeo_near_Min}
Let $g$ be a smooth Lorentzian metric in $\mR^{1+n}$ satisfying \eqref{assumption1}. Let $K_1, K_2 \subset \R^{1+n}$ be compact and let $\Omega_0 \subset S^{n-1}$ be finite. 
Write $\omega_j = e_j$, $j = 1,\dots,n$, $\omega_0 = -e_1$ and $\Omega = \{\omega_0, \dots, \omega_n\} \cup \Omega_0$.
There is $\eps > 0$ such that if
\begin{gather*}
\norm{g-\gm}_{C^2(\R^{1+n})} \le \eps,
\end{gather*}
then for $\omega \in \Omega$ the eikonal equation 
    \begin{align}\label{eikonal_aux}
g(d\varphi, d\varphi) = 0, \quad \varphi|_{\{ x \cdot \omega \leq -1 \}} = t-x \cdot \omega,
    \end{align}
has a smooth solution $\varphi = \varphi_{\omega}^g$ near $K_1$ and 
$F_g(x) = (\varphi_{\omega_0}^g(x), \dots, \varphi_{\omega_n}^g(x))$ is a diffeomorphism from a neighborhood of $K_1$ onto a neighborhood of $K_2$. Moreover, for $\omega \in \Omega$  
    \begin{align}\label{conv_phi}
\norm{\varphi_{\omega}^g - \varphi_{\omega}^{\gm}}_{C^1(K_1)} \to 0, \quad \eps \to 0.
    \end{align}
\end{Lemma}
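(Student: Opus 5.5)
We build the eikonal solutions from the geodesic parametrization of Lemma \ref{lem_simple_near_Min}, then transfer the diffeomorphism property of the Minkowski map $F_{\gm}$ to $F_g$ by $C^1$-stability. Fix $\omega \in \Omega$ and let $\Sigma_-^\omega$ and $\Phi^g = \Phi^g_\omega$ be as in \eqref{Phig}.

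For the eikonal equation, for $\gm$ we have $\Phi^{\gm}(z,r) = z + r(1,\omega)$, which is a global diffeomorphism. Choose a compact $K_1' \subset \Sigma_-^\omega \times \R$ with $\Phi^{\gm}(\mathrm{int}\, K_1')$ containing a neighborhood of $K_1$. Apply Lemma \ref{lem_simple_near_Min} with this $K_1'$ and with $K_2$ replaced by a compact neighborhood of $K_1$. This gives, for $\eps$ small, a diffeomorphism $\Phi^g : U \to V$ with $V$ a neighborhood of $K_1$ and $\Phi^g \to \Phi^{\gm}$ in $C^1$ on compacts.

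Define
\[
\varphi^g_\omega := \tau \circ \pi \circ (\Phi^g)^{-1} + 1 \quad \text{on } V,
\]
where $\pi(z,r) = z$ and $\tau(z) = t$-coordinate of $z$. Since $x\cdot\omega = -1$ on $\Sigma_-$, this is $\varphi = t - x\cdot\omega$ at points of $\Sigma_-$. The function is constant along the null geodesics $\gamma_z$, which are $g$-geodesics with initial covector dual to $(1,\omega)$.

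Following \cite[Lemma A.1]{oksanen2024}, as invoked in Proposition \ref{prop_plane_wave_structure}(a), a function constant along a smooth null geodesic foliation emanating from a null hypersurface of this type solves $g(d\varphi,d\varphi)=0$. Concretely, $d\varphi$ is the conormal to the Lagrangian generated by the flow-out. In $\{x\cdot\omega \le -1\}$, where $g = \gm$ by \eqref{assumption1} (we may assume $K_1$ lies near $\ol Q$ after enlarging), the geodesics are straight lines. So $\varphi = t - x\cdot\omega$ there and \eqref{eikonal_aux} holds.

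For the convergence \eqref{conv_phi}, write $\varphi^g_\omega - \varphi^{\gm}_\omega = \tau\circ\pi\circ((\Phi^g)^{-1} - (\Phi^{\gm})^{-1})$. Then $\norm{(\Phi^g)^{-1} - (\Phi^{\gm})^{-1}}_{C^1(K_1)} \to 0$ follows from $\norm{\Phi^g-\Phi^{\gm}}_{C^1}\to 0$ on a compact neighborhood of $(\Phi^{\gm})^{-1}(K_1)$. This uses the uniform invertibility of $D\Phi^{\gm}$ together with the inverse function theorem estimates, as in Lemma \ref{lemma_diffeo_perturbation}.

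For the diffeomorphism claim, by Lemma \ref{lemma_minkowski_conditions}(b) the map $F_{\gm}$ is a linear isomorphism, since $e_1,\dots,e_n$ are independent and $-e_1 = \sum a_j e_j$ forces $\sum a_j = -1 \ne 1$. By \eqref{conv_phi}, applied on a larger compact set $\tilde K_1 \supset F_{\gm}^{-1}(\tilde K_2) \cup K_1$ (with $\tilde K_2$ a compact neighborhood of $K_2$), we get $\norm{F_g - F_{\gm}}_{C^1(\tilde K_1)} \to 0$. The stability of diffeomorphisms, Lemma \ref{lemma_diffeo_perturbation}, then gives that $F_g$ is a diffeomorphism from a neighborhood of $K_1$ onto a neighborhood of $K_2$. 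Here we read the statement as requiring that the image covers $K_2$, which is arranged by the choice of $\tilde K_1$.

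The main obstacle is this last step. A $C^1$-small perturbation of a diffeomorphism is injective only on convex-ish compacts, and its image must still cover $\tilde K_2$. Injectivity follows from a mean-value argument on a convex compact set using $\norm{DF_g - DF_{\gm}} < \frac12\norm{DF_{\gm}^{-1}}^{-1}$. Surjectivity onto $K_2$ follows from a degree argument (Appendix \ref{degree_theory}): the straight homotopy $F_{\gm} + \theta(F_g - F_{\gm})$ avoids the points of $K_2$ on the boundary of $\tilde K_1$ when $\eps$ is small, so each such point has degree $1$ and hence has a preimage.
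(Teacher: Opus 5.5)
Your proposal is correct and follows essentially the same route as the paper: you define $\varphi^g_\omega - 1$ as the time component of $(\Phi^g_\omega)^{-1}$ obtained from Lemma \ref{lem_simple_near_Min}, deduce \eqref{conv_phi} from $C^1$-stability of inverses (Lemma \ref{lemma_diffeo_perturbation}), and transfer the diffeomorphism property from the linear isomorphism $F_{\gm}$ (Lemma \ref{lemma_minkowski_conditions}(b)) to $F_g$ on a large compact set containing $K_1$ whose $F_{\gm}$-image covers $K_2$. Your explicit injectivity and degree argument is just the proof of Lemma \ref{lemma_diffeo_perturbation} specialized to a convex domain; the degree there is $\operatorname{sgn}\det F_{\gm} = \pm 1$ rather than necessarily $1$, which is harmless.
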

\begin{proof}
By Lemma \ref{lemma_minkowski_conditions} (b), $F_{\gm}$ is a linear diffeomorphism from $\R^{1+n}$ to itself. Choose large enough compact $K \subset \R^{1+n}$ so that $K_1 \subset K$ and $K_2 \subset F_{\gm}(K)$. 

By Lemma~\ref{lem_simple_near_Min}, if $\eps > 0$ is small enough, then there are open $U_\omega \subset \Sigma_-^\omega \times \R$ and $V \subset \R^{1+n}$ such that $K \subset V$ and that the map $\Phi_\omega^g : U_\omega \to V$ is a diffeomorphism for all $\omega \in \Omega$. By \cite[Proposition 4.1]{oksanen2024} and Lemma \ref{lem_globhyp_near_Min}
the solution $\varphi_\omega^g$ of \eqref{eikonal_aux} can be expressed in terms of the inverse of $\Phi_\omega^g$ and the projection to the time coordinate: 
\[
\varphi_\omega^g - 1 = \pi \circ (\Phi_\omega^g)^{-1} \in C^\infty(V),
\quad 
\pi((t, x), r) = t, \quad (t, x) \in \R^{1+n},\ r \in \R.
\]
The convergence \eqref{conv_phi} follows from 
    \begin{align*}
\norm{(\Phi_\omega^g)^{-1} - (\Phi_\omega^{\gm})^{-1}}_{C^1(K_1)} \to 0, \quad \eps \to 0,
    \end{align*}
which itself is proven by combining Lemmas \ref{lem_simple_near_Min} and \ref{lemma_diffeo_perturbation}. Moreover, for small $\eps > 0$, $F_g$ is a diffeomorphism from a neighborhood of $K_1$ onto a neighborhood of $K_2$ in view of Lemma \ref{lemma_diffeo_perturbation}.
\end{proof}

Finally we consider the stability of \eqref{assumption5}.

\begin{Lemma}\label{lem_spanning_near_Min}
Let $g$ be a smooth Lorentzian metric in $\mR^{1+n}$ satisfying \eqref{assumption1}. There is $\eps > 0$ such that if
\begin{gather*}
\norm{g-\gm}_{C^2(\R^{1+n})} \le \eps,
\end{gather*}
then $g$ and
$d\varphi_\omega^g \otimes d\varphi_\omega^g$, $\omega \in \Omega_{\mathrm{Min}}$, span all symmetric $2$-tensors at each $x$.
\end{Lemma}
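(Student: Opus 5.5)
The plan is a perturbation argument: the spanning property is an open, pointwise condition on finitely many tensors, and it holds for $\gm$ by Lemma~\ref{lemma_minkowski_conditions}(c). The only points needing care are uniformity over all relevant $x$ and control of the eikonal solutions in $C^1$.

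Let $N = \dim$ of the space of symmetric $2$-tensors on $\mR^{1+n}$, that is, $N = (n+1)(n+2)/2$. At each point $x$, consider the linear map
\[
L_x^g : \mR \times \mR^{\Omega_{\mathrm{Min}}} \to \mathrm{Sym}^2, \qquad (c_0, (c_\omega)) \mapsto c_0 g(x) + \sum_{\omega} c_\omega\, d\varphi^g_\omega(x) \otimes d\varphi^g_\omega(x).
\]
Spanning at $x$ is equivalent to $L^g_x$ being surjective. Equivalently, some $N \times N$ minor of its matrix is nonzero, or the Gram-type quantity $\det(L_x^g (L_x^g)^t)$ is positive.

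For $g = \gm$, the entries of $L_x^{\gm}$ are constant in $x$, since $d\varphi_\omega = dt - \omega \cdot dx$. By Lemma~\ref{lemma_minkowski_conditions}(c) (adding $\gm$ itself to the trace-free span), $D_0 := \det(L^{\gm}(L^{\gm})^t) > 0$. Here one notes that $\gm$ is not trace free, so $\gm$ together with the trace-free part spans everything.

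\emph{Uniformity on the compact set.} Apply Lemma~\ref{lem_diffeo_near_Min} with $\Omega_0 = \Omega_{\mathrm{Min}}$ and $K_1$ a compact neighborhood of $\ol{Q}_{T+2}$. This shows that for $\eps$ small the solutions $\varphi^g_\omega$ exist near $K_1$ and satisfy $\norm{\varphi^g_\omega - \varphi^{\gm}_\omega}_{C^1(K_1)} \to 0$ as $\eps \to 0$. Also $\norm{g - \gm}_{C^0} \le \eps$. The entries of $L^g_x$ are polynomial in $g(x)$ and $d\varphi^g_\omega(x)$, so $\sup_{x \in K_1} |\det(L^g_x (L^g_x)^t) - D_0| \to 0$. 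Choosing $\eps$ so that this is below $D_0/2$ gives spanning at every $x \in K_1$.

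\emph{Outside the compact set.} Here $g = \gm$ by \eqref{assumption1}. I would argue that $\varphi^g_\omega = t - x \cdot \omega$ off a compact set, by the same reasoning as Proposition~\ref{prop_plane_wave_structure}(a) and the fact that the null geodesics are Minkowski lines away from the perturbation. Where this holds, the tensors coincide with the Minkowski ones and spanning is exact.

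The statement ``at each $x$'' should be read wherever $\varphi^g_\omega$ are defined, that is, near $\ol{Q}_T$, which is what \eqref{assumption5} needs. So it suffices to take $K_1 \supset \ol{Q}_T$, and the exterior discussion is only a remark.

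\emph{Main obstacle.} The difficulty is making sure the $C^1$ convergence of the eikonal solutions is available uniformly on the needed set. This is exactly \eqref{conv_phi}, so the remaining work is bookkeeping: verify that Lemma~\ref{lem_diffeo_near_Min} applies with the finite set $\Omega_{\mathrm{Min}}$, and take the minimum of finitely many $\eps$'s.
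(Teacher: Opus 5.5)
Your proposal is correct and follows essentially the paper's own argument. Spanning for $\gm$ comes from Lemma~\ref{lemma_minkowski_conditions}(c), uniform $C^1$ convergence of $\varphi^g_\omega$ on a compact set comes from Lemma~\ref{lem_diffeo_near_Min}, and the conclusion follows from continuity of a determinant. The differences are cosmetic: since $1+|\Omega_{\mathrm{Min}}| = (n+1)(n+2)/2$, the paper's matrix is square and it uses $\det A(x,g)$ directly instead of your Gram determinant, and your exterior remark is not needed (it holds only within a neighborhood of $\mR\times\ol{B}$, since behind the cylinder $\varphi^g_\omega$ generally differs from $t - x\cdot\omega$).
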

\begin{proof}
By Lemma \ref{lemma_minkowski_conditions} (c), the matrices $\gm$ and $d\varphi_{\omega} \otimes d\varphi_{\omega}$, $\omega \in \Omega_{\mathrm{Min}}$, with $\varphi_{\omega} = t - x \cdot \omega$, span the set of symmetric $(1+n) \times (1+n)$ matrices. If $m = (n+1)(n+2)/2$, writing these matrices as column vectors in $\mR^{m}$ gives a basis of $\mR^m$.

Now for $g$ as in the statement, write $g$ and $d\varphi_\omega^g \otimes d\varphi_\omega^g$, $\omega \in \Omega_{\mathrm{Min}}$, similarly as columns of a matrix $A(x, g) \in \R^{m \times m}$. 
Then $\det(A(x,\gm))$ is nonzero. By Lemma~\ref{lem_diffeo_near_Min}, for $K$ compact and for each $\omega \in \Omega_{\mathrm{Min}}$,
\[
\norm{d\varphi_\omega^g - d\varphi_\omega^{\gm}}_{C^0(K)} \to 0 \quad \text{as } \eps \to 0.
\]
It follows that
\[
\norm{\det(A(x,g)) - \det(A(x,\gm))}_{C^0(K)} \to 0 \quad \text{as } \eps \to 0.
\]
This proves the claim when $\eps$ is small enough.
\end{proof}

\subsection{Reduction of Theorems \ref{thm_main1}--\ref{thm_main_semiglob} to Theorem \ref{thm_main2}}

We give two lemmas before proving Theorem \ref{thm_main_semiglob}, and then turn to a proof of Theorem~\ref{thm_main1}.

\begin{Lemma}
Let $g$ be a smooth Lorentzian metric on $\R^{1+n}$ satisfying \eqref{assumption1}--\eqref{assumption2}. 
Let $t_0 > T$ or $t_0 < -T$. Then $\{t=t_0\}$ is a Cauchy surface on $(\R^{1+n}, g)$.
\end{Lemma}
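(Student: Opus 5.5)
We prove that every inextendible $g$-causal curve meets $\{t=t_0\}$ exactly once. By symmetry it suffices to treat $t_0 > T$; the case $t_0<-T$ is identical with time reversed. We fix the time orientation so that $dt$ is future-directed outside $M$. The plan has three steps.

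\emph{Step 1: $t$ is a time function in the region $\{t>T\}$.} On $\{t > T\}$ we have $g = \gm$ by \eqref{assumption1}, so $dt$ is timelike there. Hence $t$ is strictly increasing along every future-directed causal curve while it stays in $\{t > T\}$. Consequently an inextendible causal curve meets $\{t=t_0\}$ at most once. Indeed, at a crossing point $t$ is strictly increasing, so once the curve passes $t_0$ it remains in $\{t>t_0\}\subset\{t>T\}$ and $t$ keeps increasing. A second crossing would require $t$ to return to $t_0$ from above while staying in $\{t>T\}$, which is impossible. The same argument applies in the past direction. This gives \emph{achronality}, and in fact acausality, of $\{t=t_0\}$.

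\emph{Step 2: existence of a crossing.} This is the main obstacle. Let $\gamma$ be an inextendible future-directed causal curve. We use the Cauchy temporal function $\tau$ given by \eqref{assumption2}: $\tau$ takes all real values along $\gamma$. We claim $\gamma$ eventually enters $\{t>t_0\}$ and also eventually lies in $\{t<t_0\}$ in the past.

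\emph{Future direction.} As in the proof of Lemma \ref{lemma_U_well_defined}, we invoke \cite[Proposition 2.6]{oksanen2024}, or its time-reversed analogue, to get $r_+$ with
\[
\{\tau \ge r_+\} \subset \{ t > t_0 + \max(|x|-1,0)\} \subset \{t>t_0\}.
\]
If this is unavailable in that form, we argue directly instead. The set $\{t\le t_0\}\cap\{|x|\le 1\}$ intersected with $J^+$ of a point is compact by global hyperbolicity. Outside $\ol{Q}_T$ the curve is a Minkowski causal curve, along which $t$ grows at least like $|x|$ grows. So a curve remaining in $\{t\le t_0\}$ forever would be trapped in a compact set, contradicting non-total imprisonment in globally hyperbolic spacetimes.

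\emph{Past direction.} Similarly, $\{\tau\le r_-\}\subset\{t<t_0+\max(|x|-1,0)\}$. Combined with the fact that the curve outside $\ol Q_T$ is Minkowski causal, this shows $\gamma$ eventually lies in $\{t<t_0\}$. Concretely, if $|x|$ were to stay large along the past end, then $t$ decreases at least at unit rate in $|x|$ along a Minkowski causal segment, giving $t\to-\infty$.

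\emph{Step 3: conclusion.} By continuity of $t\circ\gamma$, Step 2 yields a crossing, and Step 1 gives its uniqueness. Hence $\{t=t_0\}$ is a Cauchy surface.

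The delicate point is Step 2, namely controlling curves that might wander near $\ol Q_T$ without $t$ increasing. This is exactly where global hyperbolicity, through $\tau$ and \cite[Proposition 2.6]{oksanen2024}, must be used; \eqref{assumption1} alone does not suffice.
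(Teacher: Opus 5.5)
\textbf{Where the proposal breaks.} Your Step 1 is the paper's uniqueness argument (strict monotonicity of $t$ where $g=\gm$). For existence of a crossing, the paper simply cites Lemma 2.8 of \cite{oksanen2024}, which is stated for timelike curves, and notes that its proof works for causal curves. Deriving existence from a Cauchy temporal function $\tau$ instead is a reasonable alternative. However, your future-direction step, which is the only nontrivial one when $t_0>T$, rests on a false inclusion. The claim $\{\tau\ge r_+\}\subset\{t>t_0+\max(|x|-1,0)\}$ fails already for $g=\gm$ and $\tau=t$: take $(r_+,x)$ with $|x|$ large. Even the weaker $\{\tau\ge r_+\}\subset\{t>t_0\}$ fails in general, for instance for $g=\gm$ with the Cauchy temporal function $\tau=t+\frac12\sqrt{1+|x|^2}$. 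Time-reversing the inclusion from Proposition 2.6 of \cite{oksanen2024}, as used in Lemma \ref{lemma_U_well_defined}, gives $\{\tau\ge r_+\}\subset\{t>t_0-\max(|x|-1,0)\}$, with a minus sign. That inclusion alone does not produce a point with $t>t_0$.

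\textbf{How to close the gap.} Use on the future end the dichotomy you gestured at for the past end. Look at the part of $\gamma$ where $\tau\ge r_+$. Either $\gamma$ meets $\{|x|\le 1\}$ there, and the corrected inclusion gives $t>t_0$ at that point. Or it stays in $\{|x|>1\}$, where $g=\gm$; it is then a future-inextendible $\gm$-causal curve, so $t\to+\infty$ because the level sets of $t$ are Cauchy for $\gm$. The reason is not that $t$ changes at unit rate in $|x|$, since $|x|$ need not change at all.

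\textbf{On your fallback.} The compactness argument also works, but global hyperbolicity alone does not make $J^+(p)\cap\{t\le t_0\}$ bounded. You need the lower bound $t\ge\min(t(p),-T)$ on $J^+(p)$, which follows from \eqref{assumption1} because $t$ increases along future causal curves in $\{t<-T\}$. You also need the spatial bound $|dx/dt|\le 1$ on each excursion outside $\R\times\ol B$. With both, the curve is confined to a compact set, which contradicts $\tau\circ\gamma\to\infty$.

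\textbf{The past direction.} For $t_0>T$ this direction needs no Proposition 2.6. A past end staying in $\{t\ge t_0\}\subset\{t>T\}$ would be a past-inextendible $\gm$-causal curve, which forces $t\to-\infty$.
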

\begin{proof}
By \cite[Lemma 2.8]{oksanen2024} any timelike curve intersects $\{t=t_0\}$. The same proof shows that any causal curve intersects $\{t=t_0\}$. Since $g = \gm$ near $\{t \le t_0\}$, the function $t$ is strictly monotonous along any causal curve near $\{t \le t_0\}$. Thus any causal curve can intersect $\{t \le t_0\}$ at most once.
\end{proof}

\begin{Lemma}\label{lem_cauchy_to_plane}
Let $g_j$, $j=1,2$, be smooth Lorentzian metrics on $\R^{1+n}$ satisfying \eqref{assumption1}--\eqref{assumption2}. Let $M$ be a domain with piecewise smooth boundary and suppose that $\overline M \subset Q_T$ and that $g_1 = g_2$ outside $M$. Then $C_{g_1}^{\mathrm{Hyp}} = C_{g_2}^{\mathrm{Hyp}}$ implies $\mathcal F_{\omega, \infty}(g_1) = \mathcal F_{\omega, \infty}(g_2)$ for all $\omega \in S^{n-1}$.
\end{Lemma}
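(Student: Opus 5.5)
The plan is to show that, for each $\omega \in S^{n-1}$ and $s \in \mR$, the plane waves $U_1 = U^{g_1}_{\omega,s}$ and $U_2 = U^{g_2}_{\omega,s}$ coincide outside $M$. In particular they coincide on the lateral boundary $(-\infty,\infty)\times\p B$, which gives $\mathcal F_{\omega,\infty}(g_1) = \mathcal F_{\omega,\infty}(g_2)$. Since $g_1 = g_2$ outside $M$ and both solve the same wave equation with the same data $H(t-x\cdot\omega-s)$ for $t \ll 0$, we have $U_1 = U_2$ in the set of points not in the causal future of $\ol M$. Write $V = U_1 - U_2$. The task is to show that $V = 0$ in $\mR^{1+n}\setminus \ol M$, using the hypothesis $C_{g_1}^{\mathrm{Hyp}} = C_{g_2}^{\mathrm{Hyp}}$.

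The standard route is the following. The restriction $U_1|_{\ol M}$ solves $\Box_{g_1} U_1 = 0$ in $M$. If it were smooth, equality of Cauchy data sets would give a solution $\tilde u$ of $\Box_{g_2}\tilde u = 0$ in $M$ with the same Cauchy data on $\p M$. We would then define $\tilde U_2 = U_1$ outside $M$ and $\tilde U_2 = \tilde u$ in $M$. Matching Cauchy data across $\p M$, together with $g_1 = g_2$ near $\p M$, makes $\tilde U_2$ a global solution of $\Box_{g_2}\tilde U_2 = 0$. Moreover $\tilde U_2$ agrees with the incoming plane wave for $t \ll 0$ (because $\ol M \subset Q_T$). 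Uniqueness in Lemma~\ref{lemma_U_well_defined} then gives $\tilde U_2 = U_2$, hence $U_1 = U_2$ outside $M$.

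The obstacle is regularity: $U_1$ is a Heaviside-type distribution, not in $C^\infty(M)$, while $C^{\mathrm{Hyp}}$ involves only smooth solutions. I would handle this by smoothing in the delay, which is linear. For $\chi \in C_c^\infty(\mR)$, set
\[
U_1^\chi = \int \chi(s) U^{g_1}_{\omega,s}\,ds ,
\]
which is the solution with incoming data $\int \chi(s)H(t-x\cdot\omega-s)\,ds$. That incoming data is a smooth function of $t-x\cdot\omega$, and the Cauchy problem for $\Box_{g_1}$ with smooth data has a smooth solution. So $U_1^\chi \in C^\infty$, and the argument above gives $U_1^\chi = U_2^\chi$ outside $M$ for every $\chi$.

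It remains to deconvolve. Taking $\chi$ to be an approximate identity $\chi_k \to \delta_{s_0}$, the right-continuity (or, more robustly, the $\mathcal D'$-continuity) of $s \mapsto U^{g_j}_{\omega,s}$ gives $U^{g_1}_{\omega,s_0} = U^{g_2}_{\omega,s_0}$ outside $\ol M$ as distributions. This continuity follows from the continuity of the solution operator on $L^2_{\mathrm{loc}}$ data. Restricting to the lateral boundary, away from $\ol M$, is legitimate by the wave front set remark made where $\mathcal F$ is defined, or by continuity in $s$ of the traces. Some care is needed at the corner $\p M$ for the gluing step. Because $\ol M \subset Q_T$ is compactly inside, a neighborhood of $\p M$ lies where $g_1 = g_2$, so the glued function is $C^1$ across $\p M$ and the equation holds weakly. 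I expect this gluing and regularity step to be the main technical point, but it should be routine.
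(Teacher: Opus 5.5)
Your proposal is correct and follows essentially the paper's argument: smooth the Heaviside data (your incoming data $(\chi*H)(t-x\cdot\omega)$ is just one choice of the paper's approximants $\varphi_j \to H$), glue using $C_{g_1}^{\mathrm{Hyp}} = C_{g_2}^{\mathrm{Hyp}}$ together with uniqueness for the Cauchy problem, then pass to the distributional limit and restrict to $\R\times\p B$ --- and proving $U^{g_1}_{\omega,s}=U^{g_2}_{\omega,s}$ on the open set $\R^{1+n}\setminus\ol M$ before restricting avoids the convergence-of-traces step the paper uses. One correction: the hypothesis gives $g_1=g_2$ only on $\R^{1+n}\setminus M$, not on a two-sided neighbourhood of $\p M$ ($\ol M\subset Q_T$ does not help), but you do not need this, since the matching Cauchy data alone make the glued function $C^1$ and piecewise smooth, hence in $H^2_{\mathrm{loc}}$, and it satisfies $\Box_{g_2}=0$ on each side of $\p M$ for the single smooth metric $g_2$.
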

\begin{proof}
Choose $\varphi_j \in C^{\infty}_c(\mR)$ so that $\supp(\varphi_j) \subset [-1,\infty)$ and $\varphi_j \to H$ in the sense of distributions, and let $u_j^{\gm} = \varphi_j(t-x\cdot \omega - s)$. Then $u_j^{\gm}$ solves $\Box_{\gm} u_j^{\gm} = 0$ in $\mR^{1+n}$, $\supp(u_j^{\gm}) \subset \{ t - x \cdot \omega \geq s -1 \}$, and $u_j^{\gm} \to U_{\omega,s}^{\gm} = H(t - x \cdot \omega - s)$ in the sense of distributions. For $t_0 \ll -T$ 
    \begin{align}\label{aux_supp_cauchy_to_plane}
\supp(u_j^{\gm}) \cap \{t < t_0\} \cap Q_\infty = \emptyset.
    \end{align}

Let $g = g_1, g_2$.
As $\{t = t_0\}$ is a Cauchy surface on $(\R^{1+n}, g)$, there is a unique solution $u_j^g \in C^\infty(\R^{1+n})$ of 
    \begin{align}\label{aux_wave_eq_cauchy_to_plane}
\begin{cases}
\Box_g u = 0, & \text{in $\R^{1+n}$}
\\
u|_{t=t_0} = u_j^{\gm}|_{t=t_0},
\\
\p_t u|_{t=t_0} = \p_t u_j^{\gm}|_{t=t_0}.
\end{cases}
    \end{align}
It follows from \eqref{aux_supp_cauchy_to_plane} that $u_j^g = u_j^{\gm}$ for $t < t_0$.
We define 
\[
u_j = \left\{ \begin{array}{cl} u_j^{g_1} & \text{in $M$}, \\ u_j^{g_2} & \text{outside $M$}. \end{array} \right.
\]
The assumption $C_{g_1}^{\mathrm{Hyp}} = C_{g_2}^{\mathrm{Hyp}}$ implies
$u_j \in H^2_{\mathrm{loc}}(\mR^{1+n})$, and in view of $g_1 = g_2$ outside $M$, $u_j$ solves \eqref{aux_wave_eq_cauchy_to_plane} with $g=g_1$. Due to uniqueness of the solution $u_j = u_j^{g_1}$. 
It follows from \cite[Lemma 4.1]{bar2015} that 
$u_j^{g_k} \to U_{\omega,s}^{g_k}$, $k=1,2$, in the sense of distributions as $j \to \infty$.
By taking distributional traces on $\mR \times \p B$ using \cite[Theorem 8.2.4 and Corollary 8.2.7]{hormander}, which is possible since the wave front sets of $U_{\omega,s}^{g_k}$, $k=1,2$, are disjoint from $N^*(\mR \times \p B)$, we obtain 
\[
U_{\omega,s}^{g_1}|_{\mR \times \p B} 
= \lim_{j \to \infty} u_j^{g_1}|_{\mR \times \p B} 
= \lim_{j \to \infty} u_j|_{\mR \times \p B} 
= \lim_{j \to \infty} u_j^{g_2}|_{\mR \times \p B} 
= U_{\omega,s}^{g_2}|_{\mR \times \p B}.
\]
\end{proof}

In view of Lemma \ref{lem_cauchy_to_plane}, Theorem \ref{thm_main_semiglob} reduces to the following corollary of Theorem \ref{thm_main2} and Lemmas \ref{lem_simple_near_Min}--\ref{lem_spanning_near_Min}.

\begin{Corollary}\label{cor_main}
Let $g, g'$ be smooth Lorentzian metrics  in $\mR^{1+n}$ satisfying \eqref{assumption1}--\eqref{assumption2}. There is $\eps > 0$ such that if
\begin{gather*}
\norm{g'-\gm}_{C^2(\ol{Q}_T)} \leq \eps,
\end{gather*}
the condition
$\mathcal F_{\omega, \infty}(g) = \mathcal F_{\omega, \infty}(g')$,  $\omega \in S^{n-1}$, implies 
\[
g = \Psi^* g'
\]
for some diffeomorphism $\Psi: \mR^{1+n} \to \mR^{1+n}$ with $\Psi = \id$ outside $Q_T$.
\end{Corollary}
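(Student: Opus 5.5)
The plan is to derive Corollary \ref{cor_main} from Theorem \ref{thm_main2} by checking that, for $\eps$ small, $g'$ satisfies \eqref{assumption3}--\eqref{assumption5} for a suitable finite $\Omega$, and that the data equality in Theorem \ref{thm_main2} holds. The data part is easy: $\mathcal F_{\omega,\infty}(g) = \mathcal F_{\omega,\infty}(g')$ gives equality of $U^g_{\omega,s}$ and $U^{g'}_{\omega,s}$ on the whole lateral boundary for all delays. Restricting to $\Sigma_{T+2}$ and $|s|\le T+1$ gives $\mathcal F_{\Omega,T}(g) = \mathcal F_{\Omega,T}(g')$ for every finite $\Omega$. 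Since $g'=\gm$ outside $M$, we have $\norm{g'-\gm}_{C^2(\R^{1+n})} = \norm{g'-\gm}_{C^2(\ol Q_T)}$, so the perturbation lemmas, which are stated with global $C^2$ norms, apply.

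For $\Omega$ take $\Omega = \Omega_{\mathrm{Min}} \cup \{\omega_0,\dots,\omega_n\}$ with $\omega_j=e_j$ and $\omega_0=-e_1$. By Lemma \ref{lemma_minkowski_conditions}(b) this choice gives a linear diffeomorphism for $\gm$: $e_1,\dots,e_n$ are independent, and $-e_1=\sum a_je_j$ forces $a_1=-1$, $a_j=0$ otherwise, so $\sum a_j=-1\neq 1$. This $\Omega$ is in fact contained in $\Omega_{\mathrm{Min}}$.

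The three conditions are then checked as follows.

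\begin{itemize}
\item \eqref{assumption3}: apply Lemma \ref{lem_simple_near_Min} for each of the finitely many $\omega\in\Omega$. For $\gm$, $\Phi^{\gm}_\omega$ is a global diffeomorphism. Take $K_2$ a compact neighborhood of $\ol Q_{T+2}$ and $K_1=(\Phi^{\gm}_\omega)^{-1}$ of a slightly larger compact set. For small $\eps$, $\Phi^{g'}_\omega$ is a diffeomorphism from a neighborhood $U$ of $K_1$ onto a neighborhood $V$ of $K_2$, which is exactly Definition \ref{simple_in_omega}.
\item \eqref{assumption4}: apply Lemma \ref{lem_diffeo_near_Min} with $K_1$ a compact neighborhood of $\ol Q_T$ and $K_2$ a compact neighborhood of $[-T-1,T+1]^{1+n}$. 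This needs a small compatibility check. The eikonal solution produced there agrees with the $\varphi'_\omega$ of Proposition \ref{prop_plane_wave_structure}, by uniqueness in part (a). The image must also be a neighborhood of the cube rather than merely contain $K_2$, which follows since $F_{g'}$ is a diffeomorphism onto a neighborhood of $K_2$.
\item \eqref{assumption5}: this is Lemma \ref{lem_spanning_near_Min} with $\Omega_{\mathrm{Min}}\subset\Omega$, applied on a compact neighborhood of $\ol Q_T$.
\end{itemize}

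Take $\eps$ to be the minimum over these finitely many requirements. Theorem \ref{thm_main2} then gives $g=\Psi^*g'$ with $\Psi=\id$ outside $\ol Q_T$.

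The main obstacle is a boundary subtlety. The corollary asks for $\Psi=\id$ outside $Q_T$, while the theorem gives this outside $\ol Q_T$. Since $\Psi$ is smooth, hence continuous, $\Psi=\id$ on the closure of the complement of $\ol Q_T$, which contains $\p Q_T$, so the two statements agree.

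The other point needing care is that Lemma \ref{lem_simple_near_Min} produces neighborhoods $U,V$ with $V\supset K_2$. I would need to confirm that $U$ can be chosen so that $\Phi^{g'}(U)=V$ exactly with $V$ a neighborhood of $\ol Q_{T+2}$. Here I expect to invoke that $\Phi^{g'}$ agrees with $\Phi^{\gm}$ on geodesics staying outside $M$, together with the stability Lemma \ref{lemma_diffeo_perturbation}.
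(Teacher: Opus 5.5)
Your proposal is correct and follows the paper's route: the paper obtains Corollary \ref{cor_main} by restricting the data to $\mathcal F_{\Omega,T}$ and verifying \eqref{assumption3}--\eqref{assumption5} for $g'$ via Lemmas \ref{lem_simple_near_Min}, \ref{lem_diffeo_near_Min} and \ref{lem_spanning_near_Min} with $\Omega_{\mathrm{Min}}$ (which contains $\pm e_1, e_2, \dots, e_n$), and then invoking Theorem \ref{thm_main2}. Your remaining worry about whether $\Phi^{g'}(U)=V$ holds is moot: Lemma \ref{lem_simple_near_Min} already gives a diffeomorphism onto $V$, and in any case you could replace $V$ by the open set $\Phi^{g'}(U)\supset K_2$.
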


Let us reduce Theorem \ref{thm_main1} to Corollary \ref{cor_main}. We first state a basic property of Lorentzian isometries.

\begin{Proposition} \label{prop_conformal_identity}
Let $g$ be a smooth Lorentzian metric on $\mR^{1+n}$, and let $\Psi: (U,g) \to (\mR^{1+n},g)$ be a Lorentzian isometry where $U \subset \mR^{1+n}$ is connected. If there is $z \in U$ such that $\Psi(z) = z$ and $D\Psi(z) = \id$, then $\Psi = \id$ in $U$.
\end{Proposition}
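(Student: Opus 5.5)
Let $A = \{ p \in U \,:\, \Psi(p) = p,\ D\Psi(p) = \id \}$. The plan is to show that $A$ is nonempty, closed and open in $U$; connectedness of $U$ then gives $A = U$, that is, $\Psi = \id$ in $U$. Nonemptiness is the hypothesis $z \in A$. Closedness follows because $\Psi$ and $D\Psi$ are continuous, so $A$ is the preimage of a closed set under the continuous map $p \mapsto (\Psi(p) - p, D\Psi(p) - \id)$.

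For openness, fix $p \in A$ and use that an isometry maps geodesics to geodesics: $\Psi \circ \gamma^g(\cdot; q, v) = \gamma^g(\cdot; \Psi(q), D\Psi(q) v)$ as long as $\gamma^g(\cdot; q,v)$ stays in $U$. This holds since $\Psi^* g = g$ on $U$ and the geodesic equation is invariant under isometries. Equivalently, $\Psi \circ \exp_p = \exp_{\Psi(p)} \circ D\Psi(p)$ on the set of $v \in T_p \mR^{1+n}$ for which the segment $r \mapsto \exp_p(rv)$, $r \in [0,1]$, lies in $U$. With $\Psi(p) = p$ and $D\Psi(p) = \id$ this gives $\Psi \circ \exp_p = \exp_p$ on a neighborhood of $0$.

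Next choose a star-shaped neighborhood $V \subset T_p\mR^{1+n}$ of $0$ on which $\exp_p$ is a diffeomorphism onto a neighborhood $W \subset U$ of $p$, shrinking $V$ if needed so that the corresponding geodesic segments lie in $U$. Every $q \in W$ is then $\exp_p(v)$ for some $v \in V$, and $\Psi(q) = q$. Hence $\Psi = \id$ on the open set $W$, which gives $D\Psi = \id$ there as well, so $W \subset A$ and $A$ is open.

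The only point needing care is that the geodesic correspondence is local. The identity $\Psi(\gamma(r)) = \tilde\gamma(r)$ must be established on the interval where $\gamma$ stays in $U$, using uniqueness of solutions to the geodesic ODE: both $\Psi \circ \gamma$ and $\tilde \gamma$ are $g$-geodesics with the same initial data. Choosing $V$ small avoids any issue with the exponential map's domain. No Lorentzian-specific difficulty arises: the argument uses only the Levi-Civita connection, so it is identical to the Riemannian case.
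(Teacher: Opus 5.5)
Your proof is correct. Its local step is the same as the paper's: in a normal neighborhood of a point where $\Psi$ agrees with the identity to first order, uniqueness for the geodesic ODE shows that radial geodesics are mapped to themselves, so $\Psi = \id$ on a geodesic ball.

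You differ from the paper in how you pass from local to global. The paper cites the fact that any two points of a connected Lorentzian manifold can be joined by a broken geodesic, and then says that the local argument can be iterated. You instead show that $A = \{p \in U \,:\, \Psi(p) = p,\ D\Psi(p) = \id\}$ is open and closed.

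Your route is more self-contained, since it uses only topological connectedness of $U$. It also makes explicit a point the paper's iteration leaves implicit. To restart the argument at a new base point, one needs $D\Psi = \id$ there, not merely $\Psi(p) = p$. At the end of a geodesic segment one only gets $D\Psi\,\dot\gamma = \dot\gamma$. In your setup full agreement is automatic, because $\Psi = \id$ on the open set $W$ and $A$ is closed. Made rigorous, the paper's iteration becomes the same open--closed argument run along a path, so the broken-geodesic fact is not really essential.

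The paper's version is shorter and more geometric. However, it relies on a cited result and leaves the chaining of geodesic balls to the reader.
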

\begin{proof}
Let $B = B(0,r) \subset T_z U$ be a ball such that $\exp_z$ is a diffeomorphism from $B$ onto the geodesic ball $\exp_z(B)$. If $v \in B$ and if $\gamma(t) = \exp_z(tv)$ for $t \in [0,1]$ is the radial geodesic starting at $z$, define $\eta(t) = \Psi(\gamma(t))$. Since $\Psi$ is an isometry, also $\eta$ is a geodesic. By the assumptions on $\Psi$ we have $\eta(0) = z = \gamma(0)$ and $\dot{\eta}(0) = D\Psi(\dot{\gamma}(0)) = \dot{\gamma}(0)$. By uniqueness of geodesics $\eta(t) = \gamma(t)$ for $t \in [0,1]$, i.e.\ $\Psi(\gamma(t)) = \gamma(t)$ for $t \in [0,1]$. This implies that 
\[
\Psi|_{\exp_z(B)} = \id.
\]
Since any two points in the connected Lorentzian manifold $U$ can be connected by a broken geodesic \cite{oneill1983}, and since any point is contained in a geodesic ball, iterating the above argument proves that $\Psi = \id$ on $U$.
\end{proof}

\begin{proof}[Proof of Theorem \ref{thm_main1}]
Write $h_j = g_j - \gm$, $j=1,2$. We extend $h_1$ to a smooth compactly supported 2-tensor on $\R^{1+n}$, denoted by $\tilde h_1$, so that 
    \begin{align*}
\norm{\tilde h_1}_{C^2(\R^{1+n})} \lesssim \eps.
    \end{align*}
We then extend $h_2$ by setting $h_2 = \tilde h_1$ in $\R^{1+n} \setminus M$. Since $g_1$ and $g_2$ coincide up to infinite order on $\p M$, this extension, denoted by $\tilde h_2$, is smooth. Writing $\tilde g_j = \gm + \tilde h_j$, we have $\tilde g_1 = \tilde g_2$ outside $M$, $\tilde g_j = \gm$ outside a compact set, and 
    \begin{align*}
\norm{\tilde g_j - \gm}_{C^2(\R^{1+n})} \lesssim \eps.
    \end{align*}
By Lemma \ref{lem_globhyp_near_Min}, $\tilde g_j$ is globally hyperbolic for small $\eps > 0$. By Lemma \ref{lem_cauchy_to_plane} and Corollary \ref{cor_main} there is a cylinder $Q_T$ containing $M$ and a diffeomorphism $\Psi$ from $\mR^{1+n}$ to itself satisfying $\Psi = \id$ outside $Q_T$ and $g = \Psi^* g'$ everywhere. By Proposition \ref{prop_conformal_identity} and the assumption that $\mR^{1+n} \setminus M$ is connected, $\Psi = \id$ outside $M$.
\end{proof}

\appendix

\section{Integration on Lorentzian manifolds} \label{lorentzian_integration}

Let $X$ be a smooth oriented manifold of dimension $1+n$ with corners, and let $g$ be a smooth Lorentzian metric on $X$ with signature $(-,+,\ldots,+)$. Since $(X, g)$ is oriented, there is a canonical volume form $dV_g$ such that $dV_g(e_0, \ldots, e_n) = 1$ for any positively oriented basis $(e_0, \ldots, e_n)$ of $T_x X$ with $g(e_0, e_0) = -1$ and $g(e_j, e_k) = \delta_{jk}$. The $L^2$ inner product on functions is  
\[
(u, v) = \int_X u \bar{v} \,dV_g,
\]
and a corresponding (non-definite) bilinear form on $1$-forms is given by 
\[
(\alpha, \beta)_g = \int_X g(\alpha, \bar{\beta}) \,dV_g.
\]
The codifferential $\delta_g$ on $1$-forms is defined via 
\[
(\delta_g \alpha, v) = (\alpha, dv)_g, \qquad v \in C^{\infty}_c(X^{\mathrm{int}}),
\]
and the wave operator is $\Box_g = \delta_g d$. In local coordinates 
\begin{align*}
dV_g &= |g|^{1/2} \,dx, \\
\delta_g \alpha &= - |g|^{-1/2} \p_j( |g|^{1/2} g^{jk} \alpha_k), \\
\Box_g u &= -|g|^{-1/2} \p_j( |g|^{1/2} g^{jk} \p_k u),
\end{align*}
where $(g^{jk})$ is the inverse matrix of $(g_{jk})$ and $|g| = |\det(g_{jk})|$.

Next we discuss integration by parts and boundary terms. If $p \in \p X$ is such that the boundary is smooth near $p$, there is a boundary defining function $\rho \in C^{\infty}(X)$ such that near $p$ one has $X = \{ \rho \geq 0 \}$, $\p X = \{ \rho = 0 \}$, and $d\rho(p) \neq 0$. Then $-d\rho$ is an outer conormal of $X$ near $p$, and any other outer conormal is a positive multiple of $-d\rho$. At points where $g(d\rho, d\rho) \neq 0$, i.e.\ $\p X$ is noncharacteristic for $\Box_g$, one could use the Lorentzian metric $g$ to fix a unit conormal $-d\rho/|g(d\rho, d\rho)|^{1/2}$. However, to have a uniquely defined outer conormal also at characteristic points, it is convenient to introduce an auxiliary Riemannian metric on $X$. This was done in Lemma \ref{lemma_ht_riemannian}, which we now prove.

\begin{proof}[Proof of Lemma \ref{lemma_ht_riemannian}]
The decomposition $\xi = \lambda T + \xi_{\perp}$ with $g(T, \xi_{\perp}) = 0$ is obtained by taking $\lambda = g(T,\xi)/g(T,T)$. Since $T$ is timelike and $g(T, \xi_{\perp}) = 0$, we must have $g(\xi_{\perp}, \xi_{\perp}) > 0$ unless $\xi_{\perp} = 0$. Thus $g_T(\xi, \xi) > 0$ unless $\xi = 0$.

Since $(X,g)$ is oriented, we have $dV_{g_T} = f \,dV_g$ for some positive function $f$. Let $T^{\sharp}$ be the vector corresponding to $T$ via $g$, and let $(e_0, \ldots, e_n)$ be a positively oriented basis of $T_x X$ with $e_0 = T^{\sharp}/|g(T,T)|^{1/2}$ and $g(e_j, e_k) = \delta_{jk}$ if $(j, k) \neq (0, 0)$. From the definition of $g_T$ we see that $g_T(e_j, e_k) = \delta_{jk}$, which implies that $dV_{g_T}(e_0, \ldots, e_n) = 1 = dV_g(e_0, \ldots, e_n)$ so $f = 1$.
\end{proof}

At smooth points $p \in \p X$, we define $\nu$ to be the unique outer conormal with $|\nu|_{g_T} = 1$, and equip $\p X$ with the volume form $dS$ induced by $g_T$. In view of $dV_{g_T} = dV_g$, it is given by 
\[
dS(v_0, \ldots, v_{n-1}) = dV_g(v_0, \ldots, v_{n-1}, -\nu^{\sharp}), \qquad v_0, \ldots, v_{n-1} \in T_p \p X,
\]
where $\nu^{\sharp}$ is the vector corresponding to $\nu$ with respect to $g_T$. If $(x',x^n)$ are local coordinates so that $x^n$ is a local boundary defining function, we have $\nu = -dx^n/|dx^n|_{g_T}$. If $(\p_0, \ldots, \p_n)$ are the coordinate vector fields, then 
    \begin{align*}
dS(\p_0, \ldots, \p_{n-1}) 
&= 
dV_g(\p_0, \ldots, \p_{n-1}, \frac{1}{|dx^n|_{g_T}} g_T^{nj} \p_j) 
\\&= 
\frac{g^{nn}_T}{|dx^n|_{g_T}} dV_g(\p_0, \ldots, \p_{n-1}, \p_n) 
= 
|dx^n|_{g_T} |g(x',0)|^{1/2},
    \end{align*}
where we used $g^{nn}_T = |dx^n|_{g_T}^2$. Therefore 
\begin{equation} \label{lorentzian_ds_expression}
dS = |dx^n|_{g_T} |g(x',0)|^{1/2} \,dx'.
\end{equation}
A computation in the $(x',x_n)$ coordinates for $u, \alpha$ supported near $p$ gives 
\[
(du, \alpha)_g = \int_X g^{jk} \p_j u \bar{\alpha}_k |g|^{1/2} \,dx = -\int_{\p X} u g^{nk} \bar{\alpha}_k |g|^{1/2} \,dx' + (u, \delta_g \alpha)_g.
\]
This can be written invariantly as 
\begin{equation*} 
(du, \alpha)_g = (u, \delta_g \alpha)_g + (u \nu, \alpha)_{g, \p X}.
\end{equation*}
Therefore, with the notation  
\[
\tilde{\p}_{\nu} u := g(du, \nu),
\]
we also have 
\begin{equation*} 
(\Box_g u, v) = (du, dv)_g - (\tilde{\p}_{\nu} u, v)_{\p X}
\end{equation*}
where $(u, v)_{\p X} = \int_{\p X} u \bar{v} \,dS$.

\section{Degree theory facts} \label{degree_theory}

The following surjectivity result is used in this article.

\begin{Lemma} \label{lemma_idext_surjective}
Suppose that $\Psi: \mR^n \to \mR^n$ is a $C^1$ map with $\Psi = \id$ outside a compact set. Then $\Psi(\R^n) = \R^n$.
\end{Lemma}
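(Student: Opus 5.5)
The plan is to use the Brouwer degree. Fix $R > 0$ so large that $\Psi = \id$ outside the open ball $B(0,R)$, and let $y \in \mR^n$ be arbitrary. If $|y| \geq R$, then $\Psi(y) = y$ and we are done. So it suffices to treat $|y| < R$ and show that $y \in \Psi(\ol{B(0,R')})$ for a suitable radius.

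Choose $R' > R$. Then $\Psi = \id$ on $\p B(0,R')$, so $y \notin \Psi(\p B(0,R'))$. Consequently the degree $\deg(\Psi, B(0,R'), y)$ is well defined. Consider the linear homotopy
\[
H_\tau = (1-\tau)\id + \tau \Psi, \qquad \tau \in [0,1].
\]
On $\p B(0,R')$ we have $H_\tau = \id$ for every $\tau$. Hence $y \notin H_\tau(\p B(0,R'))$ throughout, and homotopy invariance of the degree gives
\[
\deg(\Psi, B(0,R'), y) = \deg(\id, B(0,R'), y) = 1 .
\]
Since the degree is nonzero, $y$ has a preimage in $B(0,R')$, and so $\Psi$ is surjective.

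If one prefers to avoid continuous-map degree theory and use only the $C^1$ version, there is an alternative route. First take $y$ to be a regular value of $\Psi$ (Sard), where the degree is $\sum_{x \in \Psi^{-1}(y)} \operatorname{sign}\det D\Psi(x)$. Local constancy of the degree in $y$ then extends the conclusion to all $y$. Alternatively, one can note that $\Psi(\mR^n)$ is closed, because $\Psi$ is proper (it is the identity outside a compact set), and it contains the dense set of regular values that have nonzero degree.

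The only delicate point is justifying that the degree exists and is homotopy invariant. This is exactly the boundary condition $y \notin H_\tau(\p B(0,R'))$, which is immediate here because $H_\tau = \id$ on the sphere and $|y| < R'$. I expect no real obstacle beyond recalling these standard degree facts.
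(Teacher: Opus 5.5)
Your argument is correct and is essentially the paper's proof. You choose a ball $W$ containing $y$ with $\Psi = \id$ on $\p W$, observe that the linear homotopy $(1-t)\id + t\Psi$ stays equal to the identity on $\p W$, and conclude $\deg(\Psi, W, y) = \deg(\id, W, y) = 1$; the paper differs only cosmetically, taking the radius large enough to contain $y$ from the start instead of splitting off $|y| \geq R$, and it does not need your Sard-based alternatives, which as sketched would still require computing the degree at a regular value.
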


We will also need the following version of stability of diffemorphisms \cite[\S1.6]{guillemin1974}.

\begin{Lemma} \label{lemma_diffeo_perturbation}
Let $U, V$ be open in $\mR^n$ and $\Phi: U \to V$ a $C^1$ diffeomorphism. Suppose that $K \subset V$ is compact and $W \subset \subset U$ is open with $K \subset \Phi(W)$. There is $\delta > 0$ such that any $C^1$ map $\Psi: W \to \mR^n$ with 
\begin{equation} \label{phi_delta_stability}
\norm{\Psi - \Phi}_{C^1(\ol{W})} \leq \delta
\end{equation}
is a $C^1$ diffeomorphism from $W$ onto an open set containing $K$. Moreover, 
\[
\|\Psi^{-1} - \Phi^{-1}\|_{C^1(K)} \to 0, \quad \delta \to 0.
\]
\end{Lemma}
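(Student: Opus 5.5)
The plan is to reduce to a neighborhood of $\ol W$ and run three arguments in order: local invertibility, then global injectivity, then covering of $K$ (with convergence of inverses treated alongside). Throughout, we use that $\ol W$ is a compact subset of $U$, so $D\Phi$ and $(D\Phi)^{-1}$ are uniformly bounded and uniformly continuous on $\ol W$. Set $c = \sup_{\ol W} \|(D\Phi)^{-1}\|$.

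\emph{Local invertibility.} If $\delta < 1/(2c)$, then $D\Psi = D\Phi + E$ with $\|E\| \le \delta$. A Neumann series gives that $D\Psi$ is invertible on $\ol W$, with $\|(D\Psi)^{-1}\| \le 2c$. Hence $\Psi$ is a local $C^1$ diffeomorphism on $W$ by the inverse function theorem, and $\Psi(W)$ is open.

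\emph{Global injectivity.} This is the step I expect to be the main obstacle, since $W$ need not be convex, and I would argue by contradiction. Suppose there are $\delta_k \to 0$, maps $\Psi_k$ satisfying \eqref{phi_delta_stability} with $\delta = \delta_k$, and points $x_k \neq y_k$ in $W$ with $\Psi_k(x_k) = \Psi_k(y_k)$. Pass to subsequences with $x_k \to x$ and $y_k \to y$ in $\ol W$. Then $\Phi(x) = \Phi(y)$, so $x = y$ because $\Phi$ is injective on $U$. Take a convex ball $B_r(x) \subset U$ on which $\|D\Phi - D\Phi(x)\| \le 1/(4c)$. For large $k$ both $x_k, y_k \in B_r(x)$, and the segment between them lies in $B_r(x)$. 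Since $\Psi_k$ is only defined on $W$, I would first extend $\Psi_k - \Phi$ to a $C^1$ map on a neighborhood of $\ol W$ with comparable $C^1$ norm; alternatively, shrink $W$ slightly and treat the boundary case as a compactness nuisance. The mean value inequality then gives
\[
|\Psi_k(x_k) - \Psi_k(y_k) - D\Phi(x)(x_k - y_k)| \le \bigl(1/(4c) + \delta_k\bigr)|x_k - y_k|.
\]
On the other hand $|D\Phi(x)(x_k - y_k)| \ge |x_k - y_k|/c$, which contradicts $\Psi_k(x_k) = \Psi_k(y_k)$.

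\emph{Covering $K$.} Choose an open $W'$ with $\Phi^{-1}(K) \subset W'$ and $\ol{W'} \subset W$, and let $\eta = \operatorname{dist}(K, \Phi(\p W')) > 0$. For $y \in K$, apply the degree argument (Appendix \ref{degree_theory}) or a Brouwer or contraction argument on $\ol{W'}$. The homotopy $\Phi + \tau(\Psi - \Phi)$ misses $y$ on $\p W'$ once $\delta < \eta$, so $\deg(\Psi, W', y) = \deg(\Phi, W', y) = \pm 1$ and $y \in \Psi(W')$. A simpler alternative is the Newton-type map $x \mapsto x - (D\Phi(x_0))^{-1}(\Psi(x) - y)$, which is a contraction on a small ball around $x_0 = \Phi^{-1}(y)$.

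\emph{Convergence of inverses.} For $y \in K$ put $x = \Psi^{-1}(y)$. Then
\[
|\Phi^{-1}(y) - \Psi^{-1}(y)| = |\Phi^{-1}(\Phi(x)) - \Phi^{-1}(\Psi(x))| \le C\delta,
\]
where I use a Lipschitz bound for $\Phi^{-1}$ near $K$, valid once $\Psi(x)$ lies in a compact neighborhood of $K$ inside $V$. For the derivatives, $D(\Psi^{-1})(y) = (D\Psi(x))^{-1}$ and $D(\Phi^{-1})(y) = (D\Phi(\Phi^{-1}(y)))^{-1}$. Their difference tends to $0$ uniformly on $K$ because $|x - \Phi^{-1}(y)| \le C\delta$, $D\Phi$ is uniformly continuous on $\ol W$, $\|D\Psi - D\Phi\| \le \delta$, and inversion is uniformly continuous on the bounded set of matrices with $\|A^{-1}\| \le 2c$.
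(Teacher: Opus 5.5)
The gap is the one you flag yourself, and neither of your remedies closes it. In the injectivity step, when the limit point $x$ lies on $\p W$, the segment from $x_k$ to $y_k$ may leave $W$, and outside $W$ you have no control of $\Psi_k$ at all.

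\begin{itemize}
\item Shrinking $W$ gives injectivity only on the smaller set, whereas the claim is about all of $W$.
\item Extending $\Psi_k - \Phi$ to a neighbourhood of $\ol W$ with comparable $C^1$ norm needs an extension operator. That requires some regularity of $\p W$ (e.g.\ Lipschitz) and is unavailable for a general open $W$.
\end{itemize}

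This is not a removable technicality. Take $n=2$ and $\Phi = \id$. Let $W$ be the union of a separate disc containing $K$ and the two horns $A = \{0<x_1<1,\ x_1^4<x_2<2x_1^4\}$ and $B = \{0<x_1<1,\ -2x_1^4<x_2<-x_1^4\}$, whose closures meet only at the origin. Set $\Psi(x) = (x_1,\, x_2 - \tfrac{\delta}{3}x_1^2)$ on $A$ and $\Psi = \id$ on the rest of $W$. Then $\Psi - \id$ and its derivative extend continuously to $\ol W$, since both vanish at the origin, and $\norm{\Psi - \id}_{C^1(\ol W)} \le \delta$. Yet $\Psi(\rho, \tfrac{3}{2}\rho^4) = \Psi(\rho, -\tfrac{3}{2}\rho^4)$ for $\rho = \sqrt{\delta}/3$. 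So the lemma, read literally, fails, and an extra hypothesis is unavoidable.

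The paper's proof sidesteps this by replacing $W$ with a slightly larger domain $W_1$ with smooth boundary. That tacitly assumes $\Psi$ is defined and $C^1$-close to $\Phi$ on a neighbourhood of $\ol W$. This is harmless in the applications, where the perturbed maps (such as $\Phi^g$) are defined on the whole space and $C^1$-close to their Minkowski counterparts on every compact set.

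Under that hypothesis your argument closes immediately: take $B_r(x)$ inside the neighbourhood, and no extension is needed. The rest of your proposal is then correct, and it takes a different route from the paper:
\begin{itemize}
\item The paper proves a quantitative bound $|\Psi(x) - \Psi(y)| \ge c|x-y|$ on $\ol{W_1}$. It combines a bi-Lipschitz bound for $\Phi$ with quasiconvexity of smooth domains (points joined by curves of length $\le C|x-y|$). Your contradiction argument needs only convex balls, at the price of a non-explicit $\delta$.
\item Your degree argument on $W'$ with $\ol{W'} \subset W$ has the small advantage of not needing $\Psi$ continuous up to $\p W$.
\item Your direct comparison of $(D\Psi(x))^{-1}$ with $(D\Phi(\Phi^{-1}(y)))^{-1}$ replaces the paper's reduction to $\Phi = \id$.
\end{itemize}
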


Both results use basic degree theory. Let $W \subset \R^n$ be a bounded open set, $f \in C(\overline{W}, \R^n)$, and $y \notin f(\partial W)$. The topological degree $\deg(f, W, y) \in \mathbb Z$ is a homotopy invariant: if $f_t$, $t \in [0,1]$, is a continuous family of maps in $C(\overline{W};\R^n)$ satisfying
$y \notin f_t(\partial W)$ for all $t \in [0,1]$, then
$\deg(f_t, W, y)$ is constant in $t$, see e.g. \cite[Theorem~3.1]{deimling1985}. In the case that $f \in C^1(W)$ and $y$ is a regular value of $f$, the topological degree can be written 
    \begin{align*}
\deg(f, W, y) = \sum_{x \in f^{-1}(y)} \operatorname{sgn}
\det df(x),
    \end{align*}
see \cite[Definition~2.1]{deimling1985}. In particular, $\deg(\id, W, y) = 1$ for every $y \in W$. Moreover, $\deg(f, W, y) \neq 0$ implies that 
there is $x \in W$ with $f(x) = y$.

\begin{proof}[Proof of Lemma \ref{lemma_idext_surjective}]
Let $y \in \mathbb{R}^n$. Set $W = B(0, R)$ where $R > 0$ is large enough so that $y \in W$ and $\Psi = \id$ outside $W$.
Since $\Psi = \id$ on $\p W$, we have $y \notin \Psi(\partial W)$.
The homotopy $f_t = (1-t)\id + t\Psi$, $t \in [0,1]$,
satisfies $f_t = \id$ on $\partial W$,
so $y \notin f_t(\partial W)$. By homotopy invariance,
\[
\deg(\Psi, W, y)
= \deg(\mathrm{id}, W, y)
= 1.
\]
Since $\deg(\Psi, W, y) \ne 0$, there is $x \in W$ with $\Psi(x) = y$.
\end{proof}

\begin{proof}[Proof of Lemma \ref{lemma_diffeo_perturbation}]
By slightly enlarging $W$ if needed, we may assume that $W$ has smooth boundary (e.g.\ consider $W_1 = \{ \rho < \eps \}$ where $\rho$ is a smoothed out version of $\mathrm{dist}(\,\cdot\,, \ol{W})$). Since $\Phi$ is a diffeomorphism near $\ol{W}$, $D \Phi$ is invertible near $\ol{W}$ and by \eqref{phi_delta_stability} the same must be true for $\Psi$ when $\delta$ is small enough:
\[ 
|D\Psi(x)v| \geq c |v|, \qquad x \in \ol{W}, \ v \in \mR^n.
\] 
Thus $\Psi$ is a local diffeomorphism near any point of $\ol{W}$.

To see that $\Psi$ is injective near $\ol{W}$, for any $x,y \in \ol{W}$ we have 
\[
|\Psi(x)-\Psi(y)| \geq |\Phi(x)-\Phi(y)| - |(\Psi-\Phi)(x) - (\Psi-\Phi)(y)|.
\]
Now $\Phi$ is a diffeomorphism near $\ol{W}$ so 
\[
|\Phi(x)-\Phi(y)| \geq c|x-y| \text{ for $x,y \in \ol{W}$.}
\]
Moreover, since $W$ has smooth boundary, any points $x,y \in \ol{W}$ can be joined by a curve $\kappa$ in $\ol{W}$ with length $\leq C|x-y|$ (see e.g.\ \cite[(2.132)]{BrudnyiBrudnyi2012}). Therefore by \eqref{phi_delta_stability}
\begin{align*}
|(\Psi-\Phi)(x) - (\Psi-\Phi)(y)| &= \left| \int_0^1 \p_s \left[ (\Psi-\Phi)(\kappa(s)) \right] \,ds \right| \\
 &\leq (\sup_{\ol{W}} |D(\Psi-\Phi)|) C |x-y| \leq C \delta |x-y|.
\end{align*}
Combining these facts shows that $|\Psi(x)-\Psi(y)| \geq c|x-y|$ for $x,y \in \ol{W}$ when $\delta$ is small, and thus $\Psi$ is injective on $\ol{W}$.

Next we show that $K \subset \Psi(W)$ when $\delta$ is small enough. Fix $y \in K$. Since $\Phi$ is a $C^1$ diffeomorphism we have $y \notin \Phi(\p W)$. Consider the homotopy $f_t = (1-t)\Phi + t \Psi$, $t \in [0,1]$. Since $\p W$ has positive distance from $K$, the condition $\norm{\Psi - \Phi}_{C(\ol{W})} \leq \delta$ with $\delta$ small enough yields $y \notin f_t(\p W)$ for $t \in [0,1]$. By homotopy invariance,
\[
\deg(\Psi, W, y)
= \deg(\Phi, W, y).
\]
Since $\Phi$ is a $C^1$ diffeomorphism, $\deg(\Phi, W, y) = \pm 1$. Thus $\deg(\Psi, W, y) \neq 0$, so $\Psi(x) = y$ for some $x \in W$.

Finally, to show the convergence of inverses in $C^1(K)$, by writing $\Psi^{-1} = \Psi^{-1} \circ \Phi \circ \Phi^{-1}$ we can reduce matters to the case where $\Phi = \id$. If $y \in K$ and $y = \Psi(x)$ with $x \in W$, then 
\[
|\Psi^{-1}(y) - y| = |x - \Psi(x)| \leq \norm{\Psi-\id}_{C(\ol{W})}
\]
and 
\begin{align*}
|D(\Psi^{-1})(y) - \id| &= |(D\Psi)(x)^{-1} - \id| \\
 &= |(D\Psi)(x)^{-1}(\id - D\Psi(x))| \lesssim \norm{\Psi-\id}_{C^1(\ol{W})}. \qedhere
\end{align*}
\end{proof}

\section{Proof of Proposition \ref{prop_semiclassical_estimate}} \label{sec_carleman_appendix}

In this section we will prove Proposition \ref{prop_semiclassical_estimate}, which gives Proposition \ref{prop_carleman_psi} as a corollary. For the proof, we need several notations (see Appendix \ref{lorentzian_integration} for more details). Let $\delta_g$ be the formal adjoint of $d$ with respect to the bilinear form 
\[
(\alpha, \beta)_g = \int_Q g(\alpha, \bar{\beta}) \,dV_g.
\]
Then $\Box_g = \delta_g d$. Denote by $\nu$ the unique outer conormal of $\p Q$ (at points where $\p Q$ is smooth) that satisfies $|\nu|_{g_T} = 1$, and write 
\[
\tilde{\p}_{\nu} u|_{\p Q} = g(du, \nu)|_{\p Q}, \qquad \p_{\nu} u|_{\p Q} = g_T(du, \nu)|_{\p Q}.
\]
We will use frequently the integration by parts formula 
\[
(du, \alpha)_g = (u, \delta_g \alpha)_g + (u \nu, \alpha)_{g, \p Q}
\]
where $(\alpha, \beta)_{g, \p Q} = \int_{\p Q} g(\alpha, \bar{\beta}) \,dS$.

We will break the proof of Proposition \ref{prop_semiclassical_estimate} into a number of lemmas. First we decompose $P = e^{\psi/h} \circ h^2 \Box_g \circ e^{-\psi/h}$ into its self-adjoint and skew-adjoint parts.

\begin{Lemma} \label{lemma_p_decomposition_a_ib}
$P = A + i B$ with 
\begin{align*}
A &= h^2 \Box_g - g(d\psi, d\psi), \\
B &= 2 g(d\psi, hD\,\cdot\,) + i h (\Box_g \psi),
\end{align*}
where $D = (1/i) d$ and $A$ and $B$ are formally self-adjoint.
\end{Lemma}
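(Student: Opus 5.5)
The plan is to compute $P$ directly by conjugating, and then to split the result into its symmetric and antisymmetric parts with respect to the $L^2(dV_g)$ pairing $(u,v)$. First I will verify the Leibniz rules for $\Box_g = \delta_g d$. For smooth $f,u$ and a $1$-form $\alpha$ we have $\delta_g(f\alpha) = f\delta_g\alpha - g(df,\alpha)$. This follows from the coordinate formula $\delta_g\alpha = -|g|^{-1/2}\p_j(|g|^{1/2}g^{jk}\alpha_k)$ in Appendix~\ref{lorentzian_integration}. It gives
\[
\Box_g(fu) = f\Box_g u - 2g(df,du) + (\Box_g f)u .
\]
Applying this with $f = e^{-\psi/h}$ requires $df = -h^{-1}e^{-\psi/h}d\psi$ and $\Box_g e^{-\psi/h} = e^{-\psi/h}\bigl(-h^{-1}\Box_g\psi - h^{-2}g(d\psi,d\psi)\bigr)$. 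The sign of the last term comes from $\Box_g = -g^{jk}\p_{jk}+\dots$ acting on the exponential.

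Multiplying through by $h^2e^{\psi/h}$ then gives
\[
P u = h^2\Box_g u + 2h\,g(d\psi,du) - h(\Box_g\psi)u - g(d\psi,d\psi)u .
\]
Since $hd = ih D$, we have $2h\,g(d\psi,du) = i\cdot 2g(d\psi,hDu)$, and also $-h(\Box_g\psi)u = i\cdot ih(\Box_g\psi)u$. This matches $P = A+iB$ with $A$ and $B$ exactly as stated. The one point needing care is the sign bookkeeping in $\Box_g$, given the convention $\Box_g = \delta_g d$ with the minus sign in $\delta_g$.

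For self-adjointness I will test against $u,v\in C^\infty_c(Q^{\mathrm{int}})$, so no boundary terms appear. $A$ is formally self-adjoint because $\Box_g = \delta_g d$ is: we have $(\Box_g u,v) = (du,dv)_g = (u,\Box_g v)$, and $g(d\psi,d\psi)$ is a real multiplication operator. For $B$, write $X = (d\psi)^\sharp$, so that $2g(d\psi,hDu) = -2ih\,Xu$. The divergence identity $\int (Xu)\bar v\,dV_g = -\int u\,\overline{Xv}\,dV_g - \int (\mathrm{div}_g X)u\bar v\,dV_g$ holds with $\mathrm{div}_g X = -\delta_g d\psi = -\Box_g\psi$. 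From it, the adjoint of $-2ihX$ is $-2ihX - 2ih\,\Box_g\psi$. Adding the term $ih\Box_g\psi$, whose adjoint is $-ih\Box_g\psi$, gives
\[
B^* = -2ihX - 2ih\Box_g\psi - ih\Box_g\psi ,
\]
which does not match $B$. So I will recheck the sign: the adjoint of $-2ihX$ is actually $-2ihX - 2ih\,\mathrm{div}X = -2ihX + 2ih\Box_g\psi$. Then
\[
B^* = -2ihX + 2ih\Box_g\psi - ih\Box_g\psi = -2ihX + ih\Box_g\psi = B .
\]
Getting the sign of $\mathrm{div}_g X$ relative to $\Box_g\psi$ right is the only real obstacle, and it reduces to $\mathrm{div}_g(\nabla\psi) = -\Box_g\psi$ in the paper's sign convention.
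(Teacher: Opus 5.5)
Your proof is correct and takes essentially the paper's approach. The paper expands $|g|^{-1/2}(hD_j+i\p_j\psi)(|g|^{1/2}g^{jk}(hD_k+i\p_k\psi)u)$ in coordinates where you use the Leibniz rule for $\delta_g$, and it asserts only that $B$ is self-adjoint ``by a local coordinate computation,'' which your divergence argument with $\mathrm{div}_g (d\psi)^\sharp = -\Box_g\psi$ supplies; in a final write-up, delete the first (wrong-sign) expression for $B^*$ and keep only the corrected one, $-2ihX + 2ih\Box_g\psi - ih\Box_g\psi = B$.
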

\begin{proof}
Write in local coordinates $D_j = (1/i) \p_j$. Since $h D_j(e^{-\psi/h} u) = e^{-\psi/h}(h D_j + i \p_j \psi) u$, we have 
\begin{align*}
Pu &= e^{\psi/h} |g|^{-1/2} hD_j(|g|^{1/2} hD_k (e^{-\psi/h} u)) \\
 &= |g|^{-1/2} (hD_j + i \p_j \psi)(|g|^{1/2} g^{jk} (hD_k + i \p_k \psi) u ) \\
 &= h^2 \Box_g u - g(d\psi, d\psi) u + i \left[ \p_j \psi g^{jk} hD_k u + |g|^{-1/2} h D_j (|g|^{1/2} g^{jk} (\p_k \psi) u) \right] \\
 &= h^2 \Box_g u - g(d\psi, d\psi) u + i \left[ 2 g(d\psi, hD u) + ih (\Box_g \psi) u \right].
\end{align*}
Now $A$ is formally self-adjoint since $\Box_g = \delta_g d$, and $B$ is self-adjoint by a local coordinate computation.
\end{proof}

The next lemma gives an initial expression for $\norm{Pu}^2$ with $u \in C^{\infty}_c(X^{\mathrm{int}})$.

\begin{Lemma}
For any $u \in C^{\infty}_c(X^{\mathrm{int}})$, 
\begin{multline} \label{pu_square_first}
\norm{Pu}^2 = \norm{Au}^2 + \norm{Bu}^2 + ( i [A, B] u, u ) \\
 + h (i h\tilde{\p}_{\nu} Bu + b(x,\nu) Au, u)_{\p Q} - ih(Bu, h \tilde{\p}_{\nu} u)_{\p Q},
\end{multline}
where $b(x,\xi) = 2 g(d\psi, \xi)$.
\end{Lemma}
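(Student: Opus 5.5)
The plan is to expand the square directly and move all operators onto $u$ by integration by parts over $Q$, keeping the boundary terms. Since $P = A + iB$ by Lemma \ref{lemma_p_decomposition_a_ib},
\[
\norm{Pu}^2 = \norm{Au}^2 + \norm{Bu}^2 + (Au, iBu) + (iBu, Au).
\]
The whole task is to show that the cross terms $(Au, iBu) + (iBu, Au)$ equal $(i[A,B]u,u)$ plus exactly the stated boundary terms on $\p Q$. Note that $u \in C^\infty_c(X^{\mathrm{int}})$ vanishes near $\p X$ but not on $\Gamma$ or $\Gamma_R$, which are the pieces of $\p Q$ that contribute. By the remarks in Section \ref{sec_carleman}, $Q$ is a manifold with corners, so the Appendix \ref{lorentzian_integration} formula $(du,\alpha)_g = (u,\delta_g\alpha)_g + (u\nu,\alpha)_{g,\p Q}$ applies, and corners contribute nothing.

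First I handle $(Au, v)$ with $v = iBu$. The zeroth order part $-g(d\psi,d\psi)$ of $A$ is real and moves across freely. For $h^2\Box_g = h^2\delta_g d$, applying Green's formula $(\Box_g u, v) = (du,dv)_g - (\tilde\p_\nu u, v)_{\p Q}$ twice gives
\[
(h^2\Box_g u, v) = (u, h^2\Box_g v) + h^2(u, \tilde\p_\nu v)_{\p Q} - h^2(\tilde\p_\nu u, v)_{\p Q}.
\]
Substituting $v = iBu$, this yields $(u, iABu)$ plus the boundary terms $h(u, ih\tilde\p_\nu Bu)_{\p Q} - ih(h\tilde\p_\nu u, Bu)_{\p Q}$. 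After taking the real part of the full sum and rewriting these as conjugates, they should become the terms $h(ih\tilde\p_\nu Bu, u)_{\p Q}$ and $-ih(Bu, h\tilde\p_\nu u)_{\p Q}$ appearing in \eqref{pu_square_first}.

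Next I handle $(iBu, Au) = i(Bu, w)$ with $w = Au$. The zeroth order part $ih\Box_g\psi$ of $B$ again gives no boundary term once combined with its adjoint contribution. For $2g(d\psi, hD u) = -2ih\, g(d\psi, du)$, I will write $(g(d\psi,du), w) = (du, \bar w\, d\psi)_g$ and integrate by parts. This produces the interior term $(u, \delta_g(w\, d\psi))$, which reassembles into $(u, Bw)$ by formal self-adjointness of $B$, together with the boundary term $(u\nu, w\, d\psi)_{g,\p Q}$, which contains the factor $g(\nu, d\psi) = b(x,\nu)/2$. Multiplying by $-2ih \cdot i = 2h$ gives $h(u, b(x,\nu) Au)_{\p Q}$, i.e.\ the $h(b(x,\nu)Au, u)_{\p Q}$ term up to conjugation. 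Since $b(x,\nu)$ is real, that conjugation is harmless.

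The interior pieces are $(u, iABu) + (u, -iBAu)$ in suitable order, which combine to $(u, i[A,B]u)$. This equals $(i[A,B]u,u)$ because $i[A,B]$ is formally symmetric when $A$ and $B$ are. The main obstacle is purely bookkeeping: signs, factors of $i$ and $h$, and complex conjugation in the boundary pairings. The key check is that the real parts of the boundary terms produced above match \eqref{pu_square_first} exactly. As a sanity check I will verify the formula in the model example $g = \gm$, $\psi = t$, $\varphi = t - x_1$ on a half-space.
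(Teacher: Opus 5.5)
Your route is essentially the paper's: expand $\norm{(A+iB)u}^2$, apply Green's formula twice to $A_2=h^2\delta_g d$, and integrate the first-order part of $B$ by parts once. The difference is the direction in which you move operators, and the step you use to compensate for it is wrong.

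The paper moves everything into the \emph{first} slot. It writes $i(Bu,A_2u)=i(A_2Bu,u)+ih(h\tilde\p_\nu Bu,u)_{\p Q}-ih(Bu,h\tilde\p_\nu u)_{\p Q}$ and $-i(Au,Bu)=-i(BAu,u)+h(Au,b(x,\nu)u)_{\p Q}$, so \eqref{pu_square_first} comes out directly. You move everything into the \emph{second} slot, so what you actually obtain is the complex conjugate of \eqref{pu_square_first}: $(u,i[A,B]u)$ plus the conjugates of the three boundary terms.

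To get back, you claim $(u,i[A,B]u)=(i[A,B]u,u)$ by formal symmetry of $i[A,B]$. This is false here. The function $u\in C^\infty_c(X^{\mathrm{int}})$ does not vanish on $\Gamma$ or $\Gamma_R$, so swapping the slots of a second-order operator produces boundary integrals over $\Gamma\cup\Gamma_R$ that do not vanish in general. Compare the boundary term $-4h^3(D^2\psi(\nu^\sharp,\nu^\sharp)\p_\nu u,u)_{\p Q}$ that Lemma \ref{lemma_pu_commutator} finds in $(i[A,B]u,u)$, which is not real for general complex-valued $u$. For the same reason, taking real parts of the boundary terms one at a time cannot give \eqref{pu_square_first}, since it is an exact identity whose individual terms are complex. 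Your planned sanity check with $\gm$ and $\psi=t$ would not catch this, because there $[A,B]=0$.

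The repair is one line: conjugate your whole identity at once. Your interior-plus-boundary expression equals the real number $\norm{Pu}^2-\norm{Au}^2-\norm{Bu}^2$, so it equals its own conjugate. That conjugate is exactly $(i[A,B]u,u)+h(ih\tilde\p_\nu Bu+b(x,\nu)Au,u)_{\p Q}-ih(Bu,h\tilde\p_\nu u)_{\p Q}$, and no symmetry of $i[A,B]$ is needed. Alternatively, integrate by parts in the paper's direction.

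There is also a sign slip in your bookkeeping. With $(u,v)_{\p Q}=\int u\bar v\,dS$ one has $-h^2(\tilde\p_\nu u,iBu)_{\p Q}=+ih(h\tilde\p_\nu u,Bu)_{\p Q}$, not $-ih(h\tilde\p_\nu u,Bu)_{\p Q}$. Only with the plus sign does its conjugate give the paper's $-ih(Bu,h\tilde\p_\nu u)_{\p Q}$; with your sign the last boundary term would come out wrong. Similarly, $(g(d\psi,du),w)=(du,w\,d\psi)_g$ rather than $(du,\bar w\,d\psi)_g$, though the boundary term you write down afterwards is the correct one.
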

\begin{proof}
We have 
\begin{align*}
\norm{Pu}^2 &= ((A+iB)u, (A+iB)u) \\
 &= \norm{Au}^2 + \norm{Bu}^2 + i (Bu, Au) - i (Au, Bu).
\end{align*}
By Lemma \ref{lemma_p_decomposition_a_ib} we can write 
\[
A = A_2 + A_0, \qquad A_2 = h^2 \delta_g d, \qquad A_0 = -g(d\psi,d\psi).
\]
Integration by parts gives 
\begin{align*}
i(Bu, A_2 u) &= i(hdBu, hdu) - ih(Bu, h \tilde{\p}_{\nu} u)_{\p Q} \\
 &= i(A_2 Bu, u) + ih (h\tilde{\p}_{\nu} Bu, u)_{\p Q} - ih(Bu, h \tilde{\p}_{\nu} u)_{\p Q}
\end{align*}
and, since $B$ is of first order with principal symbol $b(x,\xi)$,  
\[
-i(Au, Bu) = -i(BAu, u) + h (Au, b(x,\nu) u)_{\p Q}.
\]
This proves \eqref{pu_square_first}.
\end{proof}

Next we study the interior terms in \eqref{pu_square_first}, and for this we need some properties of the commutator $i[A, B]$. Below we use the gradient $\nabla$ and Hessian $D^2 \psi$ with respect to the Lorentzian metric $g$. In local coordinates (see \cite{oneill1983})
\[
D^2 \psi = (\p_{jk} \psi - \Gamma_{jk}^l \p_l \psi) \,dx^j \otimes dx^k.
\]

\begin{Lemma} \label{lemma_pu_commutator}
$i[A, B] = h S$, where $S$ is a second order semiclassical operator with principal symbol 
\[
s(x, \xi) = 4 D^2 \psi(\xi^{\sharp}, \xi^{\sharp}) + 4 D^2 \psi( \nabla \psi, \nabla \psi )
\]
and $\xi^{\sharp}$ is the vector corresponding to $\xi$ via $g$. For any $u \in C^{\infty}_c(X^{\mathrm{int}})$, 
\[
(Su, u) = -4 h^2 (D^2 \psi(\nu^{\sharp}, \nu^{\sharp}) \p_{\nu} u, u)_{\p Q} + O(\norm{u}^2 + \norm{hdu}^2).
\]
\end{Lemma}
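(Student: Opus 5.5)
The plan is to split the claim into two parts: the symbol computation for $i[A,B]$, and then the integration by parts that rewrites $(Su,u)$ modulo lower order terms.

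\textbf{Step 1: the commutator.} Since $A$ and $B$ are semiclassical differential operators of orders $2$ and $1$, the commutator $[A,B]$ is a differential operator of order $2$ carrying a factor $h$. Its principal symbol is given by the Poisson bracket:
\[
\sigma(i[A,B]) = h\{a,b\}, \qquad a = g(\xi,\xi) - g(d\psi,d\psi), \qquad b = 2g(d\psi,\xi).
\]
I would compute $\{a,b\} = H_a b$ invariantly. The Hamilton vector field of $g(\xi,\xi)$ generates the cogeodesic flow, and along the flow $\frac{d}{dt} g(d\psi,\xi) = D^2\psi(\xi^\sharp,\xi^\sharp)$. This gives
\[
\{g(\xi,\xi),\, 2g(d\psi,\xi)\} = 4 D^2\psi(\xi^\sharp,\xi^\sharp).
\]
For the zeroth order part of $a$, we use $\{-g(d\psi,d\psi),\, b\} = \partial_\xi b \cdot \partial_x\big(g(d\psi,d\psi)\big)$. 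Here $\partial_\xi b = 2\nabla\psi$ and $d\big(g(d\psi,d\psi)\big) = 2D^2\psi(\nabla\psi,\cdot)$. Together these give $4D^2\psi(\nabla\psi,\nabla\psi)$, and summing the two contributions yields $s$. To keep the sign conventions honest, I would check this in normal coordinates at a point, where the Christoffel symbols vanish. The lower order parts of $S$ (including the $ih\Box_g\psi$ term in $B$) contribute only $O(h)$ first order operators. Their pairings are bounded by $\norm{u}^2+\norm{hdu}^2$, even with boundary terms, after one integration by parts that produces $h\,\norm{u}_{\p Q}\norm{hdu}_{\p Q}$-type terms. These I would also absorb into the error, or else keep track of the boundary pieces as $O(h)$ relative to the main term.

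\textbf{Step 2: the boundary term.} I would write the second order part of $S$ in divergence form:
\[
S = -4h^2\,\delta_g\big(H\, d u\big) + \text{first order}, \qquad H = \big(D^2\psi\big)^{\sharp\sharp} \text{ viewed as a symmetric map } T^*X\to T^*X,
\]
so that its principal symbol is $4D^2\psi(\xi^\sharp,\xi^\sharp)$; the zeroth order piece $4D^2\psi(\nabla\psi,\nabla\psi)$ is just multiplication. Integrating by parts using $(du,\alpha)_g = (u,\delta_g\alpha)_g + (u\nu,\alpha)_{g,\p Q}$ then gives
\[
(Su,u) = 4(h^2 H du, du)_g - 4h^2 \big(H(du)(\nu^\sharp)\ \text{-term}, u\big)_{\p Q} + O(\norm u^2+\norm{hdu}^2).
\]
The interior quadratic term is $O(\norm{hdu}^2)$. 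The main obstacle is the boundary term, which has the form $-4h^2(D^2\psi(\nu^\sharp, (du)^\sharp), u)_{\p Q}$. I need it to reduce to the normal derivative alone. I would decompose $du = \p_\nu u\,\nu + d_{\p Q}u$ with respect to $g_T$, and how the tangential part is handled depends on where it lives. On $\p X$ the Cauchy data vanish, so both $u$ and $du$ vanish there and the term is zero. On $\Gamma$ and $\Gamma_R$, the tangential piece $h^2(D^2\psi(\nu^\sharp,(d_{\p Q}u)^\sharp),u)_{\p Q}$ can be integrated by parts along the boundary manifold. This moves the tangential derivative onto $u$ and the smooth coefficients, giving $h^2\norm{u}^2_{\p Q}$-type terms. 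Those are of lower order relative to the boundary terms in Proposition \ref{prop_semiclassical_estimate}, and I would either include them in the $O(\cdot)$ or estimate them via the trace inequality $h\norm{u}_{\p Q}^2 \lesssim \norm u^2+\norm{hdu}^2$. What remains is $-4h^2(D^2\psi(\nu^\sharp,\nu^\sharp)\p_\nu u,u)_{\p Q}$, which is the claimed formula.
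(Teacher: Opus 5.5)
\textbf{Minor points.} Your Step 1 is correct, and it is more explicit than the paper, which simply cites the Riemannian computation in \cite{dos-santos-ferreira2009}. Three small slips:
\begin{itemize}
\item The Hamilton flow of $g(\xi,\xi)$ has $\dot x = 2\xi^\sharp$, so $H_{g(\xi,\xi)}\, g(d\psi,\xi) = 2D^2\psi(\xi^\sharp,\xi^\sharp)$. Your final bracket $4D^2\psi(\xi^\sharp,\xi^\sharp)$ is nevertheless right.
\item The $O(h)$ first order remainder $hL$ of $S$ needs no integration by parts: Cauchy--Schwarz gives $|(hLu,u)| \lesssim h(\|u\|+\|hdu\|)\|u\|$. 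Boundary terms of size $h\|u\|_{\p Q}\|hdu\|_{\p Q}$ would not be admissible errors in any case.
\item Since $\delta_g$ carries a minus sign in coordinates, the divergence form is $S = +4h^2\delta_g(\cdot) + \dots$. Your subsequent integration-by-parts display does have the correct signs.
\end{itemize}

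\textbf{The gap: the tangential boundary term.} After splitting $du = \p_\nu u\,\nu + d_{\p Q}u$, you are left with $-4h^2(Yu,u)_{\Gamma\cup\Gamma_R}$ for a real vector field $Y$ tangent to the boundary. Integrating by parts along $\Gamma$ gives $(Yu,u)_\Gamma = -(u,Yu)_\Gamma - ((\mathrm{div}_\Gamma Y)u,u)_\Gamma$. This determines only $\mathrm{Re}\,(Yu,u)_\Gamma$; it does not move the derivative off $u$.

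The lemma concerns the complex number $(Su,u)$ for complex-valued $u$. For such $u$, $h^2\,\mathrm{Im}\,(Yu,u)_\Gamma$ is genuinely of size $h\|u\|_\Gamma\|hd_\Gamma u\|_\Gamma$; test with $u = e^{ix'\cdot\eta/h}\chi$ and $|\eta|$ large. This is not $O(h^2\|u\|_\Gamma^2)$. The $L^2$ trace inequality $h\|u\|^2_{\p Q}\lesssim\|u\|^2+\|hdu\|^2$ does not control it either, because it involves a tangential derivative of $u$ on the boundary. So this step fails as written.

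The term is in fact $O(\|u\|^2+\|hdu\|^2)$, but proving this needs a different argument. Either of the following works:
\begin{itemize}
\item Convert it back to a volume integral. In coordinates with $\Gamma=\{x^n=0\}$, write $h^2(Yu,u)_\Gamma = -h^2\int_0^\infty \p_n\big(\int (Yu)\bar u\, w\,dx'\big)\,dx^n$. Commute $\p_n$ past $Y$ and integrate by parts in $x'$; there are no boundary terms, since $u$ vanishes near $\p X$.
\item Use the semiclassical $H^{1/2}$ trace bound $h\|u\|^2_{H^{1/2}_h(\Gamma)}\lesssim\|u\|^2+\|hdu\|^2$, together with $|(hYu,u)_\Gamma|\lesssim\|u\|^2_{H^{1/2}_h(\Gamma)}$.
\end{itemize}

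The paper avoids the issue altogether. It writes $S = s^{nn}(hD_n)^2+Q_1hD_n+Q_2$ with $Q_j$ tangential, and integrates by parts only in the tangential variables in the $Q_j$ terms. Hence the mixed terms $s^{n\alpha}hD_\alpha hD_n$ never produce a boundary term on $\Gamma$. It integrates by parts in $x^n$ only in the $s^{nn}$ term, which directly yields $-4h^2(D^2\psi(\nu^\sharp,\nu^\sharp)\p_\nu u,u)_\Gamma$. The same remarks apply on $\Gamma_R$.
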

\begin{proof}
We refer to \cite{dos-santos-ferreira2009} for a derivation 
of the expression for $s(x,\xi)$. There the derivation is given in the Riemannian case, but the same proof works in the Lorentzian case.

The formula for $(Su, u)$ is obtained by integrating by parts once. By using cutoff functions, it is enough to consider the boundary terms for $u$ supported close to $\Gamma$ or $\Gamma_R$.
Suppose now that $u$ is supported close to $\Gamma$, and $(x',x^n)$ are coordinates near $\Gamma$ such that $\Gamma = \{ x^n = 0 \}$ and $dx^n|_{\Gamma} = -\nu$. Then near $\Gamma$, $s = s^{jk} \xi_j \xi_k + f$ with $s^{nn}|_{\Gamma} = 4 D^2 \psi(\nu^{\sharp}, \nu^{\sharp})$, and  
\[
Su = s^{nn} (hD_n)^2 u + Q_1 hD_n u + Q_2 u
\]
where $Q_j$ are tangential semiclassical operators of order $j$, depending smoothly on $x_n$. Since  $u$ has zero Cauchy data on $\p X$, integration by parts in the terms containing $Q_j$, $j=1,2$, gives
\[
(Su, u) = -h^2 (s^{nn} \p_n^2 u, u) + O(\norm{u}^2 + \norm{hdu}^2).
\]
On $\Gamma$, the choice $dx^n = -\nu$ implies $|dx^n|_{g_T} = 1$ and $(g_T)^{nj} = \delta^{nj}$. Hence
    \begin{align*}
\p_{\nu} u|_{\Gamma} = -g_T(du, dx^n) = -(g_T)^{nj} \p_j u = -\p_n u,
    \end{align*}
and the induced volume form satisfies $dS = |g|^{1/2} dx'$ in view of \eqref{lorentzian_ds_expression}. Now
\begin{align*}
(s^{nn} \p_n^2 u, u) &= -(s^{nn} \p_n u, u)_{\Gamma} + O(\norm{u}^2 + \norm{hdu}^2) \\
 &= (s^{nn} \p_{\nu} u, u)_{\Gamma} + O(\norm{u}^2 + \norm{hdu}^2).
\end{align*}
This proves the result for $u$ supported close to $\Gamma$. The case of $u$ supported close to $\Gamma_R$ is analogous.
\end{proof}

The following result gives an expression for the interior terms in \eqref{pu_square_first}.

\begin{Lemma} \label{lemma_pu_interior_terms}
For any $u \in C^{\infty}_c(X^{\mathrm{int}})$, 
\begin{multline} \label{pu_interior_terms_first}
\norm{Au}^2 + \norm{Bu}^2 + ( i [A, B] u, u) \\
 = \norm{m_T^2 u}^2 + 2 (m_T^2 hdu, hdu) + \norm{A_2 u}^2 - 2 h \mathrm{Re}( h \tilde{\p}_{\nu} u, A_0 u)_{\p Q} \\
  - 4 h^3 (D^2 \psi(\nu^{\sharp}, \nu^{\sharp}) \p_{\nu} u, u)_{\p Q}  + O(h \norm{u}^2 + h \norm{hdu}^2).
\end{multline}
\end{Lemma}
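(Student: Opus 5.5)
We expand each of the three interior terms separately and then recombine, keeping every boundary contribution that appears when integrating by parts. The recombination should realize, at the operator level, the symbol identity $|p|^2 = m_T^4 + 2 m_T^2 g_T(\xi,\xi) + g(\xi,\xi)^2$ from Section \ref{sec_carleman}.

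\emph{The term $\norm{Au}^2$.} With $A = A_2 + A_0$, $A_0 = -g(T,T) = m_T^2$, we write
\[
\norm{Au}^2 = \norm{A_2 u}^2 + \norm{m_T^2 u}^2 + 2\mathrm{Re}(A_2 u, A_0 u).
\]
In the cross term we integrate by parts once using $(\Box_g u, v) = (du,dv)_g - (\tilde{\p}_\nu u, v)_{\p Q}$ with $v = A_0 u$. This gives
\[
2\mathrm{Re}(A_2u,A_0u) = 2\mathrm{Re}(m_T^2 h du, h du)_g - 2h\,\mathrm{Re}(h\tilde{\p}_\nu u, A_0 u)_{\p Q} + O(h\norm{u}\norm{hdu}),
\]
where the error comes from $d(m_T^2)$ falling on $u$. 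This produces exactly the boundary term $-2h\,\mathrm{Re}(h\tilde{\p}_\nu u, A_0u)_{\p Q}$ in \eqref{pu_interior_terms_first}, together with the indefinite quantity $2(m_T^2 hdu, hdu)_g$.

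\emph{The term $\norm{Bu}^2$.} Here $B = 2g(T, hD\,\cdot\,) + O(h)$. Since $hD = -ih\,d$, we have $\norm{Bu}^2 = 4\norm{g(T,h\,du)}^2 + O(h\norm{u}^2 + h\norm{hdu}^2)$, and no integration by parts is needed, so there are no boundary terms. Adding this to the previous piece, the definition of $g_T$ in Lemma \ref{lemma_ht_riemannian} gives pointwise
\[
2m_T^2 g(\xi,\xi) + 4 g(T,\xi)^2 = 2m_T^2\Big(g(\xi,\xi) - 2\frac{g(T,\xi)^2}{g(T,T)}\Big) = 2 m_T^2 g_T(\xi,\xi),
\]
because $-g(T,T) = m_T^2$. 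Applied with $\xi = h\,du$ this yields $2(m_T^2 hdu, hdu)$, where the pairing is now taken in $g_T$. This is the positive term in \eqref{pu_interior_terms_first}, and $\norm{A_2u}^2$ is simply kept as it stands.

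\emph{The commutator term.} By Lemma \ref{lemma_pu_commutator}, $i[A,B] = hS$ and
\[
(hSu,u) = -4h^3(D^2\psi(\nu^\sharp,\nu^\sharp)\p_\nu u, u)_{\p Q} + O(h\norm{u}^2 + h\norm{hdu}^2).
\]
This gives the remaining boundary term, and the rest of the commutator is absorbed into the $O(h)$ error. Note that $u$ is smooth up to $\p Q$, so boundary terms do not vanish even though the lemma is stated for $u \in C^\infty_c(X^{\mathrm{int}})$.

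\emph{Main obstacle.} The delicate point is bookkeeping rather than estimation. The error from integration by parts in the $A$ cross term must be $O(h)$ in the correct norms, and one must check that no boundary term from it is dropped. The commutator identity must also be used with $u$ nonvanishing on $\Gamma$ and $\Gamma_R$, which is exactly the content of Lemma \ref{lemma_pu_commutator}. On $\p X$ all boundary terms vanish because the Cauchy data there is zero, so only $\Gamma$ and $\Gamma_R$ contribute. The cross term $O(h\norm{u}\norm{hdu})$ is bounded by $O(h\norm{u}^2 + h\norm{hdu}^2)$ using Cauchy–Schwarz.
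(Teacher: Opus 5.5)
Your proposal is correct and matches the paper's proof essentially step for step. You expand $\norm{Au}^2$ with $A = A_2 + A_0$, integrate by parts once in the cross term to produce $-2h\,\mathrm{Re}(h\tilde{\p}_\nu u, A_0u)_{\p Q}$, combine $2(m_T^2 hdu, hdu)_g$ with $4\norm{g(T,hdu)}^2$ via the definition of $g_T$, and invoke Lemma \ref{lemma_pu_commutator} for the commutator term, with the same $O(h)$ error bookkeeping and the same reading of $u \in C^\infty_c(X^{\mathrm{int}})$ (vanishing near $\p X$ but not on $\Gamma$, $\Gamma_R$).
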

\begin{proof}
Since $A = A_2 + A_0$ with $A_2 = h^2 \delta_g d$, $A_0 = m_T^2$, we have 
\[
\norm{Au}^2 + \norm{Bu}^2 = \norm{A_2 u}^2 + \norm{A_0 u}^2 + (A_2 u, A_0 u) + (A_0 u, A_2 u) + \norm{Bu}^2.
\]
Integrating by parts in the cross terms yields 
\begin{align*}
 &(A_2 u, A_0 u) + (A_0 u, A_2 u) \\
 &= (hd u, hd(A_0 u))_g + (hd(A_0 u), hdu)_g - h( h \tilde{\p}_{\nu} u, A_0 u)_{\p Q} - h(A_0 u, h \tilde{\p}_{\nu} u)_{\p Q} \\
 &= 2 (A_0 hdu, hdu)_g - 2 h \mathrm{Re}( h \tilde{\p}_{\nu} u, A_0 u)_{\p Q} + O(h \norm{g(dA_0, hdu)} \norm{u}).
\end{align*}
Moreover, by the formula for $B$ and the definition of $g_T$ we have 
\begin{align*}
 &2 (A_0 hdu, hdu)_g + \norm{Bu}^2 \\
 &= 2 (m_T^2 hdu, hdu)_g + 4 \norm{g(T, hdu)}^2 + O(h \norm{g(T,hdu)} \norm{u} + h^2 \norm{u}^2) \\
 &= 2 (m_T^2 hdu, hdu) + O(h \norm{g(T,hdu)} \norm{u} + h^2 \norm{u}^2).
\end{align*}
For the commutator term, we use Lemma \ref{lemma_pu_commutator} to obtain 
\[
( i [A, B] u, u) = -4 h^3 (D^2 \psi(\nu^{\sharp}, \nu^{\sharp}) \p_{\nu} u, u)_{\p Q} + O(h \norm{u}^2 + h \norm{hdu}^2)
\]
Combining these facts gives the estimate \eqref{pu_interior_terms_first} for the interior terms.
\end{proof}

Next we study the combined boundary terms in \eqref{pu_square_first}--\eqref{pu_interior_terms_first}. They are given by 
\begin{multline} \label{pu_boundary_terms_combined}
h (i h\tilde{\p}_{\nu} Bu + b(x,\nu) Au, u)_{\p Q} - ih(Bu, h \tilde{\p}_{\nu} u)_{\p Q} \\
 - 2 h \mathrm{Re}( h \tilde{\p}_{\nu} u, A_0 u)_{\p Q} - 4 h^3 (D^2 \psi(\nu^{\sharp}, \nu^{\sharp}) \p_{\nu} u, u)_{\p Q}.
\end{multline}
Note that $u$ has zero Cauchy data on $\p X$, so it is enough to study boundary terms over $\Gamma$ and $\Gamma_R$ separately.

We first focus on the terms on $\Gamma_R$.

\begin{Lemma} \label{lemma_pu_gammar_terms}
On $\Gamma_R$, the boundary terms in  \eqref{pu_boundary_terms_combined} take the form 
\begin{multline} \label{pu_gammar_terms}
- 2h \norm{m_T^{1/2} h d u}_{\Gamma_R}^2 - 2 h \norm{m_T^{3/2} u}_{\Gamma_R}^2  \\
 + 2 h \mathrm{Re}( h \p_{\nu} u, m_T^2 u)_{\Gamma_R} + O(h^2 \norm{u}_{\Gamma_R} \norm{h d u}_{\Gamma_R}).
\end{multline}
\end{Lemma}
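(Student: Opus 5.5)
The plan is to compute each of the four boundary integrals in \eqref{pu_boundary_terms_combined} on $\Gamma_R = \{\psi = R\}$ directly. I would use the geometry of $T = d\psi$ relative to $g_T$ and local coordinates $(x', x^n)$ near $\Gamma_R$ with $x^n = R - \psi$. Since $u$ has vanishing Cauchy data on $\p X$, every integration by parts along $\Gamma_R$ in the tangential variables produces no further boundary terms. The first step is to record the normal quantities.

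\textbf{Normal quantities on $\Gamma_R$.} The outer conormal is $\nu = T/|T|_{g_T}$, and $g_T(T,T) = g(T,T) - 2g(T,T) = m_T^2$, so $\nu = T/m_T$. Hence
\[
g(T,\nu) = -m_T, \qquad b(x,\nu) = 2g(T,\nu) = -2m_T, \qquad \tilde{\p}_\nu u = g(du,T)/m_T .
\]
From the formula for $g_T$ in Lemma \ref{lemma_ht_riemannian},
\[
\p_\nu u = g_T(du,\nu) = g(du,\nu) - 2\,\frac{g(T,du)\,g(T,\nu)}{g(T,T)} = -\tilde{\p}_\nu u .
\]
Thus $g(T, h\,du) = -m_T\, h\p_\nu u$, and $Bu = -2i\, g(T, h\,du) + O(h)u = 2i\, m_T\, h\p_\nu u + O(h)u$.

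\textbf{The individual terms.} I would substitute these relations one term at a time.
\begin{enumerate}
\item The term $-2h\,\mathrm{Re}(h\tilde{\p}_\nu u, A_0 u)_{\Gamma_R}$ becomes exactly $2h\,\mathrm{Re}(h\p_\nu u, m_T^2 u)_{\Gamma_R}$, since $A_0 = m_T^2$.
\item The commutator boundary term $-4h^3(D^2\psi(\nu^\sharp,\nu^\sharp)\p_\nu u, u)_{\Gamma_R}$ is $O(h^2\norm{u}_{\Gamma_R}\norm{h\,du}_{\Gamma_R})$.
\item The term $-ih(Bu, h\tilde{\p}_\nu u)_{\Gamma_R}$ equals $-2h\norm{m_T^{1/2} h\p_\nu u}^2_{\Gamma_R}$ up to the same error.
\item The term $h(b(x,\nu)Au, u)_{\Gamma_R}$ equals $-2h(m_T(h^2\Box_g u + m_T^2 u), u)_{\Gamma_R}$. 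Its zeroth order part gives the term $-2h\norm{m_T^{3/2}u}^2_{\Gamma_R}$ in \eqref{pu_gammar_terms}.
\end{enumerate}

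\textbf{Main obstacle: second order terms.} What remains is $-2h^3(m_T\Box_g u, u)_{\Gamma_R} + ih(h\tilde{\p}_\nu Bu, u)_{\Gamma_R}$. Each piece contains second normal derivatives $(h\p_n)^2 u$, which cannot be controlled by the boundary norms in \eqref{pu_gammar_terms}. I would write $h^2\Box_g$ in the coordinates $(x', x^n)$ as $-g^{nn}h^2\p_n^2 + (\text{mixed}) + (\text{tangential}) + O(h)$, with $g^{nn} = g(T,T) = -m_T^2$. Similarly,
\[
ih\,h\tilde{\p}_\nu Bu = 2h\,\tilde{\p}_\nu\bigl(g(T, h\,du)\bigr) + O(h^2)
\]
contains $2h\, m_T^{-1} g(T,T)^2 \p_n^2$ times $h^2$. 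The check I would do is that the coefficients of $h^3\p_n^2 u$ cancel exactly, namely $2m_T^3 - 2m_T^3 = 0$, and that the mixed terms $h^2\p_n\p_{x'}u$ also cancel. This cancellation is the crucial step: it is where the choice of $\nu$ along $T$ matters, and it is where I expect sign errors, so I would verify it first in the Minkowski model with $\psi = t$.

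\textbf{Collecting terms.} The surviving tangential second order terms $h^2 g^{\alpha\beta}\p_\alpha\p_\beta u$ are integrated by parts along $\Gamma_R$. Since $g$ restricted to $T^*\Gamma_R$ agrees with $g_T$ there, this yields $-2h\norm{m_T^{1/2}h\,d_{\Gamma_R}u}^2_{\Gamma_R} + O(h^2\norm{u}_{\Gamma_R}\norm{h\,du}_{\Gamma_R})$. Adding the normal square $-2h\norm{m_T^{1/2}h\p_\nu u}^2_{\Gamma_R}$ from item 3 gives
\[
-2h\norm{m_T^{1/2}h\,du}^2_{\Gamma_R}, \qquad \text{since } |du|_{g_T}^2 = |d_{\Gamma_R}u|^2 + |\p_\nu u|^2 .
\]
Together with items 1, 2 and the zeroth order part of item 4, this gives \eqref{pu_gammar_terms}.
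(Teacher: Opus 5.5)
Your normal quantities and items 1--4 agree with the paper's proof, which uses $\nu = m_T^{-1}T$, $\tilde{\p}_\nu u = -\p_\nu u$ and $Bu = 2i m_T h\p_\nu u + O(h)u$ in the same way. Your cancellation of the $h^3\p_n^2$ coefficients and of the mixed $h^3\p_n\p_\alpha$ coefficients is also correct. (In your display the right side should read $2h\cdot h\tilde{\p}_\nu(g(T,h\,du))$; the coefficient $2m_T^3$ you then use is the right one.)

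The step that fails as written is the collecting step. With only $x^n = R-\psi$ prescribed, $g^{n\alpha}\neq 0$ in general; that is why you expect mixed terms. The principal part of $2h\cdot h\tilde{\p}_\nu(g(T,h\,du))$ is $2h\, m_T^{-1}g^{jn}g^{kn}h^2\p_{jk}u$. Besides its $\p_n^2$ and mixed pieces, it contains the purely tangential piece $2h\,m_T^{-1}g^{\alpha n}g^{\beta n}h^2\p_{\alpha\beta}u$, which you dropped.

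Without that piece the conclusion is wrong. For $\xi' = \xi_\alpha dx^\alpha$ one has $g^{\alpha\beta}\xi_\alpha\xi_\beta = g(\xi',\xi')$. But $\xi'$ does not in general lie in $\{g(T,\cdot)=0\}$, the copy of $T^*\Gamma_R$ on which $g$ and $g_T$ agree. Writing $\xi' = \lambda T + \xi_\perp$ as in Lemma \ref{lemma_ht_riemannian}, one gets $g(\xi',\xi') = g_T(\xi_\perp,\xi_\perp) - m_T^{-2}g(T,\xi')^2$. This is not $|d_{\Gamma_R}u|_{g_T}^2$, and it is not even positive definite when $\p_n$ is spacelike.

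The repair is one line. The full tangential operator is $2h\,c^{\alpha\beta}h^2\p_{\alpha\beta}$, and since $g^{\alpha n}\xi_\alpha = -g(T,\xi')$,
$$c^{\alpha\beta}\xi_\alpha\xi_\beta = m_T\, g(\xi',\xi') + m_T^{-1}g(T,\xi')^2 = m_T\, g_T(\xi_\perp,\xi_\perp).$$
Tangential integration by parts then gives $-2h\norm{m_T^{1/2}h\,d_{\Gamma_R}u}_{\Gamma_R}^2$ up to the stated error, with $d_{\Gamma_R}u$ understood as $\xi_\perp$. Your final combination with $-2h\norm{m_T^{1/2}h\p_\nu u}_{\Gamma_R}^2$ then goes through.

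This is exactly the paper's observation that $ih\tilde{\p}_\nu B + b(x,\nu)A$ has principal symbol $-2m_T g_T(\xi_\perp,\xi_\perp)$. The paper obtains it invariantly from the decomposition $\xi = \lambda T + \xi_\perp$ rather than coefficient by coefficient.

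Alternatively, choose $x'$ constant along the integral curves of $\nabla\psi$. This is possible since $d\psi(\nabla\psi) = g(T,T)\neq 0$. Then $g^{n\alpha}\equiv 0$, the mixed terms and the extra tangential term vanish identically, and $\mathrm{span}\{dx^\alpha\} = \{g(T,\cdot)=0\}$. In these coordinates your sentence becomes literally true.
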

\begin{proof}
On $\Gamma_R$, the outer conormal $\nu$ is proportional to $T$. 
Since $|T|_{g_T} = m_T$,  
    \begin{align}\label{Gamma_R_nu}
\nu = m_T^{-1} T.
    \end{align}
The operator $ih \tilde{\p}_{\nu} B + b(x,\nu) A$ has principal symbol 
\begin{equation*} 
-g(\nu, \xi) 2 g(T, \xi) + 2 g(T, \nu) (g(\xi, \xi) - g(T,T)).
\end{equation*}
Since $\nu = m_T^{-1} T$,  using the notation of Lemma \ref{lemma_ht_riemannian} with $\xi = \lambda T + \xi_{\perp}$ and $g(T, \xi_{\perp}) = 0$, we have 
\begin{align*}
&-g(\nu, \xi) 2 g(T, \xi) + 2 g(T, \nu) g(\xi, \xi) = 2 g(T, \nu)(g(\xi, \xi) - \frac{g(T,\xi)^2}{g(T,T)}) \\
 &\qquad=2 g(T, \nu) g(\xi_{\perp}, \xi_{\perp}) 
= -2 m_T g_T(\xi_{\perp}, \xi_{\perp}).
\end{align*}
It follows from $T^* \Gamma_R = \{ \xi_\perp \in T^* X \mid g(T, \xi_\perp) = 0\}$ that the principal part of $ih \tilde{\p}_{\nu} B + b(x,\nu) A$ is a tangential operator, and 
\begin{multline*}
h (i h\tilde{\p}_{\nu} Bu + b(x,\nu) Au, u)_{\Gamma_R} \\
 = -2 h (m_T h d_{\Gamma_R} u, h d_{\Gamma_R} u)_{\Gamma_R} - 2 h (m_T^3 u, u)_{\Gamma_R} + O(h^2 \norm{u}_{\Gamma_R} \norm{h d u}_{\Gamma_R}).
\end{multline*}

Furthermore, using \eqref{Gamma_R_nu} and
$\tilde{\p}_{\nu} u = g(\nu, du) = -g_T(\nu, du) = -\p_{\nu} u$
we have 
\[
Bu = 2 g(T, hDu) + ih(\Box_g \psi) u = -2 m_T (h/i) \p_{\nu} u + ih(\Box_g \psi) u.
\]
Thus on $\Gamma_R$,
the second term in \eqref{pu_boundary_terms_combined} is  
\[
 -ih(Bu, h \tilde{\p}_{\nu} u)_{\Gamma_R} = - 2 h (m_T h \p_{\nu} u, h \p_{\nu} u)_{\Gamma_R} + O(h^2 \norm{u}_{\Gamma_R} \norm{h \p_{\nu} u}_{\Gamma_R}).
\]
The third term in \eqref{pu_boundary_terms_combined} is 
\[
2h \mathrm{Re}(h \p_{\nu} u, m_T^2 u)_{\Gamma_R},
\]
and the last term in \eqref{pu_boundary_terms_combined} is $O(h^2 \norm{u}_{\Gamma_R} \norm{h \p_{\nu} u}_{\Gamma_R})$. This proves \eqref{pu_gammar_terms}.
\end{proof}

It remains to study the boundary terms \eqref{pu_boundary_terms_combined} on $\Gamma$. The main point is that $\Gamma$ is characteristic, i.e.\ 
\[
g(\nu, \nu)|_{\Gamma} = 0.
\]
We will employ suitable coordinates $(x', x^n)$ near $\Gamma$ to study subprincipal terms. Let $x' = (x^0, \ldots, x^{n-1})$ be coordinates on $\Gamma$ and let $x^n = \varphi-s$. Thus $\Gamma = \{ x^n = 0 \}$ and, writing $\mu = |dx^n|_{g_T}^{-1}$, we have $\nu = -\mu \,dx^n|_{\Gamma}$.
By \eqref{lorentzian_ds_expression} the volume form on $\Gamma$ is 
\begin{equation} \label{lorentzian_ds_expression_second}
dS = \mu^{-1} |g|^{1/2} \,dx'.
\end{equation}
Since $g(d\varphi, d\varphi) = 0$, we have the important property that 
\[
g^{nn} = 0 \text{ near $\Gamma$}.
\]
We raise and lower indices with respect to $g$ and let $\alpha$ and $\beta$ run from $0$ to $n-1$. For instance, on $\Gamma$,
    \begin{align}\label{tilde_p_nu_Gamma}
\tilde{\p}_{\nu} = g^{jk} \nu_j \p_k = -\mu \p^n = -\mu g^{n\alpha} \p_{\alpha}.
    \end{align}

First we note an integration by parts identity.

\begin{Lemma} \label{lemma_tildepnu_gamma_tangential}
The operator $\tilde{\p}_{\nu}$ is tangential on $\Gamma$, and for $v, w \in H^1_0(\Gamma)$ we have the integration by parts formula 
\begin{equation*}
(\tilde{\p}_{\nu} v, w)_{\Gamma} =  -(v, \tilde{\p}_{\nu} w)_{\Gamma} + (\mu \theta^n v, w)_{\Gamma}
\end{equation*}
where $\theta^k = |g|^{-1/2} \p_j(|g|^{1/2} g^{jk}) = -\Box_g x^k$.
\end{Lemma}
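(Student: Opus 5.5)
The plan is to work entirely in the coordinates $(x',x^n)$ near $\Gamma$ just introduced, with $x^n = \varphi - s$. Tangentiality comes directly from \eqref{tilde_p_nu_Gamma}. Since $\nu = -\mu\, dx^n$ on $\Gamma$, we have $\tilde{\p}_{\nu} = g^{jk}\nu_j \p_k = -\mu g^{nk}\p_k$. The characteristic property $g^{nn} = 0$ near $\Gamma$ then removes the $\p_n$ component, so $\tilde{\p}_{\nu} = -\mu g^{n\alpha}\p_\alpha$ involves only derivatives in $x' = (x^0,\dots,x^{n-1})$. Hence $\tilde{\p}_{\nu}$ is a vector field tangent to $\Gamma$, and it acts on functions on $\Gamma$.

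For the integration by parts formula, I will write both sides using the volume form \eqref{lorentzian_ds_expression_second}, $dS = \mu^{-1}|g|^{1/2}\,dx'$. This gives
\[
(\tilde{\p}_{\nu} v, w)_{\Gamma} = -\int_{\Gamma} \mu g^{n\alpha}(\p_\alpha v)\bar{w}\, \mu^{-1}|g|^{1/2}\,dx' = -\int_{\Gamma} |g|^{1/2} g^{n\alpha}(\p_\alpha v)\bar w\,dx'.
\]
The factor $\mu$ cancels, which is the point of the chosen normalization. Since $v,w \in H^1_0(\Gamma)$, integrating by parts in $x^\alpha$ has no boundary contribution, and we get
\[
\int_\Gamma v\, |g|^{1/2} g^{n\alpha}\p_\alpha \bar w\,dx' + \int_\Gamma v \bar w\, \p_\alpha(|g|^{1/2}g^{n\alpha})\,dx'.
\]
The first term is $-(v,\tilde{\p}_{\nu} w)_\Gamma$, by the same computation in reverse, using that $\mu$ and $g^{n\alpha}$ are real.

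For the second term I will show that $\p_\alpha(|g|^{1/2}g^{n\alpha}) = \p_j(|g|^{1/2}g^{jn}) = |g|^{1/2}\theta^n$ on $\Gamma$. This holds because $g^{nn}\equiv 0$ in a full neighborhood of $\Gamma$, not only on $\Gamma$, so $\p_n(|g|^{1/2}g^{nn}) = 0$ there. By the symmetry of $g^{jk}$ and the formula $\theta^k = |g|^{-1/2}\p_j(|g|^{1/2}g^{jk})$, the term equals $\int_\Gamma \theta^n v\bar w\,|g|^{1/2}dx' = (\mu\theta^n v, w)_\Gamma$, after reinserting $dS = \mu^{-1}|g|^{1/2}dx'$. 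The identity $\theta^k = -\Box_g x^k$ is the coordinate formula for $\Box_g$ from Appendix \ref{lorentzian_integration} applied to $u = x^k$, as in Lemma \ref{lemma_psi_differences}.

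The main subtle point is this last step: the $\p_n$ derivative must be disposed of using the vanishing of $g^{nn}$ in a neighborhood of $\Gamma$ rather than only on $\Gamma$. This is guaranteed because $\varphi$ solves the eikonal equation near $\Gamma$. The only other check is that the coordinates $(x',x^n)$ cover $\Gamma$ locally, and that a partition of unity on $\Gamma$ reduces the global statement to these local computations. Each local piece of the boundary-term-free integration by parts is justified by the $H^1_0(\Gamma)$ assumption, through density of $C^\infty_c$.
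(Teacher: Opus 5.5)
Your proposal is correct and is essentially the paper's proof. The paper also reads off tangentiality from $\tilde{\p}_{\nu} = -\mu g^{n\alpha}\p_\alpha$, then writes the pairing with $dS = \mu^{-1}|g|^{1/2}\,dx'$ so that $\mu$ cancels, integrates by parts in $x'$, and identifies $\p_\alpha(|g|^{1/2}g^{n\alpha})$ with $|g|^{1/2}\theta^n$ using $g^{nn}\equiv 0$ near $\Gamma$. You make explicit the vanishing of $\p_n(|g|^{1/2}g^{nn})$ and the localization by a partition of unity, which the paper leaves implicit.
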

\begin{proof}
Tangentiality follows from \eqref{tilde_p_nu_Gamma}. By \eqref{lorentzian_ds_expression_second} we have 
\begin{equation*}
(\tilde{\p}_{\nu} v, w)_{\Gamma} =  - \int_{\Gamma} g^{n\alpha} \p_{\alpha} v \bar{w} |g|^{1/2} \,dx' = -(v, \tilde{\p}_{\nu} w)_{\Gamma} + (\mu \theta^n v, w)_{\Gamma}
\end{equation*}
where $\theta^n$ has the required form.
\end{proof}

The next result describes the boundary terms on $\Gamma$.

\begin{Lemma} \label{lemma_pu_gamma_terms}
On $\Gamma$, the boundary terms in  \eqref{pu_boundary_terms_combined} take the form 
\begin{multline} \label{pu_gamma_terms}
\sqrt{2} h (\norm{m_T^{3/2} u}_{\Gamma}^2 + \norm{m_T^{1/2} h d_{\Gamma} u}_{\Gamma}^2) + 2 \sqrt{2} h \norm{m_T^{1/2} g_T(\hat{T}, hd_{\Gamma} u)}_{\Gamma}^2 \\
 + O(h^2 \norm{u}_{\Gamma}^2 + h^2 \norm{hd_{\Gamma} u}_{\Gamma}^2).
\end{multline}
\end{Lemma}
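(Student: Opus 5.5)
We restrict the four terms in \eqref{pu_boundary_terms_combined} to $\Gamma$ and compute each modulo $O(h^2\norm{u}_\Gamma^2 + h^2\norm{hd_\Gamma u}_\Gamma^2)$. We work in the coordinates $(x',x^n)$ with $x^n=\varphi-s$, so that $g^{nn}=0$ near $\Gamma$ and $\nu = -\mu\,dx^n$ on $\Gamma$. The first task is to remove all normal derivatives. Since $g^{nn}=0$, the operator $\tilde{\p}_\nu = -\mu g^{n\alpha}\p_\alpha$ is tangential (Lemma \ref{lemma_tildepnu_gamma_tangential}). The coefficient $b(x,\nu) = 2g(T,\nu) = -2\mu g(T,dx^n)$ is a positive multiple of $|g(N,T)|$, where $N=d\varphi$. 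For $Bu = 2g(T,hDu) + ih(\Box_g\psi)u$, we decompose $du = (\p_n u)\,dx^n + d'u$. The normal part enters as $2g(T,dx^n)\,(h/i)\p_n u$, so $Bu$ does contain $h\p_n u$, with nonzero coefficient.

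Next we handle the term $-ih(Bu,h\tilde{\p}_\nu u)_\Gamma$. Because $\tilde\p_\nu u$ is tangential, this term is linear in $h\p_n u$ multiplied by a tangential quantity. Similarly, $h(ih\tilde\p_\nu Bu + b(x,\nu)Au, u)_\Gamma$ contains $h^2\p_n^2 u$ (through $A$, via $g^{n\alpha}\p_{n\alpha}$; there is no $\p_n^2$ since $g^{nn}=0$) and $\tilde\p_\nu(\p_n u)$. The plan is to show that all terms containing $\p_n u$ cancel at principal level. Here we would use the symbol computation from the sketch of case (c): modulo lower order, the principal symbol of the combined boundary form is, for $\xi = \xi' + \lambda N$, independent of $\lambda$, because $p(x,\xi+\lambda N)$ is affine in $\lambda$. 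Concretely, $A$ contributes $2\lambda g(\xi',N)\,b(x,\nu)$, and this should cancel against the $\lambda$-terms of $ih\tilde\p_\nu B$ and $-iB\,\overline{h\tilde\p_\nu}$. Any leftover $\p_n u$ terms of lower order we would move using Lemma \ref{lemma_tildepnu_gamma_tangential} and bound into the error. The Hessian term $-4h^3(D^2\psi(\nu^\sharp,\nu^\sharp)\p_\nu u,u)_\Gamma$ is $O(h^2\norm{h\p_\nu u}\norm{u})$, which seems dangerous because it contains the normal derivative. The hope is that it either combines with the above cancellation or is absorbed after symmetrizing with Lemma \ref{lemma_tildepnu_gamma_tangential}. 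This step is the one I am least sure of.

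Once normal derivatives are eliminated, the remaining principal tangential quadratic form should be $2|g(N,T)|\,\mu\,\bigl(m_T^2 + E(\xi')/|g(N,T)|\bigr)$, up to normalization, with $E(\xi) = 2g(N,\xi)g(T,\xi) - g(N,T)g(\xi,\xi)$. This matches the computation $\mathrm{Im}(\bar r s) = 2|g(N,T)|(|g(T,T)| + g_T(\xi,\xi) + 2g_T(\hat T,\xi)^2)$ carried out for $g_T(\xi,N)=0$, i.e.\ on $T^*\Gamma$ identified via $g_T$. We then need the normalization $\mu|g(N,T)| = \mu|g(dx^n,T)|$, which we compute at a point. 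Since $\nu$ is null and $g_T$-unit, while $\nu = \lambda T + \nu_\perp$ with $g_T(\nu,\nu) = m_T^2\lambda^2 + g(\nu_\perp,\nu_\perp)$ and $g(\nu,\nu) = -m_T^2\lambda^2 + g(\nu_\perp,\nu_\perp) = 0$, we get $2m_T^2\lambda^2 = 1$. Hence $|g(\nu,T)| = m_T^2|\lambda| = m_T/\sqrt2$. This explains the $\sqrt2$ and the powers $m_T^{3/2}$ and $m_T^{1/2}$. Reading off the quadratic form with weight $\sqrt2 h m_T$ then gives $\sqrt2 h(\norm{m_T^{3/2}u}^2 + \norm{m_T^{1/2}hd_\Gamma u}^2) + 2\sqrt2 h\norm{m_T^{1/2}g_T(\hat T,hd_\Gamma u)}^2$.

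Subprincipal contributions, such as $ih\Box_g\psi$, the $\theta^n$ term from Lemma \ref{lemma_tildepnu_gamma_tangential}, and derivatives of coefficients, are each $O(h^2\norm{u}_\Gamma\norm{hd_\Gamma u}_\Gamma)$, which gives the stated error. The main obstacle is the principal-level cancellation of the normal derivatives $\p_n u$, including the Hessian term, on the characteristic surface. I would verify it first by the symbol identity above, and then redo it with explicit integration by parts.
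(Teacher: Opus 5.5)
You have the principal-level structure right, and it matches the paper. After moving $\tilde{\p}_\nu$ off $u$ in $-ih(Bu,h\tilde{\p}_\nu u)_\Gamma$ with Lemma \ref{lemma_tildepnu_gamma_tangential}, the operator $2ih\tilde{\p}_\nu B+b(x,\nu)A$ has no $\p_{nn}$ term because $g^{nn}=0$, and its $\p_{\alpha n}$ terms cancel. With $|g(\nu,T)|=m_T/\sqrt{2}$, the surviving tangential form gives exactly the three positive terms in \eqref{pu_gamma_terms}.

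The gap is one order lower. Your identity for $\xi=\xi'+\lambda N$ only removes the second-order terms $h^2\p_\alpha\p_n u$. It says nothing about first-order terms of the form $h^3(c\,\p_n u,u)_\Gamma=h^2(c\,h\p_n u,u)_\Gamma$, and the Hessian term is of exactly this type. Such terms cannot be ``bounded into the error'': the error in \eqref{pu_gamma_terms} involves only $u$ and the tangential derivative $hd_\Gamma u$ on $\Gamma$. The trace of the transversal derivative $\p_n u$ on $\Gamma$ is controlled by nothing in the final estimate. Lemma \ref{lemma_tildepnu_gamma_tangential} moves only the tangential field $\tilde{\p}_\nu$. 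It cannot move $\p_n$, nor $\p_\nu=g_T(d\,\cdot,\nu)$, which is transversal to $\Gamma$, so ``symmetrizing'' cannot absorb the Hessian term.

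For the same reason, two of your claims about lower-order terms are wrong:
\begin{itemize}
\item The $\theta^n$ term is $-ih^2(Bu,\mu\theta^n u)_\Gamma$. Since $Bu$ contains $h\p_n u$, as you observed yourself, this produces $-2h^3(\mu T^n\theta^n\p_n u,u)_\Gamma$, which is not $O(h^2\norm{u}_\Gamma\norm{hd_\Gamma u}_\Gamma)$.
\item Among the ``derivatives of coefficients'', $\tilde{\p}_\nu$ hitting the coefficient $T^n$ of $B$ produces $-4h^3(\mu(\p^nT^n)\p_n u,u)_\Gamma$, which is also not of that size.
\end{itemize}

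What is needed is an \emph{exact} cancellation of every normal-derivative term at order $h^3$. The first-order part of $b(x,\nu)A$ contributes $+2h^3(\mu T^n\theta^n\p_n u,u)_\Gamma$, which cancels the $\theta^n$ term above. The remaining term $-4h^3(\mu(\p^nT^n)\p_n u,u)_\Gamma$ must be identified with the Hessian term. Because $g^{nn}=0$ near $\Gamma$, one has $g^{nj}g^{nk}\p_m g_{jk}=0$. This converts the metric-derivative part of $\p^nT^n=g^{nj}\p_j(g^{nk}\p_k\psi)$ into Christoffel symbols, giving $\p^nT^n=\mu^{-2}D^2\psi(\nu^\sharp,\nu^\sharp)$. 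Combined with $\p_\nu u=-\mu^{-1}\p_n u$ on $\Gamma$, the term becomes $+4h^3(D^2\psi(\nu^\sharp,\nu^\sharp)\p_\nu u,u)_\Gamma$. This cancels the fourth term of \eqref{pu_boundary_terms_combined} exactly; that boundary term was produced in Lemma \ref{lemma_pu_commutator} precisely so that this would happen. Only after these two cancellations are the leftover first-order terms genuinely tangential, and only then are they, together with the third term of \eqref{pu_boundary_terms_combined}, harmless $O(h^2\norm{u}_\Gamma^2+h^2\norm{hd_\Gamma u}_\Gamma^2)$ errors.
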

\begin{proof}
Lemma \ref{lemma_tildepnu_gamma_tangential} implies that the first two terms in \eqref{pu_boundary_terms_combined} may be written as 
\begin{multline*}
h (i h\tilde{\p}_{\nu} Bu + b(x,\nu) Au, u)_{\p Q}  -ih(Bu, h \tilde{\p}_{\nu} u)_{\Gamma} \\
 = h (2 i h\tilde{\p}_{\nu} Bu + b(x,\nu) Au, u)_{\p Q}  - ih^2 (Bu, \mu \theta^n u)_{\Gamma}.
\end{multline*}
The operator $2 ih \tilde{\p}_{\nu} B + b(x,\nu) A$ on $\Gamma$ has the form  
\begin{align*}
 &\frac{1}{2 h^2} (2 ih \tilde{\p}_{\nu} B u + b(x,\nu) A u)  \\
 &= -\mu \p^n(2 T^j \p_j u -  (\Box_g \psi) u) - \mu T^n(-g^{jk} \p_{jk} u - \theta^k \p_k u + h^{-2} m_T^2 u)  \\
 &= \mu (T^n g^{jk} - 2 T^j g^{nk}) \p_{jk} u + \mu (T^n \theta^k  - 2 \p^n T^k + g^{nk} \Box_g \psi)\p_k u  \\
 & \qquad -h^{-2} \mu T^n m_T^2 u + fu
\end{align*}
where $f = \mu \p^n \Box_g \psi$. Since the $\p_{\alpha n} u$ terms cancel out, and since the $\p_{nn} u$ terms disappear due to $g^{nn} = 0$, this reduces to 
\[
E u + \mu (T^n \theta^n - 2 \p^n T^n) \p_n u - h^{-2} \mu T^n m_T^2 u + Q u
\]
where $E = \mu (T^n g^{\alpha \beta} - 2 T^{\alpha} g^{n\beta}) \p_{\alpha \beta}$ and $Q$ is a first order tangential operator independent of $h$. 
Moreover, 
    \begin{align*}
- ih^2 (Bu, \mu \theta^n u)_{\Gamma}
&= -h^3(2 T^j \p_j u - (\Box_g \psi) u, \mu \theta^n u)_{\Gamma}
\\&= -2h^3(\mu T^n \theta^n \p_n u, u)_{\Gamma} + h^3(Q' u, u)_\Gamma,
    \end{align*}
where $Q'$ is another first order tangential operator independent of $h$. Summarizing,
\begin{align} \label{pu_bt_gamma_intermediate}
&h (i h\tilde{\p}_{\nu} Bu + b(x,\nu) Au, u)_{\p Q}  -ih(Bu, h \tilde{\p}_{\nu} u)_{\Gamma}
\\\notag&\qquad= 2h^3 (Eu - 2 \mu (\p^n T^n) \p_n u - h^{-2} \mu T^n m_T^2 u + \tilde{Q} u, u)_{\Gamma},
\end{align}
where the two terms containing $\theta^n$ canceled out and $\tilde Q = Q + Q'/2$.

Importantly, $E$ is an elliptic operator on $\Gamma$. Indeed, writing $N = \mu \,dx^n$, the symbol of $E$ is a quadratic form in $\xi \in T^* \Gamma$ given by 
\[
e(x,\xi) = -g(N, T) g(\xi, \xi) + 2 g(T, \xi) g(N, \xi).
\]
It is a variant of the quadratic form in \cite[Lemma 24.1.2]{hormander}. Since 
    \begin{align*}
\xi \in T^* \Gamma = \{ \eta \,:\, g_T(N,\eta) = 0 \},
    \end{align*}
we have $g(N, \xi) = 2 g(T, \xi) g(T, N) / g(T,T)$. Thus 
\begin{align*}
e(x,\xi) 
&= 
 - g(N, T) g(\xi, \xi) + 
\frac{4 g(T, N) g(T, \xi)^2}{g(T,T)} 
\\
 &= -g(N, T) g_T(\xi,\xi) + \frac{2 g(T, N)}{g(T,T)} g(T, \xi)^2.
\end{align*}
On $\Gamma$ we have $g_T(N,N) = 1$ and $g(N,N) = 0$, which gives 
\[
1 = g_T(N,N) = -2 \frac{g(N,T)^2}{g(T,T)} \quad \implies \quad g(N,T) = -\frac{1}{\sqrt{2}} m_T.
\]
Thus 
\[
e(x,\xi) = \frac{1}{\sqrt{2}} m_T g_T(\xi, \xi) + \sqrt{2} m_T g_T(\hat{T}, \xi)^2
\]
with $\hat{T} = T/|T|_{g_T}$. Quantizing this expression gives that 
\begin{multline*}
h^2 (Eu, u)_{\Gamma} = \frac{1}{\sqrt{2}} \norm{m_T^{1/2} hd_{\Gamma} u}_{\Gamma}^2 + \sqrt{2} \norm{m_T^{1/2} g_T(\hat{T}, hd_{\Gamma} u)}_{\Gamma}^2 \\
 + O(h \norm{hd_{\Gamma} u}_{\Gamma} \norm{u}_{\Gamma}).
\end{multline*}
Going back to \eqref{pu_bt_gamma_intermediate} and using 
    \begin{align*}
\mu T^n = g(N, T) = -m_T/\sqrt{2}
    \end{align*}
we have 
\begin{multline} \label{pu_bt_gamma_intermediate_second}
h (i h\tilde{\p}_{\nu} Bu + b(x,\nu) Au, u)_{\p Q}  -ih(Bu, h \tilde{\p}_{\nu} u)_{\Gamma} \\
 = \sqrt{2} h \norm{m_T^{1/2} h d_{\Gamma} u}_{\Gamma}^2 + \sqrt{2} h \norm{m_T^{3/2} u}_{\Gamma}^2 + 2 \sqrt{2} h \norm{m_T^{1/2} g_T(\hat{T}, hd_{\Gamma} u)}_{\Gamma}^2  \\
  - 4 h^3 (\mu (\p^n T^n) \p_n u, u)_{\Gamma} + O(h^2  \norm{u}_{\Gamma}^2 + h^2 \norm{hd_{\Gamma} u}_{\Gamma}^2).
\end{multline}

Let us rewrite the term $(\mu (\p^n T^n) \p_n u, u)_{\Gamma}$. In general, a function $A$ from $X$ to invertible matrices satisfies $\p_j(A^{-1}) = -A^{-1} (\p_j A) A^{-1}$. Hence  
\begin{align*}
\p^n T^n = g^{nj} \p_j(g^{nk} \p_k \psi) = g^{nj} \p_{jk} \psi g^{kn} - g^{nj} g^{np} \p_j g_{pq} g^{qk} \p_k \psi.
\end{align*}
On the other hand, since $g^{pq} g_{qr} = \delta^p_r$ and $g^{nn} = 0$ near $\Gamma$, we have  
\[
g^{nj} g^{nk} \p_m g_{jk} = -g^{nj} (\p_m g^{nk}) g_{jk} = -(\p_m g^{nk}) \delta^n_k = -\p_m g^{nn} = 0
\]
and therefore 
\[
g^{nj} g^{nk} \Gamma_{jk}^l = \frac{1}{2} g^{nj} g^{nk} g^{lm} (\p_j g_{km} + \p_k g_{jm} - \p_m g_{jk}) = g^{nj} g^{nk} g^{lm} \p_j g_{km}.
\]
It follows that 
\[
\p^n T^n = g^{nj} (\p_{jk} \psi - \Gamma_{jk}^l \p_l \psi) g^{kn} = D^2 \psi((dx^n)^{\sharp}, (dx^n)^{\sharp}) = \mu^{-2} D^2 \psi(\nu^{\sharp}, \nu^{\sharp}).
\]
We also have 
\begin{align*}
\p_{\nu} u|_{\Gamma} = g_T(du, -\mu \,dx^n) = -\mu (g_T)^{nj} \p_j u = - \mu^{-1} \p_n u
\end{align*}
since $(g_T)^{nj} = \mu^{-2} \delta^{nj}$ on $\Gamma$. After these simplifications \eqref{pu_bt_gamma_intermediate_second} becomes 
\begin{multline*}
h (i h\tilde{\p}_{\nu} Bu + b(x,\nu) Au, u)_{\p Q}  -ih(Bu, h \tilde{\p}_{\nu} u)_{\Gamma} \\
 = \sqrt{2} h \norm{m_T^{1/2} h d_{\Gamma} u}_{\Gamma}^2 + \sqrt{2} h \norm{m_T^{3/2} u}_{\Gamma}^2 + 2 \sqrt{2} h \norm{m_T^{1/2} g_T(\hat{T}, hd_{\Gamma} u)}_{\Gamma}^2  \\
  + 4 h^3 ((D^2 \psi)(\nu^{\sharp}, \nu^{\sharp}) \p_{\nu} u, u)_{\Gamma} + O(h^2  \norm{u}_{\Gamma}^2 + h^2 \norm{hd_{\Gamma} u}_{\Gamma}^2).
\end{multline*}
This is our final expression for the first two terms in \eqref{pu_boundary_terms_combined}.

The third term in \eqref{pu_boundary_terms_combined} is relatively harmless, since $\tilde{\p}_{\nu}$ is a tangential operator and we may use Lemma \ref{lemma_tildepnu_gamma_tangential} to obtain 
\[
- 2 h \mathrm{Re}( h \tilde{\p}_{\nu} u, A_0 u)_{\p Q} = h^2 \left[ ( u, (\tilde{\p}_{\nu} A_0)u )_{\Gamma} - (\mu \theta^n u, A_0 u)_{\Gamma} \right] = O(h^2 \norm{u}_{\Gamma}^2 ).
\]
The fourth term in \eqref{pu_boundary_terms_combined} is 
\[
- 4 h^3 (D^2 \psi(\nu^{\sharp}, \nu^{\sharp}) \p_{\nu} u, u)_{\p Q}.
\]
Combining the last three displayed equations proves the lemma.
\end{proof}

We may now prove the main estimate of this section.

\begin{proof}[Proof of Proposition \ref{prop_semiclassical_estimate}]
Combining \eqref{pu_square_first}--\eqref{pu_gammar_terms} and \eqref{pu_gamma_terms} gives
\begin{multline*}
\norm{m_T^2 u}^2 + 2 (m_T^2 hdu, hdu) + \norm{A_2 u}^2 + O(h \norm{u}^2 + h \norm{hdu}^2) \\
+ \sqrt{2} h (\norm{m_T^{3/2} u}_{\Gamma}^2 + \norm{m_T^{1/2} h d_{\Gamma} u}_{\Gamma}^2) + 2 \sqrt{2} h \norm{m_T^{1/2} g_T(\hat{T}, hd_{\Gamma} u)}_{\Gamma}^2 \\
 + O(h^2 \norm{u}_{\Gamma}^2 + h^2 \norm{hd_{\Gamma} u}_{\Gamma}^2) \\
 = \norm{Pu}^2 + 2h \norm{m_T^{1/2} h d u}_{\Gamma_R}^2 + 2 h \norm{m_T^{3/2} u}_{\Gamma_R}^2  \\
 - 2 h \mathrm{Re}( h \p_{\nu} u, m_T^2 u)_{\Gamma_R} + O(h^2 \norm{u}_{\Gamma_R} \norm{h d u}_{\Gamma_R}).
\end{multline*}
We drop the nonnegative $\norm{A_2 u}^2$ and $2 \sqrt{2} h \norm{m_T^{1/2} g_T(\hat{T}, hd_{\Gamma} u)}_{\Gamma}^2$ terms and use Cauchy-Schwarz in the last terms on the right to get the inequality 
\begin{multline*}
(1-Ch) \left[ \norm{m_T^2 u}^2 + 2 (m_T^2 hdu, hdu) + \sqrt{2} h \norm{m_T^{3/2} u}_{\Gamma}^2 +  \sqrt{2} h \norm{m_T^{1/2} h d_{\Gamma} u}_{\Gamma}^2 \right] \\
 \leq  \norm{Pu}^2 + (3+Ch) \left[ h \norm{m_T^{1/2} h d u}_{\Gamma_R}^2 + h \norm{m_T^{3/2} u}_{\Gamma_R}^2 \right].
\end{multline*}
This is the required statement.
\end{proof}

We will also phrase the above estimate in terms of the large parameter $\sigma = 1/h$.

\begin{Proposition} \label{prop_carleman_sigma_general}
Let $X \in L^{\infty}(Q, TQ)$ and $q \in L^{\infty}(Q)$. There is $\sigma_0 \geq 1$, only depending on $\norm{X}_{L^{\infty}}$, $\norm{q}_{L^{\infty}}$, $\inf_{Q} m_T$, $g$ and $\psi$, such that when $\sigma \geq \sigma_0$ we have the estimate 
\begin{align*}
\sigma^4 \norm{e^{\sigma \psi} m_T^2 u}^2 + \sigma^2 \norm{e^{\sigma \psi} m_T d u}^2 + \sigma^3 \norm{e^{\sigma \psi} m_T^{3/2} u}_{\Gamma}^2 + \sigma \norm{e^{\sigma \psi} m_T^{1/2} d_{\Gamma} u}_{\Gamma}^2 \\
 \leq 2 \norm{e^{\sigma \psi} (\Box_g + X + q) u}^2 + 8 \sigma^3 \norm{e^{\sigma \psi} m_T^{3/2} u}_{\Gamma_R}^2 + 8 \sigma \norm{e^{\sigma \psi} m_T^{1/2} d u}_{\Gamma_R}^2
\end{align*}
for any $u \in H^2(Q)$ with Cauchy data vanishing on $\p X$.
\end{Proposition}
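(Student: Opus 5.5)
The plan is to deduce this from Proposition~\ref{prop_semiclassical_estimate} with $h = 1/\sigma$. Set $v = e^{\sigma\psi} u$. Then $Pv = e^{\sigma\psi} h^2 \Box_g u$, and since $u$ has vanishing Cauchy data on $\p X$, so does $v$. Applying Proposition~\ref{prop_semiclassical_estimate} to $v$ and multiplying through by $\sigma^4 = h^{-4}$ gives a bound whose right-hand side contains $\norm{e^{\sigma\psi}\Box_g u}^2$ together with the $\Gamma_R$ terms $(3+C/\sigma)\bigl[\sigma^3 \norm{m_T^{3/2} v}_{\Gamma_R}^2 + \sigma\norm{m_T^{1/2} dv}_{\Gamma_R}^2\bigr]$.

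The next step is to convert norms of $v$ and $dv$ back into weighted norms of $u$. We have $dv = e^{\sigma\psi}(du + \sigma u\, d\psi)$ and $|d\psi|_{g_T} = m_T$. Hence
\[
\sigma^2\norm{e^{\sigma\psi} m_T du}^2 \le 2\sigma^2 \norm{m_T dv}^2 + 2\sigma^4 \norm{m_T^2 v}^2 .
\]
The left side of Proposition~\ref{prop_semiclassical_estimate} carries the factor $2$ on the $dv$ term. To land exactly on the stated left-hand side (coefficients $1,1,1,1$), I would instead use a weighted inequality of the form $|a+b|^2 \le (1+\delta)|a|^2 + (1+\delta^{-1})|b|^2$. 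This may force a constant larger than 1 in front of $\norm{Pu}^2$, which is allowed since the target coefficient is $2$.

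On $\Gamma$ the only derivative is $d_\Gamma$, so $d_\Gamma v = e^{\sigma\psi}(d_\Gamma u + \sigma u\, d_\Gamma\psi)$ with $|d_\Gamma \psi|\le m_T$ up to constants. Then $\sigma\norm{e^{\sigma\psi}m_T^{1/2} d_\Gamma u}_\Gamma^2 \lesssim \sigma \norm{m_T^{1/2}d_\Gamma v}^2_\Gamma + \sigma^3\norm{m_T^{3/2}v}^2_\Gamma$, and both pieces appear on the left of Proposition~\ref{prop_semiclassical_estimate} with factor $\sqrt2$. On $\Gamma_R$ we go in the reverse direction: $\sigma\norm{m_T^{1/2}dv}^2 \le 2\sigma\norm{e^{\sigma\psi}m_T^{1/2}du}^2 + 2\sigma^3\norm{e^{\sigma\psi}m_T^{3/2}u}^2$. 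Combined with the factor $3+C/\sigma$, this gives the constants $8$.

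I expect the constant bookkeeping here to be the real obstacle, since exact numbers ($1$, $2$, $8$) are claimed. Direct expansion yields $3(1+\delta)+\ldots$ rather than $8$, so the ratios must be checked. If they fail, my fallback is to absorb mismatches using the excess $\sigma^4$ interior term and the $\sqrt2>1$ margin on $\Gamma$, choosing $\delta$ and $\sigma_0$ accordingly.

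Finally, I would handle the lower order perturbation. Write $\Box_g u = (\Box_g + X + q)u - Xu - qu$, which gives
\[
\norm{e^{\sigma\psi}\Box_g u}^2 \le (1+\eta)\norm{e^{\sigma\psi}(\Box_g+X+q)u}^2 + C_\eta\bigl(\norm{X}_\infty^2 \norm{e^{\sigma\psi}du}^2 + \norm{q}_\infty^2\norm{e^{\sigma\psi}u}^2\bigr).
\]
The error terms carry no power of $\sigma$, while the left side contains $\sigma^2\norm{e^{\sigma\psi}m_T du}^2$ and $\sigma^4\norm{e^{\sigma\psi}m_T^2u}^2$. Using $\inf m_T>0$, they are absorbed once $\sigma_0$ is large depending on $\norm{X}_\infty$, $\norm{q}_\infty$ and $\inf m_T$. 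The constant $O(1/\sigma)$ losses from Proposition~\ref{prop_semiclassical_estimate}, where $C$ depends on $g$ and $\psi$, are absorbed the same way. This gives the coefficient $2$ in front of the $(\Box_g+X+q)$ term and the stated dependence of $\sigma_0$.
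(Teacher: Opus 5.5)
Your route is the paper's own: substitute $v=e^{\sigma\psi}u$ with $h=1/\sigma$, apply Proposition~\ref{prop_semiclassical_estimate}, add and subtract $X+q$, and absorb. The gap is the constant bookkeeping you flag as the obstacle. It cannot be closed by any choice of $\delta$, and your fallback relies on an ``excess'' $\sigma^4$ interior term that does not exist. That term is in fact the bottleneck.

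Since $e^{\sigma\psi}du=dv-\sigma v\,d\psi$ and $|d\psi|_{g_T}=m_T$, the target term $\sigma^2\norm{e^{\sigma\psi}m_T du}^2$ equals $\sigma^2\norm{m_T dv}^2+\sigma^4\norm{m_T^2 v}^2$ up to $O(\sigma^3)$ terms (integrate the cross term by parts). So the target left side contains $2\sigma^4\norm{m_T^2 v}^2+\sigma^2\norm{m_T dv}^2$. Proposition~\ref{prop_semiclassical_estimate} supplies only $\sigma^4\norm{m_T^2 v}^2+2\sigma^2\norm{m_T dv}^2$: its factor $2$ sits on the wrong term. With your pointwise splitting you must multiply the semiclassical estimate by $\max(2+\delta^{-1},(1+\delta)/2)\ge(5+\sqrt{17})/4>2$. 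This pushes the coefficient of $\norm{e^{\sigma\psi}(\Box_g+X+q)u}^2$ above $2$, and that of $\sigma^3\norm{e^{\sigma\psi}m_T^{3/2}u}_{\Gamma_R}^2$ above $12$. Your own count already gives $9$ there before this multiplication.

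In fact no argument can give the coefficient $2$ for the statement as literally written. Take $g=\gm$ on the manifold $\mR\times\ol{B}$ and $\psi=t$, so $m_T=1$ and $g_T$ is Euclidean. Take the vector field $X=0$, $q=-c$ with a constant $c>0$, and $u=e^{-\sigma t}v$ with a fixed smooth $v$ compactly supported in the interior of $Q$, so all boundary terms vanish. Using $\mathrm{Re}(\p_t v,v)=\mathrm{Re}(\Box_{\gm} v,\p_t v)=0$, the left side of the claimed estimate equals $2\sigma^4\norm{v}^2+\sigma^2\norm{dv}^2$, while
\[
2\norm{e^{\sigma t}(\Box_{\gm}-c)u}^2=2\sigma^4\norm{v}^2+\sigma^2\bigl(4\norm{dv}^2-4c\norm{v}^2\bigr)+O(1).
\]
So the inequality fails for every large $\sigma$ once $4c\norm{v}^2>3\norm{dv}^2$.

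What does hold is the following. Read the derivative norms as norms of $d(e^{\sigma\psi}u)$ and $d_\Gamma(e^{\sigma\psi}u)$, i.e.\ of $v$. Then Proposition~\ref{prop_semiclassical_estimate} gives the estimate with $2$ and $8$ directly, for three reasons:
\begin{itemize}
\item no conversion back to $u$ is needed;
\item the $X+q$ errors are $O(\norm{dv}^2+\sigma^2\norm{v}^2)$, because $e^{\sigma\psi}Xu=Xv-\sigma(X\psi)v$;
\item the factors $(1-C/\sigma)^{-1}$ and $3+C/\sigma$ fit under $2$ and $8$ for large $\sigma$.
\end{itemize}
Under the literal reading, your conversion proves the estimate with larger absolute constants, e.g.\ $3$ and $22$. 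Either version suffices for Proposition~\ref{prop_carleman_psi}, which only uses $\lesssim$. The paper's one-line proof does not address this conversion either.
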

\begin{proof}
This is obtained from Proposition \ref{prop_semiclassical_estimate} upon replacing $u$ by $e^{\sigma \psi} u$, adding and subtracting $X+q$ to $\Box_g$, using the triangle inequality, and absorbing errors by taking $\sigma \geq \sigma_0$ with $\sigma_0$ sufficiently large.
\end{proof}

Proposition \ref{prop_carleman_psi} follows immediately from Proposition \ref{prop_carleman_sigma_general}.

 \begin{Remark}
The estimate in Proposition \ref{prop_carleman_psi} is stronger than a typical Carleman estimate in terms of powers of $\sigma$. In fact, this estimate is semiclassically elliptic instead of being subelliptic. To explain this difference, let $p(x,\xi) = g(\xi, \xi)$ be the principal symbol of $\Box_g$, so that $p(x,\xi+i \sigma d\psi)$ is the semiclassical principal symbol of the conjugated operator $e^{\sigma \psi} \Box_g e^{-\sigma \psi}$ (with large parameter $\sigma$). As observed in Section \ref{sec_carleman}, $p(x,\xi+i \sigma d\psi)$ is never zero when $\sigma \neq 0$. 
A typical subellipticity condition for the conjugated operator would include the positivity of a certain Poisson bracket at points where $p(x,\xi+i \sigma d\psi)$ vanishes, and this is vacuously true in our case. In particular, the necessary conditions in \cite[Theorems 28.2.1 and 28.2.1']{hormander} hold.
\end{Remark}

\section{Proof of Theorem \ref{th_diffeo}} \label{sec_diffeo_appendix}

As shown in Section \ref{sec_directional_simplicity}, Proposition \ref{prop_simplicity_detection} follows from Theorem \ref{th_diffeo}. We will give a proof of this theorem after establishing a number of lemmas that follow by adapting the arguments in \cite[Section 3]{oksanen2024}. For notational simplicity, we suppose that $\omega = (1,0,\dots,0)$ in \eqref{def_Sigma_minus}.

\begin{Lemma}\label{lem_Jacobi}
Suppose that $g$ satisfies \eqref{assumption1}.
Then using $(t,y) \in \R \times \R^{n-1}$, with $y = (y^1,\dots,y^{n-1})$, as coordinates for a point $(t,-1,y) \in \Sigma_-$
we have
    \begin{align*}
g(\p_t \Phi^g, \p_r \Phi^g) = -1,
\quad
g(\p_{y_j} \Phi^g, \p_r \Phi^g) = 0.
    \end{align*} 
\end{Lemma}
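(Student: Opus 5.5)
The plan is the standard Gauss-lemma argument: show that both inner products are constant in $r$ along each geodesic $\gamma_z$, and then evaluate them at $r=0$. Write $J = \p_a \Phi^g(z,r)$ with $a \in \{t, y_1,\dots,y_{n-1}\}$. Since $r \mapsto \Phi^g(z,r)$ is a geodesic for each fixed $z$, $J$ is a Jacobi field along $\gamma_z$, and $\p_r \Phi^g = \dot\gamma_z$.

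The main step is to show that $f(r) := g(J, \dot\gamma_z)$ is constant in $r$. Differentiating with the Levi-Civita connection along $\gamma_z$ and using $D_r \dot\gamma_z = 0$ gives
\[
f'(r) = g(D_r J, \dot\gamma_z) = g(D_a \p_r\Phi^g, \p_r \Phi^g),
\]
where the symmetry lemma $D_r \p_a \Phi = D_a \p_r \Phi$ holds because the connection is torsion free. The last expression equals $\frac12 \p_a\, g(\p_r\Phi^g, \p_r\Phi^g)$. The quantity $g(\dot\gamma_z, \dot\gamma_z)$ is conserved along geodesics. At $r=0$ the point $z$ lies in $\Sigma_- = \{x\cdot\omega = -1\}$, and since $|x|\ge 1$ there, \eqref{assumption1} gives $g=\gm$ near $z$ (if $z\in\p M$ one can use continuity, or note that $g=\gm$ on the closure of the complement). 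Hence $g(\dot\gamma_z,\dot\gamma_z) = \gm((1,\omega),(1,\omega)) = 0$ for every $z\in\Sigma_-$. So $g(\p_r\Phi^g,\p_r\Phi^g)\equiv 0$ on the domain of $\Phi^g$, its $\p_a$-derivative vanishes, and $f$ is constant.

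It remains to evaluate at $r=0$. There $\Phi^g(z,0) = z$, so $\p_a \Phi^g(z,0)$ is the coordinate vector of the embedding $(t,y)\mapsto(t,-1,y)$. This gives $\p_t\Phi^g = (1,0,0)$ and $\p_{y_j}\Phi^g = (0,0,e_j)$, while $\p_r\Phi^g = (1,\omega) = (1,1,0)$. Since $g=\gm$ at $z$,
\[
g(\p_t\Phi^g,\p_r\Phi^g) = -1, \qquad g(\p_{y_j}\Phi^g,\p_r\Phi^g) = 0.
\]
By the constancy established above, these identities hold for all $r<\rho^g(z)$.

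The only delicate point is justifying that $g=\gm$ at points of $\Sigma_-$, including where $\Sigma_-$ touches $\ol{Q}_T$ (when $|x|=1$). I would handle it by continuity of $g$ from the exterior region, together with smoothness of $\Phi^g$ jointly in $(z,r)$, which is needed to commute the derivatives.
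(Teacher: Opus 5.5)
Your proposal is correct and follows the paper's proof essentially verbatim: the paper also differentiates $g(\p_t\Phi^g,\dot\gamma_z)$ in $r$, uses $D_r\p_t\Phi^g = D_t\p_r\Phi^g$ and the fact that every $\gamma_z$ is null to get $\frac12\p_t\, g(\dot\gamma_z,\dot\gamma_z)=0$, evaluates at $r=0$, and handles $\p_{y_j}\Phi^g$ the same way. Your remark that only the pointwise value $g(z)=\gm(z)$ on $\Sigma_-$ is needed, which holds by continuity even at points of $\Sigma_-\cap\p M$, is a detail the paper leaves implicit.
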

\begin{proof}
We omit writing $g$ as superscript in the proof. 
Let $(t, y) \in \R \times \R^{n-1}$, set $z = (t, -1, y)$ and define the vector field $J(r) = \p_t \Phi(z, r)$ along the geodesic $\gamma(r) = \gamma_{z}(r)$. Since $D_r \p_t = D_t \p_r$, where $D$ denotes the covariant derivative, and since $\gamma$ is a null geodesic, we have  
\[
\p_r (g(J, \dot{\gamma})) = g(D_r \p_t \Phi, \dot{\gamma}) + g(J, D_r \dot{\gamma}) = g(D_t \p_r \Phi, \dot{\gamma}) = \frac{1}{2} \p_t ( g(\dot{\gamma}, \dot{\gamma}) ) = 0.
\]
Thus $g(J, \dot \gamma) = g(J, \dot \gamma)|_{r=0} = g(\p_t, N) = -1$ where $N = (1,e_1)$. A similar argument shows that $g(\p_{y_j} \Phi, \dot{\gamma}) = 0$.
\end{proof}

Recall that $\Sigma_{+,\sigma}^g$ is defined by \eqref{def_Sigma_plus_sigma} for $g$ that is simple in direction $\omega$.

\begin{Lemma}\label{lem_Sigma_splike}
Suppose that $g$ is simple in direction $\omega$. Let $\sigma \in \R$. Then $\Sigma_{+,\sigma}^g$ is spacelike
and the geodesics $\gamma_z$, $z \in \Sigma_{-,\sigma}$, are normal to $\Sigma_{+,\sigma}^g$.
\end{Lemma}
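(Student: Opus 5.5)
We will work in the coordinates $(z,r)$ supplied by $\Phi^g$, with $z = (t,-1,y) \in \Sigma_-$ as in Lemma~\ref{lem_Jacobi}. Here $\Sigma_{+,\sigma}^g = \alpha(\Sigma_{-,\sigma})$ with $\alpha(z) = \Phi^g(z, r_+(z))$. The set $\Sigma_{-,\sigma}$ is parametrized by $y \in \R^{n-1}$ with $t = \sigma$ fixed. Hence the tangent space of $\Sigma_{+,\sigma}^g$ at $\alpha(z)$ is spanned by the vectors
\[
v_j = \p_{y_j}\big(\Phi^g(z, r_+(z))\big) = \p_{y_j}\Phi^g(z, r_+(z)) + (\p_{y_j} r_+)(z)\,\p_r \Phi^g(z, r_+(z)), \qquad j = 1,\dots,n-1.
\]
These vectors are linearly independent because $\alpha$ is a diffeomorphism onto $\Sigma_+^g$ (Remark~\ref{rem_alpha_diffeo}) and $\Sigma_{-,\sigma}$ is a submanifold.

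For normality we use Lemma~\ref{lem_Jacobi}, which gives $g(\p_{y_j}\Phi^g, \p_r\Phi^g) = 0$ for all $r$. The remaining term contributes $g(\p_r\Phi^g, \p_r\Phi^g) = g(\dot\gamma_z, \dot\gamma_z) = 0$, since $\gamma_z$ is a null geodesic: its initial velocity $(1,\omega)$ is Minkowski-null and $g = \gm$ near $\Sigma_-$ by \eqref{assumption1}. So $g(v_j, \dot\gamma_z(r_+(z))) = 0$ for all $j$, that is, $\dot\gamma_z$ is $g$-orthogonal to $T\Sigma_{+,\sigma}^g$. This proves the normality claim.

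For spacelikeness, the tangent space $W := T_{\alpha(z)}\Sigma_{+,\sigma}^g$ is an $(n-1)$-dimensional subspace contained in $\dot\gamma^{\perp}$, where $\dot\gamma = \dot\gamma_z(r_+(z))$ is a nonzero null vector. It is a standard fact of Lorentzian linear algebra that $\dot\gamma^\perp$ is a degenerate hyperplane: $g$ restricted to it is positive semidefinite with one-dimensional radical $\R\dot\gamma$. Consequently any subspace of $\dot\gamma^\perp$ not containing $\dot\gamma$ is spacelike. It therefore remains to show $\dot\gamma \notin W$. This holds because $W \subset T\Sigma_+^g$, while Remark~\ref{rem_alpha_diffeo} states that $\gamma_z$ is transverse to $\Sigma_+^g$. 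Equivalently, in the $\Phi^g$ coordinates, $\p_r$ is not in the span of the vectors $\p_{y_j} + (\p_{y_j} r_+)\p_r$.

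The main point requiring care is this last transversality step together with the degenerate-hyperplane fact. The latter can be checked by completing $\dot\gamma$ to a null frame: write $\dot\gamma = e_0 + e_1$ with $e_0$ unit timelike and $e_1$ orthogonal to it. Then $\dot\gamma^\perp = \R\dot\gamma \oplus \mathrm{span}(e_2,\dots,e_n)$, and for $w = a\dot\gamma + w'$ with $w'$ in that span we get $g(w,w) = |w'|^2$. This vanishes only when $w \in \R\dot\gamma$, which gives the claim.
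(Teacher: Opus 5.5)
Your proof is correct and follows essentially the same route as the paper. You use Lemma~\ref{lem_Jacobi} together with the nullness of $\dot\gamma_z$ to get orthogonality, the fact that a subspace of a lightlike space not containing its null direction is spacelike, and transversality of $\gamma_z$ to $\Sigma_+^g$ to exclude $\dot\gamma_z$ from $T\Sigma_{+,\sigma}^g$. The paper instead works with the null hypersurface swept out by $\gamma_z$, $z\in\Sigma_{-,\sigma}$, and cites O'Neill for the linear algebra, while you differentiate the parametrization of $\Sigma_{+,\sigma}^g$ directly and verify the null-frame fact by hand. One small correction: $g=\gm$ holds on $\Sigma_-$ only by continuity, not in a neighborhood, since $\Sigma_-$ touches $\ol{Q}_T$; pointwise equality is all you need.
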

\begin{proof}
We omit writing $g$ as superscript in the proof. 
Define
    \begin{align*}
\Gamma = \{\gamma_z(s) \mid z \in \Sigma_{-, \sigma},\ s < r_+(z) + \epsilon\},
    \end{align*}
where $\eps > 0$ is as in Lemma \ref{lem_r_plus}. 
It follows from Lemma \ref{lem_r_plus} that $\Gamma$ is a smooth manifold. 
Moreover, 
    \begin{align*}
\Sigma_{+,\sigma} = \Gamma \cap \Sigma_+
    \end{align*}
and the intersection is transversal.
Let $x \in \Gamma \cap \Sigma_+$,  and write $x = \gamma_z(r)$ and $\gamma = \gamma_z$ for some $z \in \Sigma_{-, \sigma}$ and $r > 0$. 

Let us show that $T_x \Gamma$ is lightlike. Observe that $T_x \Gamma$ is spanned by $\dot \gamma(r)$ and $\p_{y^j} \Phi^g(z, r)$, $j=1,\dots,n-1$, where $y^j$ is as in Lemma \ref{lem_Jacobi}. By that lemma we have the orthogonality $\dot \gamma \perp \p_{y^j} \Phi^g$. In particular, $\dot \gamma \perp v$ for all $v \in T_x \Gamma$, and $g$ is degenerate on $T_x \Gamma$. In other words, $T_x \Gamma$ is lightlike.

By \cite[Lemma 28, p.\ 142]{oneill1983} the lightlike subspace $T_x \Gamma$ does not contain any timelike vectors and all null vectors in $T_x \Gamma$ are of the form $c \dot \gamma$ for some $c \in \R \setminus 0$. These null vectors are not in $T_x \Sigma_+$. Thus for all $v \in T_x \Gamma \cap T_x \Sigma_+$ it holds that $v$ is spacelike. Moreover, $\dot \gamma \perp v$ as $v \in T_x \Gamma$.
\end{proof}

Let $g$ be a time-oriented Lorentzian metric on $\R^{1+n}$.
We write $J^\pm_g(S)$ and $I^\pm_g(S)$ for the causal and chronological future and past of a set $S$, see \cite[p.\ 402]{oneill1983} for the definitions of these four sets.

For the rest of the section, whenever considering $g$ satisfying \eqref{assumption1}--\eqref{assumption2}, we fix the time-orientation so that $dt$ is future-directed outside $M$. Moreover, we fix $g'$ satisfying \eqref{assumption1}--\eqref{assumption2} and assume that $g'$ is simple in direction $\omega$. To simplify the notation, we omit writing $g'$ as a superscript. In particular, $\Sigma_{+,\sigma} = \Sigma_{+,\sigma}^{g'}$ in what follows. A key step in the proof of Theorem \ref{th_diffeo} is given by the following proposition.

\begin{Proposition} \label{prop_jminus}
Let $g$ be as in Theorem \ref{th_diffeo} and let $\sigma \in \R$.
Then 
\[
J_{g}^-(\Sigma_{+,\sigma}) \cap \Sigma_- = \{t \le \sigma\} \cap \Sigma_-.
\]
\end{Proposition}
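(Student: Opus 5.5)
The plan is to prove the two inclusions separately, using only global hyperbolicity of $g$, the fact that $g = g' = \gm$ outside $M$, and the hypothesis \eqref{eq_beta} in Theorem \ref{th_diffeo}.

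\emph{Inclusion $\supset$.} Let $z = (t,-1,y) \in \Sigma_-$ with $t \le \sigma$, and set $z' = (\sigma,-1,y) \in \Sigma_{-,\sigma}$.
\begin{enumerate}
\item The segment $r \mapsto (r,-1,y)$, $r \in [t,\sigma]$, lies in $\{x\cdot\omega = -1\}$, which is disjoint from $M$. There $g = \gm$, so this segment is future-directed timelike, and $z \in J^-_g(z')$.
\item By \eqref{eq_beta}, or directly by Lemma \ref{lem_nontrapping}, the geodesic $\gamma^g_{z'}$ reaches $\Sigma_+^{g'}$ at the point $\gamma^g_{z'}(r^g_+(z'))$.
\item This point lies in $\Sigma_{+,\sigma}$, because \eqref{eq_beta} maps $\Sigma_{-,\sigma}$ onto $\Sigma_{+,\sigma}$ fibrewise.
\item Since $\gamma^g_{z'}$ is future-directed null, concatenating gives $z \in J^-_g(\Sigma_{+,\sigma})$.
\end{enumerate}

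\emph{Inclusion $\subset$.} Suppose, for contradiction, that $z \in \Sigma_-$ has $t(z) > \sigma$ and $z \le_g w$ for some $w \in \Sigma_{+,\sigma}$. Pick $\sigma'$ with $\sigma < \sigma' < t(z)$ and consider the closed set $A = \Sigma_- \cap \{t \ge \sigma'\}$. Then $z \in A$ by step 1 above, so $w \in J^+_g(A)$. The aim is to show that $J^+_g(A) \cap \Sigma_+$ cannot meet $\Sigma_{+,\sigma}$.
\begin{enumerate}
\item Global hyperbolicity \eqref{assumption2} makes $J^+_g(A)$ closed; if needed, this is reduced to compact pieces of $A$ using a Cauchy temporal function. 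Hence $\p J^+_g(A) = J^+_g(A) \setminus I^+_g(A)$.
\item By the standard structure theory of achronal boundaries, every point of $\p J^+_g(A) \setminus A$ lies on a null geodesic generator that stays in $\p J^+_g(A)$ and extends back to $A$, meeting $A$ orthogonally at its edge $\Sigma_{-,\sigma'}$.
\item At a point of $\Sigma_{-,\sigma'}$, which lies outside $M$, the two future null normals to the codimension-two surface $\Sigma_{-,\sigma'}$ are $(1,\omega)$ and $(1,-\omega)$.
\item The $(1,-\omega)$ generators lie in the Minkowski region $\{x\cdot\omega < -1\}$ and never meet $\Sigma_+$, since $\Sigma_+$ lies near or beyond $\ol Q_{T+2}$ by Lemma \ref{lem_r_plus}.
\item Hence $\p J^+_g(A) \cap \Sigma_+ \subset \{\gamma^g_{z''}(r) : z'' \in \Sigma_{-,\sigma'}\} \cap \Sigma_+$. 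By the convexity argument used in Lemma \ref{lem_nontrapping}, a geodesic $\gamma^g_{z''}$ meets $\Sigma_+$ only at $r = r^g_+(z'')$, so by \eqref{eq_beta} this set is $\Sigma_{+,\sigma'}$.
\end{enumerate}

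\emph{Connectedness step.} The map $\alpha$ of Remark \ref{rem_alpha_diffeo} shows that $\Sigma_+ \setminus \Sigma_{+,\sigma'}$ has two components, $\alpha(\Sigma_-\cap\{t<\sigma'\})$ and $\alpha(\Sigma_-\cap\{t>\sigma'\})$.
\begin{enumerate}
\item $J^+_g(A) \cap \Sigma_+$ is relatively closed in $\Sigma_+$ and has boundary inside $\Sigma_{+,\sigma'}$. So on each component it is either empty or everything.
\item On the lower component, take $|z|$ large. There $\alpha$ is the Minkowski straight-line map, and $\alpha(t,-1,y)$ with $t<\sigma'$ is in the Minkowski causal future of no point of $A$; this is an explicit check with $\gm$, using $r_+ = 2+\eps$.
\item Therefore the whole lower component, which contains $\Sigma_{+,\sigma}$, is disjoint from $J^+_g(A)$. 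This contradicts $w \in J^+_g(A)$.
\end{enumerate}

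The main obstacle is item 5 of the second inclusion: identifying $\p J^+_g(A) \cap \Sigma_+$ with $\Sigma_{+,\sigma'}$. This needs two things. First, a generator of the achronal boundary must be a null geodesic emanating orthogonally from the edge of $A$ (it can leave the boundary after a cut point, but only points before the cut matter). Second, the $(1,\omega)$ generators must meet $\Sigma_+$ exactly at $r^g_+$; here \eqref{eq_beta} and the non-re-entry into $\R\times B$ are used. I would first handle the issue that $A$ is non-compact, either by truncating $A$ or by appealing to causal simplicity.
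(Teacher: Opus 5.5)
Your inclusion $\supset$ is the paper's own argument: Lemma~\ref{lem_nontrapping} plus the vertical segment in $\Sigma_-$. Your description of $\p J^+_g(A)$ by null generators leaving the edge $\Sigma_{-,\sigma'}$ in the directions $(1,\pm\omega)$ is also fine, with the $(1,-\omega)$ sheet confined to $\{x\cdot\omega<-1\}$.

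The gap is item 5 of the second part: the claim that $\gamma^g_{z''}$ meets $\Sigma_+$ only at $r=r^g_+(z'')$. The convexity argument of Lemma~\ref{lem_nontrapping} only shows that a hit of $\Sigma_+$ with velocity parallel to the $g'$-ray is the \emph{first} hit. It says nothing about the geodesic after $r^g_+(z'')$. From there on, $\gamma^g_{z''}$ is simply the continuation of the $g'$-ray $\gamma^{g'}_{z'}$ beyond $\alpha(z')$. Non-re-entry into $\R\times B$ holds but is irrelevant, because $\Sigma_+$ lies entirely outside $\R\times B$ (since $\R\times B\subset\Phi^{g'}(\mho_{-\eps})$).

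Lemma~\ref{lem_r_plus} gives injectivity of $\Phi^{g'}$ only on $\mho_\eps$, and simplicity forbids ray crossings only near $\ol Q_{T+2}$. So a ray strongly deflected by $g'$ may, at parameters $\ge r_+(z')+\eps$, cross straight rays that miss the cylinder at parameters below their $r_+$. It then re-enters $\Phi^{g'}(\mho_0)$ and must cross $\p\Phi^{g'}(\mho_0)=\Sigma_+$ again. Nothing in the hypotheses excludes this even when $g=g'$, so \eqref{eq_beta} cannot be what rules it out.

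What your open--closed argument actually needs is that no generator of $\p J^+_g(A)$ returns to $\Sigma_+$ inside $\alpha(\Sigma_-\cap\{t<\sigma'\})$ before its cut point. That amounts to saying a point of $\Sigma_{-,\sigma'}$ is not in $J^-_g(\Sigma_{+,\sigma_2})$ for $\sigma_2<\sigma'$, which is essentially the statement you are proving. Without it, the relative boundary of $J^+_g(A)\cap\Sigma_+$ need not lie in $\Sigma_{+,\sigma'}$, and the conclusion on the lower component does not follow.

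The paper avoids following rays past $\Sigma_+$ by reversing the direction of the argument. It works with $\p J^-_g(\Sigma_{+,\sigma})$ and follows its generators backward from $\Sigma_{+,\sigma}$ in the directions $-\dot\gamma_z(r_+(z))$. By \eqref{eq_beta} and Lemma~\ref{lem_nontrapping}, these are exactly the segments $\gamma^g_{z}|_{[0,r^g_+(z)]}$, $z\in\Sigma_{-,\sigma}$, run backward. That is the part of the $g$-rays before their first hit of $\Sigma_+$, which is precisely what \eqref{eq_beta} controls. This yields $\p J^-_g(\Sigma_{+,\sigma})\cap\Sigma_-\subset\Sigma_{-,\sigma}$. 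The inclusion then follows from a one-dimensional open--closed argument along $r\mapsto z+re_0$, $z\in\Sigma_{-,\sigma}$, using that $J^-_g(\Sigma_{+,\sigma})$ is closed and misses $z+Re_0$ for some $R$ (Lemma~\ref{lem_cl_bdd}).

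Two smaller points in your version would also need work:
\begin{enumerate}
\item Closedness of $J^+_g(A)$ for the non-compact $A$ requires an analogue of Lemma~\ref{lem_near_far_J}.
\item The far-away check in your connectedness step cannot be done with $\gm$ alone, since $g$-causal curves may pass through $M$, where $t$ need not increase. Using the last exit point from $M$ fixes this.
\end{enumerate}
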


The proof is based on several auxiliary lemmas. We begin by introducing some notation. For each $\sigma \in \R$ we choose a point  
    \begin{align*}
q_\sigma = (t_\sigma, x_\sigma^1, 0) \in \Sigma_{+,\sigma} \cap (\R^2 \times \{0\}).
    \end{align*}
Applying Lemma \ref{lemma_idext_surjective} to the map $\Psi(y) = \pi(\Phi(r_+(z), z))$, where $z = (\sigma, -1, y)$, $y \in \R^{n-1}$, and $\pi(t,x^1,\dots,x^n) = (x^2,\dots,x^n)$, we see that such a point exists. We choose large enough $R_0 \ge 3$ so that for all $\sigma \in \R$ the set 
    \begin{align*}
A_0 = \Sigma_{+,\sigma} \cap \{|x| \ge R_0\}, 
    \end{align*}
satisfies $A_0 = (\{\sigma + 2 + \eps\} \times \{2 + \eps\} \times \R^{n-1}) \cap \{|x| \ge R_0\}$, where $\eps > 0$ is as in Lemma \ref{lem_r_plus}.
In particular, $\max_{\sigma \in \R} x_\sigma^1 < R_0$. 
We write
\[
\mathcal{C} = \R \times B,
\]
and recall the following lemma from \cite{oksanen2024}.

\begin{Lemma} \label{lemma_c_bd}
Let $g$ satisfy \eqref{assumption1}--\eqref{assumption2} and let $p_j = (t_j, x_j) \in \p \mathcal{C}$. Then $t_1 \leq t_2-\pi$ implies $p_1 \in J_g^-(\{p_2\})$. 
\end{Lemma}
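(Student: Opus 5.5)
The plan is to connect $p_1$ to $p_2$ by an explicit future-directed causal curve that stays in the region $\{|x| > 1\}$ except at its endpoints. By \eqref{assumption1}, $g = \gm$ there. By continuity of $g$, also $g = \gm$ on $\p\mathcal C$, so the curve is $g$-causal as soon as it is $\gm$-causal. The only delicate point is the borderline case $t_2 - t_1 = \pi$, which I would handle by approximation and closedness of the causal relation in a globally hyperbolic spacetime.

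First I construct the spatial path. Write $p_j = (t_j, x_j)$ with $|x_j| = 1$, and let $\theta \in [0,\pi]$ be the angle between $x_1$ and $x_2$. For $\delta > 0$, let $c_\delta$ be the following path in $\mR^n$:
\begin{itemize}
\item the radial segment from $x_1$ to $(1+\delta)x_1$;
\item a great-circle arc on the sphere of radius $1+\delta$ from $(1+\delta)x_1$ to $(1+\delta)x_2$;
\item the radial segment from $(1+\delta)x_2$ back to $x_2$.
\end{itemize}
If $x_1 = -x_2$, any great circle through them will do. The Euclidean length of $c_\delta$ is $L_\delta = 2\delta + (1+\delta)\theta \le \pi + \delta(2+\pi)$, and $c_\delta$ lies in $\{|x| > 1\}$ except at its two endpoints.

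Next I build the curves. Set $t_1^\delta = t_2 - L_\delta$ when $t_2 - t_1 < L_\delta$, and $t_1^\delta = t_1$ otherwise. Then $t_1^\delta \le t_1$, and $t_1^\delta \to t_1$ as $\delta \to 0$ because $t_1 \le t_2 - \pi$. Parametrize $c_\delta$ by Euclidean arclength, traverse it at unit speed, and then wait at $x_2$ for the remaining time $t_2 - t_1^\delta - L_\delta \ge 0$. The waiting segment $t \mapsto (t, x_2)$ lies on $\p\mathcal C$, where $g = \gm$, so it is timelike. This gives a curve $\eta(t) = (t, x(t))$, $t \in [t_1^\delta, t_2]$, with $|\dot x| \le 1$. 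It is therefore $\gm$-causal and future-directed with respect to $dt$, and by the preceding remarks it is $g$-causal and future-directed for the chosen time orientation. Hence $p_1^\delta := (t_1^\delta, x_1) \in J_g^-(\{p_2\})$.

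Finally I pass to the limit. When $t_1 < t_2 - \pi$, taking $\delta$ small gives $t_1^\delta = t_1$ and we are done directly. In the equality case $t_2 - t_1 = \pi$, we have $p_1^\delta \to p_1$ as $\delta \to 0$. Since $(\mR^{1+n}, g)$ is globally hyperbolic by \eqref{assumption2}, the set $J_g^-(\{p_2\})$ is closed, and so $p_1 \in J_g^-(\{p_2\})$.

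The main point needing care is the regularity at the corners and endpoints. The curve is only piecewise smooth, which is allowed in the definition of $J^\pm$. Its endpoints lie on $\p\mathcal C$, possibly inside $M$ when $|t| \le T$, and there one uses that $g$ is smooth and equals $\gm$ on the open set $\{|x| > 1\}$, hence also on its closure. If one prefers to avoid closedness of $J^-$, one can instead extend the causal curve slightly at both ends. Closedness is the cleaner route, however.
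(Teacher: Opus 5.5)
Your argument is correct. The paper gives no proof of this lemma and only recalls it from \cite{oksanen2024}, but your construction is the natural one: a $\gm$-causal curve in $\{|x|\ge 1\}$, where $g=\gm$, running along a great circle and then waiting at $x_2$.

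Since you already note that $g=\gm$ on $\p\mathcal C$ itself, the push-off to radius $1+\delta$ and the appeal to closedness of $J_g^-$ are unnecessary. You can traverse the great-circle arc of $\p B$ from $x_1$ to $x_2$ (length $\theta\le\pi$) at unit speed and then wait for time $t_2-t_1-\theta\ge 0$. This settles the equality case $t_2-t_1=\pi$ directly.
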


\begin{Lemma}\label{lem_near_far_J}
Let $g$ satisfy \eqref{assumption1}--\eqref{assumption2}, let $\sigma \in \R$ and let 
    \begin{align*}
R \geq \max(R_0, \pi - t_\sigma + x_\sigma^1 + \sigma + 2 + \eps).
    \end{align*}
Define
\[
A = \Sigma_{+,\sigma} \cap \{|x| \ge R\}, 
\quad
K = \Sigma_{+,\sigma} \cap \{|x| \le R\}.
\]
Then
\[
J_{g}^-(\Sigma_{+,\sigma}) = (J_{\gm}^-(A) \setminus \mathcal C) \cup J_{g}^-(K).
\]
\end{Lemma}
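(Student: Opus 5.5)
We prove the two inclusions separately. Split $\Sigma_{+,\sigma} = A \cup K$, so that $J_g^-(\Sigma_{+,\sigma}) = J_g^-(A) \cup J_g^-(K)$. It therefore suffices to show
\[
(J_{\gm}^-(A) \setminus \mathcal C) \subset J_g^-(\Sigma_{+,\sigma}) \quad\text{and}\quad J_g^-(A) \subset (J_{\gm}^-(A)\setminus\mathcal C) \cup J_g^-(K).
\]

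\emph{First inclusion.} Let $p \in J_{\gm}^-(A)\setminus\mathcal C$, and let $\alpha$ be a $\gm$-causal curve from $p$ to a point $a \in A$. If $\alpha$ avoids $\mathcal C$, it is also $g$-causal by \eqref{assumption1}, since $\ol{Q}_T \subset \mathcal C$, and we are done. If $\alpha$ enters $\mathcal C$, we instead reroute through $K$ using Lemma \ref{lemma_c_bd}. Let $p_1 = (t_1,x_1) \in \p\mathcal C$ be the first entry point of $\alpha$ into $\ol{\mathcal C}$. The segment from $p$ to $p_1$ lies outside $\mathcal C$ and is therefore $g$-causal, so $p \in J_g^-(p_1)$.

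Next we use the point $q_\sigma = (t_\sigma,x_\sigma^1,0) \in K$; this membership holds because $|x_\sigma^1| < R_0 \le R$. Consider the $\gm$-null ray from $q_\sigma$ backward in the $-x^1$ direction, $\{(t_\sigma - r, x_\sigma^1 - r, 0)\}$. It reaches $\p\mathcal C$ at $p_2 = (t_\sigma - x_\sigma^1 - 1, -1, 0)$ and, with the sign conventions as laid out, stays outside $\mathcal C$ in between; this needs checking, and otherwise one uses the exit point on the far side. Hence $p_2 \in J_g^-(q_\sigma)$.

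It then suffices to have $t_1 \le t_2 - \pi$. The choice of $R$ is designed exactly for this: since $a \in A$ has $|x|\ge R$ and $t$-coordinate $\sigma+2+\eps$, and $\alpha$ is $\gm$-causal, we get $t_1 \le \sigma + 2 + \eps - (R - 1)$. Comparing with $t_2 = t_\sigma - x_\sigma^1 - 1$ and using $R \ge \pi - t_\sigma + x_\sigma^1 + \sigma + 2 + \eps$ yields the inequality, up to bookkeeping of constants. Lemma \ref{lemma_c_bd} then gives $p_1 \in J_g^-(p_2)$, and the chain $p \le_g p_1 \le_g p_2 \le_g q_\sigma$ shows $p \in J_g^-(K)$.

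\emph{Second inclusion.} Let $p \in J_g^-(a)$ with $a\in A$, via a $g$-causal curve $\beta$. If $\beta$ never enters $\ol{Q}_T$, then it is $\gm$-causal, and either $p \notin \mathcal C$, giving $p \in J_{\gm}^-(A)\setminus\mathcal C$, or $\beta$ passes through $\ol{\mathcal C}$. If $\beta$ meets $\ol{\mathcal C}$, let $p_1$ be its last exit point from $\ol{\mathcal C}$. The segment from $p_1$ to $a$ is $\gm$-causal and lies outside $\mathcal C$, so the estimate above gives $t(p_1) \le t_2 - \pi$. As before, Lemma \ref{lemma_c_bd} shows $p_1 \in J_g^-(q_\sigma)$, and hence $p \in J_g^-(p_1) \subset J_g^-(K)$.

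The main obstacle is the geometric bookkeeping: verifying that the backward ray from $q_\sigma$ meets $\p\mathcal C$ without first passing through $\mathcal C$, and that the constant in the hypothesis on $R$ gives exactly $t_1 \le t_2 - \pi$. We would handle this by explicit Minkowski computations in the plane $x^2 = \dots = x^n = 0$, choosing whichever intersection point of the ray with $\p\mathcal C$ is most convenient.
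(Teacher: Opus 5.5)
Your two inclusions follow the paper's proof: the first or last crossing of $\p\mathcal C$, the bound $t_1\le\sigma+2+\eps-(R-1)$ from $\gm$-causality of the outer segment, Lemma \ref{lemma_c_bd}, and the anchor $q_\sigma\in K$. The step that fails as written is the choice of $p_2=(t_\sigma-x_\sigma^1-1,-1,0)$, and the problem is not bookkeeping.

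Since $x_\sigma^1>1$, the backward null ray $r\mapsto(t_\sigma-r,\,x_\sigma^1-r,\,0)$ meets $\p\mathcal C$ first at $x^1=1$. (The paper uses $x_\sigma^1>1$. It holds because $\Sigma_+$ misses $\R\times\ol B\subset\Phi^{g'}(\mho_{-\eps})$. It also misses $\{x^1\le-1\}$, whose points are $\Phi^{g'}(z,r)$ with $r\le 0<r_+(z)$. Both follow from injectivity of $\Phi^{g'}$ on $\mho_\eps$.) To reach your $p_2$ at $x^1=-1$, the ray must cross $\mathcal C$, where $g$ is unknown, so $p_2\le_g q_\sigma$ is unjustified. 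Lemma \ref{lemma_c_bd} cannot bridge the crossing either, because the time gap is only $2<\pi$. The constant also does not close: the hypothesis on $R$ gives $t_1\le\sigma+3+\eps-R\le t_\sigma-x_\sigma^1+1-\pi$, which is exactly $2$ too weak for $t_1\le t(p_2)-\pi$.

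The point you need is the first boundary point on that ray, $\tilde q=(t_\sigma+1-x_\sigma^1,\,1,\,0)$. The segment from $\tilde q$ to $q_\sigma$ has $x^1\ge 1$, so it lies outside $\mathcal C$ and is $g$-causal. The bound displayed above is then precisely $t_1\le t(\tilde q)-\pi$, so Lemma \ref{lemma_c_bd} gives $p_1\le_g\tilde q\le_g q_\sigma$ in both inclusions.

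The hypothesis on $R$ is calibrated to this point, so the choice is forced rather than a matter of convenience. With it, your argument coincides with the paper's proof; this is presumably what your fallback ``exit point on the far side'' was meant to be.

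A minor correction: $\ol Q_T\not\subset\mathcal C$. What you actually use is $Q_T\subset\mathcal C$ together with $g=\gm$ on $\R^{1+n}\setminus Q_T$, which holds by continuity.
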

\begin{proof}
We omit writing $g$ as a subscript within this proof and write $\le$ for the causal relation with respect to $g$, see \cite[p.\ 402]{oneill1983} for the definition of $\le$.
Let $p \in J^-(\Sigma_{+,\sigma})$
and let us show that $p \in (J_{\gm}^-(A) \setminus \mathcal C) \cup J^-(K)$.
There is $q \in \Sigma_{+,\sigma}$ and a past-directed causal curve $\gamma$ from $q$ to $p$. If $q \in K$, then $p \in J^-(K)$, so we suppose that $q \in A$. Then $q = (\sigma + 2 + \eps, y)$ for some $y$.
If $\gamma$ does not intersect $\mathcal C$, 
then $\gamma$ is causal with respect to $\gm$ and 
$p \in J_{\gm}^-(A) \setminus \mathcal C$.
Suppose now that $\gamma$ intersects $\mathcal C$ and let $\tilde p = (t_1, x_1) \in \p \mathcal C$ be the first intersection point with $\p \mathcal C$. 
There holds $t_1 \leq \sigma + 2 + \eps - (R-1)$ as $\gamma$ is causal with respect to $\gm$ outside $\mathcal C$. Using the bound for $R$, we have, writing $t_2 = t_\sigma + 1 - x_\sigma^1$, 
    \begin{align*}
t_1 \leq  t_2 - \pi.
    \end{align*}
Writing $\tilde q = (t_2, 1, 0)$ we have $\tilde q \le q_\sigma = (t_\sigma, x_\sigma^1, 0)$ since $t_\sigma - t_2 = x_\sigma^1 - 1 > 0$. Lemma \ref{lemma_c_bd} implies $\tilde{p} \leq \tilde{q}$. Thus $p \le \tilde p \le \tilde q \le q_\sigma$. Since $q_\sigma \in K$, we have $p \in J^-(K)$.

Let now $p \in (J_{\gm}^-(A) \setminus \mathcal C) \cup J^-(K)$
and let us show that $p \in J^-(\Sigma_{+,\sigma})$. Observe that 
\[
J^-(\Sigma_{+,\sigma}) = J^-(A \cup K) = J^-(A) \cup J^-(K).
\]
If $p \in J^-(K)$, then $p \in J^-(\Sigma_{+,\sigma})$. On the other hand, suppose that 
$p \in J_{\gm}^-(A) \setminus \mathcal C$.
There is $q \in A$ and a curve $\gamma$ from $q$ to $p$ that is causal with respect to $\gm$. 
If $\gamma$ does not intersect $\mathcal C$, then $\gamma$ is causal with respect to $g$ and $p \in J^-(A)$.
Suppose now that $\gamma$ intersects $\mathcal C$ and let $\tilde p = (t_1,x_1)$ be the last intersection point with $\p \mathcal C$. As $\gamma$ is causal with respect to $\gm$, it holds that 
    \begin{align*}
t_1 \leq \sigma + 2 + \eps - (R-1) \leq t_2 - \pi.
    \end{align*}
We have again $\tilde p \le \tilde q$ for $\tilde q = (t_2, 1, 0) \in \Sigma_{+,\sigma}$. As $\tilde p$ is the last intersection point of $\gamma$ and $\p \mathcal C$, the segment of $\gamma$ from $\tilde p$ to $p$ is outside $\mathcal C$, and $p \le \tilde p$.
Since $q_\sigma \in K$ and $\tilde p \le \tilde q \le q_\sigma$, we have $p \in J^-(K)$.
\end{proof}

\begin{Lemma}\label{lem_cl_bdd}
Let $g$ satisfy \eqref{assumption1}--\eqref{assumption2} and let $\sigma \in \R$.
The set $J_{g}^-(\Sigma_{+,\sigma})$ is closed, and 
for any $z \in \Sigma_-$ there is $r > 0$ such that 
$z + r e_0 \not\in J_{g}^-(\Sigma_{+,\sigma})$ where $e_0 = (1,0) \in \R^{1+n}$. 
\end{Lemma}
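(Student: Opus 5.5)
Both claims will come out of the decomposition in Lemma \ref{lem_near_far_J}. Fix $R$ as there, so that
\[
J_{g}^-(\Sigma_{+,\sigma}) = (J_{\gm}^-(A) \setminus \mathcal C) \cup J_{g}^-(K),
\]
with $K = \Sigma_{+,\sigma} \cap \{|x| \le R\}$ compact.

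\emph{Closedness.} I will prove that each of the two pieces is closed.
\begin{itemize}
\item For $J_g^-(K)$: by \eqref{assumption2}, $(\mR^{1+n},g)$ is globally hyperbolic. In a globally hyperbolic spacetime the causal past of a compact set is closed, since $J^\pm$ of a compact set is closed there (see \cite{oneill1983}).
\item For $J_{\gm}^-(A) \setminus \mathcal C$: the set $A = (\{\sigma+2+\eps\}\times\{2+\eps\}\times\R^{n-1}) \cap \{|x|\ge R\}$ is an explicit closed subset of the Minkowski codimension-two plane. Its Minkowski causal past is
\[
J_{\gm}^-(A) = \{ (t,x) \,:\, \exists\, a \in A,\ t \le t_a - |x - x_a| \}.
\]
This is closed, because the infimum over $a$ of the distance is attained: $A$ is closed, and the relevant part of $A$ is bounded once we restrict to a bounded region for $x$.
\item Removing the open set $\mathcal C = \R \times B$ from a closed set leaves a closed set.
\end{itemize}
The union of the two pieces is therefore closed.

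\emph{Escape along $e_0$.} Fix $z \in \Sigma_-$ and use a Cauchy temporal function $\tau$ for $g$. I treat the two pieces separately.
\begin{itemize}
\item The set $\tau(K)$ is bounded above by some $\tau_K$. Every point of $J_g^-(K)$ satisfies $\tau \le \tau_K$, because $\tau$ increases along future causal curves. Since $z = (t,-1,y)$ lies outside $\mathcal C$, and $g = \gm$ near the line $r \mapsto z + re_0$ for large $r$, the function $\tau$ tends to $+\infty$ along this timelike line (it is an inextendible timelike curve). Hence $z + re_0 \notin J_g^-(K)$ for $r$ large.
\item For the Minkowski piece, the point $z + re_0 = (t+r,-1,y)$ lies in $J_{\gm}^-(A)$ only if $t + r \le \sigma + 2 + \eps - |(-1,y) - (2+\eps, y_a)|$ for some $a \in A$. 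The right-hand side is at most $\sigma - 1$, so this fails once $r > \sigma - 1 - t$.
\end{itemize}
Taking $r$ larger than both thresholds gives the claim.

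The main obstacle is the closedness of $J_{\gm}^-(A)$ for the noncompact set $A$; this is where the explicit Minkowski formula replaces compactness. The remaining point to check carefully is the limit of $\tau$ along the vertical line. Instead of $\tau$, I could use that the line eventually lies in $\{t > T\}$, where $g = \gm$, and that $K$ lies in the past of a Cauchy surface $\{t = t_1\}$ with $t_1 > T$. This uses the preceding lemma, which shows that $\{t = t_0\}$ is a Cauchy surface for $t_0 > T$.
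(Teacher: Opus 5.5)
Your proposal is correct. The closedness part is the same as in the paper: decompose via Lemma \ref{lem_near_far_J}, use that $J_g^-(K)$ is closed for compact $K$ in a globally hyperbolic spacetime, use that $J_{\gm}^-(A)$ is closed, and use that $\mathcal C$ is open. For $J_{\gm}^-(A)$ the paper simply appeals to Minkowski geometry, while you give an actual argument.

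The escape along $e_0$ is where you differ, and only on the piece $J_g^-(K)$.

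\begin{itemize}
\item \textbf{Paper's route.} It observes that $J_g^+(z)\cap J_g^-(K)$ is closed and contained in the compact causal diamond $J_g^+(K\cup\{z\})\cap J_g^-(K\cup\{z\})$, hence bounded in $t$. It then implicitly uses $z+re_0\in J_g^+(z)$, which needs the whole vertical segment starting at $z$ to be $g$-causal.
\item \textbf{Your route.} You bound $J_g^-(K)$ by a level set of a Cauchy temporal function, or, in your alternative, by $\{t\le t_1\}$ with $t_1>T$. You then use that the vertical line eventually passes to the future of that level set.
\end{itemize}

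What your route buys: it avoids compactness of causal diamonds. The $\{t=t_1\}$ variant is especially elementary. It only needs the tail of the line in $\{t>T\}$, where $t$ is strictly monotone along causal curves. You also make the Minkowski-piece bound explicit ($t+r\le\sigma-1$), which the paper leaves to the reader.

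One small point in your $\tau$ version: calling the line an inextendible $g$-timelike curve needs a reason. Either note that the whole line lies in $\{|x|\ge 1\}$, where $g=\gm$ by continuity. Or use only the future tail, together with the fact that a Cauchy temporal function is unbounded above along any future-inextendible causal curve. Your $\{t=t_1\}$ alternative sidesteps this issue entirely.
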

\begin{proof}
We omit writing $g$ as a subscript within this proof. 
Let $R$, $A$ and $K$ be as in Lemma \ref{lem_near_far_J}.
Due to the properties of the Minkowski geometry, $J_{\gm}^-(A)$ is closed. 
The set $J^-(K)$ is closed as $K$ is compact, see \cite[Theorem 2.1]{hounnonkpe2019}.  
As $\mathcal C$ is open, the closedness of $J^-(\Sigma_{+,\sigma})$ follows from Lemma~\ref{lem_near_far_J}. 

Let $z \in \Sigma_-$. The future $J^+(z)$ is closed, so $J^+(z) \cap J^-(K)$ is also closed. Moreover, it holds that 
\[
J^+(z) \cap J^-(K) \subset J^+(K \cup \{z\}) \cap J^-(K \cup \{z\})
\]
and the latter set is compact \cite[Proposition 2.3]{hounnonkpe2019}. Thus $J^+(z) \cap J^-(K)$ is compact, and there is $T > 0$ such that
    \begin{align*}
 J^+(z) \cap J^-(K) \subset \{t \le T\}.
    \end{align*}
We see that $z + r e_0 \not\in J_{\gm}^-(A) \cup J^-(K)$ for $r \gg 0$.
\end{proof}

\begin{Lemma}\label{lem_bd_null}
Let $g$ satisfy \eqref{assumption1}--\eqref{assumption2} and let $\sigma \in \R$. Then the boundary of $J_{g}^-(\Sigma_{+,\sigma})$ is covered by normal null geodesics from $\Sigma_{+,\sigma}$.
\end{Lemma}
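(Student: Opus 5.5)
The plan is to use Lemma~\ref{lem_near_far_J} to split $J_g^-(\Sigma_{+,\sigma})$ into a Minkowski piece and a compact piece, and then treat each with a standard result. With $R$, $A$ and $K$ as in that lemma,
\[
J_g^-(\Sigma_{+,\sigma}) = (J_{\gm}^-(A)\setminus\mathcal C)\cup J_g^-(K).
\]
By Lemma~\ref{lem_cl_bdd} this set is closed. Take a point $p$ of its boundary. Then $p$ lies in the boundary of one of the two pieces. If $p$ lies on the boundary of the first piece but not on $\p\mathcal C$, the geometry near $p$ is Minkowskian. In that case $p$ sits on the boundary of the Minkowski past of the flat region $A$, which lies in $\{t=\sigma+2+\eps\}$ and is spacelike. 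That boundary is swept out by straight null lines from $A$ that are normal to $A$. These lines are $g$-geodesics as long as they stay outside $\mathcal C$, and they are normal to $\Sigma_{+,\sigma}$ since they are normal to $A$.

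For the compact piece $K$, I will use the general causal-theory fact for globally hyperbolic spacetimes. Since $K$ is compact, $J_g^-(K)$ is closed (see \cite{hounnonkpe2019}). Moreover $\p J^-_g(K) = J^-_g(K)\setminus I^-_g(K)$. For $p$ in this set, a causal curve from $K$ to $p$ exists, and any such curve must be a null pregeodesic without conjugate or cut points. It must also leave $K$ orthogonally, since otherwise a variation would push $p$ into $I_g^-(K)$ (\cite{oneill1983}, the Lemma in Chapter 10 on the first variation for submanifolds). Here $K$ has boundary, and at points of $\p K$ orthogonality may fail. To avoid this, I will enlarge $R$ slightly and use $K' = \Sigma_{+,\sigma}\cap\{|x|\le R+1\}$. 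Points reached only from $\p K'$ fall in the Minkowski region, where they are already handled, or are covered by the first piece.

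The main obstacle is the gluing: a point of $\p J_g^-(\Sigma_{+,\sigma})$ might lie on $\p\mathcal C$, or be a limit of both pieces. I would argue directly from the whole set instead. Given $p$ on the boundary, take $q_k\in\Sigma_{+,\sigma}$ and causal curves from $q_k$ to $p_k \to p$. By the decomposition I may assume either $q_k\in K'$, so that a limit curve exists by compactness, or that the curves are Minkowski straight lines outside $\mathcal C$, which also converge. The limit curve is a causal curve from some $q\in\Sigma_{+,\sigma}$ to $p$, and $p\notin I_g^-(\Sigma_{+,\sigma})$ because $I^-$ is open and $p$ is a boundary point. The standard argument then shows this curve is an unbroken null geodesic normal to $\Sigma_{+,\sigma}$ at $q$, since $\Sigma_{+,\sigma}$ is spacelike by Lemma~\ref{lem_Sigma_splike}. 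Otherwise, corner-cutting or a normal variation produces a timelike curve and $p\in I^-$.
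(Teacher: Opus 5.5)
Your final argument is exactly the paper's proof: $p \in J_g^-(\Sigma_{+,\sigma})$ because this set is closed (Lemma~\ref{lem_cl_bdd}), $p \notin I_g^-(\Sigma_{+,\sigma})$ because $I_g^-$ is open, and then a causal curve from the spacelike submanifold $\Sigma_{+,\sigma}$ (Lemma~\ref{lem_Sigma_splike}) that is not a normal null pregeodesic could be deformed into a timelike one \cite[Theorem 51, p.\ 298]{oneill1983}. The split into a Minkowski piece and a compact piece, and the limit-curve construction, are unnecessary: the closedness you already cite puts $p$ itself in $J_g^-(\Sigma_{+,\sigma})$, which directly gives a causal curve from $p$ to $\Sigma_{+,\sigma}$.
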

\begin{proof}
We omit writing $g$ as a subscript within this proof. 
By Lemma \ref{lem_cl_bdd} and \cite[Lemma 6, p.\ 404]{oneill1983} we have 
    \begin{align*}
\p J^-(\Sigma_{+,\sigma}) = J^-(\Sigma_{+,\sigma}) \setminus I^-(\Sigma_{+,\sigma}).
    \end{align*}
Let $p \in \p J^-(\Sigma_{+,\sigma})$. Then there is a causal curve from $p$ to $\Sigma_{+,\sigma}$. This curve is a normal null geodesic, since otherwise we get the contradiction $p \in I^-(\Sigma_{+,\sigma})$ in view of \cite[Theorem 51, p.\ 298]{oneill1983} and Lemma \ref{lem_Sigma_splike}. 
\end{proof}

\begin{proof}[Proof of Proposition \ref{prop_jminus}]
By Lemma \ref{lem_Sigma_splike}, 
the $g'$-geodesics $\gamma_z$, $z \in \Sigma_{-,\sigma}$, are normal to $\Sigma_{+,\sigma}$. As $g = g'$ on $\Sigma_{+,\sigma}$, it follows from Lemma \ref{lem_bd_null} that 
$\p J_{g}^-(\Sigma_{+,\sigma})$ is covered by $g$-geodesics starting from $\Sigma_{+,\sigma}$ in the directions $-\dot \gamma_z(r_+(z))$, $z \in \Sigma_{-,\sigma}$.
At this point, it is not known if all such geodesics stay on $\p J_{g}^-(\Sigma_{+,\sigma})$. Nonetheless,
\eqref{eq_beta} implies that they meet $\Sigma_-$ in the set $\Sigma_{-,\sigma}$. Thus we have 
    \begin{align}\label{upper}
\p J_{g}^-(\Sigma_{+,\sigma}) \cap \Sigma_- \subset \Sigma_{-,\sigma}.
    \end{align}
Let $z \in \Sigma_{-,\sigma}$. It follows from Lemma \ref{lem_nontrapping} that $z \in J_{g}^-(\Sigma_{+,\sigma})$. Consider the curve $\mu(r) = z + r e_0$ where $e_0 = (1,0) \in \R^{1+n}$.
If $r \le 0$, then $\mu(r) \le z$ and therefore $\mu(r) \in J_{g}^-(\Sigma_{+,\sigma})$.
On the other hand, let $r > 0$. 
By Lemma~\ref{lem_cl_bdd}, there is $R > 0$ such that $\mu(R) \not\in J_{g}^-(\Sigma_{+,\sigma})$. In view of \eqref{upper} the curve $\mu$ can intersect $\p J_{g}^-(\Sigma_{+,\sigma})$ only at $\mu(0)$. Hence the set 
    \begin{align*}
\{ r > 0 \,:\, \mu(r) \notin \overline{J_{g}^-(\Sigma_{+,\sigma})} \}
    \end{align*}
is a nonempty open and closed subset of the interval $\{ r > 0 \}$. Since the interval is connected, $\mu(r) \not\in J_{g}^-(\Sigma_{+,\sigma})$ when $r > 0$.
\end{proof}

\begin{Lemma}\label{lem_local_inv1}
Let $g$ be as in Theorem \ref{th_diffeo}.
Then the derivative of $\Phi^{g}$ is invertible at $(z,r)$ for all $z \in \Sigma_-$ and $0 < r < r_+^{g}(z)$.
\end{Lemma}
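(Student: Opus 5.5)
The plan is to argue by contradiction via conjugate points, following \cite[Section 3]{oksanen2024}. Suppose $D\Phi^g$ is singular at $(z_0,r_0)$ with $0<r_0<r_+^g(z_0)$. By Lemma~\ref{lem_Jacobi}, $g(\p_t\Phi^g,\dot\gamma)=-1$, so the $\p_t$-variation is never in the kernel modulo $\dot\gamma$. Also $\p_r\Phi^g=\dot\gamma\neq 0$ and $\dot\gamma$ is $g$-orthogonal to the $\p_{y^j}\Phi^g$. Hence a degeneracy must come from a nontrivial Jacobi field $J=\sum_j c_j\p_{y^j}\Phi^g$ along $\gamma_{z_0}$ with $J(r_0)\in\mathrm{span}\,\dot\gamma(r_0)$. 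Such a $J$ arises from a variation through geodesics $\gamma_z$ with $z\in\Sigma_{-,\sigma_0}$, $\sigma_0=t(z_0)$. Its initial data $J(0)\in T\Sigma_{-,\sigma_0}$ is spacelike and $\dot J(0)=0$, since the initial direction $(1,\omega)$ is constant. This makes the point $\gamma_{z_0}(r_0)$ a focal point of the spacelike codimension-two submanifold $\Sigma_{-,\sigma_0}$ along the normal null geodesic $\gamma_{z_0}$. The normality holds because $(1,\omega)$ is null and orthogonal to $T\Sigma_{-,\sigma_0}=\{0\}\times\omega^\perp$ in the Minkowski region.

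Next, the standard result on focal points of null geodesics (\cite[Proposition 10.48]{oneill1983}, the null analogue of \cite[Theorem 51, p.~298]{oneill1983}) says that past a focal point the geodesic enters the chronological future. Precisely, for $r_0<r<r_+^g(z_0)$ the point $\gamma_{z_0}(r)$ lies in $I^+_g(\Sigma_{-,\sigma_0})$. I will take $r=r_+^g(z_0)$, where the geodesic reaches $\Sigma_{+,\sigma_0}$. This is legitimate since, by Lemma~\ref{lem_nontrapping}, $r_+^g(z_0)$ is finite, the hitting is transversal, and \eqref{eq_beta} puts the hitting point $w=\gamma_{z_0}(r_+^g(z_0))$ in $\Sigma_{+,\sigma_0}$. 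Openness of $I^+$ then gives a timelike curve from some $\tilde z\in\Sigma_{-,\sigma_0}$ to $w$. Perturbing its starting point slightly forward in time along $e_0$ keeps it timelike, so we get a point $\tilde z+\delta e_0$, with $\delta>0$, lying in $J_g^-(\Sigma_{+,\sigma_0})\cap\Sigma_-$. But $\tilde z+\delta e_0$ has $t=\sigma_0+\delta>\sigma_0$, contradicting Proposition~\ref{prop_jminus}.

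The main obstacle is justifying the focal point step. First, I must verify that the variation field genuinely gives a $\Sigma_{-,\sigma_0}$-Jacobi field with $J(r_0)$ proportional to $\dot\gamma$. The proportionality holds because the kernel vector of $D\Phi^g$ has an $r$-component only along $\dot\gamma$, while the $t$-component is ruled out by the pairing $-1$ with $\dot\gamma$. I must also check that $J$ is not tangential, i.e.\ $J\not\equiv c\dot\gamma$; this holds since $J(0)\in\omega^\perp$ is spacelike and nonzero. Second, the cited focal point theorem is usually stated for compact or closed submanifolds and for a strict timelike deformation. I would apply it to a small compact neighbourhood of $z_0$ in $\Sigma_{-,\sigma_0}$, since only local variations are needed. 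I would also double-check the time orientation, running the argument in the future direction from $\Sigma_-$ to $\Sigma_+$, which matches $J^-$ in Proposition~\ref{prop_jminus}.
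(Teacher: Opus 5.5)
Your proposal is correct and follows essentially the same route as the paper: you rule out the $\p_t$ direction by pairing with $\dot\gamma$ via Lemma~\ref{lem_Jacobi}, identify a focal point of $\Sigma_{-,\sigma_0}$ along $\gamma_{z_0}$, use O'Neill's focal-point proposition to get a timelike curve from $\Sigma_{-,\sigma_0}$ to $\gamma_{z_0}(r_+^g(z_0))\in\Sigma_{+,\sigma_0}$, and contradict Proposition~\ref{prop_jminus}. Your explicit shift of the starting point along $e_0$ simply unpacks the paper's remark that this point lies in the interior of $J_g^-(\Sigma_{+,\sigma_0})\cap\Sigma_-$.
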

\begin{proof}
We use coordinates $(t,y)$ on $\Sigma_-$ as in Lemma \ref{lem_Jacobi}.
To get a contradiction, suppose that $\Phi^{g}$ is singular at $(z_*, r_*)  \in \Sigma_- \times \R$ 
with $z_* = (t_*, -1, y_*)$ and $0 < r_* < r_+^{g}(z_*)$, that is, 
there is a direction $(\delta t, \delta y, \delta r) \in \R^{1 + n}$ such that at $(z_*, r_*)$ there holds
    \begin{align*}
\p_t \Phi^{g} \delta t + \p_{y_j} \Phi^{g} \delta y_j + \p_r \Phi^{g} \delta r = 0. 
    \end{align*}
Applying Lemma \ref{lem_Jacobi} and using the fact that $\dot \gamma^{g} = \p_r \Phi^{g}$ is a null vector, we have
\[
0 = g(\p_t \Phi^{g} \delta t + \p_{y_j} \Phi^{g} \delta y_j + \p_r \Phi^{g} \delta r, \dot \gamma^{g}) = -\delta t. 
\]
Thus $\delta t = 0$, and the map 
\[
\Phi_*^g :  \R^{n-1} \times \R \to \R^{1+n}, \quad \Phi_*^g(y,r) = \Phi^{g}((t_*, -1, y), r)
\]
is singular at $(y_*, r_*)$ in the sense that its differential (or pushforward) does not have full rank. 

The null geodesic $\gamma_{z_*}^g$ is normal to the spacelike submanifold $\Sigma_{-,t_*}$. 
Moreover, $\gamma_{z_*}^g(r_*)$ is a focal point of $\Sigma_{-,t_*}$ along $\gamma_{z_*}^g$ since $\Phi_*^g$ is singular at $(y_*, r_*)$, see \cite[Proposition 30, p.\ 283]{oneill1983}. By \cite[Proposition 48, p.\ 296]{oneill1983} there is a $g$-timelike curve from some point $z_- \in \Sigma_{-,t_*}$ to the point $\gamma_{z_*}^g(r_+^{g}(z_*)) \in \Sigma_{+,t_*}$.
But then $z_-$ is in the interior of $J_g^-(\Sigma_{+,t_*}) \cap \Sigma_-$, see \cite[Lemma 6, p.\ 404]{oneill1983}.
This is a contradiction since $z_-$ is on the boundary of $J_g^-(\Sigma_{+,t_*}) \cap \Sigma_-$ according to Proposition \ref{prop_jminus}.
\end{proof}

\begin{proof}[Proof of Theorem \ref{th_diffeo}]
We write $V = \Phi^{g'}(\mho_0)$. Then $\p V = \Sigma_+^{g'}$.
The definition \eqref{def_r_g} of $r_+^g$ implies that
$\gamma_z^g(r_+^g(z)) \in \p V$ for $z \in \Sigma_-$ and that $\gamma_z^g(r)$ is in the interior of $V$ for $z \in \Sigma_-$ and $r < r_+^g(z)$. In particular, writing
    \begin{align*}
U = \{ (z, r) \in \Sigma_- \times \R \mid r < r_+^{g}(z)\},
    \end{align*}
it holds that $\Phi^{g} : U \to V$. The claim follows from Lemma \ref{lem_local_inv1} and Hadamard's global inverse function theorem, see e.g.\ \cite[Theorem 6.2.8]{krantz2002}, once we show that $\Phi^{g} : U \to V$ is proper. Writing $z = (t, y)$ we have 
    \begin{align*}
\Phi^{g}(t, y, r) = (t + r, y + r, r)
    \end{align*}
when $|t| + |y|$ is large or $r < 0$. Hence it is enough to observe that if a bounded sequence $(t_j, y_j, r_j)$ escapes to infinity in the sense that $|r_j - r_+^{g}(z_j)| \to 0$ then $x_j = \Phi^{g}(t_j, y_j, r_j)$ escapes to infinity in the sense that it approaches the boundary $\p V$.
\end{proof}

\begin{proof}[Proof of Lemma \ref{lem_sc}]
For the compact case, it suffices to show that 
\[
  L_\alpha = \{x \in \mathbb{R}^n \mid f_K(x) \geq \alpha \}
\]
is closed for each $\alpha \in \mathbb{R}$.
Let $\alpha \in \mathbb{R}$ and let $x_k \in L_\alpha$, $k=1,2,\dots$, converge to some $x \in \mathbb{R}^n$.
Because $K$ is closed and bounded, the fiber
\[
  F(y) = \{t \in \mathbb{R} \mid y + t e_n \in K \}
\]
is compact for every $y \in \R^n$. Indeed, it is closed as the preimage of $K$
under the continuous map $t \mapsto y + t e_n$, and bounded since $K$ is bounded.
Therefore whenever $F(y)$ is non-empty, the supremum $f_K(y)$ is attained.
Hence for each $k$ there exists $t_k \in F(x_k)$ with $t_k = f_K(x_k) \geq \alpha$.
Since the sequence $x_k$ is bounded and $K$ is compact, 
the sequence $t_k$ is bounded.
Pass to a subsequence, still denoted $t_k$, such that $t_k \to t$ for some $t \geq \alpha$.
Then
\[
x_k + t_k e_n \to x + t e_n.
\]
Since $K$ is closed and $x_k + t_k e_n \in K$, we have
$x + t e_n \in K$ so $f_K(x) \geq t \geq \alpha$.
Therefore $x \in L_\alpha$ and $L_\alpha$ is closed. 

We turn to the open case. 
It suffices to show that 
\[
  S_\alpha = \{ x \in \mathbb{R}^n \mid f_U(x) > \alpha \}
\]
is open for each $\alpha \in \mathbb{R}$.
Let $\alpha \in \mathbb{R}$ and let $x \in S_\alpha$.
By definition of the supremum, there exists $t > \alpha$ such that
$x + t e_n \in U$.
Since $U$ is open, there is $\varepsilon > 0$ such that $y \in U$ whenever $|x + t e_n - y| < \varepsilon$. In particular, if $|\tilde x - x| < \varepsilon$ then $\tilde x + t e_n \in U$ and $f_U(\tilde x) \geq t > \alpha$. Therefore $\tilde x \in S_\alpha$ and 
$S_\alpha$ is open. 
\end{proof}

\bibliographystyle{alpha}
\bibliography{master}

\end{document}